\def\asid{\textsf{ASID}}
\def\GrMod{\operatorname{\mathsf{GrMod}}}
\def\turn!{\textup{!`}}
\def\op{\textup{op}}
\def\soc{\operatorname{soc}}
\def\pd{\mathop{\mathrm{pd}}\limits}
\def\injdim{\mathop{\mathrm{id}}\limits}
\def\ind{\mathop{\mathrm{ind}}\nolimits}
\def\ac{\operatorname{ac}}
\def\lb{\operatorname{\mathrm{lb}}} 
\def\ub{\operatorname{\mathrm{ub}}}
\def\mrlp{\mathrm{lp}}
\def\rank{\operatorname{\mathsf{rank}}}
\def\CM{\operatorname{\mathsf{CM}}}
\def\grCM{\operatorname{\mathsf{CM}}^{\Bbb{Z}}}
\def\grmod{\operatorname{mod}^{\Bbb{Z}}}
\def\mrb{\mathrm{b}}
\def\mre{\mathrm{e}}
\def\mrac{\mathrm{ac}}
\def\Sing{\operatorname{Sing}}
\def\grSing{\operatorname{Sing}^{\ZZ}}
\def\lpSing{\operatorname{Sing}^{\Bbb{Z}}_{\mrlp} }
\def\stabCM{\operatorname{\underline{\mathsf{CM}}}}
\def\stabgrCM{\operatorname{\underline{\mathsf{CM}}}^{\Bbb{Z}}}
\def\stablpCM{\operatorname{\underline{ \mathsf{CM} }^{\Bbb{Z}} _{\mrlp} }}
\def\stabmod{\operatorname{\underline{mod}}}
\def\stabgrmod{\operatorname{\underline{mod}}^{\Bbb{Z}}}
\def\thick{\mathop{\mathsf{thick}}\nolimits}
\def\Loc{\mathop{\mathsf{Loc}}\nolimits}
\def\leftvee{\vartriangleleft} %{\textup{¶}\vee}
\def\rightvee{\vartriangleright} %{\textup{‰E}\vee}
\def\kk{{\mathbf k}}
\def\NN{{\Bbb N}}
\def\ZZ{{\Bbb Z}}
\def\cA{{\cal A}}
\def\cC{{\cal C}}
\def\cH{{\cal H}}
\def\cT{{\cal T}}
\def\sfq{{\mathsf{q}}}
\def\sfA{{\mathsf{A}}}
\def\sfC{{\mathsf{C}}}
\def\sfD{{\mathsf{D}}}
\def\sfE{{\mathsf{E}}}
\def\sfF{{\mathsf{F}}}
\def\sfK{{\mathsf{K}}}
\def\sfO{{\mathsf{O}}}
\def\sfT{{\mathsf{T}}}
\def\sfq{{\mathsf{q}}}
\def\scrE{{\mathscr{E}}}
\def\scrF{{\mathscr{F}}}
\def\scrG{{\mathscr{G}}}
\def\scrH{{\mathscr{H}}}
\def\scrI{{\mathscr{I}}}
\def\scrJ{{\mathscr{J}}}
\def\scrK{{\mathscr{K}}}
\def\scrL{{\mathscr{L}}}
\def\scrS{{\mathscr{S}}}
\def\scrT{{\mathscr{T}}}
\def\tuD{{\textup{D}}}
\def\tuH{{\textup{H}}}
\def\tuT{{\textup{T}}}
\def\tuZ{{\textup{Z}}}
\def\frki{{\mathfrak{i}}}
\def\frkp{{\mathfrak{p}}}
\def\frkt{{\mathfrak{t}}}
\def\id{\operatorname{id}}
\def\op{\operatorname{op}}
\def\mod{\operatorname{mod}}
\def\Mod{\operatorname{Mod}}
\def\GrMod{\operatorname{Mod}^{\mathbb{Z}}}
\def\Ker{\mathop{\mathrm{Ker}}\nolimits}
\def\proj{\operatorname{proj}}
\def\grproj{\operatorname{proj}^{\mathbb{Z}}} 
\def\GrProj{\operatorname{Proj}^{\mathbb{Z}}}
\def\Proj{\operatorname{Proj}}
\def\grProj{\operatorname{Proj}^{\mathbb{Z}}}
\def\add{\operatorname{add}}
\def\image{\operatorname{Im}}
\def\Hom{\operatorname{Hom}}
\def\grHom{\operatorname{HOM}}
\def\End{\operatorname{End}}
\def\Ext{\operatorname{Ext}}
\def\grExt{\operatorname{EXT}}
\def\gldim{\operatorname{gldim}}
\newcommand{\RHom}{\operatorname{\Bbb{R}Hom}}
\def\grRHom{\operatorname{\Bbb{R}HOM}}
\newcommand{\lotimes}{\otimes^{\Bbb{L}}}
\newcommand{\cone}{\operatorname{\mathsf{cn}}}
\def\RHom{\operatorname{\mathbb{R}Hom}}
\def\grRHom{\operatorname{\mathbb{R}HOM}}
\newtheorem{lemma}{Lemma}[section]
\newtheorem{proposition}[lemma]{Proposition}
\newtheorem{theorem}[lemma]{Theorem}
\newtheorem{corollary}[lemma]{Corollary}
\newtheorem{claim}[lemma]{Claim}
\theoremstyle{definition}
\newtheorem{remark}[lemma]{Remark}
\newtheorem{example}[lemma]{Example}
\newtheorem{classification}[lemma]{Classification}
\newtheorem{definition}[lemma]{Definition}
\theoremstyle{remark}
\newcommand{\exnumber}[1]{$\langle$#1$\rangle$}
\title{On finitely graded Iwanaga-Gorenstein algebras and the stable categories 
of their (graded) Cohen-Macaulay modules  
{\small to the memory of  R.O. Buchweitz}}
\author{Hiroyuki Minamoto and Kota Yamaura}
\def\sfqv{{\mathsf{qv}}}
\begin{document}

\maketitle

\begin{abstract}
We discuss  finitely  graded Iwanaga-Gorenstein (IG) algebras $A$ 
and representation theory of their (graded) Cohen-Macaulay (CM) modules. 
By quasi-Veronese algebra construction, 
in principle, we may reduce our study to the case where 
$A$ is a trivial extension algebra $A = \Lambda \oplus C$ with the grading $ \deg \Lambda = 0, \ \deg C = 1$. 
In \cite{adasore} 
we gave a necessary and sufficient condition that $A$ is IG in terms of $\Lambda$ and $C$ 
by using derived tensor products and derived Homs.  
For simplicity, we assume that $\Lambda$ is of finite global dimension in the sequel. 
In this paper, we show that  the condition that $A$ is IG, 
has a triangulated categorical interpretation. 
We prove  that if $A$ is IG, 
then the graded stable category $\stabgrCM A$ of CM-modules is realized as 
an admissible subcategory  of the derived category $\sfD^{\mrb}(\mod \Lambda)$. 
As a corollary, we deduce that the Grothendieck group $K_{0}(\stabgrCM A)$ 
is free of finite rank. 
%We show that the stable category $\stabCM A$ of (non-graded) CM-modules is realized as the  orbit category of the derived category $\sfD^{\mrb}(\mod \Lambda)$ with respect to a certain autoequivalence. 

We give several applications.  
Among other things, for a path algebra $\Lambda= \kk Q$ of an $A_{2}$ or $A_{3}$ quiver Q, 
we give a complete list of $\Lambda$-$\Lambda$-bimodule $C$  such that $\Lambda \oplus C$ is IG (resp. 
of finite global dimension)  
by using the triangulated categorical interpretation mentioned above. 
\end{abstract}

\tableofcontents

\section{Introduction}\label{Introduction}

Representation theory of (graded) Iwanaga-Gorenstein (IG) 
algebra was initiated by Auslander-Reiten \cite{AR}, Happel \cite{Happel} 
 and Buchweitz \cite{Buchweitz}, 
has been studied by many researchers
 and is recently getting interest from other areas.  
One source of interest on representation theory IG-algebra is theory of cluster categories,   
since higher cluster category is often realized as the stable category $\stabCM A$ of ungraded CM-modules over a finitely  graded algebra $A$.  
Another source is in the study of Mirror symmetry, as is mentioned below.

 Recall that a graded algebra $A= \bigoplus_{i= 0}^{\infty} A_{i}$ is called IG if it is Noetherian on both sides and 
 of finite self-injective dimension on both sides. 
 The central object of representation theory of graded IG-algebra $A$ is 
 the category $\grCM A$ of graded Cohen-Macaulay (CM) modules and 
 its stable categories $\stabgrCM A$. 
 The latter has a canonical structure of a triangulated category. 
 Fundamental results are following  equivalences of triangulated categories 
 \begin{equation}\label{equivalences}
 \sfK^{\mrac}(\grproj A) \xrightarrow{\ \ \underline{\tuZ}^{0} \ \ } \stabgrCM A \xrightarrow{\ \ \beta \ \ } \grSing A 
 \end{equation}
where 
 $\sfK^{\mrac}(\grproj A)$ is the homotopy category of acyclic complexes of graded projective $A$-modules 
 and $\grSing A$ is the graded singularity category. 
 It is defined by the Verdier quotient $\grSing A := \sfD^{\mrb}(\grmod A)/\sfK^{\mrb}(\grproj A)$. 
 If we perform the same construction to an algebraic variety $X$, 
then we obtain a triangulated category $\Sing X$ which only depends  the singular locus of $X$. 
Hence, the name. Singularity categories play   important roles in theory of Mirror symmetry.
 This is another reason that  representation theory of IG-algebra have been becoming to get much attention.  
Story until now is the same with the ungraded situation. 
There is a special feature for the graded situation, the Orlov subcategory $\sfO$.  
Under the assumption that the degree $0$-subalgebra $A_{0}$ coincides with 
the base field $\kk$, 
Orlov \cite{Orlov} found another triangulated category $\sfO$ which is equivalent to 
$\stabgrCM A$ in study of Mirror symmetry.  
The category $\sfO$ is 
a triangulated subcategory $\sfO$ of $\sfD^{\mrb}(\grmod A)$ 
such that  the restriction functor $\pi|_{\sfO}$ of 
the canonical quotient functor $\pi: \sfD^{\mrb}(\grmod A) \to \grSing A$ gives an equivalence. 
\begin{equation}\label{Introduction: Orlov equivalence}
\pi|_{\sfO} : \sfO \xrightarrow{ \ \ \sim \ \  }  \grSing A.
\end{equation}
We postpone giving  the definition of $\sfO$ under more general assumption 
that $A_{0}$ is IG until Section \ref{Orlov's equivalence}.

The aim of this paper is to study finitely graded IG-algebras $A=\bigoplus_{i=0}^{\ell} A_{i}$ 
over a field and the stable categories of their graded Cohen-Macaulay (CM) modules 
by using results of \cite{adasore}.  
One of our achievement is the following result.

\begin{theorem}\label{Introduction: theorem 1} 
Let $A= \bigoplus_{i= 0}^{\ell} A_{i}$ be a finite dimensional  graded IG -algebra. 
If the degree $0$-part algebra $A_{0}$ is of finite global dimension, 
then  the Gorthendieck group $K_{0}(\stabgrCM A)$ is free and 
its rank is bounded by $\ell |A_{0}|$ from above: 
\[
\rank K_{0}(\stabgrCM A) \leq \ell |A_{0}| 
\]
where $|A_{0}|$ denotes the number of non-isomorphic simple $A_{0}$-modules. 
\end{theorem}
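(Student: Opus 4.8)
The plan is to reduce, by means of the quasi-Veronese construction, to the trivial-extension setting of the paper, then to invoke the main theorem realizing $\stabgrCM$ as an admissible subcategory of the bounded derived category of a finite-global-dimension algebra, and finally to read the rank estimate off the resulting semiorthogonal decomposition. For the reduction I would pass to the $\ell$-th quasi-Veronese algebra $B := A^{[\ell]}$. Since $A = \bigoplus_{i=0}^{\ell} A_i$ is concentrated in degrees $0, \dots, \ell$, the algebra $B$ is concentrated in degrees $0$ and $1$; in particular $B_1 B_1 \subseteq B_2 = 0$, so $B = \Lambda \oplus C$ is the trivial extension with $\Lambda := B_0$ in degree $0$ and $C := B_1$ in degree $1$. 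The graded Morita equivalence $\GrMod B \simeq \GrMod A$ afforded by the quasi-Veronese construction is exact and carries graded projectives to graded projectives and finitely generated modules to finitely generated ones, hence induces a triangle equivalence $\grSing A \simeq \grSing B$ and, via \eqref{equivalences}, an equivalence $\stabgrCM A \simeq \stabgrCM B$ (and shows $B$ is IG since $A$ is); so it suffices to bound $\rank K_0(\stabgrCM B)$. Finally, $\Lambda = B_0$ is the $\ell \times \ell$ triangular matrix algebra whose $\ell$ diagonal blocks all equal $A_0$ and whose strictly upper blocks are the bimodules $A_1, \dots, A_{\ell-1}$; thus $\Lambda/\rad\Lambda \cong (A_0/\rad A_0)^{\times \ell}$, so $\Lambda$ has exactly $\ell|A_0|$ simple modules, and, viewing $\Lambda$ as an iterated triangular extension of $A_0$ by these bimodules and using that $\gldim A_0 < \infty$ forces every $A_0$-module to have finite projective dimension, the standard formula for the global dimension of a triangular matrix algebra gives $\gldim \Lambda < \infty$.

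Next I would apply the paper's main theorem to $B = \Lambda \oplus C$: since $B$ is IG and $\gldim \Lambda < \infty$, it realizes $\stabgrCM B$ as an admissible subcategory $\cS \subseteq \sfD^{\mrb}(\mod \Lambda)$. On the other hand $K_0(\sfD^{\mrb}(\mod \Lambda)) \cong K_0(\mod \Lambda) \cong \ZZ^{\ell|A_0|}$, free on the classes of the simple modules. Admissibility means $\cS$ fits into a semiorthogonal decomposition $\sfD^{\mrb}(\mod \Lambda) = \langle \cU, \cS \rangle$ for some (likewise admissible) subcategory $\cU$, so additivity of $K_0$ along semiorthogonal decompositions yields $\ZZ^{\ell|A_0|} \cong K_0(\cU) \oplus K_0(\cS)$. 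Hence $K_0(\stabgrCM A) \cong K_0(\stabgrCM B) \cong K_0(\cS)$ is a direct summand of $\ZZ^{\ell|A_0|}$; since a subgroup of a finitely generated free abelian group over the principal ideal domain $\ZZ$ is again free of rank at most that of the ambient group, we conclude that $K_0(\stabgrCM A)$ is free with $\rank K_0(\stabgrCM A) \leq \ell|A_0|$, as asserted.

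The genuine difficulty is concentrated entirely in the ingredient used at the start of the second paragraph --- the main theorem producing the admissible embedding $\stabgrCM B \hookrightarrow \sfD^{\mrb}(\mod \Lambda)$ from the IG-criterion of \cite{adasore} --- and I expect that, together with the structural input it rests on, to be the main obstacle; granting it, the quasi-Veronese reduction and the $K_0$ bookkeeping above are routine. The only subsidiary points that would need to be spelled out carefully are the compatibility of the quasi-Veronese equivalence with the Cohen-Macaulay and stable structures (so that it descends to $\stabgrCM$) and the finiteness of $\gldim \Lambda$ sketched above.
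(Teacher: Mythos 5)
Your argument is correct and follows essentially the same route as the paper's own proof of this statement (given as Theorem \ref{Introduction K-gp corollary}): reduce via the quasi-Veronese construction to the trivial extension $A^{[\ell]} = \nabla A \oplus \Delta A$, note $\gldim \nabla A < \infty$ so that $\sfK^{\mrb}(\proj \nabla A) = \sfD^{\mrb}(\mod \nabla A)$ and the locally perfect subcategories coincide with the full ones, invoke the recollement/admissible-embedding theorem to realize $\stabgrCM A$ as an admissible subcategory of $\sfD^{\mrb}(\mod \nabla A)$, and conclude by the direct-summand argument on $K_0$ together with $|\nabla A| = \ell|A_0|$.
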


One important trick here is the \emph{quasi-Veronese} algebra construction: 
from a finitely graded algebra $A = \bigoplus_{i=0}^{\ell} A_{i}$, 
we can construct the $\ell$-th quasi-Veronese algebra $A^{[\ell]}$, 
whose important properties are  that it is graded Morita equivalent to $A$ and 
that $(A^{[\ell]})_{i} = 0$ for $i \neq 0,1$.  
We denote $\nabla A := (A^{[\ell]})_{0}, \Delta A := (A^{[\ell]})_{1}$ 
and call $\nabla A$ the \emph{Beilinson} algebra. 
\[
A^{[\ell]} = \nabla A \oplus \Delta A. 
\]
Since they are graded Morita equivalent, 
$A$ is IG if and only if so is $A^{[\ell]}$ and if this is the case, 
we have an equivalence $\stabgrCM A \simeq \stabgrCM A^{[\ell]}$. 
Therefore, we may and will concentrate in the case  $A= A_{0} \oplus A_{1}$. 
In this case $A$ is regarded as the trivial extension algebra of $A_{0}$ by the bimodule $A_{1}$ over it. 
Hence,  the basic set up of this paper is the followings: 
$\Lambda$ is an algebra, 
$C$ is a bimodule over it and $A = \Lambda \oplus C$ is the trivial extension 
with the canonical grading $\deg \Lambda = 0, \deg C= 1$.

%\color{blue}
An important class of trivial extension algebras which are IG is the class of cluster tilted algebras, which play a key role in cluster theory.  
Originally a cluster tilted algebra $A$ is defined to be the endomorphism algebra $A = \End_{\cC} T$ of a cluster tilting object $T$ of 
a cluster category $\cC$. 
It is shown in \cite{ABS} that $A$ is obtained as the trivial extension algebra $A = \Lambda \oplus \Ext_{\Lambda}^{2}(\tuD(\Lambda), \Lambda)$ 
where $\Lambda$ is an iterated tilted algebra. 
It is proved in \cite{Keller-Reiten} that $A$ is IG. 
We leave for  future work to apply our results to cluster tilted algebras.
%Thus we expect our general result can be applied to study cluster tilted algebras. 
%\color{black}

Let $\Lambda$ be a finite dimensional algebra again. 
In \cite{adasore}, we call a bimodule $C$ over $\Lambda$ \emph{asid} 
(as an abbreviation of ``attaching self-injective dimension")  
if the trivial extension algebra $A = \Lambda \oplus C$ is IG. 
We gave  a characterization of asid bimodule in terms of $- \lotimes_{\Lambda} C^{a}$ and $\RHom_{\Lambda}(C^{a},-)$ 
%as a consequence of injective dimension formula of trivial extensions algebras 
where $C^{a}$ denotes the iterated derived tensor product of $C$, namely, for a natural number  $a > 0$,  
\[
C^{a} := C \lotimes_{\Lambda} C \lotimes_{\Lambda} \cdots \lotimes_{\Lambda} C \ \ \ ( a\textup{-times}) 
\]
and $C^{0} := \Lambda$. 
One of our main theorem gives a categorical characterization of asid bimodules. 

\begin{theorem}\label{Introduction:Adaching theorem 0}
Let $\Lambda$ be a finite dimensional algebra of finite global dimension and $C$ a bimodule over $\Lambda$. 
Then 
the trivial extension algebra $A = \Lambda   \oplus C$ is IG 
if and only if there exists an admissible subcategory 
$\sfT \subset \sfD^{\mrb}(\mod \Lambda)$ 
which satisfies the following conditions. 
\begin{enumerate}[(1)]
\item The functor $ \cT = -\lotimes_{\Lambda} C$ acts on $\sfT$ as an equivalence, 
 i.e., 
$\cT({\sfT}) \subset {\sfT}$ and 
the restriction functor $\cT|_{\sfT}$ is an autoequivalence.

\item The functor $\cT = -\lotimes_{\Lambda} C$ nilpotently acts on $\sfT^{\perp}$, i.e., 
$\cT({\sfT}^{\perp}) \subset {\sfT}^{\perp}$ and 
there exists a natural number $a \in \NN$ such that $\cT^{a}({\sfT}^{\perp}) =0$. 
\end{enumerate} 
\end{theorem}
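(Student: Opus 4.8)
The plan is to read the statement as a \emph{triangulated Fitting lemma} and deduce it from the characterization of asid bimodules in \cite{adasore}. Since $\gldim\Lambda<\infty$ we have $\sfD^{\mrb}(\mod\Lambda)=\sfK^{\mrb}(\proj\Lambda)=\Perf\Lambda$, and $C$ is perfect both as a left and a right $\Lambda$-module; hence $\cT=-\lotimes_{\Lambda}C$ and $\cT^{!}:=\RHom_{\Lambda}(C,-)$ are exact endofunctors of $\sfD^{\mrb}(\mod\Lambda)$, each of the form ``derived tensor with a perfect bimodule'', and reflexivity of perfect bimodules makes $\cT^{!}$ both a left and a right adjoint of $\cT$. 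The criterion of \cite{adasore} describes the IG property of $A=\Lambda\oplus C$ through $\cT^{a}=-\lotimes_{\Lambda}C^{a}$ and $(\cT^{!})^{a}=\RHom_{\Lambda}(C^{a},-)$; the content I will use is that $A$ is IG precisely when there exist $a\in\NN$ and a thick subcategory $\sfN$ with $\sfN=\bigcup_{n}\Ker\cT^{n}=\bigcup_{n}\Ker(\cT^{!})^{n}=\Ker\cT^{a}=\Ker(\cT^{!})^{a}$ on which both $\cT$ and $\cT^{!}$ are nilpotent, and such that the units and counits of the adjunction $(\cT^{a},(\cT^{!})^{a})$ have cones in $\sfN$. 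With this in hand I expect the admissible subcategory $\sfT$ to be (a copy of) $\grSing A\simeq\stabgrCM A$, embedded via the canonical functor induced by $A\twoheadrightarrow\Lambda$, with $\cT|_{\sfT}$ the degree-shift autoequivalence --- the refinement stated in the introduction.

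\emph{Direction ``$A$ IG $\Rightarrow\sfT$ exists''.} Take $(a,\sfN)$ as above and set $\sfT:={}^{\perp}\sfN$. First I check $\sfN$ is admissible: for any $X$ the unit $X\to(\cT^{!})^{a}\cT^{a}X$ fits into a triangle $Z\to X\to(\cT^{!})^{a}\cT^{a}X\to Z[1]$ with $Z\in\sfN$ (the cone condition), and $(\cT^{!})^{a}\cT^{a}X\in\sfN^{\perp}$, because $\Hom(N,(\cT^{!})^{a}\cT^{a}X)\cong\Hom(\cT^{a}N,\cT^{a}X)=0$ for $N\in\sfN$; together with the symmetric statement obtained from the counit, this yields semiorthogonal decompositions of $\sfD^{\mrb}(\mod\Lambda)$ along $\sfN$ on both sides, so $\sfN$ --- and hence $\sfT={}^{\perp}\sfN$, for which $\sfT^{\perp}=\sfN$ --- is admissible. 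Conditions (1)--(2) then follow formally: $\sfN$ is $\cT$- and $\cT^{!}$-stable (it equals $\Ker\cT^{a}=\Ker(\cT^{!})^{a}$), hence so is $\sfT$ since $\cT,\cT^{!}$ are mutually two-sided adjoint; $\cT^{a}|_{\sfT^{\perp}}=\cT^{a}|_{\sfN}=0$; and $\cT|_{\sfT}$ is an autoequivalence because, for $Y\in\sfT$, the cone of the unit $Y\to(\cT^{!})^{a}\cT^{a}Y$ lies in $\sfN$ (cone condition) and in $\sfT$ (stability), hence in $\sfN\cap\sfT=0$, giving $(\cT^{!})^{a}\cT^{a}|_{\sfT}=\id$, and symmetrically $\cT^{a}(\cT^{!})^{a}|_{\sfT}=\id$, so $\cT^{a}|_{\sfT}$ is an autoequivalence of $\sfT$ and therefore so is its factor $\cT|_{\sfT}$. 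Finally I identify the equivalence $\sfT\simeq\sfD^{\mrb}(\mod\Lambda)/\sfN$ with the singularity-category functor $\sfD^{\mrb}(\mod\Lambda)\to\grSing A$, giving $\sfT\simeq\grSing A\simeq\stabgrCM A$.

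\emph{Converse.} Given an admissible $\sfT$ satisfying (1)--(2), put $\sfN:=\sfT^{\perp}$. A short adjunction chase using admissibility, (1)--(2) and the two-sided adjointness of $\cT^{!}$ with $\cT$ shows that $\cT^{!}$ preserves $\sfN$, that $\cT$ and $\cT^{!}$ preserve $\sfT$, and that $\cT^{!}|_{\sfT}$ is quasi-inverse to $\cT|_{\sfT}$. For arbitrary $X$, the semiorthogonal triangle of $X$ together with $\cT^{a}|_{\sfN}=0$ gives $\cT^{a}X\in\sfT$ and, via the autoequivalence $\cT|_{\sfT}$, an explicit identification of $\cT^{a}X$ and $(\cT^{!})^{a}X$ with the $\sfT$-component of $X$. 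Translating this back into properties of $-\lotimes_{\Lambda}C^{a}$ and $\RHom_{\Lambda}(C^{a},-)$ --- in particular that the comparison maps between them coming from the adjunction become invertible modulo $\sfN$ --- recovers exactly the conditions \cite{adasore} requires of $C$, so $A=\Lambda\oplus C$ is IG. (Again $\gldim\Lambda<\infty$ is used, to pass freely between conditions on $\mod\Lambda$ and on $\sfD^{\mrb}(\mod\Lambda)=\Perf\Lambda$.)

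\emph{Main obstacle.} The crux is the admissibility of $\sfN=\bigcup_{n}\Ker\cT^{n}$ in the forward direction --- equivalently, the splitting of the Verdier localization at $\sfN$. This is exactly where the \emph{quantitative} strength of the Gorenstein hypothesis (finite self-injective dimension, encoded via \cite{adasore} in the uniform bound $a$ and the vanishing of the unit and counit cones modulo $\sfN$) is indispensable: for a general bimodule $C$ the chain $\Ker\cT\subseteq\Ker\cT^{2}\subseteq\cdots$ need not stabilize and no Fitting-type decomposition exists. A secondary, purely bookkeeping, point is tracking which of the mutually adjoint functors $\cT$ and $\cT^{!}$ preserves which one-sided orthogonal of $\sfN$; once their nilpotency on $\sfN$ is established this is routine.
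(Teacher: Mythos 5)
Your overall architecture --- an abstract Fitting-type statement for an adjoint pair of exact endofunctors, fed by the asid criterion of \cite{adasore} --- is the same as the paper's (Lemma \ref{abstraction 1} combined with Lemma \ref{T proposition}). However, there are two genuine gaps.

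First, the input you attribute to \cite{adasore} is not what \cite{adasore} provides. The actual criterion (Theorem \ref{adasore:right asid theorem}) is that $C$ is perfect on both sides and that $\RHom_{\Lambda}(C^{a},\lambda_{r})$ and $\RHom_{\Lambda^{\op}}(C^{a},\lambda_{\ell})$ are isomorphisms for $a\gg 0$, where $\lambda_{r}\colon \Lambda\to\RHom_{\Lambda}(C,C)$ is the left-multiplication map. The package you take as given --- that the chain of kernels $\Ker\cT^{n}$ stabilizes, that it agrees with the kernels of the iterated adjoint, and above all that the cone of the counit $\RHom_{\Lambda}(C^{a},M)\lotimes_{\Lambda}C^{a}\to M$ lies in $\Ker\cT^{a}$ for every $M$ --- is precisely the hard content of the forward implication. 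In the paper this occupies Proposition \ref{isomorphisms proposition} (reformulating the asid condition as invertibility of $\scrT_{C^{a},\Lambda}$ and of the evaluation maps), Lemma \ref{T lemma} (stabilization of the chains of kernels and thick hulls, via a compact-generation argument), and the computation in Lemma \ref{T proposition}, where the counit for general $M$ is reduced to $M=\Lambda$ and then factored through maps $\epsilon_{r}^{(i)}$ that become invertible after $-\lotimes_{\Lambda}C^{\alpha}$. Assuming this package is essentially assuming the theorem.

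Second, $\RHom_{\Lambda}(C,-)$ is not a two-sided adjoint of $\cT=-\lotimes_{\Lambda}C$. Since $C$ is perfect on both sides, the right adjoint is $-\lotimes_{\Lambda}C^{\rightvee}$ with $C^{\rightvee}=\RHom_{\Lambda}(C,\Lambda)$, while the left adjoint is $-\lotimes_{\Lambda}C^{\leftvee}$ with $C^{\leftvee}=\RHom_{\Lambda^{\op}}(C,\Lambda)$ (Lemma \ref{adjoint lemma}); these bimodules differ in general, so $\cT$ is not a Frobenius functor. The distinction is not mere bookkeeping: it is exactly where the \emph{left} asid condition (as opposed to the right one) enters, and it is how left admissibility of $\sfT$ is obtained in the paper --- the counit argument alone gives only right admissibility, and the left half comes from the $\Lambda^{\op}$-version of Lemma \ref{T proposition} transported by the duality $(-)^{\leftvee}$. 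Your uniform use of a single self-adjoint functor collapses ${}^{\perp}\sfN$ with $\sfN^{\perp}$ and hides where half of the hypothesis is used; the converse direction, sketched as an "adjunction chase," inherits the same ambiguity and would need the careful left/right separation of Lemma \ref{abstraction 1} to actually recover both asid conditions.
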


It might be looked that  the categorical characterization is only an abstract result. 
However, this can be a powerful tool to attack concrete problems. 
For instance, in Section \ref{Classification},    
we apply this characterization to solve the problem that classifies asid bimodules over a path algebra of $A_{2}$ or $A_{3}$ quiver. 

This theorem is proved in Theorem \ref{Adaching theorem 0} 
for the more general case where $\Lambda$ is IG with a little modification 
that replace $\sfD^{\mrb}(\mod \Lambda)$ with $\sfK^{\mrb}(\proj \Lambda)$. 
We note that in Theorem \ref{Adaching theorem 0} we give  another categorical characterization 
in terms of $C$-duality functors 
\[
(-)^{\star}:= \RHom_{\Lambda} ( -, C): \sfD^{\mrb}(\mod \Lambda) \rightleftarrows \sfD^{\mrb}(\mod \Lambda^{\op})^{\op} 
:  \RHom_{\Lambda^{\op}}(-,C) = : (-)^{\star}. 
\]

We call an admissible subcategory $\sfT \subset \sfD^{\mrb}(\mod \Lambda)$ 
an \emph{asid subcategory} of an asid module $C$  if it satisfies the conditions (1) and (2) of above theorem. 
The next main result,  Theorem \ref{Introduction: Adaching theorem 1}, 
 asserts a uniqueness of asid subcategory by giving  a description using an asid bimodule. 
For this, we use  two invariants of an asid bimodule, the \emph{right asid number} $\alpha_{r}$ and the \emph{left asid number} $\alpha_{\ell}$ 
which are introduced in \cite{adasore}. 
%These numbers play important role in this paper as shown in Theorem \ref{Introduction: Adaching theorem 1} below. 
The original definitions is recalled in Definition \ref{adasore:asid number definition}. 
Here we give a formula for the right asid number $\alpha_{r}$
under the assumption that $\Lambda$ is finite dimensional. 
%The formula below tells that  $\alpha_{r}$ is the amplitude of the degree of the socles  of the graded cosyzygies $\Omega^{-n}A$ as graded $A$-modules. 
%\begin{equation}\label{asid number formula}
%\alpha_{r}  = \max\{ a\in \ZZ \mid \exists n \textup{ s.t. } \soc (\Omega^{-n}A)_{a} \neq 0\} -
%\min\{ a\in  \ZZ \mid \exists n \textup{ s.t. } \soc (\Omega^{-n}A)_{a} \neq 0\}. 
%\end{equation}
%\[
%\alpha_{r}  = \max\{ a\geq -1 \mid \exists n \textup{ s.t } \soc (\Omega^{-n}A)_{-a} \neq 0\} + 1. 
%\]
\[
\alpha_{r}  = 1 - \min\{ a\in \ZZ \mid \exists n \textup{ s.t. } \soc (\Omega^{-n}A)_{a} \neq 0\}. 
\]
This formula  tells us that
it essentially counts the minimal degree of the socle of the graded cosyzygies $\Omega^{-n}A$ as graded $A$-modules. 
The left asid number $\alpha_{\ell}$ is given by the same formula involving the left graded cosyzygies. 
Thus the first statement of Theorem \ref{Introduction: Adaching theorem 1} below 
can be regarded as a result concerning on right-left symmetry of graded self-injective resolution of $A$.  

\begin{theorem}[Theorem \ref{Adaching theorem 1}]\label{Introduction: Adaching theorem 1}
Assume that $\gldim \Lambda < \infty$. 
If $A= \Lambda \oplus C$ is IG, then the following assertions hold.   
\begin{enumerate}[(1)]
\item   
We have $\alpha_{r} = \alpha_{\ell}$. We put $\alpha := \alpha_{r} = \alpha_{\ell}$. 

\item 
The subcategory $\sfT$ of (2)  of Theorem \ref{Introduction:Adaching theorem 0} is uniquely determined as 
$\sfT = \thick C^{\alpha}$. 

\item 
We have 
$
\sfT^{\perp} = \Ker (- \lotimes_{\Lambda} C^{\alpha}). 
$
\end{enumerate}
\end{theorem}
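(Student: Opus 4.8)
The plan is to reduce everything to the dynamics of the endofunctor $\cT=-\lotimes_{\Lambda}C$ on $\sfD^{\mrb}(\mod\Lambda)$. By Theorem~\ref{Adaching theorem 0}, since $A=\Lambda\oplus C$ is IG and $\gldim\Lambda<\infty$, there is an admissible subcategory $\sfT$ on which $\cT$ restricts to an autoequivalence and whose orthogonal $\sfT^{\perp}$ it preserves nilpotently; here $\gldim\Lambda<\infty$ guarantees in addition that $\cT$ is an everywhere-defined triangulated endofunctor of $\sfD^{\mrb}(\mod\Lambda)$ and that $\sfD^{\mrb}(\mod\Lambda)=\thick\Lambda$. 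Write $N\ge 0$ for the \emph{exact} nilpotency index, the least integer with $\cT^{N}(\sfT^{\perp})=0$. I would first prove the three assertions with $\alpha$ replaced throughout by $N$, and only at the end identify $N$ with $\alpha_{r}$ and $\alpha_{\ell}$; substituting $\alpha=N$ into that intermediate statement then gives (1), (2) and (3) at once.

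For the ``$\alpha=N$'' version the key tool is the semiorthogonal decomposition attached to the admissible subcategory $\sfT$: every $X\in\sfD^{\mrb}(\mod\Lambda)$ sits in a triangle two of whose terms are $X$ and objects lying respectively in $\sfT$ and in $\sfT^{\perp}$. Applying $\cT^{a}$ for $a\ge N$ annihilates the $\sfT^{\perp}$-term, so $\cT^{a}X$ is isomorphic to $\cT^{a}$ of the $\sfT$-term and hence lies in $\sfT$; since $\cT^{a}|_{\sfT}$ is essentially surjective this shows $\cT^{a}(\sfD^{\mrb}(\mod\Lambda))=\sfT$ up to isomorphism. Because $\sfD^{\mrb}(\mod\Lambda)=\thick\Lambda$ and $\cT^{a}$ is triangulated, $\cT^{a}(\thick\Lambda)\subseteq\thick(\cT^{a}\Lambda)=\thick C^{a}$, while $C^{a}=\cT^{a}\Lambda$ lies in the thick subcategory $\sfT$; combining these forces $\sfT=\thick C^{a}$ for every $a\ge N$. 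A parallel triangle computation, using that $\sfT\cap\sfT^{\perp}=0$ and that the autoequivalence $\cT^{a}|_{\sfT}$ is conservative, shows that $\cT^{a}X=0$ makes the $\sfT$-term of $X$ vanish, hence $X\in\sfT^{\perp}$; together with $\sfT^{\perp}\subseteq\Ker\cT^{N}\subseteq\Ker\cT^{a}$ this yields $\sfT^{\perp}=\Ker(-\lotimes_{\Lambda}C^{a})$ for all $a\ge N$. Both descriptions are visibly independent of the chosen $\sfT$, so any two asid subcategories coincide; this already proves the uniqueness claimed in (2) and shows $N$ depends only on $C$.

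It remains to show $N=\alpha_{r}=\alpha_{\ell}$. For $N=\alpha_{r}$ I would use the identification $\sfT\simeq\stabgrCM A$ furnished by Theorem~\ref{Adaching theorem 0} together with the equivalences~\eqref{equivalences}: under this dictionary the autoequivalence induced by $\cT$ becomes the internal degree-shift autoequivalence and $\sfT^{\perp}$ becomes the subcategory of objects concentrated in a bounded window of internal degrees, so that the least number of shifts needed to push all of $\sfT^{\perp}$ out of that window — namely $N$ — is exactly the integer governing the socle degrees of the graded cosyzygies $\Omega^{-n}A$; by the formula recalled before the statement and by Definition~\ref{adasore:asid number definition} this integer is $\alpha_{r}$. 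Running the same argument over $\Lambda^{\op}$ computes the analogous left-hand quantity as $\alpha_{\ell}$, and the two agree by the second characterization in Theorem~\ref{Adaching theorem 0}: the $C$-duality functors $(-)^{\star}=\RHom_{\Lambda}(-,C)$ and $(-)^{\star}=\RHom_{\Lambda^{\op}}(-,C)$ form a duality carrying the asid datum $(\sfT,\sfT^{\perp})$ on the $\Lambda$-side to the one on the $\Lambda^{\op}$-side and intertwining $-\lotimes_{\Lambda}C$ with $C\lotimes_{\Lambda}-$ up to a fixed shift, so their exact nilpotency indices coincide; equivalently one invokes the self-duality $\RHom_{A}(-,A)$ of the graded minimal self-injective (bi)resolution of $A$ to match left and right socle degrees. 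We then set $\alpha:=\alpha_{r}=\alpha_{\ell}$.

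The manipulations with semiorthogonal decompositions in the second paragraph are formal. The real work, and the step I expect to be the main obstacle, is the third paragraph: making the dictionary $\sfD^{\mrb}(\mod\Lambda)\leftrightarrow\sfD^{\mrb}(\grmod A)\leftrightarrow\stabgrCM A$ precise enough to track the internal grading and the $C$-duality, and in particular verifying that $N$ is \emph{exactly} — not merely eventually — the integer predicted by the socle formula, i.e. that $\cT^{N-1}$ does not already annihilate $\sfT^{\perp}$. It is here that the right–left symmetry of the graded self-injective resolution of $A$ is genuinely used.
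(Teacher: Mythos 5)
Your second paragraph is sound: granting Theorem \ref{Adaching theorem 0}, the semiorthogonal-decomposition manipulations showing $\sfT=\thick C^{a}$ and $\sfT^{\perp}=\Ker(-\lotimes_{\Lambda}C^{a})$ for every $a\geq N$ (with $N$ the exact nilpotency index), and hence the uniqueness of $\sfT$, are correct and essentially coincide with the paper's argument for parts (2) and (3) (compare the proof of Lemma \ref{T proposition} and part (2) of the paper's proof of Theorem \ref{Adaching theorem 1}).

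The genuine gap is your third paragraph, i.e.\ the identification $N=\alpha_{r}=\alpha_{\ell}$, and it is a gap on two counts. First, the route you propose goes through the equivalence $\sfT\simeq\stabgrCM A$ and a ``dictionary'' matching $N$ with socle degrees of the graded cosyzygies $\Omega^{-n}A$. That equivalence is not furnished by Theorem \ref{Adaching theorem 0}; it is Theorem \ref{Adaching theorem 2}, which in the paper is proved \emph{after} and \emph{using} Theorem \ref{Adaching theorem 1}, so your argument is circular as organized. Moreover the cosyzygy formula for $\alpha_{r}$ is itself a nontrivial consequence of Proposition \ref{adasore:asid number corollary}, not the definition, and the claim that $\sfT^{\perp}$ ``becomes the subcategory of objects in a bounded window of internal degrees'' is not meaningful ($\sfT^{\perp}$ is precisely what the Happel functor kills). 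Second, even the statement $N=\alpha_{r}$ requires an argument you do not supply: by definition $\alpha_{r}$ is the least $a$ with $\RHom_{\Lambda}(C^{a},\lambda_{r})$ an isomorphism, and the bridge to the nilpotency index is Proposition \ref{isomorphisms proposition} (translating this into the condition that $\cT$ induces isomorphisms on $\Hom(M\lotimes C^{a},N)$) combined with the equivalence $(2)\Leftrightarrow(3)$ of the abstract Lemma \ref{abstraction 1} for the adjoint pair $-\lotimes_{\Lambda}C^{\leftvee}\dashv-\lotimes_{\Lambda}C$; this stays entirely inside $\sfK^{\mrb}(\proj\Lambda)$ and never touches $\stabgrCM A$. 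Likewise, your assertion that the $C$-duality ``intertwines the two tensor functors, so the exact nilpotency indices coincide'' is only the expected statement: the duality is contravariant, so it exchanges right orthogonals with left orthogonals, and the equality $\alpha_{r}=\alpha_{\ell}$ rests on the non-obvious equivalences $(3)\Leftrightarrow(6)$ of Lemma \ref{abstraction 1} (the minimal $a$ for $F$ equals the minimal $a$ for its left adjoint $G$), followed by transporting $G=-\lotimes_{\Lambda}C^{\leftvee}$ to $C\lotimes_{\Lambda}-$ on $\sfK^{\mrb}(\proj\Lambda^{\op})$ via $(-)^{\rightvee}$. Without these ingredients the equalities in (1), and hence the specific index $\alpha$ appearing in (2) and (3), are not established.
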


%\color{blue}
The next result reveals  the categories $\sfT$ and $\sfT^{\perp}$  are  strongly related to representation theory of IG-algebra $A$. 
 More precisely the asid subcategory $\sfT$ is shown to be equivalent to the stable category $\stabgrCM A$ via generalized Happel's functor $\cH$ and the right perpendicular category $\sfT^{\perp}$ is identified with $\Ker \cH$. 
%\color{black}

To state this,  we recall original  Happel's functor.  
 Let $\sfD(\Lambda) := \Hom_{\kk}(\Lambda, \kk)$ be the $\kk$-dual of $\Lambda$ with the canonical bimodule structure. 
 Then the trivial extension algebra  $T(\Lambda) : =\Lambda \oplus \tuD(\Lambda)$ 
is self-injective  and $\stabgrCM T(\Lambda) = \stabgrmod T(\Lambda)$. 
Happel \cite{Happel book} showed that 
there is a canonical embedding functor 
\[
\cH : \sfD^{\mrb}(\mod \Lambda) \hookrightarrow \stabgrmod A. 
\]
He also showed that $\cH$ gives an equivalence if and only if $\Lambda$ is of finite global dimension. 

%The equivalence tells  existence of tilting object in the stable category $\stabgrmod T(\Lambda)$. 
%Later it is observed in \cite{Chen trivial} \cite{MM} that a finite dimensional graded self-injective  algebra $A$   
%Finally, the second author showed in \cite{Yamaura} that 

Observe that even in the case $C \neq \sfD(\Lambda)$, 
if $A = \Lambda \oplus C$ is IG, 
then there are an analogue of  Happel's functor 
constructed as the following composite functor 
\[
\cH : \sfD^{\mrb}(\mod \Lambda) \hookrightarrow \sfD^{\mrb}(\grmod A) \twoheadrightarrow 
\grSing A \xrightarrow{\beta^{-1}} \stabgrCM A 
\]
where the first two arrows are canonical functors and the third is the inverse of the functor $\beta$ in \eqref{equivalences}.  
For a finitely graded IG-algebra $A = \bigoplus_{i = 0}^{\ell}A_{i}$, 
the quasi-Veronese algebra construction yields the functor below which is also denoted by $\cH$.
\[
\cH : \sfD^{\mrb}(\mod \nabla A) \to \stabgrCM A^{[\ell]} \xrightarrow{ \simeq } \stabgrCM A
\]
%for a general finitely graded IG-algebra $A=\bigoplus_{i=0}^{\ell}A_{i}$ 
%we have a functor $\cH: \sfD^{\mrb}(\mod \nabla A) \to \stabgrCM A$ via 
%quasi-Veronese algebra construction. 
%A class of algebras $A$ such that $\cH$ gives an equivalence is obtained by Lu \cite{Lu}. 
In general  $\cH$ does  neither  give an equivalence nor even is fully faithful.
In \cite{anodai} we characterize 
 a finitely graded IG-algebra $A = \bigoplus_{i= 0}^{\ell} A_{i}$ such that generalized Happel's functor $\cH$ is fully faithful or gives an equivalence. 
In this paper, we study general properties of $\cH$.

\begin{theorem}%[Theorem \ref{Adaching theorem 1}, Theorem \ref{Adaching theorem 2}]
\label{Introduction: Adaching theorem 2}
Assume that $\gldim \Lambda < \infty$ and  $A= \Lambda \oplus C$ is IG. 
Let $\sfT$ be the asid subcategory.   Then the following assertions hold.   
\begin{enumerate}[(1)]
\item 
The functor $\cH$ restricts to give an equivalence 
\[
\cH|_{\sfT} : \sfT \xrightarrow{\sim} \stabgrCM A.
\]

%\color{blue}
In particular $\stabgrCM A$ is realized as an admissible subcategory of $\sfD^{\mrb}(\mod \Lambda)$. 
%\color{black}

\item 
We have 
$\sfT^{\perp} = \Ker \cH $. 

%\color{blue}
\item 
The asid number $\alpha = \alpha_{r} = \alpha_{\ell}$ is a minimal integer 
that satisfies either 
$ \Ker( -\lotimes C^{\alpha})= \Ker \cH$ or 
$\cH|_{\thick C^{\alpha}}: \thick C^{\alpha} \xrightarrow{\simeq} \stabgrCM A$. 
%\color{black}
\end{enumerate}
\end{theorem}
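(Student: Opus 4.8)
The plan is to work with the composite functor $F:=\pi\circ\iota_{0}\colon\sfD^{\mrb}(\mod\Lambda)\to\grSing A$, where $\iota_{0}$ regards a complex of $\Lambda=A/C$-modules as a complex of graded $A$-modules concentrated in internal degree $0$ and $\pi$ is the Verdier quotient; since $\cH=\beta^{-1}\circ F$ it suffices to prove the three assertions for $F$. The engine will be the identity, valid in $\grSing A$ for every $X\in\sfD^{\mrb}(\mod\Lambda)$ and every $j\in\ZZ$,
\[
\pi\,\iota_{j}(X)\;\cong\;\pi\,\iota_{j+1}(\cT X)\,[1],
\]
where $\iota_{j}$ places a complex in internal degree $j$. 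To prove it I would pick a bounded complex $Q^{\bullet}$ of finitely generated projective $\Lambda$-modules with $Q^{\bullet}\simeq X$ (available since $\gldim\Lambda<\infty$), view $A$ as an $(A,\Lambda)$-bimodule through $\Lambda=A_{0}\subset A$, apply $A\otimes_{\Lambda}-$, and place the degree-$0$ part of the result in internal degree $j$; the termwise exact sequences $0\to C\otimes_{\Lambda}Q^{i}\to A\otimes_{\Lambda}Q^{i}\to Q^{i}\to 0$ then assemble into a triangle $\iota_{j+1}(\cT X)\to A\lotimes_{\Lambda}X\to\iota_{j}(X)\to$ whose middle term is a bounded complex of graded projective $A$-modules, hence perfect, so $\pi$ kills it. Iterating gives $\pi\iota_{0}(X)\cong\pi\iota_{n}(\cT^{n}X)[n]$ for $n\ge0$, and, because $\cT$ restricts to an autoequivalence of $\sfT$ by Theorem~\ref{Adaching theorem 0}(1), the relation runs backwards on $\sfT$: $\pi\iota_{j}(T)\cong\pi\iota_{0}(\cT^{-j}T)[-j]$ for every $T\in\sfT$ and every $j\in\ZZ$.

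From this, one inclusion of (2) is immediate: by Theorem~\ref{Adaching theorem 0}(2) some $\cT^{a}$ annihilates $\sfT^{\perp}$, so for $X\in\sfT^{\perp}$ the identity gives $\pi\iota_{j}(X)\cong\pi\iota_{j+a}(\cT^{a}X)[a]=0$ for all $j$; in particular $\sfT^{\perp}\subseteq\Ker\cH$. I would then grant assertion (1) for the moment and derive (2) and (3). For (2): if $\cH(X)=0$, apply $\cH$ to the triangle $X_{\sfT}\to X\to X_{\sfT^{\perp}}\to$ furnished by the admissible subcategory $\sfT$; using $\cH(X_{\sfT^{\perp}})=0$ one gets $\cH(X_{\sfT})=0$, so $X_{\sfT}=0$ by the equivalence of (1), i.e. $X\cong X_{\sfT^{\perp}}\in\sfT^{\perp}$, giving $\Ker\cH=\sfT^{\perp}$. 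For (3): Theorem~\ref{Adaching theorem 1}(2),(3) identify $\sfT=\thick C^{\alpha}$ and $\sfT^{\perp}=\Ker(-\lotimes_{\Lambda}C^{\alpha})$, which together with (1)--(2) shows that $a=\alpha$ realizes both stated alternatives; and $\alpha$ is, by the formula for $\alpha_{r}$ recalled in the introduction together with Theorem~\ref{Adaching theorem 1}, the least integer with $\cT^{a}(\sfT^{\perp})=0$ --- equivalently the least $a$ with $\Ker(-\lotimes_{\Lambda}C^{a})=\sfT^{\perp}$, equivalently the least $a$ with $\thick C^{a}=\sfT$ --- so no $a<\alpha$ can satisfy either alternative.

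So the real content is assertion (1), that $F|_{\sfT}\colon\sfT\to\grSing A$ is an equivalence, and the hard part will be full faithfulness. For $X,Y\in\sfT$ the identity of the first paragraph reduces $\Hom_{\grSing A}(\pi\iota_{0}X,\pi\iota_{0}Y)$ to $\Hom_{\grSing A}(\pi\iota_{n}(\cT^{n}X),\pi\iota_{n}(\cT^{n}Y))$, and for $n\gg0$ the minimal graded projective resolution of $\iota_{n}(\cT^{n}X)$ is concentrated in internal degrees $\ge n$. Here I would invoke that $A$ is IG: a cosyzygy $\Omega^{-i}\iota_{n}(\cT^{n}X)$ with $i\ge\injdim_{A}A$ is graded Cohen-Macaulay, $\Hom_{\grSing A}$ between such cosyzygies is stable graded Hom, and in the relevant internal-degree range stable and ordinary graded homomorphisms over $A$ coincide, so the $\Hom$ is already computed in $\sfD^{\mrb}(\grmod A)$; the adjunction for the surjection $A\twoheadrightarrow\Lambda$ then collapses $\Hom_{\sfD^{\mrb}(\grmod A)}(\iota_{n}(\cT^{n}X),\iota_{n}(\cT^{n}Y))$ to $\Hom_{\sfD^{\mrb}(\mod\Lambda)}(\cT^{n}X,\cT^{n}Y)$ --- the contributions of the higher powers $\cT^{n+b}Y$ of $C$ land in strictly higher internal degrees and drop out --- and this equals $\Hom_{\sfD^{\mrb}(\mod\Lambda)}(X,Y)$ since $\cT$ is an autoequivalence of $\sfT$; it remains to check that the composite identification is the map induced by $F$. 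This step is delicate and is precisely the finite-injective-dimension-and-grading mechanism underlying Orlov's equivalence~\eqref{Introduction: Orlov equivalence}; an alternative route is to locate $\iota_{0}(\sfT)$, after a sufficiently large internal twist, inside the Orlov subcategory $\sfO$ and use $\pi|_{\sfO}\colon\sfO\xrightarrow{\ \sim\ }\grSing A$.

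Granting full faithfulness, essential surjectivity comes for free. The essential image of $F|_{\sfT}$ is then a thick subcategory of $\grSing A$, using that $\sfT$, being admissible in the idempotent-complete category $\sfD^{\mrb}(\mod\Lambda)$, is itself triangulated and idempotent complete. Because $\rad A=\rad\Lambda\oplus C$, the graded simple $A$-modules are exactly the objects $\iota_{j}(S)$ for $S$ a simple $\Lambda$-module and $j\in\ZZ$, and these generate $\grSing A$ as a thick subcategory. For each such object, $\pi\iota_{j}(S)\cong\pi\iota_{j}(S_{\sfT})$ --- the $\sfT^{\perp}$-component is killed, by the first two paragraphs --- and $\pi\iota_{j}(S_{\sfT})\cong\pi\iota_{0}(\cT^{-j}S_{\sfT})[-j]=F(\cT^{-j}S_{\sfT})[-j]$ lies in the essential image of $F|_{\sfT}$. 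Hence that essential image is a thick subcategory containing a generating set, so it is all of $\grSing A$, and $F|_{\sfT}$, hence $\cH|_{\sfT}\colon\sfT\xrightarrow{\ \sim\ }\stabgrCM A$, is an equivalence. This establishes (1), and with it (2) and (3) by the second paragraph.
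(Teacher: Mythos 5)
Your reduction of (2) and (3) to (1) matches the paper's, your shift identity $\pi\iota_{j}(X)\cong\pi\iota_{j+1}(\cT X)[1]$ is exactly Lemma \ref{varpi shift lemma}, and your essential-surjectivity argument (the essential image of a fully faithful exact functor from the idempotent-complete $\sfT$ is thick, and the classes $\pi\iota_{j}(S)$ for $S$ simple generate $\grSing A$) is a legitimate alternative to what the paper does. But the whole proof hinges on full faithfulness of $\cH|_{\sfT}$, and that step is not proved. The assertion that ``in the relevant internal-degree range stable and ordinary graded homomorphisms over $A$ coincide'' is precisely the content of Orlov's key comparison lemma, not a consequence of the degree bounds you list: nonzero maps between graded CM modules generated in the same degree range can perfectly well factor through graded projectives, and the actual mechanism is a stabilization statement comparing $\Hom_{\sfD^{\mrb}(\grmod A)}$ with $\Hom_{\grSing A}$ that requires the \emph{two-sided} degree conditions defining $\sfO$ (a condition on $M$ \emph{and} on $M^{*}$), which you neither formulate nor verify. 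Your fallback --- put $\iota_{0}(\sfT)$ inside $\sfO$ and invoke $\pi|_{\sfO}\xrightarrow{\sim}\grSing A$ --- is circular relative to this paper: for $A_{0}=\Lambda$ a finite-dimensional algebra rather than $\kk$, the equivalence \eqref{Introduction: Orlov equivalence} is Theorem \ref{lp Orlov theorem}, which the paper \emph{deduces from} Theorem \ref{Adaching theorem 2}. So either you carry out Orlov's Hom-comparison in this generality, or you need a different argument.

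The paper's proof avoids the Hom computation altogether. It shows that $\frkp_{0}:\sfK^{\mrac}_{\mrlp}(\grproj A)\to\sfT$ is an equivalence: given $M\in\sfT$, the complexes representing $(-\lotimes_{\Lambda}C[1])^{i}(M)$ are spliced into an acyclic complex of graded projectives using Lemma \ref{adasore lemma 4.2} and the gluing lemmas of \cite{adasore} (this is exactly where the autoequivalence property of $-\lotimes_{\Lambda}C[1]$ on $\sfT$ enters), and morphisms are lifted the same way; faithfulness then comes from the commutative square with Buchweitz's equivalences $\underline{\tuZ}^{0}$ and $\beta$. Full faithfulness of $\cH|_{\sfT}$ is automatic from that diagram, and the generalized Orlov equivalence falls out as a corollary rather than being an input. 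If you want to keep your overall architecture, the cleanest repair is to replace your Hom-stabilization paragraph with this construction of an explicit quasi-inverse via complete resolutions.
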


The assertion 
(2) is proved in Theorem \ref{Adaching theorem 1} and 
(3) is a consequence of Lemma \ref{abstraction 1},  Proposition \ref{asid number proposition}. 

The assertion (1) is proved in Theorem \ref{Adaching theorem 2}. 
What we actually show is that the functor $\cH|_{\sfT}$ is an equivalence which 
fits into the following commutative diagram 
three arrows of which are equivalences of  
\eqref{equivalences} and \eqref{Introduction: Orlov equivalence}. 
\[
\begin{xymatrix}{
\sfK^{\mrac} (\grproj A) \ar[rr]^{\qquad \underline{\tuZ}^{0}}_{\sim} \ar[d]_{\frkp_{0} }^{\wr} & & 
\stabCM A \ar[d]^{ \beta}_{\wr} \\
\sfT  \ar[r]^{\sim}_{\mathsf{in}|_{\sfT}} \ar[rru]^{\cH|_{\sfT}}_{\sim} & \sfO \ar[r]^{\sim \ \ \ \ \ }_{\pi|_{\sfO}\ \ \ } & \grSing A 
}
\end{xymatrix}
\]
The functor  
$\mathsf{in}|_{\sfT}$ is the restriction of 
 the canonical embedding 
$\mathsf{in}: \sfD^{\mrb}(\mod \Lambda) \hookrightarrow \sfD^{\mrb}(\grmod A)$ 
and 
$\frkp_{0}$ is  a functor which assign a projective complex $P$ with its ``degree $0$-generators". 
These equivalence can be proved in the more general case  where $\Lambda$ is IG. 
For this we need to introduce  ``locally perfect" complexes of graded modules.  

%\color{blue}
Recall that a graded module $M = \bigoplus_{i \in \ZZ} M_{i}$ is called locally finite if $\dim M_{i} < \infty$ for all $i \in \ZZ$. 
In the same  way the notion of  locally perfect complexes is defined in Definition \ref{locally perfect}. 
It seems that when we generalize Orlov's equivalence $\sfO \xrightarrow{\sim} \grSing A$ 
to a (not necessary finitely) graded algebra $A = \bigoplus_{ i \geq 0} A_{i}$ with $A_{0}$ IG of $\gldim A_{0}= \infty$, 
it is necessary to use the category of   locally perfect complexes.  
Moreover a similar notion of  ``locally free'' modules play a key role in \cite{GLS}.  
Thus we can expect that the notion of locally perfect complexes will be of use in study of IG-algebras.
%\color{black}

As a summary, we give the following theorem 
which can be stated for a general finitely graded IG-algebra $A$ which is not necessary  a trivial extension algebra.

\begin{theorem}\label{Introduction recollement theorem}
Let $A = \bigoplus_{i=0}^{\ell} A_{i}$ be a finite dimensional graded IG-algebra. 
Assume that $\gldim \Lambda < \infty$. 
Then there exists the recollement of the following form 
\[
\begin{xymatrix}{
\stabgrCM A \ar[rr] 
&& \sfD^{\mrb}(\mod \nabla A) \ar[rr] \ar@/^4mm/[ll]^{\cH} \ar@/_4mm/[ll]
&& \Ker \cH \ar@/^4mm/[ll]^{\mathsf{in}}  \ar@/_4mm/[ll]
}\end{xymatrix}
\]
where $\mathsf{in}$ is a canonical inclusion. 
\end{theorem}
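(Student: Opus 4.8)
The plan is to reduce, via the quasi-Veronese construction, to the trivial-extension situation of Theorems~\ref{Adaching theorem 0} and~\ref{Adaching theorem 2}, and then to recognise the asserted diagram as the standard recollement attached to an admissible subcategory, transported along the equivalences furnished by those theorems. First I would set $\Lambda := \nabla A$ and $C := \Delta A$, so that the $\ell$-th quasi-Veronese algebra $A^{[\ell]} = \Lambda \oplus C$ is the trivial extension of $\Lambda$ by $C$. Since $A$ and $A^{[\ell]}$ are graded Morita equivalent, $A^{[\ell]}$ is IG, the hypothesis $\gldim\Lambda<\infty$ is unaffected, and there is an equivalence $\stabgrCM A \simeq \stabgrCM A^{[\ell]}$ under which the generalised Happel functor $\cH$ of $A$ is identified with the one of $A^{[\ell]}$. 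Hence it suffices to construct the recollement for $A = \Lambda \oplus C$ with $\gldim\Lambda<\infty$, in which case the middle term is literally $\sfD^{\mrb}(\mod\Lambda) = \sfD^{\mrb}(\mod\nabla A) = \sfK^{\mrb}(\proj\Lambda)$.

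Next I would invoke Theorem~\ref{Adaching theorem 0} to obtain the \emph{asid} subcategory $\sfT \subseteq \sfD^{\mrb}(\mod\Lambda)$, which by definition is admissible: the inclusion $i_* \colon \sfT \hookrightarrow \sfD^{\mrb}(\mod\Lambda)$ has a left adjoint $i^*$ and a right adjoint $i^!$. Admissibility of $\sfT$ is precisely what makes the Verdier quotient $j^* \colon \sfD^{\mrb}(\mod\Lambda) \to \sfD^{\mrb}(\mod\Lambda)/\sfT$ have both adjoints as well: composing $j^*$ with the inclusion $\sfT^{\perp} \hookrightarrow \sfD^{\mrb}(\mod\Lambda)$ yields an equivalence $\sfT^{\perp} \xrightarrow{\ \sim\ } \sfD^{\mrb}(\mod\Lambda)/\sfT$, whose inverse followed by that inclusion is the right adjoint $j_*$, and the symmetric construction with ${}^{\perp}\sfT$ provides the left adjoint $j_!$. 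This is a recollement with middle term $\sfD^{\mrb}(\mod\Lambda)$, left term $\sfT$ (embedded by $i_*$), and right term $\sfT^{\perp}$, in which the section from the right-hand term is $j_* = \mathsf{in}$, the canonical inclusion of $\sfT^{\perp}$.

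Finally I would match this with the statement using Theorem~\ref{Adaching theorem 2}. By part~(1), $\cH|_{\sfT} \colon \sfT \xrightarrow{\ \sim\ } \stabgrCM A$ is an equivalence, so transporting the left term of the recollement along it gives the left term $\stabgrCM A$; by part~(2), $\sfT^{\perp} = \Ker\cH$, so the right term is $\Ker\cH$ and $\mathsf{in}$ is as in the statement. It then remains to check that the horizontal functor $\sfD^{\mrb}(\mod\Lambda) \to \stabgrCM A$ occurring as an adjoint of $i_*$ is $\cH$ itself. Since $\cH$ annihilates $\sfT^{\perp} = \Ker\cH$, it factors through the Verdier localisation $\sfD^{\mrb}(\mod\Lambda)/\sfT^{\perp}$; because $\sfT$ is admissible, $\sfT^{\perp}$ is left admissible with ${}^{\perp}(\sfT^{\perp}) = \sfT$, so this localisation is canonically identified with $\sfT$ and the localisation functor with $i^!$ (regarded as landing in $\sfT$). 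Restricting the factorisation to $\sfT$, where $i^!$ is the identity, shows the induced functor $\sfT \to \stabgrCM A$ is $\cH|_{\sfT}$; hence $\cH = \cH|_{\sfT} \circ i^!$. Therefore $\cH$ is, up to the equivalence $\cH|_{\sfT}$, the right adjoint of the embedding $\stabgrCM A \hookrightarrow \sfD^{\mrb}(\mod\Lambda)$, namely the upper left-hand arrow of the displayed diagram, while the two unlabelled arrows are $i^*$ and $j_!$.

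I expect essentially all the substance to lie in the cited Theorems~\ref{Adaching theorem 0} and~\ref{Adaching theorem 2}; for the present statement the two points that genuinely require care are the compatibility of the quasi-Veronese reduction with the functor $\cH$ (so that one may pass freely between $A$ and $A^{[\ell]}$), and the final identification $\cH = \cH|_{\sfT} \circ i^!$, which is what upgrades ``an abstract recollement exists'' to ``the recollement displayed in the statement, with those specific functors, exists''.
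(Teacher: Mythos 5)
Your proposal is correct and follows essentially the same route as the paper: the paper deduces this theorem from its Theorem \ref{recollement theorem}, whose proof is exactly the combination of Theorem \ref{Adaching theorem 0}, Lemma \ref{T proposition}, Theorem \ref{Adaching theorem 2} and the admissible-subcategory/recollement dictionary of Section \ref{Recollection of triangulated categories}, followed by the quasi-Veronese reduction and the observation that $\gldim \nabla A < \infty$ forces $\sfK^{\mrb}(\proj \nabla A) = \sfD^{\mrb}(\mod \nabla A)$ and $\stabgrCM_{\mrlp} A = \stabgrCM A$. Your explicit verification that $\cH = \cH|_{\sfT}\circ i^{!}$ is precisely the content the paper delegates to Lemma \ref{right adjoint lemma} and the surrounding discussion, so nothing is missing.
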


We mention that Theorem \ref{Introduction: theorem 1} is a consequence of this theorem.

This is the end of abstract results. We give three concrete applications. 
In the first two application we discuss a finite dimensional graded IG-algebra of finite CM-type. 
To obtain  conclusions of ungraded CM-modules from the results  of graded CM-modules 
obtained before, we use a CM-version of Gabriel theorem by the first  author.
%\color{blue} and M. Yoshiwaki \cite{MYY} Uh------NNNNN 
% \color{black}
This theorem asserts that a finite dimensional graded IG-algebra $A$ 
is of graded finite CM-type if and only if it is of (ungraded) CM-type. 

It is known that 
a finite dimensional algebra  $\Lambda$ is a iterated tilted algebra 
if and only if  the trivial extension algebra $\tuT(\Lambda) = \Lambda \oplus \tuD(\Lambda)$ by $\tuD(\Lambda)$ is of finite representation type 
(see Happel \cite[Section V.2]{Happel book}).  
Since the algebra $\tuT(\Lambda)$ is always self-injective, 
our first application can be looked as a CM-generalization of one implication of this result.  
A CM-generalization  of the other implication is discussed in \cite{MYY}.

\begin{theorem}[Theorem \ref{application:iterated theorem}]\label{Introduction: application theorem 1}
Let $\Lambda$ be an iterated tilted algebra of Dynkin type. 
If a trivial extension algebra $A = \Lambda \oplus C$ is IG, 
then it is of  finite  CM-type.
\end{theorem}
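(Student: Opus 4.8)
The strategy is to prove that $A$ is of \emph{graded} finite CM-type and then invoke the CM-version of Gabriel's theorem quoted above to pass to ungraded finite CM-type. Since finite CM-type of $A$ is equivalent to finite CM-type of each of its blocks, we may assume $A$ is connected. I would first record the relevant features of an iterated tilted algebra $\Lambda$ of Dynkin type $\Delta$: it has finite global dimension and is derived equivalent to the path algebra $\kk Q$ of a quiver $Q$ whose underlying graph is $\Delta$. Hence $\gldim\Lambda<\infty$, so the results of the preceding sections apply to $A=\Lambda\oplus C$, and $\sfD^{\mrb}(\mod\Lambda)\simeq\sfD^{\mrb}(\mod\kk Q)$. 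The category $\sfD^{\mrb}(\mod\kk Q)$ has, by Gabriel's theorem together with the heredity of $\kk Q$, only finitely many indecomposable objects up to the shift functor $[1]$: every indecomposable is a shift of an indecomposable $\kk Q$-module, and there are finitely many of those. I would also note for later use that $\sfD^{\mrb}(\mod\kk Q)$ is fractionally Calabi--Yau, so a non-zero power of its Serre functor is a power of $[1]$.

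Next, since $A=\Lambda\oplus C$ is IG and $\gldim\Lambda<\infty$, Theorem~\ref{Introduction: Adaching theorem 2}(1) produces a triangle equivalence $\cH|_{\sfT}\colon\sfT\xrightarrow{\sim}\stabgrCM A$, where $\sfT$ is an admissible --- in particular a thick --- subcategory of $\sfD^{\mrb}(\mod\Lambda)$. Thus the indecomposable objects of $\sfT$ form a subset of those of $\sfD^{\mrb}(\mod\Lambda)$ that is stable under $[1]$, and so $\sfT$, hence $\stabgrCM A$, has only finitely many indecomposable objects up to the triangulated shift $[1]=\Omega^{-1}$.

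It remains to upgrade ``finitely many indecomposables up to $[1]$'' to ``finitely many indecomposables up to the degree shift $(1)$'', the latter being exactly what graded finite CM-type of $A$ asserts (the finitely many indecomposable graded projective modules, one family per simple up to degree shift, contribute only finitely many classes). The degree shift $(1)$ on $\stabgrCM A$ is a triangle autoequivalence, hence commutes up to isomorphism with the suspension $[1]$; therefore it permutes the finite set of $[1]$-orbits of indecomposables, and some power $(1)^{N}$ fixes each such orbit, i.e.\ $(1)^{N}M\cong M[e_{M}]$ for every indecomposable $M$ and integers $e_{M}$. Combining the identification (established in the earlier sections) of the degree shift on $\sfT$ with a shift of the autoequivalence $\cT=-\lotimes_{\Lambda}C$, the fractional Calabi--Yau property recorded above, and the connectedness of $A$, one sees that the $e_{M}$ may be taken equal to a single non-zero integer $e$, so that $(1)^{N}\cong[e]$ on $\stabgrCM A$. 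Consequently the finitely many $[1]$-orbits of indecomposables break into only finitely many $(1)$-orbits, so $A$ is of graded finite CM-type; the CM-version of Gabriel's theorem then yields that $A$ is of (ungraded) finite CM-type.

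I expect the main obstacle to be this last step: controlling the \emph{degree}-shift orbits rather than the triangulated-shift orbits. The key point --- that $(1)$ cannot be ``transverse'' to $[1]$ on $\stabgrCM A$ --- rests on the fact that $\sfT$ lives inside the derived category of a Dynkin quiver, which is fractionally Calabi--Yau, so that the degree shift, being (a shift of) the geometric functor $\cT$, has a non-zero power equal to a shift of $\sfT$. Pinning this comparison down carefully, and in particular checking that the resulting exponent $e$ is non-zero --- this non-vanishing encodes the genuine positivity of the grading on $A$, and fails for no connected finitely graded IG-algebra with $\stabgrCM A\neq 0$ --- is where the real work lies; the subsequent orbit count is elementary.
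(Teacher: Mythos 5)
Your overall strategy is the same as the paper's: reduce to graded finite CM-type and invoke Theorem \ref{MYY}; use that $\sfD^{\mrb}(\mod\Lambda)\simeq\sfD^{\mrb}(\mod\kk Q)$ for a Dynkin quiver $Q$ has only finitely many indecomposables up to $[1]$, so the asid subcategory $\sfT\simeq\stabgrCM A$ does too; then count $(1)$-orbits against $[1]$-orbits. The pigeonhole step ($(1)$ permutes the finitely many $[1]$-orbits, hence $(1)^{N}M\cong M[e_{M}]$ for every indecomposable $M$) is correct and is exactly the mechanism of the paper's Lemma \ref{application:finite lemma}. But the decisive point --- that the integers $e_{M}$ are nonzero --- is left by you as an acknowledged gap, and the route you sketch toward it does not deliver it. Fractional Calabi--Yau-ness of $\sfD^{\mrb}(\mod\kk Q)$ constrains the \emph{Serre} functor ($S^{h}\cong[d]$), not the autoequivalence $(1)\cong-\lotimes_{\Lambda}C[1]$ of $\sfT$, which is in general unrelated to $S$. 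Even using the full structure of autoequivalences of Dynkin derived categories (every one acts on the AR quiver $\ZZ\Delta$ as $\tau^{a}$ times a diagram automorphism), one only gets that some power of $(1)$ acts on objects as some shift $[e]$ --- with no control on whether $e=0$; and if $e=0$ the $[1]$-orbits split into infinitely many $(1)$-orbits. Connectedness of $A$ is also beside the point: equality of the $e_{M}$ is not needed for the orbit count, only their nonvanishing.

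The paper closes precisely this gap by an elementary cohomological-degree argument. Choose representatives $M_{1},\dots,M_{r}$ of $\ind\sfT/[1]$ normalized so that $\tuH^{0}(M_{i})\neq0$ and $\tuH^{\geq1}(M_{i})=0$. Since $C$ is an honest bimodule concentrated in cohomological degree $0$ and $\gldim\Lambda<\infty$, the complex $M_{i}\lotimes_{\Lambda}C$ again has vanishing cohomology in positive degrees; writing $M_{i}\lotimes_{\Lambda}C[1]\cong M_{j}[p_{i}]$ and comparing top nonvanishing cohomological degrees forces $p_{i}\geq1$. This strict positivity is exactly the hypothesis of the combinatorial Lemma \ref{application:finite lemma}, which then gives $\#\ind\sfT/(-\lotimes_{\Lambda}C[1])<\infty$, i.e.\ graded finite CM-type. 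You would need to supply an argument of this kind (or some other proof that $(1)^{N}M\not\cong M$ for indecomposable $M$) for your proof to be complete.
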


In this theorem, CM representation theory of $A$ is controlled by the degree $0$-part 
$\Lambda$. 

An easiest way to obtain a bimodule is 
taking a tensor product  $C = N \otimes_{\kk} M$ of 
a left $\Lambda$-module $N$ and a right $\Lambda$-module $M$.  
In the second application, 
we study this case 
under  the assumption  $\gldim \Lambda < \infty$. 
In Theorem \ref{tensor bimodule case theorem} 
we determine the condition that $A = \Lambda \oplus C$ is IG (or $\gldim A < \infty$) 
and give a description of $\stabCM A$. 
We see that if $A$ is IG, then it is always of finite CM type and the number of indecomposable CM-modules is given by $\# \ind \stabCM A = \pd M + 1$. 
Contrary to the first application, 
CM-representation theory is controlled by the degree $1$-part $N \otimes_{\kk} M$. 

As the third and final application, using Theorem \ref{Introduction:Adaching theorem 0}
we give a complete list of $\Lambda$-$\Lambda$-bimodules $C$ 
such that $\Lambda \oplus C$ is IG 
in the case where $\Lambda= \kk Q$ is 
the path algebra of a quiver $Q$ of $A_{2}$ type or $A_{3}$ type. 

\bigskip

The organization of the paper is the following.
In Section \ref{Preliminaries} collects  results which are need in the sequel.  
Among other things, 
in Section \ref{Graded algebras}, we recall   the  quasi-Veronese algebra construction 
and the decomposition functor $\frkp_{i}$ which plays a key role in the paper.  
%In Section \ref{Derived tensor products} we discuss derived tensor products of bi-modules. 
In Section \ref{section lp}, we introduce the notion of locally perfect complexes and locally perfect CM-modules. 
%Although these notions are defined under the assumption that a graded algebra $A$ is finitely graded. 
They can be defined over arbitrary graded algebras. 
In Section \ref{Categorical characterization}, we prove categorical characterization of asid bimodules 
(Theorem \ref{Adaching theorem 0}) 
and a uniqueness of asid subcategories $\sfT$ of an asid bimodule $C$ (Theorem \ref{Adaching theorem 1}).  
In Section \ref{around}, 
we show that the asid subcategory $\sfT$ is equivalent to the stable category $\stablpCM A$ of locally perfect CM-modules. 
As a consequence, 
for a finitely graded algebra $A = \bigoplus_{i =0}^{\ell} A_{i}$ 
we see that $\stablpCM A$ is realized as an admissible subcategory of 
$\sfK^{\mrb}(\proj \nabla A)$. 
In Section \ref{Applications} we apply our result to study  two particular classes of trivial extension algebras. 
In Section \ref{Classification}, we give a complete list of asid bimodules $C$ 
in the case where $\Lambda= \kk Q$ is the path algebra of a quiver $Q$ of $A_{2}$ type or $A_{3}$ type.

\subsection{Notation and convention}

Throughout this paper the symbol $\kk$ denotes a field  and ``algebra" means $\kk$-algebra. 
For generalization to the case where the base commutative ring $\kk$ is not a field, 
see remark \ref{general base ring remark}. 
The symbol $\tuD$ denotes the $\kk$-dual functor $\tuD := \Hom_{\kk}(-,\kk)$. 

Let $\Lambda$ be an algebra. 
Unless otherwise stated, the word  ``$\Lambda$-modules" means a right $\Lambda$-modules.  
%We denote by $\mod \Lambda$ the category of  finite dimensional (right) $\Lambda$-modules. 
We denote the opposite algebra  by $\Lambda^{\op}$. 
We identify left $\Lambda$-modules with (right) $\Lambda^{\op}$-modules.   
A $\Lambda$-$\Lambda$-bimodule $D$ is always assumed to be $\kk$-central, 
i.e., $ad = da $ for $d \in D, \ a \in \kk$, 
Therefore we may identify $\Lambda$-$\Lambda$-bimodules  
with modules over the enveloping algebra $\Lambda^{\mre}:= \Lambda^{\op} \otimes_{\kk} \Lambda$. 
For a $\Lambda$-$\Lambda$-bimodule $D$, 
we denote by $D_{\Lambda}$ and ${}_{\Lambda} D$ 
the underlying right  and left $\Lambda$-modules respectively. 
So for example, 
$\injdim D_{\Lambda}$ denotes the injective dimension of $D$ regarded a (right) $\Lambda$-module.

For an additive category $\cA$, we denote by $\sfC(\cA)$ and $\sfK(\cA)$  
the category of cochain complexes and cochain morphisms 
and its homotopy category respectively. 
For complexes $X, Y \in \sfC(\cA)$, 
we denote by $\Hom_{\cA}^{\bullet}(X, Y)$ the $\Hom$-complex. 
For an abelian category $\cA$, we denote by $\sfD(\cA)$ the derived category of $\cA$. 

%All triangulated categories and exact  functors between them are assumed to be $\kk$-linear. 
The shift of a triangulated categories are denoted by $[1]$.

In the paper except Section \ref{Classification}, 
the degree of graded modules $M$ is usually  indicated by the characters $i,j ,\dots$. 
The degree of complexes $X$  is usually indicated by the characters $m,n, \dots$.

\vspace{10pt}
\noindent
\textbf{Acknowledgment}
%
%The authors give hearty gratitude to Takahide Adachi 
%who very kindly allows us to use the term ``adaching module".
%
The authors thank O. Iyama for giving comments on earlier version of the results given this paper. 
The first author  was partially  supported by JSPS KAKENHI Grant Number 26610009.
The second author  was partially  supported by JSPS KAKENHI Grant Number 26800007.

\section{Preliminaries}\label{Preliminaries}

\subsection{Graded algebra and graded modules}\label{Graded algebras}

In this paper, a graded algebra is always a non-negatively graded algebra $A = \bigoplus_{i \geq 0} A_{i}$. 

Let $A$ be a graded algebra. 
We denote by $\GrMod A$ the category of graded (right) $A$-modules $M= \bigoplus_{i \in \ZZ} M_{i}$ 
and graded $A$-module homomorphisms $f: M \to N$, which, by definition, preserve degree of $M$ and $N$, 
i.e., $f(M_{i}) \subset N_{i}$. 
We define the truncation $M_{\geq  j}$ by 
$(M_{\geq  j})_{i} =M_{i} \ (i \geq j), \ \ (M_{\geq j})_{i} = 0  \ ( i < j)$. 
We set $M_{ < j} := M/M_{\geq j}$ so that we have an exact sequence $ 0\to M_{\geq j} \to M \to M_{< j} \to 0$. 

For a graded $A$-module $M$ and an integer $j \in \ZZ$, 
we define the shift $M(j) \in \GrMod A$ by $(M(j))_{i} = M_{i+j}$. 
For $M, N \in \GrMod A, \ n \in \NN$ and $i \in \ZZ$,  
we set $\grExt_{A}^{n}(M,N)_{i}:= \Ext_{\GrMod A}^{n}(M,N(i))$ 
and 
\[
\grExt_{A}^{n}(M, N) := \bigoplus_{i \in \ZZ} \grExt_{A}^{n}(M, N)_{i} = \bigoplus_{i \in \ZZ} \Ext_{\GrMod A}^{n}(M, N(i)). 
\]
We note the obvious equality $\grHom_{A}(M,N)_{0} = \Hom_{\GrMod A}(M,N)$. 
We use the similar notation for $\grRHom_{A}(M,N)$ where $M, N$ are objects of $\sfD(\GrMod A)$.

We denote by $\grmod A\subset \GrMod A$ the full subcategory of finitely generated graded $A$-modules.  
For $i \in \ZZ$, we denote by $\Mod^{\geq i} A \subset \GrMod A$ 
the full subcategory consisting of $M \in \GrMod A$ such that $M_{< i} = 0$. 
We set $\mod^{\geq i}A := (\grmod A) \cap (\Mod^{\geq i} A)$. 
 Similarly, we define the full subcategories $\Mod^{< i} A$ and $\mod^{< i} A$.

\subsubsection{Quasi-Veronese algebras}\label{quasi-Veronese} 

A (non-negatively) graded algebra $A = \bigoplus_{i \geq 0} A_{i}$ is called \textit{finitely graded} if $A_{i} = 0$ for $i \gg 0$. 

%In this Section \ref{quasi-Veronese} 
%which  shows that every finitely graded algebra $A$ is graded Morita equivalent to a trivial extension algebra 
%$\Lambda\oplus C$ with the canonical grading $\deg \Lambda = 0, \deg C = 1$. 
%Thus, any representation theoretic problem of a 
%finitely graded algebra is, in principle, reduced to that of a trivial extension algebra.  

Let $A$ be a finitely graded algebra. 
We fix a natural number $\ell$ such that $A_{i} = 0$ for $ i \geq \ell +1$. 
(It is not necessary to assume that $A_{\ell} \neq 0$.) 
%We recall the quasi-Veronese algebra construction introduced by I. Mori \cite{Mori B-construction},    
We recall that  the \textit{Beilinson algebra} $\nabla A$ of $A$ 
(which  rigorously should be  called the Beilinson algebra of the pair $(A, \ell)$) 
and its bimodule $\Delta A$ are defined to be 
\[
\nabla A: = 
\begin{pmatrix} 
A_{0} & A_{1} & \cdots & A_{\ell -1} \\
0         & A_{0} & \cdots  & A_{\ell -2} \\
\vdots & \vdots     &        & \vdots \\
0         & 0   & \cdots  & A_{0}
\end{pmatrix}, \ \ \ 
\Delta A: = 
\begin{pmatrix} 
A_{\ell} & 0 & \cdots & 0 \\
A_{\ell- 1} & A_{\ell} & \cdots  &0 \\
\vdots & \vdots     &        & \vdots \\
A_{1} & A_{2}   & \cdots  & A_{\ell}
\end{pmatrix} 
\]
where the algebra structure and the bimodule structure are 
given by the  matrix  multiplications. 
Then, 
the trivial extension algebra 
 $\nabla A \oplus \Delta A$ 
with the grading $\deg \nabla A = 0, \deg  \Delta A = 1$ 
is   the $\ell$-th quasi-Veronese algebra $A^{[\ell]}$ of $A$ 
introduced by Mori \cite[Definition 3.10]{Mori B-construction}. 
\[
A^{[\ell]} = \nabla A \oplus \Delta A. 
\]

By \cite[Lemma 3.12]{Mori B-construction} 
$A$ and $A^{[\ell]} $ are graded Morita equivalent to each other. 
More precisely, 
the functor $\sfqv :=\sfqv_{A}$ below gives a $\kk$-linear equivalence. 
\[
\begin{split}
 & \sfqv: \GrMod A \xrightarrow{ \ \simeq  \ } \GrMod A^{ [ \ell ]}, \\
& \sfqv(M) := \bigoplus_{i \in \ZZ} \sfqv(M)_{i}, \ \ \ \ 
\sfqv(M)_{i} =  M_{i\ell} \oplus M_{i \ell +1 } \oplus \cdots \oplus M_{( i+ 1)\ell -1 }.
\end{split}
\]
This equivalence is restricted to an equivalence  $\Mod^{\geq 0} A \xrightarrow{\sim} \Mod^{\geq 0}A^{[\ell]}$. 

The  functor $\sfqv$ has the following compatibility with the degree shift functors over $A$ and $A^{[\ell]}$. 
\[
(1) \circ \sfqv \cong \sfqv \circ (\ell).
\]

For simplicity we set $B = A^{[\ell]}$. 
We may identify $(A^{\op})^{[\ell]}$ with $B^{\op}$. 
The composite functor  $\sfqv'$ gives an equivalence.  
\[\sfqv' := \sfqv_{A^{\op}} \circ ( -\ell +1): \GrMod A^{\op} \to \GrMod B^{\op}.\]
It induces an equivalence $\Mod^{>0} A^{\op} \xrightarrow{\sim} \Mod^{>0} B^{\op}$ 
and fits into the following commutative diagram 
\[
\begin{xymatrix}{
\GrMod A^{\op} \ar[rr]^{\grHom_{A^{\op}}(-, A) } \ar[d]_{\sfqv'} && 
\GrMod A \ar[d]^{\sfqv} \\
\GrMod B^{\op} \ar[rr]_{\grHom_{B^{\op}}(-, B) }  && 
\GrMod B
}\end{xymatrix}
\]

\subsubsection{Decomposition of a complex of graded projective $A$-modules}\label{dgpm}

Let $A = \bigoplus_{i=0}^{\ell} A_{i}$ be a finitely graded algebra. 
We recall from \cite{adasore}  a decomposition of a complex of  graded projective $A$-modules. 
For notational simplicity, we set $\Lambda := A_{0}$.

First we deal with a graded projective $A$-module. 
For an integer $i \in \ZZ$, 
we denote by  $\frkp_{i}: \GrProj A \to \Proj \Lambda$ 
the functor $\frkp_{i}P := (P \otimes_{A} \Lambda)_{i}$. 
Then  $\frkt_{i} P :=  (\frkp_{i} P) \otimes_{\Lambda} A ( -i)$ is a graded projective $A$-module 
and there exists an isomorphism of graded $A$-modules, from which we deduce the following lemma.  
%\begin{equation}%\label{decomposition of graded projective modules}
%\begin{split}
\[
 P   \cong \bigoplus_{ i\in \ZZ} \frkt_{i} P. 
 \]
 % P_{i} & \cong (\frkt_{i-1} P)_{i}  \oplus  (\frkt_{i} P)_{i} \cong (\frkp_{i-1} P \otimes_{\Lambda} C) \oplus \frkp_{i}P 
%\ ( \textup{as $\Lambda$-modules}). 
%\end{split} 
%\end{equation}

%The following observation will be used later. We leave the proof to the readers. 

\begin{lemma}\label{fg lemma}
A graded projective $A$-module  $P \in \GrProj A$ is finitely generated 
if and only if 
$\frkp_{i} P$ is finitely generated and $\frkp_{i}P = 0$ for $|i| \gg 0$. 
\end{lemma}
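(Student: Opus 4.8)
The plan is to deduce everything from the direct sum decomposition $P \cong \bigoplus_{i \in \ZZ} \frkt_{i} P$ recalled just above, where $\frkt_{i}P = (\frkp_{i}P) \otimes_{\Lambda} A(-i)$. Two easy observations underlie both implications. First, $\frkp_{i}P = (P\otimes_{A}\Lambda)_{i}$ is a projective $\Lambda$-module, being a graded component of the projective graded $\Lambda$-module $P \otimes_{A}\Lambda$ and $\Lambda$ is concentrated in degree $0$. Second, $\frkt_{i}P$ is generated over $A$ in degree $i$ by its degree-$i$ component, which is exactly $\frkp_{i}P$; hence $\frkt_{i}P$ is a finitely generated $A$-module if and only if $\frkp_{i}P$ is a finitely generated $\Lambda$-module.

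For the ``if'' direction, suppose each $\frkp_{i}P$ is finitely generated and $\frkp_{i}P = 0$ for $|i| \gg 0$. Then $\frkp_{i}P$, being a finitely generated projective $\Lambda$-module, is a direct summand of some $\Lambda^{n_{i}}$, so $\frkt_{i}P$ is a direct summand of $A(-i)^{n_{i}}$ and in particular finitely generated over $A$. Since only finitely many $\frkt_{i}P$ are nonzero, $P \cong \bigoplus_{i}\frkt_{i}P$ is a finite direct sum of finitely generated modules, hence finitely generated.

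For the ``only if'' direction, suppose $P$ is finitely generated. Being projective, $P$ is then a direct summand of a finitely generated graded free module $F = \bigoplus_{k=1}^{m} A(-j_{k})$. Applying the additive functor $\frkp_{i} = (-\otimes_{A}\Lambda)_{i}$ and using $A(-j)\otimes_{A}\Lambda = \Lambda(-j)$, which is $\Lambda$ sitting in degree $j$, we get that $\frkp_{i}F \cong \Lambda^{\#\{k : j_{k} = i\}}$ is finitely generated and vanishes outside the finite set $\{j_{1},\dots,j_{m}\}$; as $\frkp_{i}P$ is a direct summand of $\frkp_{i}F$, it inherits both properties.

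There is no serious obstacle here; the only points requiring a line of care are the standard fact that a finitely generated graded projective module is a summand of a finitely generated graded free module, and the computation $A(-j)\otimes_{A}\Lambda = \Lambda(-j)$, both routine. Alternatively one can run the ``only if'' direction purely through $P \cong \bigoplus_{i}\frkt_{i}P$: a finite generating set of $P$ meets only finitely many summands and its componentwise projections generate those summands, so all but finitely many $\frkt_{i}P$ vanish and the rest are finitely generated, whence the same holds for the $\frkp_{i}P$ by the second observation above.
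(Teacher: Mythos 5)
Your proof is correct and follows essentially the same route the paper intends: the lemma is deduced from the decomposition $P \cong \bigoplus_{i}\frkt_{i}P$ together with the observation that $\frkt_{i}P = (\frkp_{i}P)\otimes_{\Lambda}A(-i)$ is finitely generated over $A$ exactly when $\frkp_{i}P$ is finitely generated over $\Lambda$. The paper omits the details entirely, and your write-up (including the free-cover variant of the ``only if'' direction) supplies them accurately.
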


We use the same symbol $\frkp_{i}, \frkt_{i}$ for graded $A^{\op}$-modules. 
If $P\in \GrProj A$ is such that  $\frkp_{i} P $ is finitely generated for $i \in \ZZ$, 
 then $\grHom_{A}(P, A) $ is a graded projective $A^{\op}$-module 
 and moreover we have 
an isomorphism of $\Lambda$-modules. 
\[
\frkp_{i}\grHom_{A}(P,A) \cong \Hom_{\Lambda}(\frkp_{-i}P, \Lambda).
\]
 
 We discuss compatibility of $\frkp_{i}$ with the quasi-Veronese algebra construction. 
For $r = 0, \cdots, \ell$, 
we define a projective $\nabla A$-module $R_{r}$ to be 
\[
R_{r} = (0,\cdots , 0, A_{0}, A_{1}, \cdots, A_{\ell -1 -r}).  
\]
It is clear that $\nabla A \cong \bigoplus_{r= 0}^{\ell -1} R_{r}$ as $\nabla A$-modules. 
We note that $R_{r}$ has a canonical $\Lambda^{\op}$-module structure. 
%It is easy to check that for $r = 0, \cdots, \ell -1$ 
%\[\frkp_{i}( \sfqv A(-(j\ell +r))) = \begin{cases} R_{r} & i =j \\ 0 &  i \neq j. \end{cases}\]
%Using this we deduce the following lemma. 

We leave the verification of the following lemma to the readers. 

\begin{lemma}\label{quasi-Veronese lemma} 
For $P \in \GrProj A$,  we have the following isomorphism of $\nabla A$-modules. 
\[
\frkp_{i} \sfqv P \cong \bigoplus_{r = 0}^{\ell -1} (\frkp_{i\ell +r} P) \otimes_{\Lambda} R_{r}.
\]
\end{lemma}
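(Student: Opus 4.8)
The plan is to reduce, by additivity, to the rank-one free modules $A(-j)$, and then to identify $\sfqv(A(-j))$ explicitly as a graded projective $A^{[\ell]}$-module using the matrix descriptions of $\nabla A$ and $\Delta A$.

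First I would invoke the canonical decomposition $P\cong\bigoplus_{j\in\ZZ}\frkt_j P$ with $\frkt_j P=(\frkp_j P)\otimes_\Lambda A(-j)$. Both sides of the asserted isomorphism are additive functors of $P$ that commute with arbitrary direct sums: the left-hand side because $\sfqv$ is a $\kk$-linear equivalence and $\frkp_i=(-\otimes_{A^{[\ell]}}\nabla A)_i$ is a tensor functor followed by a degree-truncation, the right-hand side because $\frkp_{i\ell+r}$ is of the same type, $-\otimes_\Lambda R_r$ commutes with direct sums, and the outer sum over $0\le r\le\ell-1$ is finite. Since $\sfqv$ is defined degreewise and $\frkp_j P$ is concentrated in a single degree, one also has the routine identities $\sfqv\bigl((\frkp_j P)\otimes_\Lambda A(-j)\bigr)\cong(\frkp_j P)\otimes_\Lambda\sfqv(A(-j))$ and $\frkp_i\bigl((\frkp_j P)\otimes_\Lambda N\bigr)\cong(\frkp_j P)\otimes_\Lambda\frkp_i(N)$ for a graded $A^{[\ell]}$-module $N$. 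Together these give $\frkp_i\sfqv P\cong\bigoplus_{j\in\ZZ}(\frkp_j P)\otimes_\Lambda\frkp_i\sfqv(A(-j))$, so, after reindexing the right-hand side of the lemma by $j=i\ell+r$, the statement reduces to the single claim: $\frkp_i\sfqv(A(-j))\cong R_{j-i\ell}$ if $i\ell\le j\le i\ell+\ell-1$, and $\frkp_i\sfqv(A(-j))=0$ otherwise.

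For this I would write $j=q\ell+s$ with $0\le s\le\ell-1$ and $q\in\ZZ$, and use the compatibility $(1)\circ\sfqv\cong\sfqv\circ(\ell)$, which gives $\sfqv(A(-j))\cong\sfqv(A(-s))(-q)$ and hence $\frkp_i\sfqv(A(-j))\cong\frkp_{i-q}\sfqv(A(-s))$; so it suffices to treat $j=s$. From $\sfqv(M)_i=\bigoplus_{t=0}^{\ell-1}M_{i\ell+t}$ and the vanishing $A_m=0$ for $m<0$ and $m>\ell$, a direct inspection shows that $\sfqv(A(-s))$ is concentrated in degrees $0$ and $1$, with degree-$0$ part $\bigoplus_{t=s}^{\ell-1}A_{t-s}$ occupying the matrix blocks $s,\dots,\ell-1$ — i.e.\ exactly $R_s=e_s\nabla A$, where $e_s\in\nabla A$ is the diagonal idempotent with $e_s\nabla A=R_s$ — and degree-$1$ part $\bigoplus_{t=0}^{s}A_{\ell-s+t}$ occupying the blocks $0,\dots,s$ — i.e.\ exactly $e_s\Delta A$. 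Hence $\sfqv(A(-s))\cong e_s A^{[\ell]}$ as graded $A^{[\ell]}$-modules, so $\frkp_i\sfqv(A(-s))=(e_s A^{[\ell]}\otimes_{A^{[\ell]}}\nabla A)_i$, which is $e_s\nabla A=R_s$ for $i=0$, vanishes for $i=1$ because $\nabla A\cdot\Delta A=\Delta A$, and is zero for all other $i$. Re-inserting the shift by $q$ gives the displayed claim, since $j-i\ell=s$ exactly when $i=q$ while the intervals $[i\ell,i\ell+\ell-1]$ are pairwise disjoint.

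The one step needing real care is the bookkeeping in the previous paragraph: one must check that the surviving copy of $A_0$ in $\sfqv(A(-s))_0$ lands in the summand $R_s$ (not in some other $R_{s'}$), and that the $A^{[\ell]}$-module structure on $\sfqv(A(-s))$ is genuinely that of $e_s A^{[\ell]}$, not merely that the two agree degreewise as $\nabla A$-modules. Both follow by unwinding Mori's construction of $\sfqv$ — under which the graded right $A$-module $\bigoplus_{s=0}^{\ell-1}A(-s)$ corresponds to the regular module $A^{[\ell]}$, and $A(-s)$ to its summand $e_s A^{[\ell]}$ — but this identification is the combinatorial heart of the proof; everything else is formal manipulation of tensor products and degree-truncations.
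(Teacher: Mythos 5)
Your proof is correct. The paper gives no argument for this lemma (it explicitly says ``We leave the verification of the following lemma to the readers''), so there is nothing to compare against; your argument is the natural one. The reduction via $P\cong\bigoplus_j(\frkp_jP)\otimes_\Lambda A(-j)$ and additivity is sound, the shift compatibility $(1)\circ\sfqv\cong\sfqv\circ(\ell)$ correctly reduces everything to $j=s\in\{0,\dots,\ell-1\}$, and your bookkeeping identifying $\sfqv(A(-s))_0$ with $R_s=e_s\nabla A$ and $\sfqv(A(-s))_1$ with $e_s\Delta A$ matches the matrix descriptions of $\nabla A$ and $\Delta A$; the final step $\frkp_i(e_sA^{[\ell]})=R_s$ for $i=0$ and $0$ otherwise is also right, since $e_sA^{[\ell]}\otimes_{A^{[\ell]}}\nabla A=e_sA^{[\ell]}/e_s\Delta A$. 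You are also right to flag that the only genuinely non-formal point is that $\sfqv(A(-s))\cong e_sA^{[\ell]}$ as $A^{[\ell]}$-modules and not merely degreewise; this is exactly what Mori's construction of the graded Morita equivalence provides.
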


%As a corollary, we have 
%\begin{corollary} 
%The functor $\sfqv$ induces  equivalences
%\[\proj^{< 0} A \xrightarrow{\sim} \proj^{< 0} A^{[\ell]}, \   
%\proj^{\geq 0} A \xrightarrow{\sim} \proj^{\geq 0} A^{[\ell]}. \ \]
%\end{corollary}

 From now we deal with a complex of graded projective $A$-modules. 
By abuse of notations, 
we denote the functors $\frkp_{i}: \sfC(\GrProj A) \to \sfC(\Proj \Lambda)$, 
$\frkp_{i}: \sfK(\GrProj A) \to \sfK(\Proj \Lambda)$ 
induced from the functor $\frkp_{i}: \GrProj A \to \Proj \Lambda$ by the same symbols. 
The symbol $\frkt_{i}$ is used in the same way. 
The following lemma plays an important role in the sequel. 

\begin{lemma}[{\cite[Lemma 4.2]{adasore}}]\label{adasore lemma 4.2}
Assume that $A = \Lambda \oplus C$. 
Let $P \in \sfC(\GrProj A)$. 
Then there exists 
a morphism $\sfq_{i}: \frkp_{i} P \to (\frkp_{i -1}P ) \otimes_{\Lambda} C$  in $\sfC(\Mod \Lambda)$,  
which gives an exact triangle in $\sfK(\Mod \Lambda)$. 
\begin{equation}\label{kurumimochi}
\frkp_{i-1} P \otimes_{\Lambda} C \to P_{i} \to \frkp_{i} P \xrightarrow{\sfq_{i}} \frkp_{i-1}P \otimes_{\Lambda} C[1]. 
\end{equation}
In particular $\tuH(P)_{i} = 0$ if and only if the morphism $\sfq_{i}$ is an isomorphism in $\sfD(\Mod \Lambda)$. 
\end{lemma}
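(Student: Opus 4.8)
The plan is to unwind the definitions of $\frkp_{i}$ and $\frkt_{i}$ one internal degree at a time. Term by term, a complex of graded projective $A$-modules carries a preferred (choice-dependent) decomposition, and the entire content of the lemma is to read off what the differential of $P$ does in internal degree $i$ with respect to that decomposition; everything else then follows from standard facts about mapping cones.

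So fix a cohomological degree $n$. For each $k\in\ZZ$ choose a $\Lambda$-linear splitting of the canonical surjection $P^{n}_{k}\twoheadrightarrow(P^{n}\otimes_{A}\Lambda)_{k}=\frkp_{k}P^{n}$ and extend it by freeness; this realizes the decomposition $P^{n}\cong\bigoplus_{k\in\ZZ}\frkt_{k}P^{n}$ recalled in the text. Because $A=\Lambda\oplus C$, the graded module $A(-k)$ is concentrated in internal degrees $k$ and $k+1$, with $(A(-k))_{k}=\Lambda$ and $(A(-k))_{k+1}=C$; taking the internal degree-$i$ part of the decomposition therefore yields an isomorphism of $\Lambda$-modules
\[
P^{n}_{i}\;\cong\;\frkp_{i}P^{n}\;\oplus\;\bigl(\frkp_{i-1}P^{n}\otimes_{\Lambda}C\bigr),
\]
where the first summand comes from $\frkt_{i}P^{n}$ and the second from $\frkt_{i-1}P^{n}$.

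Next I would write the differential $d^{n}\colon P^{n}_{i}\to P^{n+1}_{i}$ as a $2\times2$ matrix in these coordinates. Naturality of the quotient $-\otimes_{A}\Lambda$ identifies the $(1,1)$-entry with $\frkp_{i}(d^{n})$, so the first summands assemble into the complex $\frkp_{i}P$; keeping track of the $C$-factor, the $(2,2)$-entry is $d_{\frkp_{i-1}P}^{n}\otimes\id_{C}$, so the second summands assemble into $\frkp_{i-1}P\otimes_{\Lambda}C$. The crux is that the upper-right entry $\frkp_{i-1}P^{n}\otimes_{\Lambda}C\to\frkp_{i}P^{n+1}$ vanishes: an element of the source is represented by $x\cdot c$ with $x$ in the chosen copy of $\frkp_{i-1}P^{n}$ and $c\in C$, and $A$-linearity gives $d^{n}(x\cdot c)=d^{n}(x)\cdot c$; decomposing $d^{n}(x)\in P^{n+1}_{i-1}=\frkp_{i-1}P^{n+1}\oplus(\frkp_{i-2}P^{n+1}\otimes_{\Lambda}C)$ and using $C\cdot C=0$ in the trivial extension to annihilate the contribution of the second summand after multiplying by $c$, one finds $d^{n}(x)\cdot c\in\frkp_{i-1}P^{n+1}\otimes_{\Lambda}C$. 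This is the one place where the hypothesis $A=\Lambda\oplus C$ is used, and I expect it to be the only real obstacle; the remaining work is bookkeeping, mostly with the signs attached to shifts and cones. Denoting the surviving lower-left entry by $\sfq_{i}^{n}\colon\frkp_{i}P^{n}\to\frkp_{i-1}P^{n+1}\otimes_{\Lambda}C$, the relation $d^{n+1}d^{n}=0$ read off in internal degree $i$ becomes $\sfq_{i}^{n+1}d_{\frkp_{i}P}^{n}=-(d_{\frkp_{i-1}P}^{n+1}\otimes\id_{C})\,\sfq_{i}^{n}$, which is precisely the statement that $\sfq_{i}:=(\sfq_{i}^{n})_{n}$ is a morphism of complexes $\frkp_{i}P\to(\frkp_{i-1}P\otimes_{\Lambda}C)[1]$.

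Finally, the lower-triangular form of the differential exhibits $P_{i}$ as the mapping cone $\cone\bigl(\sfq_{i}[-1]\colon\frkp_{i}P[-1]\to\frkp_{i-1}P\otimes_{\Lambda}C\bigr)$, and rotating the cone triangle (up to the customary sign) gives the asserted triangle
\[
\frkp_{i-1}P\otimes_{\Lambda}C\to P_{i}\to\frkp_{i}P\xrightarrow{\ \sfq_{i}\ }(\frkp_{i-1}P\otimes_{\Lambda}C)[1]
\]
in $\sfK(\Mod\Lambda)$. For the last claim, passing to the internal degree-$i$ part is exact, so $\tuH(P)_{i}=\tuH(P_{i})$; the cohomology long exact sequence of the triangle then shows that $\tuH(P_{i})=0$ in all cohomological degrees if and only if $\tuH^{n}(\sfq_{i})$ is an isomorphism for every $n$, i.e. if and only if $\sfq_{i}$ is a quasi-isomorphism, i.e. an isomorphism in $\sfD(\Mod\Lambda)$.
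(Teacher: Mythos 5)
Your proof is correct. The paper itself states this lemma as a citation of \cite[Lemma 4.2]{adasore} and gives no proof, but your argument — termwise splitting $P^{n}_{i}\cong\frkp_{i}P^{n}\oplus(\frkp_{i-1}P^{n}\otimes_{\Lambda}C)$ via the decomposition $P^n\cong\bigoplus_k\frkt_kP^n$, the lower-triangular form of the differential forced by $C\cdot C=0$ and the degree constraint, and the resulting cone identification — is exactly the mechanism the paper's Section 2.1.2 sets up and implicitly relies on (e.g.\ in the proof of Theorem \ref{Adaching theorem 2}, where a complex is reassembled from prescribed $\frkp_iP$ and $\sfq_i$), so it is the intended argument.
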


Assume that $\frkp_{i}P$ belongs to $\sfC^{-}(\Proj  \Lambda)$ for some $i \in \ZZ$. 
Then the complex $\frkp_{i} P \otimes_{\Lambda} C$ is quasi-isomorphic to 
 the derived tensor product $\frkp_{i} P \lotimes_{\Lambda} C$.
This observation yields the following corollary. 

\begin{corollary}\label{black thunder}
Assume that $A = \Lambda \oplus C$. 
Let $P \in \sfC^{-}(\Proj A)$ such that $\frkp_{i} P \in \sfC^{-}(\Proj \Lambda)$ for $i \in \ZZ$. 
Then the following assertions hold. 
\begin{enumerate}[(1)]
 \item 
If there exists $j \in \ZZ$ such that $\tuH(P)_{> j} = 0$,  
then  for $i \geq j$, we have an isomorphism 
$\frkp_{i} P \cong \frkp_{j} P \lotimes_{\Lambda} C^{i -j}[i-j]$ 
 in $\sfD(\Mod \Lambda)$. 
 
 \item 
If there exists $j \in \ZZ$ such that $\tuH(P)_{\leq  j} = 0$,  
then  for $i \leq j$, we have an isomorphism 
$\frkp_{j} P \cong \frkp_{i} P \lotimes_{\Lambda} C^{j-i}[j-i]$ 
 in $\sfD(\Mod \Lambda)$.
 \end{enumerate}
\end{corollary}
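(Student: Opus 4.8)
The plan is to deduce both assertions directly from Lemma \ref{adasore lemma 4.2}, using induction on the difference $i-j$ (resp. $j-i$). First I would observe that by the hypothesis $\frkp_{i}P \in \sfC^{-}(\Proj\Lambda)$ for every $i$, the naive tensor product $\frkp_{i}P \otimes_{\Lambda} C$ computes the derived tensor product $\frkp_{i}P \lotimes_{\Lambda} C$ (this is the observation stated just before the corollary). Hence the triangle \eqref{kurumimochi} of Lemma \ref{adasore lemma 4.2} can be read in $\sfD(\Mod\Lambda)$ as
\[
\frkp_{i-1}P \lotimes_{\Lambda} C \to P_{i} \to \frkp_{i} P \xrightarrow{\sfq_{i}} \frkp_{i-1}P \lotimes_{\Lambda} C[1],
\]
and the last sentence of that lemma says $\tuH(P)_{i}=0$ if and only if $\sfq_{i}$ is an isomorphism in $\sfD(\Mod\Lambda)$.

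For assertion (1): assume $\tuH(P)_{>j}=0$. Then for every $i>j$ we have $\tuH(P)_{i}=0$, so $\sfq_{i}\colon \frkp_{i}P \xrightarrow{\sim} \frkp_{i-1}P \lotimes_{\Lambda} C[1]$ is an isomorphism in $\sfD(\Mod\Lambda)$. The base case $i=j$ of the claimed formula is the identity $\frkp_{j}P \cong \frkp_{j}P \lotimes_{\Lambda} C^{0}[0]$ since $C^{0}=\Lambda$. For the inductive step, suppose $i>j$ and $\frkp_{i-1}P \cong \frkp_{j}P \lotimes_{\Lambda} C^{i-1-j}[i-1-j]$ in $\sfD(\Mod\Lambda)$; applying $-\lotimes_{\Lambda} C[1]$ and composing with $\sfq_{i}$ gives
\[
\frkp_{i}P \cong \bigl(\frkp_{j}P \lotimes_{\Lambda} C^{i-1-j}[i-1-j]\bigr) \lotimes_{\Lambda} C[1] \cong \frkp_{j}P \lotimes_{\Lambda} C^{i-j}[i-j],
\]
using associativity of $\lotimes_{\Lambda}$ and $C^{i-1-j}\lotimes_{\Lambda} C \simeq C^{i-j}$ together with the shift $[i-1-j]\cdot[1]=[i-j]$. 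This completes the induction. Assertion (2) is entirely symmetric: assuming $\tuH(P)_{\leq j}=0$, for every $i\leq j$ (so $i-1<j$ too) we get that $\sfq_{i}$ is an isomorphism, and an induction on $j-i$ — starting from $j=i$ and running the triangle in the opposite direction, i.e. expressing $\frkp_{i-1}P\lotimes_{\Lambda}C[1]$ in terms of $\frkp_{i}P$ — yields $\frkp_{j}P \cong \frkp_{i}P \lotimes_{\Lambda} C^{j-i}[j-i]$.

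I do not expect a serious obstacle here; the only point requiring a little care is justifying that the naive and derived tensor products agree at each stage, so that the triangle \eqref{kurumimochi} really can be interpreted in $\sfD(\Mod\Lambda)$ with $\otimes_{\Lambda}$ replaced by $\lotimes_{\Lambda}$ — but this is exactly the content of the hypothesis $\frkp_{i}P\in\sfC^{-}(\Proj\Lambda)$ together with the observation preceding the statement, applied to the complexes $\frkp_{j}P\lotimes_{\Lambda}C^{i-1-j}[\cdots]$ appearing along the induction (these are again bounded-above complexes of projective $\Lambda$-modules up to quasi-isomorphism, so tensoring one more copy of $C$ onto them is still computed by the naive tensor product). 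The rest is a routine bookkeeping of shifts and the associativity isomorphism for $\lotimes_{\Lambda}$.
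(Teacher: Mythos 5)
Your proof is correct and is exactly the argument the paper intends: the corollary is stated as an immediate consequence of Lemma \ref{adasore lemma 4.2} together with the preceding observation that $\frkp_{i}P\otimes_{\Lambda}C$ computes the derived tensor product when $\frkp_{i}P\in\sfC^{-}(\Proj\Lambda)$, and your induction on $|i-j|$ via the isomorphisms $\sfq_{k}$ is precisely how that consequence is drawn. The only cosmetic remark is that in part (2) the iteration runs in the same direction as in part (1) — one simply uses the isomorphisms $\sfq_{k}$ for $i<k\leq j$, all of which are invertible because $\tuH(P)_{k}=0$ there — so no genuine reversal of the triangle is needed.
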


%\begin{remark}\label{graded Hom remark}
%Let $\Lambda$ be an ungraded algebra. 
%We may regard $\Lambda$ as a graded algebra concentrated at $0$-th degree.   
%If we do so, the notation $\Hom_{\Lambda}$ has two meaning that graded $\Hom$-space defined above and the $\Hom$-space of the category $\Mod \Lambda$ of ungraded modules. 
%In this paper, the notation $\Hom_{\Lambda}$ is always understood in the latter sense. Namely, $\Hom_{\Lambda} = \Hom_{\Mod \Lambda}$. 
%However, the two  exceptions occur 
%in the description of $\frks_{i}$ in Section \ref{Decomposition of graded injective $A$-modules} 
%and in the proof of  Lemma \ref{A key observation for the criterion}.  \end{remark}

\subsection{(Graded) Iwanaga-Gorenstein algebras}

In this section, we collect  basic facts about Iwanaga-Gorenstein (IG) algebra. 
In particular, we recall the constructions of equivalences in \eqref{equivalences}, 
since we use them in this paper. 
First we recall the definition of IG-algebras.
 
\begin{definition}
A (graded) algebra $A$ is said to be Iwanaga-Gorenstein(IG) 
if it is (graded) Noetherian on both sides and 
$(\textup{gr-})\injdim A_{A} < \infty, (\textup{gr-})\injdim {}_{A}A < \infty$. 
\end{definition}

We recall the fundamental observation  due to  Iwanaga, which is frequently and tacitly used in the sequel.

\begin{proposition}[Iwanaga \cite{Iwanaga}]\label{Iwanaga}
Let $A$ be a (graded) IG-algebra. 
For a (graded) $A$-module $M$ the following conditions are equivalent
\begin{enumerate}[(1)]
\item 
$(\textup{gr-})\pd M < \infty$. 

\item 
$(\textup{gr-})\injdim M < \infty$. 
\end{enumerate}
\end{proposition}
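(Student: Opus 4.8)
The plan is to reconstruct Iwanaga's original argument. Put $d := \max\{(\textup{gr-})\injdim A_{A},\ (\textup{gr-})\injdim {}_{A}A\}$, which is finite since $A$ is IG. I describe the ungraded situation; the graded one is obtained verbatim by replacing free modules with graded free modules $\bigoplus_{j} A(j)$ and every resolution with its graded analogue, which is legitimate because $A$ is graded Noetherian and $\GrMod A$ has enough graded projectives and injectives. The two elementary tools are the estimates read off from the $\Ext$-long exact sequences of a short exact sequence $0 \to X \to Y \to Z \to 0$, namely $\injdim Z \le \max\{\injdim Y,\ \injdim X - 1\}$ and $\pd X \le \max\{\pd Y,\ \pd Z - 1\}$.

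For $(1)\Rightarrow(2)$: choose a finite projective resolution $0 \to P_{n} \to \cdots \to P_{0} \to M \to 0$. Each $P_{k}$ is a direct summand of a (possibly infinite) free module, and an arbitrary direct sum of copies of $A$ has injective dimension $\le d$ because $A$ is Noetherian — via the Baer criterion, $\Ext^{d+1}_{A}(A/\mathfrak{a},-)$ commutes with that coproduct for every finitely generated right ideal $\mathfrak{a}$. Hence $\injdim P_{k} \le d$ for all $k$, and descending dimension shifting along the syzygy sequences $0 \to \Omega^{k+1}M \to P_{k} \to \Omega^{k}M \to 0$, starting from $\Omega^{n}M \cong P_{n}$, yields $\injdim M \le d < \infty$.

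For $(2)\Rightarrow(1)$: pick a finite injective resolution $0 \to M \to I^{0} \to \cdots \to I^{n} \to 0$ and cut it into short exact sequences $0 \to C^{j} \to I^{j} \to C^{j+1} \to 0$ with $C^{0} = M$ and $C^{n} = I^{n}$. If one grants that every injective right $A$-module has finite projective dimension, say bounded by $d'$, then ascending dimension shifting along these sequences gives $\pd C^{j} \le d'$ for all $j$, hence $\pd M < \infty$. So the whole implication reduces to the single claim that every (graded-)injective module has finite projective dimension, and this is the step I expect to be the real obstacle: in contrast to the easy observation above that projectives have finite injective dimension, here there is a genuine left–right asymmetry to overcome. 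The route I would take: over a (graded-)Noetherian ring, the character module $E^{+} := \Hom_{\ZZ}(E,\QQ/\ZZ)$ of an injective right module $E$ is a flat left $A$-module, since $\Tor^{A}_{1}(N,E^{+}) \cong \Ext^{1}_{A}(N,E)^{+}$ holds for finitely generated $N$ and the right-hand side vanishes because $E$ is injective; and flat modules over an IG-algebra have finite projective dimension — indeed a flat module $F$ has all syzygies flat, its $d$-th syzygy $G$ satisfies $\Ext^{i}_{A}(G,A) \cong \Ext^{i+d}_{A}(F,A) = 0$ for $i \ge 1$ since $i + d > \injdim A_{A}$, and a flat module with no higher extensions against $A$ is projective over a Noetherian ring. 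With $d' = d$ this even gives $\pd M \le d$.

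Finally, in the situation of genuine interest in this paper, where $A$ is finite dimensional over $\kk$, the implication $(2)\Rightarrow(1)$ can be obtained much more cheaply: the exact $\kk$-duality $\tuD$ interchanges right and left modules and projectives with injectives, so $\injdim M_{A} = \pd \tuD(M)_{A^{\op}}$ and $\pd M_{A} = \injdim \tuD(M)_{A^{\op}}$, and $A^{\op}$ is again IG with the same $d$; hence $(2)\Rightarrow(1)$ for $A$ follows from $(1)\Rightarrow(2)$ applied to $A^{\op}$. The general (graded-)case, where this shortcut is unavailable because the graded $\kk$-dual does not preserve finite generation, is exactly what is recorded in \cite{Iwanaga}, and for that case the argument of the previous paragraph is the one to spell out.
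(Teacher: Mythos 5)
The paper offers no proof of this proposition — it is quoted from Iwanaga — so I am judging your reconstruction on its own terms. Your direction $(1)\Rightarrow(2)$ is correct (over a Noetherian ring direct sums of injectives are injective, so free modules have injective dimension at most $d$, and your dimension shift is right), and so is the $\kk$-duality shortcut for finite-dimensional $A$ in your last paragraph. The problem lies in the core of $(2)\Rightarrow(1)$: you correctly reduce to showing that every injective right $A$-module $E$ has finite projective dimension, but the argument you sketch for that claim both lands on the wrong module and leans on a false lemma.

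First, the side mismatch. Your character-module computation shows that the \emph{left} module $E^{+}$ is flat and (granting your lemma) of finite projective dimension; this says nothing about $\pd E_{A}$ — the only thing flatness of $E^{+}$ gives back about the right side is injectivity of $E^{++}$, which you already knew. What you actually need is that $E$ itself has finite \emph{flat} dimension, and the classical route is the duality $\Tor_{i}^{A}(E,N)\cong\Hom_{A}\bigl(\Ext^{i}_{A^{\op}}(N,A),E\bigr)$ for finitely generated left modules $N$ (compute $\Tor$ from a resolution of $N$ by finitely generated projectives and use exactness of $\Hom_{A}(-,E)$); the right-hand side vanishes for $i>\injdim {}_{A}A$, and since $\Tor$ commutes with filtered colimits this gives $\fd E_{A}\le d$. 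This is precisely where the hypothesis on ${}_{A}A$ enters. Second, the lemma ``a flat module $G$ with $\Ext^{i}_{A}(G,A)=0$ for all $i\ge 1$ is projective'' is not a theorem: for $A=\ZZ$, which is IG with $d=1$, it asks whether every torsion-free Whitehead group is free, a question independent of ZFC. The harmless repair, which covers every use of the proposition in this paper, is to take $M$ finitely generated: then $\fd M\le d$ by dimension shifting along the injective resolution, the syzygies in a resolution by finitely generated projectives are finitely generated, and a finitely generated flat module over a Noetherian ring is projective, so $\Omega^{d}M$ is projective and $\pd M\le d$. For genuinely infinitely generated $M$, passing from finite flat dimension to finite projective dimension over an IG ring is a separate nontrivial theorem and cannot be dispatched by the Ext-vanishing criterion you invoke.
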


Next we recall the definition of Cohen-Macaulay modules. 

\begin{definition}
Let $A$ be a (graded) IG-algebra. 
A (graded) $A$-module $M$ is called \textit{Cohen-Macaulay} (CM) 
if it is finitely generated and satisfies the condition 
\[
\Ext_{A}^{n} (M, A) = 0 \textup{ for } n \geq 1.
\]
\end{definition}

We denote by $\grCM A \subset \grmod A$ the full subcategory of graded CM-modules. 
The ungraded version is denoted by $\CM A \subset \mod A$. 

Let $A$ be a graded IG-algebra. 
The category $\grCM A$ is a Frobenius category, 
whose admissible projective-injective modules are precisely graded projective modules. 
Therefore, by \cite{Happel book}, the stable category $\stabgrCM A:= \grCM A/[\grproj A]$ canonically has a structure of triangulated category.

Now we recall fundamental equivalences of triangulated categories 
which relate 
the stable category $\stabgrCM A$ to 
other important triangulated categories.

The graded singularity category $\grSing A$ is defined to be the Verdier quotient 
\[\grSing A := \sfD^{\mrb}(\grmod A)/ \sfK^{\mrb}(\grproj A).\] 
We consider the following diagram
\[
\begin{xymatrix}{ 
\grCM A \  \ar[d] \ar@{^{(}->}[r]^{i} & \ \grmod A \  \ar@{^{(}->}[r]^{j \ } & \ \sfD^{\mrb}(\grmod A) \ar[d]^{\pi}\\ 
\stabgrCM A  \ \ar@{..>}[rr]_{\beta} &&\  \grSing A
}\end{xymatrix}
\]
where the top arrows are canonical embeddings and the vertical arrows are canonical functors. 
Since $\pi j i (\grproj A) = 0$, 
there exists a unique functor $\beta$ 
which makes the above diagram commutative.

\begin{theorem}[Buchweitz \cite{Buchweitz}, Happel \cite{Happel}]\label{Buchweitz-Happel theorem}
The functor $\beta$ gives   an equivalence of triangulated categories. 
\end{theorem}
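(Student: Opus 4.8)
The plan is to follow the classical argument of Buchweitz in the graded Gorenstein setting (Happel's is the same for finite-dimensional algebras), establishing that $\beta$ is an exact functor, essentially surjective, and fully faithful. Throughout I put $d := \max\{\grinjdim A_{A},\, \grinjdim {}_{A}A\} < \infty$. First I would record that $\beta$ is a triangle functor: the triangulated structure on $\stabgrCM A$ comes from the Frobenius structure, so a distinguished triangle there is the image of a triangle $L \to M \to N \to L[1]$ in $\sfD^{\mrb}(\grmod A)$ attached to a short exact sequence $0 \to L \to M \to N \to 0$ in $\grCM A$; hence $\beta$ preserves triangles, and it intertwines the cosyzygy shift on $\stabgrCM A$ with $[1]$ because for an exact sequence $0 \to L \to P \to \Omega^{-1}L \to 0$ with $P$ graded projective, $P$ becomes zero in $\grSing A$.

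For essential surjectivity I would argue by syzygies. Given $X \in \sfD^{\mrb}(\grmod A)$, choose a bounded-above resolution $P \to X$ by finitely generated graded projectives. Brutally truncating $P$ in cohomological degrees above some $-n$ produces a bounded complex of projectives, hence an object of $\sfK^{\mrb}(\grproj A)$ which vanishes in $\grSing A$; the associated triangle then identifies $X$, in $\grSing A$, with $M_{n}[n]$ where $M_{n}$ is the cokernel module appearing in degree $-n$, an $n$-th syzygy of the cohomology of $X$. A dimension-shift computation, using $\grinjdim A_{A} \leq d$ (and Iwanaga's Proposition \ref{Iwanaga}), gives $\Ext_{A}^{i}(M_{n}, A) = 0$ for all $i \geq 1$ once $n$ is at least $d$ plus the cohomological amplitude of $X$, so $M_{n} \in \grCM A$ and $X \cong M_{n}[n] = \beta(\Omega^{-n}M_{n})$ lies in the essential image of $\beta$.

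The real content is full faithfulness, i.e. bijectivity of $\stabHom_{A}(M, N) \to \Hom_{\grSing A}(M, N)$ for $M, N \in \grCM A$. Faithfulness is the easy direction: a graded homomorphism $M \to N$ that becomes zero in $\grSing A$ factors in $\sfD^{\mrb}(\grmod A)$ through a bounded complex of projectives, and because $\Ext_{A}^{>0}(M, A) = 0$ such a factorization can be rewritten through a genuine graded projective module, so the map is zero in $\stabgrCM A$. For fullness I would compute $\Hom_{\grSing A}(M, N)$ as the colimit of $\Hom_{\sfD^{\mrb}(\grmod A)}(M', N)$ over morphisms $s : M' \to M$ with cone in $\sfK^{\mrb}(\grproj A)$. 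The crucial point — and where the Iwanaga-Gorenstein hypothesis is indispensable — is that $M$ is totally reflexive, hence admits a complete resolution $\cdots \to P^{-1} \to P^{0} \to P^{1} \to \cdots$ of projectives with $Z^{0} = M$; its brutal truncations give a cofinal subsystem of the $s : M' \to M$, and the vanishing $\Ext_{A}^{>0}(M, A) = 0$ forces the transition maps of the system to be isomorphisms, so the colimit is already attained and equals $\Hom_{\grmod A}(M, N)$ modulo maps through projectives, namely $\stabHom_{A}(M, N)$. Conceptually, both sides compute Buchweitz's stable (Tate) cohomology group $\widehat{\Ext}_{A}^{0}(M, N)$.

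The step I expect to be the main obstacle is exactly this cofinality-and-stabilization analysis: one must check carefully that an arbitrary roof $M \xleftarrow{s} X \xrightarrow{f} N$ in the Verdier quotient can be rewritten with $X$ a Cohen-Macaulay module drawn from a complete resolution of $M$, and that no $\Hom$ is lost along the truncation tower. An equivalent, and perhaps cleaner, route I would keep in reserve is to factor $\beta$ through $\sfK^{\mrac}(\grproj A)$: first establish that $\underline{\tuZ}^{0} : \sfK^{\mrac}(\grproj A) \to \stabgrCM A$ is an equivalence (a purely Frobenius-categorical fact, using that over an Iwanaga-Gorenstein ring every acyclic complex of projectives is totally acyclic) and that $\sfK^{\mrac}(\grproj A) \to \grSing A$, $P \mapsto [Z^{0}(P)]$, is an equivalence (Buchweitz), and then observe that the latter functor is precisely $\beta \circ \underline{\tuZ}^{0}$, whence $\beta$ is an equivalence. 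This presentation isolates the one genuine homological input — total acyclicity of projective resolutions over a Gorenstein ring — most transparently.
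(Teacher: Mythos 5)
The paper does not prove this statement at all: Theorem \ref{Buchweitz-Happel theorem} is quoted from Buchweitz's manuscript and Happel's paper, so there is no internal argument to compare yours against. What you have written is a faithful outline of Buchweitz's original proof, and all three components are sound: the triangle-functor and essential-surjectivity steps are complete as sketched (the dimension shift $\Ext_A^i(M_n,A)\cong\Ext_A^{i+n}(-,A)$ together with $\grinjdim A_A<\infty$ is exactly what makes a high enough syzygy Cohen--Macaulay, and cosyzygies of CM modules are again CM, so $X\cong\beta(\Omega^{-n}M_n)$ is legitimate). The one place where your text is a plan rather than a proof is the fullness step, and you say so yourself: one still has to verify that every left roof $M\xleftarrow{s}X\to N$ with perfect cone can be replaced by one with $s=\mathrm{id}$, which in Buchweitz's treatment rests on the computation of $\Hom_{\sfD^{\mrb}(\grmod A)}(M,Q)$ for $Q\in\sfK^{\mrb}(\grproj A)$ using $\Ext_A^{>0}(M,A)=0$, not merely on a cofinality statement about truncations of a complete resolution. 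Your fallback route through $\sfK^{\mrac}(\grproj A)$ is the factorization the paper itself uses later (it is the left square of the diagram in Theorem \ref{Adaching theorem 2}), but be aware that it does not actually isolate the difficulty: the equivalence $\sfK^{\mrac}(\grproj A)\to\grSing A$ carries the same full-faithfulness content as $\beta$ itself, and within this paper it would be circular to derive it from Theorems \ref{Buchweitz-Happel theorem} and \ref{Buchweitz theorem}, both of which are taken on faith from the literature.
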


We denote by $\sfC^{\mrac}(\grproj A) \subset \sfC(\grproj A)$ the full subcategory of 
acyclic complexes of finitely generated graded projective modules. 
Let   $\tuZ^{0}: \sfC^{\mrac}(\grproj A) \to \grmod A$ 
be the functor which sends a complex $X$  to its $0$-th cocycle group $\tuZ^{0}(X)$. 
It  can be shown that the functor $\tuZ^{0}$ 
has  its image in $\grCM A$ 
and descents   to the  functor  $\underline{\tuZ}^{0} : \sfK^{\mrac}(\proj A) \to\stabgrCM A$. 
\[
\begin{xymatrix}{ 
\sfC^{\mrac}(\grproj A) \ar[d] \ar[rr]^{\tuZ^{0}}   && \grCM A \ar[d] \\ 
\sfK^{\mrac}(\grproj A) \ar[rr]_{\underline{\tuZ}^{0} } && \stabgrCM A 
}\end{xymatrix}\]

\begin{theorem}[{Buchweitz \cite{Buchweitz}}]\label{Buchweitz theorem}
The functor $\underline{\tuZ}^{0}$ gives   an equivalence  of triangulated categories. 
\end{theorem}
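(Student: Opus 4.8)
This is the theorem of Buchweitz, and I recall the argument. The plan is threefold: (i) verify that $\underline{\tuZ}^{0}$ is a well-defined triangulated functor; (ii) prove essential surjectivity by constructing, for each $M \in \grCM A$, a \emph{complete resolution}, i.e.\ an object $P \in \sfC^{\mrac}(\grproj A)$ with $\tuZ^{0}(P) \cong M$; and (iii) prove full faithfulness by a comparison-of-(co)resolutions argument. Write $d := \max\{\grinjdim A_{A}, \grinjdim {}_{A}A\}$, which is finite since $A$ is IG.

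For (i): for $P \in \sfC^{\mrac}(\grproj A)$ the short exact sequences $0 \to \tuZ^{n}(P) \to P^{n} \to \tuZ^{n+1}(P) \to 0$ realize $\tuZ^{0}(P)$ as a $d$-th syzygy of $\tuZ^{d}(P)$, whence $\grExt^{i}_{A}(\tuZ^{0}(P), A) \cong \grExt^{i+d}_{A}(\tuZ^{d}(P), A) = 0$ for $i \geq 1$, so $\tuZ^{0}(P) \in \grCM A$. If $f, g \colon P \to P'$ are homotopic via $h$, then $(f - g)|_{\tuZ^{0}(P)}$ is the composite $\tuZ^{0}(P) \xrightarrow{h^{0}} (P')^{-1} \xrightarrow{d^{-1}_{P'}} \tuZ^{0}(P')$, which factors through the graded projective $(P')^{-1}$, so $\underline{\tuZ}^{0}$ descends to $\stabgrCM A$. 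It is triangulated: regarding $\sfC^{\mrac}(\grproj A)$ as a Frobenius category whose conflations are the degreewise split short exact sequences and whose projective-injectives are the contractible complexes, a diagram chase (snake lemma) shows $\{\tuZ^{n}\}_{n}$ is an exact functor to $\grCM A$ carrying contractible complexes to graded projectives, hence it induces a triangulated functor on stable categories; this functor is $\underline{\tuZ}^{0}$, and the short exact sequence $0 \to \tuZ^{0}(P) \to P^{0} \to \tuZ^{1}(P) \to 0$ identifies $\tuZ^{0}(P[1]) = \tuZ^{1}(P)$ with the cosyzygy of $\tuZ^{0}(P)$, showing compatibility with suspension.

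For (ii): given $M \in \grCM A$, choose a graded-projective resolution $\cdots \to R^{-2} \to R^{-1} \to M \to 0$. Standard duality over an IG-algebra gives $M^{\vee} := \grHom_{A}(M, A) \in \grCM A^{\op}$ together with a biduality isomorphism $M \xrightarrow{\ \sim\ } M^{\vee\vee}$; applying $\grHom_{A^{\op}}(-, A)$ to a graded-projective resolution of $M^{\vee}$ over $A^{\op}$, which is exact because $\grExt^{>0}_{A^{\op}}(M^{\vee}, A) = 0$, yields an exact coresolution $0 \to M \to S^{0} \to S^{1} \to \cdots$ by graded projective right $A$-modules. Splicing this with the resolution along $M$ produces $P \in \sfC^{\mrac}(\grproj A)$ with $\tuZ^{0}(P) = M$, proving essential surjectivity.

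For (iii), put $M := \tuZ^{0}(P)$ and $N := \tuZ^{0}(P')$. For fullness: a module map $\varphi \colon M \to N$ lifts to a chain map $f \colon P \to P'$ with $\tuZ^{0}(f) = \varphi$ --- on the negative parts by the comparison theorem for the graded-projective resolutions of $M$ and $N$ (using projectivity of the terms $P^{n}$, $n \leq 0$), then extended term by term into positive degrees, where at each step the obstruction lies in a group $\grExt^{1}_{A}(C, Q)$ with $C$ a cocycle of $P$ (hence in $\grCM A$) and $Q$ a graded projective term of $P'$, and $\grExt^{\geq 1}_{A}(-, Q) = 0$ for such $Q$. Thus $\Hom_{\sfK^{\mrac}(\grproj A)}(P, P') \to \Hom_{\grmod A}(M, N)$ is surjective, a fortiori onto $\stabHom_{\grCM A}(M, N)$. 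For faithfulness: if $\tuZ^{0}(f)$ factors through a graded projective, realize that projective as $\tuZ^{0}$ of a contractible complex, lift the two factor maps through it by fullness, and subtract their (null-homotopic) composite, reducing to the case $\tuZ^{0}(f) = 0$; then a null-homotopy on the negative parts (comparison theorem) propagated into positive degrees by the same $\grExt^{1}$-vanishing shows $f \simeq 0$. The hard step is exactly this full faithfulness: the crucial input --- used both to extend chain maps across the coresolution halves and to propagate null-homotopies there --- is that cocycles of acyclic complexes of graded projectives over an IG-algebra are Cohen--Macaulay and that $\grExt^{\geq 1}_{A}(\mathrm{CM}, \text{graded projective}) = 0$; this, with the biduality $M \cong M^{\vee\vee}$ used in (ii), is where the Iwanaga--Gorenstein hypothesis is essential.
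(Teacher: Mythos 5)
Your proof is correct and is the standard argument from Buchweitz's unpublished manuscript; the paper itself only cites the result and gives no proof. The splicing construction in your step (ii), via $A$-duality and the biduality $M \cong M^{\vee\vee}$, is exactly the construction the paper reuses in its proof of Lemma \ref{lp=finproj} for the locally perfect variant, and your $\grExt^{1}$-vanishing argument for extending chain maps and homotopies across the coresolution half correctly isolates where the Iwanaga--Gorenstein hypothesis enters.
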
 

%Now there exist the following equivalences of triangulated categories.
%\[\sfK^{\mrac}(\grproj A) \xrightarrow{\ \underline{\tuZ}^{0} \ \simeq \ } \stabgrCM A \xrightarrow{ \ \  \beta \  \simeq \ } \grSing A.\]

\subsection{Derived tensor products and derived Hom functors which involve bimodules}\label{Derived tensor products}

In this section \ref{Derived tensor products} 
we discuss the compatibility between 
the derived functors involving bimodules 
and the restriction functors.

Let $\Lambda$ be an algebra over a field $\kk$. 
Recall that we identify ($\kk$-central) $\Lambda$-$\Lambda$-bimodules 
with module over the enveloping algebra $\Lambda^{\mre}= \Lambda^{\op} \otimes_{\kk} \Lambda$ 
and regard the category $\Mod \Lambda^{\mre}$ as the category of $\Lambda$-$\Lambda$-bimodules. 

Let  $D$ be a $\Lambda$-$\Lambda$-bimodules. 
We denote by $D_{\Lambda}$ and ${}_{\Lambda}D$ 
the underlying right and left $\Lambda$-modules of $D$.  
The assignments $D \mapsto D_{\Lambda}$ and $D\mapsto {}_{\Lambda}D$ 
extend to functors, which are called the restriction functors. 
\[ \Mod \Lambda^{\mre} \to \Mod \Lambda, \ D \mapsto D_{\Lambda}, \ \ 
\Mod \Lambda^{\mre} \to \Mod \Lambda^{\op}, \ D \mapsto {}_{\Lambda}D,  \\
\]

We equip the $\Hom$-space $\Hom_{\Lambda}(D_{\Lambda}, E_{\Lambda})$  
and the tensor product $(D_{\Lambda} ) \otimes_{\Lambda} ({}_{\Lambda} E)$ 
with   a canonical  $\Lambda$-$\Lambda$-bimodule structures 
and  denote them 
 by $\Hom_{\Lambda}(D, E)$ and $D \otimes_{\Lambda} E$ respectively 
the bimodules so obtained.  
A $\Lambda$-$\Lambda$-bimodule $D \in \Mod \Lambda^{\mre}$ induces (covariant or contravariant) functors 
\[
\begin{split}
&D \otimes_{\Lambda} -, \ - \otimes_{\Lambda} D, \   \Hom_{\Lambda}(D,-), \  
\Hom_{\Lambda}(-, D)  : \Mod \Lambda^{\mre} \to \Mod \Lambda^{\op},  \\
&- \otimes_{\Lambda} D, \ \Hom_{\Lambda}(D, -) : \Mod \Lambda \to \Mod \Lambda, \ \ 
D \otimes_{\Lambda} - : \Mod \Lambda^{\op} \to \Mod \Lambda^{\op}, \\
& \Hom_{\Lambda}(-, D) : \Mod \Lambda \to \Mod \Lambda^{\op}. \\
\end{split}
\]

The aim of this section is to prove the following lemma. 
Since the restriction functors are exact, we denote the derived functors of them by the same symbol. 

\begin{lemma}\label{compatibility lemma}
For objects  $D ,E \in \sfD(\Mod \Lambda^{\mre})$, 
there exist natural isomorphisms 
\[\begin{split}
& (D \lotimes_{\Lambda}E )_{\Lambda} \cong  (D_{\Lambda}) \lotimes_{\Lambda} E, \ 
{}_{\Lambda}(D \lotimes_{\Lambda}E ) \cong   D \lotimes_{\Lambda} ({}_{\Lambda} E), \\
&{}_{\Lambda} \RHom_{\Lambda}(D, E)  \cong \RHom_{\Lambda}(D_{\Lambda}, E),\  
\RHom_{\Lambda}(D, E)_{\Lambda}  \cong \RHom_{\Lambda}(D, E_{\Lambda}) . 
\end{split}
\]
\end{lemma}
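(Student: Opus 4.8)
The plan is to reduce all four isomorphisms to the two tensor statements, and to reduce those to the classical compatibility between the balanced derived tensor product over $\Lambda^{\mre}$ and the one-sided derived tensor product over $\Lambda$. First I would observe that the four assertions split into two dual pairs under the anti-involution $\Lambda \rightsquigarrow \Lambda^{\op}$ (which swaps $D_{\Lambda}$ with ${}_{\Lambda^{\op}}D$), so it suffices to prove, say, the first tensor isomorphism and the last $\RHom$ isomorphism; the other two follow by applying the proven ones over $\Lambda^{\op}$. Moreover the two surviving statements are themselves adjoint to one another: the $\RHom$ isomorphism $\RHom_{\Lambda}(D,E)_{\Lambda}\cong\RHom_{\Lambda}(D,E_{\Lambda})$ is just the statement that the forgetful functor $(-)_{\Lambda}\colon \sfD(\Mod\Lambda^{\mre})\to\sfD(\Mod\Lambda)$ is the right adjoint appearing in the standard $\otimes$–$\RHom$ adjunction once one knows the tensor compatibility, or it can be proved directly in the same way; so really the whole lemma rests on the single natural isomorphism $(D\lotimes_{\Lambda}E)_{\Lambda}\cong (D_{\Lambda})\lotimes_{\Lambda}E$.

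To prove that one, I would work at the level of complexes using explicit resolutions. Replace $D$ by a K-flat (e.g.\ semifree) resolution $P\to D$ over $\Lambda^{\mre}$. The key point is that a K-flat complex of $\Lambda^{\mre}$-modules restricts, via $(-)_{\Lambda}$, to a K-flat complex of right $\Lambda$-modules: indeed $\Lambda^{\mre}=\Lambda^{\op}\otimes_{\kk}\Lambda$ is free — hence flat — as a right $\Lambda$-module (the $\Lambda$-action being on the right tensor factor), so $P_{\Lambda}$ is a complex of flat right $\Lambda$-modules which inherits K-flatness because tensoring commutes with the relevant filtered colimits and the underlying-module functor is exact and cocontinuous. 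Granting that, both sides of the claimed isomorphism are computed by the \emph{same} underived complex: $(D\lotimes_{\Lambda}E)_{\Lambda}$ is represented by $(P\otimes_{\Lambda}E)_{\Lambda}$, while $(D_{\Lambda})\lotimes_{\Lambda}E$ is represented by $P_{\Lambda}\otimes_{\Lambda}E$, and $(P\otimes_{\Lambda}E)_{\Lambda}=P_{\Lambda}\otimes_{\Lambda}E$ on the nose — the bimodule structure on $P\otimes_{\Lambda}E$ that one forgets is exactly the leftover right $\Lambda$-action on $E$, with no interference. Naturality in $D$ and $E$ is then immediate from functoriality of the resolution.

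For the $\RHom$ statements I would run the dual argument: take a K-injective resolution $E\to I$ over $\Lambda^{\mre}$ and check that $I_{\Lambda}$ is K-injective over $\Lambda$. This is the analogue fact that the restriction functor $(-)_{\Lambda}$ has an exact left adjoint, namely $-\otimes_{\Lambda}\Lambda^{\mre}$ (induction), which is exact because $\Lambda^{\mre}$ is flat — in fact free — as a left $\Lambda$-module; a right adjoint to an exact functor preserves K-injectivity. Then $\RHom_{\Lambda}(D,E)_{\Lambda}$ and $\RHom_{\Lambda}(D,E_{\Lambda})$ are both represented by $\Hom_{\Lambda}^{\bullet}(D,I)$ with its two natural restrictions, which agree. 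Alternatively one derives this formally from the tensor isomorphism plus the adjunction $\Hom_{\Lambda}(X\otimes_{\Lambda}\Lambda^{\mre},-)\cong\Hom_{\Lambda^{\mre}}(X,(-))$ and Yoneda. The main obstacle — really the only nontrivial input — is the claim that the one-sided restriction of a K-flat (resp.\ K-injective) complex over the enveloping algebra stays K-flat (resp.\ K-injective); everything else is bookkeeping with adjunctions and the fact that $\Lambda^{\mre}$ is free on either side over $\Lambda$. Once that is nailed down the four isomorphisms, their naturality, and their mutual compatibility all drop out at once.
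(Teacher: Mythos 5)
Your proof is correct and is essentially the paper's argument: both hinge on the single observation that $\Lambda^{\mre}=\Lambda^{\op}\otimes_{\kk}\Lambda$ is free over $\Lambda$ on either side (since $\kk$ is a field), so that restriction preserves the resolving class and both sides of each isomorphism are computed by the same underived complex. The only difference is cosmetic — you use K-flat/K-injective resolutions and an op-symmetry reduction where the paper uses Keller's property-(P) resolutions and a sublemma that restriction preserves projectives and injectives.
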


For example, the first natural isomorphism says that 
the left diagram of \eqref{two commutative diagrams} 
is commutative up to a natural isomorphism.
\begin{equation}\label{two commutative diagrams}
\begin{xymatrix}{
\sfD(\Mod \Lambda^{\mre}) \ar[d]_{(-)_{\Lambda}} \ar[rr]^{ -\lotimes_{\Lambda} E} && 
\sfD(\Mod \Lambda^{\mre}) \ar[d]^{(-)_{\Lambda}} \\
\sfD(\Mod \Lambda) \ar[rr]^{ -\lotimes_{\Lambda} E} && 
\sfD(\Mod \Lambda),  
}\end{xymatrix} 
\quad 
\begin{xymatrix}{
\sfC(\Mod \Lambda^{\mre}) \ar[d]_{(-)_{\Lambda}} \ar[rr]^{ -\otimes_{\Lambda} E} && 
\sfC(\Mod \Lambda^{\mre}) \ar[d]^{(-)_{\Lambda}} \\
\sfC(\Mod \Lambda) \ar[rr]^{ -\otimes_{\Lambda} E} && 
\sfC(\Mod \Lambda).  
}\end{xymatrix} 
\end{equation}

The key is the following lemma for which we need to assum
the base ring $\kk$ is a field.

\begin{lemma}\label{sublemma of compatibility lemma}
The restriction functor $D \mapsto D_{\Lambda}$ 
sends projective (resp. injective) $\Lambda^{\mre}$-modules to projective (resp. injective) $\Lambda$-modules. 

Similar statements hold for the restriction functor $D \mapsto {}_{\Lambda} D$. 
\end{lemma}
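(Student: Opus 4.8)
The plan is to observe that both restriction functors are instances of restriction of scalars along a ring homomorphism into $\Lambda^{\mre}=\Lambda^{\op}\otimes_{\kk}\Lambda$, and that, because $\kk$ is a field, $\Lambda^{\mre}$ is a \emph{free} module over $\Lambda$ for each of the relevant one-sided structures.

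Concretely, the functor $D\mapsto D_{\Lambda}$ is restriction along $\iota_r\colon\Lambda\to\Lambda^{\mre}$, $a\mapsto 1\otimes a$, and $D\mapsto{}_{\Lambda}D$ is restriction along $\iota_{\ell}\colon\Lambda^{\op}\to\Lambda^{\mre}$, $a\mapsto a\otimes 1$; by symmetry (replacing $\Lambda$ by $\Lambda^{\op}$ and using $(\Lambda^{\op})^{\mre}\cong\Lambda^{\mre}$) it suffices to treat the first one. First I would record the elementary fact that, fixing a $\kk$-basis $\{e_i\}_{i\in I}$ of $\Lambda$, one has $\Lambda^{\mre}\cong\bigoplus_{i\in I}\Lambda$ both as a right $\Lambda$-module and as a left $\Lambda$-module for the actions induced by $\iota_r$ (these actions only move the second tensor factor, and $\Lambda^{\op}$ is $\kk$-free since $\kk$ is a field); in particular $\Lambda^{\mre}$ is free, hence flat and projective, on each side over $\Lambda$.

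Granting this, the two assertions are formal. For projectives: restriction along $\iota_r$ sends a free $\Lambda^{\mre}$-module to a direct sum of copies of $(\Lambda^{\mre})_{\Lambda}$, which is a free $\Lambda$-module; a projective $\Lambda^{\mre}$-module is a direct summand of a free one, so its restriction is a direct summand of a free $\Lambda$-module, hence projective. For injectives: by the base-change adjunction, restriction along $\iota_r$ is right adjoint to extension of scalars $-\otimes_{\Lambda}\Lambda^{\mre}\colon\Mod\Lambda\to\Mod\Lambda^{\mre}$, and the latter is exact because $\Lambda^{\mre}$ is flat as a left $\Lambda$-module; a right adjoint of an exact functor preserves injective objects, so $(-)_{\Lambda}$ carries injective $\Lambda^{\mre}$-modules to injective $\Lambda$-modules. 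Applying the same two arguments to $\iota_{\ell}$ gives the corresponding statements for $D\mapsto{}_{\Lambda}D$.

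There is no real obstacle here: the argument is entirely formal once the freeness of $\Lambda^{\mre}$ over $\Lambda$ is in hand, and that single point is exactly where the hypothesis that $\kk$ is a field is used — over a general commutative base ring $\Lambda^{\op}$ need not be flat, and the statement can fail. The only thing to be careful about is matching the left/right module structures to the correct adjunction: flatness of ${}_{\Lambda}\Lambda^{\mre}$ for the injective case, and projectivity (freeness) of $(\Lambda^{\mre})_{\Lambda}$ for the projective case.
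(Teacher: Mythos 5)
Your proof is correct and follows essentially the same route as the paper: freeness of $\Lambda^{\mre}$ over $\Lambda$ (using that $\kk$ is a field) for the projective case, and the identification of restriction as the right adjoint of the exact functor $-\otimes_{\Lambda}\Lambda^{\mre}$ (equivalently, as $\Hom_{\Lambda^{\mre}}(\Lambda^{\mre},-)$ with the $\otimes$-$\Hom$ adjunction) for the injective case. The paper phrases the injective step slightly more concretely via the isomorphism $D_{\Lambda}\cong\Hom_{\Lambda^{\mre}}(\Lambda^{\mre},D)$, but the argument is the same.
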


\begin{proof}
In this proof, we make use of the assumption that we are dealing with an algebra over a field. 

Since $\Lambda$ is free over the base field $\kk$, 
the restriction $\Lambda^{\mre}_{\Lambda} = (\Lambda^{\op} \otimes_{\kk} \Lambda)_{\Lambda}$ is free $\Lambda$-module. 
Thus the assertion for projective modules holds. 

To prove the assertion for injective modules, 
first note that the restriction functor is the restriction along 
the algebra homomorphisms 
$\iota: \Lambda \to \Lambda^{\mre}= \Lambda^{\op} \otimes_{\kk} \Lambda$ 
defined by $\iota(a) :=1_{\Lambda^{\op}} \otimes a \  (a \in \Lambda)$. 
Namely, $\Lambda^{\mre}$ has a $\Lambda$-$\Lambda^{\mre}$-bimodule structure 
whose left module structure is induced from $\iota$ 
and 
there exists a natural isomorphism $D_{\Lambda} \cong \Hom_{\Lambda^{\mre}}(\Lambda^{\mre}, D)$ 
of $\Lambda$-modules. 
Since $\Lambda$ is free over the base field $\kk$, 
$\Lambda^{\mre}$ is flat as a left $\Lambda$-module 
when 
it is regarded as left $\Lambda$-module. 
Now it is easy to check that if $D$ is an injective $\Lambda^{\mre}$-module, 
then the $\Lambda$-module  $D_{\Lambda} = \Hom_{\Lambda^{\mre}}(\Lambda^{\mre}, D)$ 
is injective by using $\otimes$-$\Hom$-adjunction. 
\end{proof}

\begin{proof}[Proof of Lemma \ref{compatibility lemma}.]
For simplicity, we only give a proof of the first natural isomorphism. 
The others can be proved in similar ways. 

First  we recall the construction of left derived functors from \cite{Keller:ddc}. 
An object $P$ of $\sfC(\Proj \Lambda)$ 
is said to have property (P) if it has an increasing  filtration $0 = P_{(0)} \subset P_{(1)} \subset \cdots  $ 
such that each graded quotient $P_{(i)}/P_{(i -1)}$ is an object of $\sfC(\Proj \Lambda)$ 
with $0$ differential 
and that $\bigcup_{i \geq 0} P_{(i)} = P$. 
Every object $M \in \sfC(\Mod \Lambda)$ 
is quasi-isomorphic to an object $P \in \sfC(\Proj \Lambda)$ 
having property (P) 
and the derived tensor product $M \lotimes_{\Lambda} E$ 
is defined to be the quasi-isomorphism class of $P \lotimes_{\Lambda} E$. 

It follows from Lemma \ref{sublemma of compatibility lemma} 
that the restriction functor $(-)_{\Lambda}$ preserves the property (P). 
Now the desired statement follows from  
the commutativity of  the right   diagram of \eqref{two commutative diagrams}. 
\end{proof}

Thanks to Lemma \ref{compatibility lemma}, in what follows 
we safely drop the sign of restriction functors.

\begin{remark}\label{general base ring remark}
We remark that 
if we deal with derived functors involving bimodules 
in a suitable way, 
then we can generalize the contents of the paper 
to the case where the base $\kk$ is not a field but a commutative ring. 

As is pointed out in \cite[Remark 1.12]{Yekutieli dualizing complexes} 
in the case where the base commutative ring $\kk$ is not a field,  
a proper way to deal with bimodules is provided by theory of DG-algebras (see e.g. \cite{Positselski}). 

First,  we need to resolve $\Lambda$ as DG-algebras. 
 Namely,  we take a quasi-isomorphism $\tilde{\Lambda} \xrightarrow{\simeq} \Lambda$ of DG-algebra
 with a cofibrant DG-algebra. 
 It is known  that 
 the derived  category $\sfD(\tilde{\Lambda}) $ of DG-$\tilde{\Lambda}$-modules 
 is triangulated equivalent to $\sfD(\Mod \Lambda)$. 
Then,  
 the derived category  $\sfD(\tilde{\Lambda}^{\op} \otimes_{\kk} \tilde{\Lambda})$ 
of DG-$\tilde{\Lambda}^{\op} \otimes_{\kk} \tilde{\Lambda}$-modules  
is served  as a proper derived category of $\Lambda$-$\Lambda$-bimodules. 
An analogue of Lemma \ref{compatibility lemma} holds for this derived category 
even when $\kk$ is a commutative ring. 
Moreover, 
using 
 the derived category  $\sfD(\tilde{\Lambda}^{\op} \otimes_{\kk} \tilde{\Lambda})$ 
 we can prove all the results given in this paper. 
\end{remark}

\subsection{Admissible subcategories and recollements}\label{Recollection of triangulated categories}

In this Section we recall the notion of  an admissible subcategory and related results. 
%For details, we refer \cite{Krause:Localization theory}, \cite{Positselski}. 
%\color{blue}
%I think there exist more appropriate references. 
%\color{black} 

Let $\sfD$ be a triangulated category. 
Recall that a full triangulated subcategory $\sfE  \subset \sfD$ is called \textit{thick} 
if it is closed under taking  direct summands. 
For an object $d \in \sfD$, we denote 
the smallest thick  subcategory of $\sfD$ which contains $d$ 
by $\thick d$  and call it the \textit{thick hull} of $d$. 
An object $e \in \sfD$ belongs to $\thick d$ 
if and only if 
it is obtained from $d$ by taking shifts, cones and direct summands 
finitely many times. 
For a thick subcategory $\sfE$, we define its right orthogonal subcategory to be  
\[
{\sfE}^{\perp} : = \{ d \in \sfD \mid \Hom_{\sfD}(e,d) = 0 \textup{ for all } e \in \sfE\}.
\]  
In the similar way, we define the left orthogonal subcategory ${}^{\perp}\sfE$.

Recall that a thick subcategory $\sfE \subset \sfD$  is said to be  \textit{right} (resp. \textit{left}) \textit{admissible} 
if the inclusion functor $\mathsf{in}_{\sfE} : \sfE \hookrightarrow \sfD$ 
has a right (resp. left) adjoint functor, which will be  often denoted by $\tau$ in the sequel. 
A thick subcategory is called \textit{admissible} 
if it is both left and right admissible. 

We say that a triangulated category $\sfD$ has a \textit{semiorthogonal decomposition}  
$\sfD = \sfE \perp \sfF$ by thick subcategories $\sfE, \sfF \subset \sfD$ 
if $\sfE$ is right admissible  and $\sfE^{\perp} = \sfF$. 
We will use the following  characterization of right admissibility for thick subcategories 
in the proof of Lemma \ref{T proposition}.

%
%\color{blue}
%However, 
%in \cite{Krause:Localization theory} statement is not precise, 
%in \cite{Positselski} no proof is written. 
%\color{black}  

\begin{lemma}[{\cite{Bondal-Kapranov}%see  \cite[Proposition 4.9.1]{Krause:Localization theory}, \cite[Lemma 1.3]{Positselski}
}]\label{right admissible lemma} 
For a thick subcategory $\sfE \subset \sfD$, the following conditions are equivalent. 
\begin{enumerate}[(1)] 
\item $\sfE \subset \sfD$ is right admissible.

\item   
Every $d \in \sfD$   
fits into  an exact triangle $ e \to d \to k \to e[1]$ 
such that $ e \in \sfE, \ k \in {\sfE}^{\perp}$. 
\end{enumerate}
Moreover if one of the above conditions is satisfied, 
then  the right adjoint functor  $\tau$ sends objects $d \in \sfD$ 
to $e \in \sfE$ which is appeared as the left most term 
of the exact triangle in (2).  
\end{lemma}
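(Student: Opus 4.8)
The plan is to run the standard Bondal--Kapranov argument, which is a purely formal consequence of the existence of an adjoint on one side and of the orthogonality relation $\Hom_{\sfD}(\sfE, \sfE^{\perp}) = 0$ together with stability of $\sfE$ and $\sfE^{\perp}$ under shifts. No homological or geometric input beyond the triangulated axioms is needed.

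For the implication $(1) \Rightarrow (2)$, I would start from the counit $\varepsilon_{d} : \tau d \to d$ of the adjunction (suppressing the inclusion $\mathsf{in}_{\sfE}$ from the notation) and complete it to an exact triangle $\tau d \to d \to k \to \tau d[1]$. To see $k \in \sfE^{\perp}$, note that for every $e \in \sfE$ the map $\Hom_{\sfD}(e, \tau d) \to \Hom_{\sfD}(e, d)$ induced by $\varepsilon_{d}$ is, by the defining property of the adjunction and fullness of $\sfE \subset \sfD$, identified with the adjunction isomorphism $\Hom_{\sfE}(e, \tau d) \xrightarrow{\sim} \Hom_{\sfD}(e, d)$, hence bijective; since $\sfE$ is closed under shifts the same holds with $e$ replaced by $e[n]$ for all $n \in \ZZ$. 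Applying $\Hom_{\sfD}(e,-)$ to the triangle and comparing two consecutive terms of the long exact sequence, both adjacent maps are isomorphisms, so $\Hom_{\sfD}(e, k) = 0$. As the left-hand term of the triangle is $\tau d$ by construction, the ``Moreover'' clause holds in this direction.

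For $(2) \Rightarrow (1)$, I would first establish that the triangle in $(2)$ is functorial. Given $f : d \to d'$ and triangles $e_{d} \to d \to k_{d} \to e_d[1]$ and $e_{d'} \to d' \to k_{d'} \to e_{d'}[1]$ with $e_{\bullet} \in \sfE$ and $k_{\bullet} \in \sfE^{\perp}$, applying $\Hom_{\sfD}(e_{d}, -)$ to the second triangle and using $\Hom_{\sfD}(e_{d}, k_{d'}) = \Hom_{\sfD}(e_{d}, k_{d'}[-1]) = 0$ shows $\Hom_{\sfD}(e_{d}, e_{d'}) \xrightarrow{\sim} \Hom_{\sfD}(e_{d}, d')$, so the composite $e_{d} \to d \xrightarrow{f} d'$ lifts uniquely to a morphism $\tau(f) : e_{d} \to e_{d'}$ over $d'$. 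Uniqueness of the lift forces $\tau(g \circ f) = \tau(g) \circ \tau(f)$ and $\tau(\id) = \id$, so $d \mapsto e_{d}$, $f \mapsto \tau(f)$ is a well-defined functor $\tau : \sfD \to \sfE$; taking $d = d'$, $f = \id$ also shows the triangle is unique up to a unique isomorphism, so $\tau$ does not depend on the chosen triangles. Finally, for $e \in \sfE$ and $d \in \sfD$, applying $\Hom_{\sfD}(e,-)$ to $e_{d} \to d \to k_{d} \to e_d[1]$ and using $k_{d} \in \sfE^{\perp}$ (for $k_{d}$ and for $k_{d}[-1]$) yields $\Hom_{\sfE}(e, \tau d) = \Hom_{\sfD}(e, e_{d}) \xrightarrow{\sim} \Hom_{\sfD}(e, d)$; naturality in $d$ is exactly the functoriality just established, and naturality in $e$ is immediate. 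Hence $\tau$ is right adjoint to $\mathsf{in}_{\sfE}$, and $\tau d$ is by definition the left-hand term of the triangle, which is the ``Moreover'' clause.

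The step requiring the most care, and the main obstacle, is the functoriality of the assignment $d \mapsto e_{d}$ in $(2) \Rightarrow (1)$: one must verify the unique lifts compose correctly, that $\tau(\id) = \id$, and that the whole construction is independent of the chosen triangles. Everything reduces to a diagram chase exploiting $\Hom_{\sfD}(\sfE, \sfE^{\perp}) = 0$ and the shift-stability of $\sfE$ and $\sfE^{\perp}$, so the difficulty is purely bookkeeping rather than conceptual.
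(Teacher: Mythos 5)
Your argument is correct and is precisely the standard Bondal--Kapranov proof; the paper itself gives no proof of this lemma but simply cites \cite{Bondal-Kapranov}, where the same argument appears. Both directions, including the functoriality/uniqueness bookkeeping in $(2)\Rightarrow(1)$ and the identification of $\tau(d)$ with the left-hand term of the triangle, are handled correctly.
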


%Later in Section \ref{Locally perfect complexes:Applications},  
%we need to use following property of a semiorthogonal decomposition and a recollement. 
Assume that we have a semiorthogonal decomposition $\sfD = \sfE \perp \sfF$. 
We have the following commutative diagram
\[
\begin{xymatrix}{ 
\sfE \ \ar@{^{(}->}[rr]^{\mathsf{in}_{\sfE}} 
\ar@{..>}[d]^{\simeq}_{(\mathsf{qt}_{\sfF} \circ \mathsf{in}_{\sfE})|_{\sfE} \ \ } && 
\sfD \ar@{=}[d] \ar@{->>}[rr]^{\mathsf{qt}_{\sfE}} && 
\sfD/\sfE \\
\sfD/\sfF && \sfD \ar@{->>}[ll]_{\mathsf{qt}_{\sfF}}  
&&\ \sfF, \ar@{..>}[u]_{\simeq} \ar@{_{(}->}[ll]_{\mathsf{in}_{\sfF}}
}\end{xymatrix}
\quad
\begin{xymatrix}{
\sfE && \sfD \ar[ll]_{\tau}  \ar@{=}[d] \\ 
\sfD/\sfF \ar[u]^{ ( (\mathsf{qt}_{\sfF} \circ \mathsf{in}_{\sfE})|_{\sfE})^{-1} \ \ } 
&& 
\sfD \ar[ll]^{\mathsf{qt}_{\sfF}}
}\end{xymatrix}
\]
where $\mathsf{in}_{\sfE}, \mathsf{in}_{\sfF}$ are canonical inclusions and 
$\mathsf{qt}_{\sfE}, \mathsf{qt}_{\sfF}$ are canonical quotient functors. 
It is known that 
the restriction functor  
$(\mathsf{qt}_{\sfF} \circ \mathsf{in}_{\sfE})|_{\sfE}$ 
is an equivalence and that 
the composite functor 
$((\mathsf{qt}_{\sfF} \circ \mathsf{in}_{\sfE})|_{\sfE})^{-1} \circ \mathsf{qt}_{\sfF}$ 
is naturally isomorphic to the right adjoint functor $\tau$ of $\mathsf{in}_{\sfE}$. 
We mention that  the similar composite functor gives an equivalence $\sfF \xrightarrow{\sim} \sfD/\sfE$.  
We can immediately check the following lemma. 

\begin{lemma}\label{right adjoint lemma}
For $d, d' \in \sfD$, we have the following natural isomorphism
\[
\Hom_{\sfD}(\tau (d), d') \cong \Hom_{\sfD/\sfF} ( \mathsf{qt}_{\sfF}(d), \mathsf{qt}_{\sfF}(d'))
\]
where we suppress the inclusion functor $\mathsf{in}_{\sfE}$ as custom and 
write the object $\mathsf{in}_{\sfE} (\tau (d))$ of $\sfD$  as $\tau(d)$.
\end{lemma}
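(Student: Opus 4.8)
The plan is to deduce the isomorphism formally from the adjunction $\mathsf{in}_{\sfE} \dashv \tau$ together with the two facts recorded just above the statement: that
\[
\phi := (\mathsf{qt}_{\sfF} \circ \mathsf{in}_{\sfE})|_{\sfE} \colon \sfE \xrightarrow{\ \sim\ } \sfD/\sfF
\]
is an equivalence of triangulated categories, and that $\phi^{-1} \circ \mathsf{qt}_{\sfF} \cong \tau$, equivalently $\mathsf{qt}_{\sfF} \cong \phi \circ \tau$ as functors $\sfD \to \sfD/\sfF$.

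First I would invoke the adjunction. Since $\tau$ is the right adjoint of $\mathsf{in}_{\sfE}$, substituting $e = \tau(d)$ into the adjunction isomorphism $\Hom_{\sfD}(\mathsf{in}_{\sfE}(e), d') \cong \Hom_{\sfE}(e, \tau(d'))$ yields a natural isomorphism
\[
\Hom_{\sfD}(\tau(d), d') \;\cong\; \Hom_{\sfE}(\tau(d), \tau(d')),
\]
where on the left we write $\tau(d)$ for $\mathsf{in}_{\sfE}(\tau(d))$ as in the statement. Next, because $\phi$ is an equivalence it induces a natural isomorphism $\Hom_{\sfE}(\tau(d), \tau(d')) \cong \Hom_{\sfD/\sfF}(\phi\tau(d), \phi\tau(d'))$. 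Finally, substituting $\phi \circ \tau \cong \mathsf{qt}_{\sfF}$ into both slots gives $\Hom_{\sfD/\sfF}(\phi\tau(d), \phi\tau(d')) \cong \Hom_{\sfD/\sfF}(\mathsf{qt}_{\sfF}(d), \mathsf{qt}_{\sfF}(d'))$. Composing the three displayed isomorphisms proves the claim, and since each of them is natural in $d$ and $d'$, so is the composite.

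This is a purely formal diagram chase with no genuine obstacle; the only point meriting attention is bookkeeping the variances, namely that one applies the adjunction with $\mathsf{in}_{\sfE}$ as the left adjoint (so that $\tau(d)$ ends up in the covariant slot of $\Hom_{\sfE}$), and that one uses the natural isomorphism $\mathsf{qt}_{\sfF} \cong \phi \circ \tau$ rather than some variant of it. Alternatively, one could argue via Lemma \ref{right admissible lemma}: the triangle $\tau(d) \to d \to k \to \tau(d)[1]$ with $k \in \sfE^{\perp} = \sfF$ shows $\mathsf{qt}_{\sfF}(\tau(d)) \xrightarrow{\sim} \mathsf{qt}_{\sfF}(d)$ (and likewise for $d'$), after which $\phi$ and full faithfulness of $\mathsf{in}_{\sfE}$ reduce the statement to the same adjunction identity; but the computation above is shorter.
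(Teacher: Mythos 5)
Your proof is correct and is exactly the verification the paper intends: the paper gives no explicit argument (it says the lemma "can be immediately checked" from the two facts stated just before it, namely that $(\mathsf{qt}_{\sfF}\circ\mathsf{in}_{\sfE})|_{\sfE}$ is an equivalence and that its inverse composed with $\mathsf{qt}_{\sfF}$ is naturally isomorphic to $\tau$), and your three-step chase through the adjunction, full faithfulness of the equivalence, and the isomorphism $\mathsf{qt}_{\sfF}\cong\phi\circ\tau$ is precisely that check.
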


A thick subcategory $\sfE \subset \sfD$ is admissible 
if and only if it fits into the following recollement of the left of \eqref{201803061422}.
\begin{equation}\label{201803061422}
\begin{xymatrix}{
\sfE \ar[rr]^{\mathsf{in}_{\sfE}} 
&& \sfD \ar@/^5mm/[ll]^{\tau} \ar@/_5mm/[ll] \ar[rr]^{\mathsf{qt}_{\sfE}}
&& \sfD/\sfE, \ar@/^5mm/[ll]  \ar@/_5mm/[ll]
}\end{xymatrix}
\begin{xymatrix}{
\sfD/ \sfF \ar[rr]  
&& \sfD \ar@/^5mm/[ll]^{\mathsf{qt}_{\sfF}} \ar@/_5mm/[ll] \ar[rr]
&& \sfF. \ar@/^5mm/[ll]^{\mathsf{in}_{\sfF}}  \ar@/_5mm/[ll]
}\end{xymatrix}
\end{equation}
If an admissible subcategory $\sfE$ is a piece of  a semi-orthogonal decomposition $\sfD = \sfE \perp \sfF$, 
then the recollement of the left of \eqref{201803061422} 
become the recollement of the right of \eqref{201803061422} 
via the equivalences $\sfE \simeq \sfD/\sfF, \sfD/\sfE \simeq \sfF$ mentioned above.

\section{Locally perfect complexes and locally perfect CM-modules}\label{section lp}

In Section \ref{section lp}, $A = \bigoplus_{i =0 }^{\ell} A_{i}$ denotes a finitely graded Noetherian algebra.  
We introduce notions of locally perfect complex and  locally perfect CM-module over $A$.

\subsection{Locally perfect complexes}

\begin{definition}\label{locally perfect}
A complex $P \in \sfC(\grproj A)$ is called \emph{locally perfect} 
if the complex $\frkp_i P$ belongs to $\sfC^{\mrb}(\proj \Lambda)$ for any $i \in \mathbb{Z}$.
We denote by 
$\sfC_{\mrlp}(\grproj A)$ 
the full subcategory of $\sfC(\grproj A)$ consisting of locally perfect complexes.
We define 
\[
\sfC^{\bullet}_{\mrlp}(\grproj A):=\sfC_{\mrlp}(\grproj A) \cap \sfC^{\bullet}(\grproj A).
\] 
where $\bullet =-,\mrb,\mrac$, etc.
We denote by 
$\sfK^{\bullet}_{\mrlp}(\grproj A)$ 
the homotopy category of $\sfC^{\bullet}_{\mrlp}(\grproj A)$.

Next, we discuss locally perfectness 
of objects of derived categories. 
For simplicity, 
we  only deal with the bounded derived category $\sfD^{\mrb}(\grmod A)$. 
An object $M \in \sfD^{\mrb}(\grmod A)$ is called \emph{locally perfect} 
if it is represented by  a locally perfect complex $P \in \sfC_{\mrlp}(\grproj A)$. 
We note that 
$M \in \sfD^{\mrb}(\grmod A)$  is locally perfect 
if and only if 
it is represented by $P \in \sfC_{\mrlp}^{-, \mrb}(\grproj A)$. 
We denote by $
\sfD^{\mrb}_{\mrlp}(\grmod A)
$
the full subcategory of $\sfD^{\mrb}(\grmod A)$ consisting of locally perfect objects. 
Obviously we have
\begin{eqnarray}
\sfK^{\mrb}(\grproj A) \subset \sfD^{\mrb}_{\mrlp}(\grmod A) \subset \sfD^{\mrb}(\grmod A). \label{D_lp>K^b}
\end{eqnarray}
\end{definition}

%We remark that the notion of locally perfect complex works for  not necessary finitely graded algebras. 

Since the functor $\frkp_{i}: \sfK(\grproj A) \to \sfK(\proj A_{0})$ is exact, 
the first statement of  the  lemma  below follows. 

\begin{lemma}
\begin{enumerate}[(1)]
\item $\sfK_{\mrlp}^{\bullet}(\grproj A)$ is a thick subcategory of $\sfK^{\bullet}(\grproj A)$.

\item $\sfD_{\mrlp}^{\mrb}(\grmod A)$ is a thick subcategory of 
$\sfD^{\mrb}(\grmod A)$. 
\end{enumerate}
\end{lemma}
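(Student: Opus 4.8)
The plan is to deduce both assertions from two standard inputs: that each $\frkp_i$ is a triangulated functor (already recorded just above the statement), and that $\sfK^{\mrb}(\proj \Lambda) = \Perf \Lambda$ is a thick subcategory of $\sfK(\proj \Lambda)$. For (2) I would in addition invoke the standard equivalence between $\sfD^{\mrb}(\grmod A)$ and the homotopy category $\sfK^{-,\mrb}(\grproj A)$ of bounded-above complexes of finitely generated graded projectives with bounded cohomology, which is available because $A$ is graded Noetherian (so every object of $\sfD^{\mrb}(\grmod A)$ admits a projective resolution of that shape, and $\Hom$ out of a bounded-above complex of projectives agrees in $\sfK$ and in $\sfD$).

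For (1), I would first observe that $P \in \sfK^{\bullet}(\grproj A)$ lies in $\sfK^{\bullet}_{\mrlp}(\grproj A)$ exactly when $\frkp_i P$ lies in $\sfK^{\mrb}(\proj \Lambda)$ for every $i \in \ZZ$, so that $\sfK^{\bullet}_{\mrlp}(\grproj A) = \bigcap_{i \in \ZZ} \frkp_i^{-1}(\sfK^{\mrb}(\proj \Lambda))$ inside $\sfK^{\bullet}(\grproj A)$. Since $\frkp_i$ is triangulated and additive, $\frkp_i^{-1}(\sfK^{\mrb}(\proj \Lambda))$ is a thick subcategory of $\sfK^{\bullet}(\grproj A)$: it is closed under $[\pm 1]$ because $\frkp_i$ commutes with the shift, under cones because $\frkp_i$ preserves triangles and $\sfK^{\mrb}(\proj \Lambda)$ is closed under cones, and under direct summands because $\sfK^{\mrb}(\proj \Lambda)$ is. An intersection of thick subcategories is thick, which gives (1). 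The identical argument with $\bullet$ replaced by the combined decoration $-,\mrb$ shows that $\sfK^{-,\mrb}_{\mrlp}(\grproj A)$ is a thick subcategory of $\sfK^{-,\mrb}(\grproj A)$, which is what (2) will need.

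For (2), let $\rho \colon \sfK^{-,\mrb}(\grproj A) \xrightarrow{\ \sim\ } \sfD^{\mrb}(\grmod A)$ be the equivalence sending a complex of projectives to the object it represents. By the observation recorded in Definition \ref{locally perfect}, an object $M$ of $\sfD^{\mrb}(\grmod A)$ is locally perfect if and only if $M \cong \rho(P)$ for some $P \in \sfC^{-,\mrb}_{\mrlp}(\grproj A)$; hence $\sfD^{\mrb}_{\mrlp}(\grmod A)$ is precisely the essential image of $\sfK^{-,\mrb}_{\mrlp}(\grproj A)$ under $\rho$. Since a triangle equivalence carries thick subcategories to thick subcategories, (2) follows from the thickness established in (1).

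The point that requires the most care — the only real, if mild, obstacle — is the interface between the chain-level definition of local perfectness (literal membership of each $\frkp_i P$ in $\sfC^{\mrb}(\proj \Lambda)$) and the triangulated statements, namely closure of $\sfK^{\bullet}_{\mrlp}(\grproj A)$ under direct summands taken inside $\sfK^{\bullet}(\grproj A)$. Concretely, one must check that if $P \in \sfC^{\bullet}(\grproj A)$ has $\frkp_i P$ perfect over $\Lambda$ for all $i$, then $P$ is homotopy equivalent to an honestly locally perfect complex. I would obtain this from the canonical decomposition $P \cong \bigoplus_i \frkt_i P$ with $\frkt_i P = \frkp_i P \otimes_{\Lambda} A(-i)$ recalled above: choosing for each $i$ a bounded complex $Q_i \in \sfC^{\mrb}(\proj \Lambda)$ homotopy equivalent to $\frkp_i P$, one gets $P \simeq \bigoplus_i Q_i \otimes_{\Lambda} A(-i)$, and applying $\frkp_j$ to the latter returns $Q_j$, which is bounded; boundedness above and boundedness of cohomology are inherited from $P$, so this new complex lies in $\sfC^{-,\mrb}_{\mrlp}(\grproj A)$ (and in $\sfC^{\mrac}_{\mrlp}(\grproj A)$ in the acyclic case, after dropping the boundedness hypotheses). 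Since a direct summand of a perfect complex of $\Lambda$-modules is again perfect, this yields the required closure, and the finiteness of the decomposition for finitely generated $P$ is guaranteed by Lemma \ref{fg lemma}. Once this compatibility is settled, everything else is formal.
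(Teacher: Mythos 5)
Your main line of argument for (1) --- writing $\sfK^{\bullet}_{\mrlp}(\grproj A)$ as $\bigcap_{i\in\ZZ}\frkp_i^{-1}\bigl(\sfK^{\mrb}(\proj \Lambda)\bigr)$ and using that each $\frkp_i$ is triangulated while $\sfK^{\mrb}(\proj\Lambda)$ is thick --- is exactly the paper's (one-sentence) argument, and your route to (2) via the equivalence $\sfK^{-,\mrb}(\grproj A)\simeq \sfD^{\mrb}(\grmod A)$ together with the remark in Definition \ref{locally perfect} is the standard way to fill in the part the paper leaves unsaid. Both of these are fine as stated.

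The auxiliary step you single out as ``the only real obstacle'' is, however, justified incorrectly. You claim that from the termwise decomposition $P^{n}\cong\bigoplus_i \frkt_i P^{n}$ one obtains a homotopy equivalence of complexes $P\simeq\bigoplus_i Q_i\otimes_\Lambda A(-i)$ after replacing each $\frkp_iP$ by a homotopy-equivalent bounded $Q_i$. This is false: the isomorphism $P\cong\bigoplus_i\frkt_iP$ holds degreewise as graded modules, but the differential of $P$ does not respect this decomposition --- its components $\frkt_iP^{n}\to\frkt_{j}P^{n+1}$ with $j<i$ are exactly what give rise to the connecting morphisms $\sfq_i$ of Lemma \ref{adasore lemma 4.2}. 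Concretely, for $A=\kk[x]/(x^{2})$ with $\deg x=1$ and $P=\bigl(A(-1)\xrightarrow{\,x\cdot\,}A\bigr)$, both $\frkp_{0}P$ and $\frkp_{1}P$ are bounded (the induced differential on $P\otimes_{A}\Lambda$ vanishes), yet $P$ is not homotopy equivalent to $\frkt_{0}P\oplus\frkt_{1}P$ with the zero differential, since the cohomologies disagree. The statement you actually need --- that a complex whose $\frkp_iP$ are all perfect is homotopy equivalent to a complex $P'$ with prescribed, literally bounded $\frkp_iP'=Q_i$ --- is true, but it is the content of \cite[Lemma 4.7]{adasore}, which the paper invokes for precisely this purpose in the proof of Proposition \ref{lp_resol}; the new differential has to be assembled from the chosen homotopy equivalences $\frkp_iP\simeq Q_i$ rather than taken to be a direct sum. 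If you replace your decomposition argument by an appeal to that lemma (or simply adopt the homotopy-invariant reading of the condition ``$\frkp_iP\in\sfC^{\mrb}(\proj\Lambda)$'', which is how the paper uses local perfectness throughout), the rest of your proof goes through.
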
 

We can deduce the following lemma from Lemma \ref{quasi-Veronese lemma}. 

\begin{lemma}\label{qv lp lemma}
The equivalence $\sfqv$ induces an equivalence 
\[
\sfqv: \sfD^{\mrb}_{\mrlp}(\grmod A) \xrightarrow{\sim} \sfD^{\mrb}_{\mrlp}(\grmod A^{[\ell]}).
\]
\end{lemma}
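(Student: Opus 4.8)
The plan is to reduce the statement to the explicit comparison of the decomposition functors $\frkp_{i}$ on the two sides furnished by Lemma \ref{quasi-Veronese lemma}. Write $B := A^{[\ell]}$ and $\Lambda := A_{0}$, so that $B_{0} = \nabla A$. First I would record the ``soft'' part: since $\sfqv : \GrMod A \xrightarrow{\sim} \GrMod B$ is a $\kk$-linear equivalence restricting to an equivalence $\grmod A \xrightarrow{\sim} \grmod B$, it is exact and carries finitely generated graded projective $A$-modules to finitely generated graded projective $B$-modules (projectivity and finite generation being categorical properties), and likewise for a quasi-inverse $\sfqv^{-1}$. Consequently $\sfqv$ induces an equivalence on the ambient bounded derived categories $\sfD^{\mrb}(\grmod A)\xrightarrow{\sim}\sfD^{\mrb}(\grmod B)$, carrying $\sfC^{-,\mrb}(\grproj A)$ to $\sfC^{-,\mrb}(\grproj B)$ and back, and with $\sfqv P$ representing $\sfqv M$ whenever $P$ represents $M$. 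By Definition \ref{locally perfect} it then remains only to verify that a complex $P \in \sfC^{-,\mrb}(\grproj A)$ is locally perfect over $A$ if and only if $\sfqv P$ is locally perfect over $B$.

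For that I would apply Lemma \ref{quasi-Veronese lemma} in each cohomological degree; the isomorphisms there being natural in $P$, they assemble into an isomorphism of complexes of $\nabla A$-modules
\[
\frkp_{i}\sfqv P \ \cong\ \bigoplus_{r=0}^{\ell-1} (\frkp_{i\ell+r}P)\otimes_{\Lambda} R_{r}, \qquad i \in \ZZ,
\]
with $\frkp_{i}\sfqv P \in \sfC(\proj \nabla A)$ since $\sfqv P \in \sfC(\grproj B)$. The key observation is that tensoring with $R_{r}$ over $\Lambda$ detects vanishing degreewise: the underlying left $\Lambda$-module of $R_{r}$ is $A_{0}\oplus A_{1}\oplus\cdots\oplus A_{\ell-1-r}$, which contains ${}_{\Lambda}\Lambda$ as a direct summand, so $X\otimes_{\Lambda}R_{r}=0$ forces $X=0$ for any right $\Lambda$-module $X$. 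Applied termwise, this shows that the displayed direct sum is bounded as a complex precisely when each $\frkp_{i\ell+r}P$ ($0\le r\le\ell-1$) is bounded. Since each $\frkp_{i\ell+r}P$ is already a complex of finitely generated projective $\Lambda$-modules and every $j\in\ZZ$ is uniquely of the form $i\ell+r$ with $i\in\ZZ$, $0\le r\le\ell-1$, this yields exactly the equivalence ``$\frkp_{i}\sfqv P\in\sfC^{\mrb}(\proj\nabla A)$ for all $i$ $\iff$ $\frkp_{j}P\in\sfC^{\mrb}(\proj\Lambda)$ for all $j$'', i.e.\ ``$\sfqv P$ locally perfect over $B$ $\iff$ $P$ locally perfect over $A$''.

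Assembling the pieces: if $M$ is locally perfect, choose $P\in\sfC^{-,\mrb}_{\mrlp}(\grproj A)$ representing it, so $\sfqv P$ represents $\sfqv M$ and is locally perfect by the previous paragraph; conversely, if $\sfqv M$ is locally perfect, choose a locally perfect $Q\in\sfC^{-,\mrb}_{\mrlp}(\grproj B)$ representing it and apply the argument to $\sfqv^{-1}Q\in\sfC^{-,\mrb}(\grproj A)$, which represents $M$ and is forced to be locally perfect because $\sfqv(\sfqv^{-1}Q)\cong Q$ is. Thus $\sfqv$ restricts to an essentially surjective, fully faithful functor $\sfD^{\mrb}_{\mrlp}(\grmod A)\to\sfD^{\mrb}_{\mrlp}(\grmod B)$, which is the assertion.

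I expect the only genuinely non-formal point to be the ``only if'' half of ``$\sfqv P$ locally perfect $\iff$ $P$ locally perfect'': one has to be sure that tensoring a term of $\frkp_{i\ell+r}P$ with $R_{r}$ over $\Lambda$ neither annihilates a nonzero module nor turns an unbounded complex into a bounded one, which is exactly what the direct-summand observation ${}_{\Lambda}\Lambda \subseteq {}_{\Lambda}R_{r}$ takes care of. Apart from that there is nothing deep: it is a matter of checking carefully that $\sfqv$ preserves finite generation, graded projectivity and (co)boundedness, and that Lemma \ref{quasi-Veronese lemma} is natural enough to produce an isomorphism of complexes rather than merely of modules in each degree.
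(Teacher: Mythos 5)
Your proposal is correct and follows exactly the route the paper intends: the paper gives no written proof beyond the remark that the lemma ``can be deduced from Lemma \ref{quasi-Veronese lemma}'', and your argument is precisely that deduction, with the one non-formal point (that $-\otimes_{\Lambda}R_{r}$ reflects vanishing because ${}_{\Lambda}\Lambda = A_{0}$ is a direct summand of ${}_{\Lambda}R_{r}$) correctly identified and handled.
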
 

%From now, we mainly deal with the case where $A = \Lambda \oplus C$ is a trivial extension algebra. 

If $\pd_{A_{0}}A < \infty$, 
then the locally perfectness of $M \in \sfD^{\mrb}(\grmod A)$ can be checked by looking 
graded pieces $M_{i} \in \sfD^{\mrb}(\mod A_{0})$. 

\begin{proposition}\label{lp_resol}
Assume that $\pd_{A_{0}} A < \infty$.  
Then, an object $M \in \sfD^{\mrb}(\grmod A)$ is locally perfect 
if and only if  $M_i$ belongs to $\sfK^{\mrb}(\proj A_{0} )$  for any $i\in\ZZ$. 
 %In other word we have 
%\[\sfD^{\mrb}_{\mrlp}(\grmod A)=\{ M \in \sfD^{\mrb}(\grmod A) \mid  M_i  \in \sfK^{\mrb}(\proj \Lambda)  \textup{ for } i \in \ZZ \}.\]
\end{proposition}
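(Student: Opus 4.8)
Write $\Lambda:=A_{0}$, and for a complex $P\in\sfC(\grproj A)$ recall the decomposition $P^{n}\cong\bigoplus_{j}\frkt_{j}(P^{n})$ with $\frkt_{j}(P^{n})=\frkp_{j}(P^{n})\otimes_{\Lambda}A(-j)$, so that the internal degree $i$ part is $(P^{n})_{i}\cong\bigoplus_{k=0}^{\ell}\frkp_{i-k}(P^{n})\otimes_{\Lambda}A_{k}$. A preliminary observation to record first: each $A_{k}$ is a direct summand of $A$ as a right $\Lambda$-module, so $\pd_{\Lambda}(A_{k})\le\pd_{\Lambda}A<\infty$; hence for a finitely generated projective right $\Lambda$-module $Q$ the module $Q\otimes_{\Lambda}A_{k}$ is a summand of a finite power of $A_{k}$ and has finite projective dimension, and therefore $-\otimes_{\Lambda}A_{k}$ carries $\sfK^{\mrb}(\proj\Lambda)$ into itself (a bounded complex of modules of finite projective dimension has finite projective dimension, and such a complex of projectives computes the derived tensor). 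For the ``only if'' direction I would represent $M$ by $P\in\sfC_{\mrlp}^{-,\mrb}(\grproj A)$; since $P$ is bounded above and the finitely many complexes $\frkp_{j}P$ ($j=i-\ell,\dots,i$) are bounded, $M_{i}=P_{i}$ is a bounded complex each of whose terms $\frkp_{i-k}(P^{n})\otimes_{\Lambda}A_{k}$ has finite $\Lambda$-projective dimension, so $M_{i}\in\sfK^{\mrb}(\proj\Lambda)$.

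For the ``if'' direction the plan is to reduce to a single test object. As $M\in\sfD^{\mrb}(\grmod A)$ and $A$ is finitely graded, $M$ is bounded in internal degree, say represented by a bounded complex of finitely generated graded modules supported in degrees $[a,b]$. The exact sequences $0\to M_{j}\to M_{<j+1}\to M_{<j}\to 0$ give triangles in $\sfD^{\mrb}(\grmod A)$ exhibiting $M=M_{<b+1}$ as built from the pure graded pieces $M_{a},\dots,M_{b}$ by finitely many cones, where each $M_{j}$ is a complex of $\Lambda$-modules inflated along $A\twoheadrightarrow\Lambda$ and concentrated in internal degree $j$. By hypothesis $M_{j}\in\sfK^{\mrb}(\proj\Lambda)=\thick_{\sfD(\Mod\Lambda)}(\Lambda)$, so applying the exact inflate-and-shift functor $\sfD(\Mod\Lambda)\to\sfD(\GrMod A)$, which sends $\Lambda$ to $(A/A_{\ge 1})(-j)$, gives $M_{j}\in\thick\bigl((A/A_{\ge 1})(-j)\bigr)$. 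Since $\sfD^{\mrb}_{\mrlp}(\grmod A)$ is thick and stable under internal shifts (from $\frkp_{i}(P(-j))=\frkp_{i-j}P$), it suffices to prove that the graded module $A/A_{\ge 1}$ is locally perfect.

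To see this, I would choose a graded projective resolution $P\to A/A_{\ge 1}$ with every $P^{n}$ concentrated in internal degrees $\ge 0$ (possible because $A/A_{\ge 1}$ sits in degree $0$ and $A$ is non-negatively graded); then $\frkp_{i}P=0$ for $i<0$ and $\frkp_{i}P\in\sfC^{-}(\proj\Lambda)$ for all $i$. For $i\ge 0$ I would use the finite filtration of the complex $P_{i}$ induced by the subcomplexes $\bigl(\bigoplus_{j\le m}\frkt_{j}P\bigr)_{i}$ of $P_{i}$, a legitimate filtration since the differential of $P$ sends $\frkt_{j}P$ into $\bigoplus_{j'\le j}\frkt_{j'}P$ (this is the $\ell$-step analogue of Lemma \ref{adasore lemma 4.2}), whose associated graded pieces are $\frkp_{i-k}P\otimes_{\Lambda}A_{k}\simeq\frkp_{i-k}P\lotimes_{\Lambda}A_{k}$ for $k=0,\dots,\ell$. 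Now induct on $i$: the filtration of $P_{0}$ is trivial since $\frkp_{-k}P=0$ for $k\ge 1$, so $\frkp_{0}P\simeq(A/A_{\ge 1})_{0}=\Lambda$ is perfect; assuming $\frkp_{i'}P\in\sfK^{\mrb}(\proj\Lambda)$ for $0\le i'<i$, all pieces with $k\ge 1$ are perfect by the preliminary observation, hence so is the subcomplex of $P_{i}$ they filter, whereas $P_{i}\simeq(A/A_{\ge 1})_{i}=0$ for $i\ge 1$; the top triangle of the filtration then identifies $\frkp_{i}P$, up to shift, with that perfect subcomplex, so $\frkp_{i}P\in\sfK^{\mrb}(\proj\Lambda)$. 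Thus $P$ is locally perfect, which finishes the argument.

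The step I expect to be the main obstacle is making the filtration of $P_{i}$ precise for general $\ell$: one must verify that the associated graded of $\bigoplus_{j\le m}\frkt_{j}P$ really is $\frkp_{m}P\otimes_{\Lambda}A(-m)$ with the expected differential. An alternative that sidesteps this is to pass first to the quasi-Veronese algebra $A^{[\ell]}=\nabla A\oplus\Delta A$ via Lemma \ref{qv lp lemma} and apply Lemma \ref{adasore lemma 4.2} verbatim; but that route would require separately checking that $\pd_{A_{0}}A<\infty$ forces $\pd_{\nabla A}\Delta A<\infty$ and that the ``graded pieces are perfect'' hypothesis is preserved by $\sfqv$, so the direct filtration argument seems the cleaner choice.
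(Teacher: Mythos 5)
Your ``only if'' direction is fine, and in the ``if'' direction the overall architecture (working with general $\ell$ via the filtration by $\bigoplus_{j\le m}\frkt_{j}P$, and reducing to the single object $A/A_{\ge 1}$ using thickness of $\sfD^{\mrb}_{\mrlp}(\grmod A)$) is a legitimate alternative to the paper's route, which instead reduces to the trivial extension case by the quasi-Veronese construction and then inducts on $i$ using the triangle \eqref{kurumimochi}. The filtration you worry about is not the real obstacle: since $\frkt_{j}P^{n}$ is generated in internal degree $j$ and $A$ is non-negatively graded, the differential does send $\frkt_{j}P$ into $\bigoplus_{j'\le j}\frkt_{j'}P$, and the associated graded carries the differential of $\frkp_{j}P$ tensored with $A(-j)$, so that part goes through.

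The genuine gap is the last sentence of your main argument: ``so $\frkp_{i}P\in\sfK^{\mrb}(\proj\Lambda)$. Thus $P$ is locally perfect.'' What you have proved is that each $\frkp_{i}P$ is \emph{homotopy equivalent} to a bounded complex; Definition \ref{locally perfect} requires an actual representative $P'\in\sfC_{\mrlp}(\grproj A)$, i.e.\ a complex of \emph{finitely generated} graded projective $A$-modules with each $\frkp_{i}P'$ literally lying in $\sfC^{\mrb}(\proj\Lambda)$. Bridging this is the entire second half of the paper's ``if''-direction proof and it is not automatic: one must (a) choose bounded replacements $Q_{i}\simeq\frkp_{i}P$ and reassemble them into a complex $P'$ of graded projective $A$-modules homotopic to $P$ (this uses \cite[Lemma 4.7]{adasore}), and (b) check that each term $(P')^{n}$ is still finitely generated, which by Lemma \ref{fg lemma} forces $Q_{i}^{n}=0$ for $i\gg 0$ at every fixed cohomological degree $n$. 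Point (b) requires an amplitude estimate such as $\ub(\frkp_{i}P)\to-\infty$ as $i\to\infty$ (in the paper this comes from Corollary \ref{black thunder}.(1), after normalizing $Q_{i}$ so that its top nonzero degree equals $\ub(\frkp_{i}P)$); your filtration does in fact yield $\ub(\frkp_{i}P)\le\max_{1\le k\le\ell}\ub(\frkp_{i-k}P)-1$ for the resolution of $A/A_{\ge 1}$, so the estimate is available, but neither (a) nor (b) appears in your write-up, and without them the conclusion ``$A/A_{\ge 1}$ is locally perfect'' is not established. The same representative-level issue is hidden in your appeal to thickness of $\sfD^{\mrb}_{\mrlp}(\grmod A)$ under direct summands, which the paper also states without proof; closure under cones is harmless (the cone of a chain map of locally perfect complexes is termwise their direct sum), but you should be aware that the summand case again requires producing an honest locally perfect representative.
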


First we show that we can reduce the problem to the case of a trivial extension algebra  by using the quasi-Veronese algebra construction. 

\begin{lemma}\label{sublemma of lp_resol}
Assume that $\pd_{A_{0}} A < \infty$.  
Let $M $ be an object of $\sfD^{\mrb}(\grmod A)$.
Then the $i$-th graded submodule $M_{i}$ belongs to $\sfK^{\mrb}(\proj A_{0})$ for $i \in \ZZ$ 
if and only if 
$(\sfqv M)_{i}$ belongs to $\sfK^{\mrb}(\proj \nabla A)$ for $i \in \ZZ$. 
\end{lemma}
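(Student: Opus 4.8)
The plan is to unravel both sides of the claimed equivalence using the explicit formula for the graded pieces of $\sfqv M$ and the block decomposition $\nabla A \cong \bigoplus_{r=0}^{\ell-1} R_{r}$, then compare term by term. First I would recall that, for $M \in \sfD^{\mrb}(\grmod A)$, the $i$-th graded piece of $\sfqv M$ is, by the definition of $\sfqv$ (at the level of complexes, applied to a representative), the object
\[
(\sfqv M)_{i} \;\cong\; M_{i\ell} \oplus M_{i\ell+1} \oplus \cdots \oplus M_{(i+1)\ell-1}
\]
in $\sfD^{\mrb}(\mod A_{0})$, where each $M_{j}$ is regarded as an object of $\sfD^{\mrb}(\mod A_{0}) = \sfD^{\mrb}(\mod \Lambda)$ via the restriction along $\Lambda = A_{0} \hookrightarrow A$. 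Since $\sfK^{\mrb}(\proj \Lambda)$ is a thick subcategory of $\sfD^{\mrb}(\mod \Lambda)$, the condition ``$M_{j} \in \sfK^{\mrb}(\proj \Lambda)$ for all $j \in \ZZ$'' is equivalent to ``$(\sfqv M)_{i}$, viewed as a $\Lambda$-complex, lies in $\sfK^{\mrb}(\proj \Lambda)$ for all $i \in \ZZ$.'' So it remains to show that the latter is in turn equivalent to ``$(\sfqv M)_{i} \in \sfK^{\mrb}(\proj \nabla A)$ for all $i$.''

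This reduces the whole lemma to a statement that no longer mentions $\sfqv$: for an object $N \in \sfD^{\mrb}(\mod \nabla A)$, one has $N \in \sfK^{\mrb}(\proj \nabla A)$ if and only if the underlying $\Lambda$-complex $N_{\Lambda}$ (restriction along $\Lambda \hookrightarrow \nabla A$, say via the diagonal corner inclusions, or more precisely the restriction coming from the functor relating $\nabla A$-modules to $\Lambda$-modules used in Lemma \ref{quasi-Veronese lemma}) lies in $\sfK^{\mrb}(\proj \Lambda)$. The ``only if'' direction is the routine one: each projective $\nabla A$-module $R_{r}$ restricts, as a $\Lambda$-module, to a finite direct sum of $A_{0}$-summands of the $A_{j}$'s, and under the hypothesis $\pd_{\Lambda} A < \infty$ each $A_{j}$ has finite projective dimension over $\Lambda$, hence $R_{r} \in \sfK^{\mrb}(\proj \Lambda)$; since $\sfK^{\mrb}(\proj \nabla A) = \thick\{\text{projectives}\}$ and $\sfK^{\mrb}(\proj \Lambda)$ is thick and closed under the restriction of shifts, cones and summands, the claim follows for all of $\sfK^{\mrb}(\proj \nabla A)$.

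The main obstacle is the ``if'' direction: knowing that $N_{\Lambda}$ is perfect over $\Lambda$, deduce $N$ is perfect over $\nabla A$. Here I would use that $\nabla A$ is the (upper triangular) matrix algebra built from $\Lambda = A_{0}$ and the bimodules $A_{j}$, together with the hypothesis $\pd_{\Lambda} A < \infty$. The strategy is to exploit the recollement / triangular structure of $\nabla A$-modules: a $\nabla A$-module is built (via iterated extensions) from its ``layers'', each of which is a $\Lambda$-module induced up, and perfectness over $\nabla A$ can be tested layer by layer because the comparison between $\nabla A$ and $\Lambda$-projective dimensions is controlled by $\pd_{\Lambda} A_{j} < \infty$. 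Concretely, I would argue that $\sfK^{\mrb}(\proj \nabla A)$ coincides with the thick subcategory of $\sfD^{\mrb}(\mod \nabla A)$ consisting of objects whose restriction to each corner is perfect over $\Lambda$ — this is a standard fact for triangular matrix rings of finite global dimension relative to the diagonal, and here $\gldim \nabla A < \infty$ would follow from $\gldim \Lambda < \infty$ in the applications, though for the lemma as stated we only need the finiteness of $\pd_{\Lambda} A$. I expect the cleanest route is to invoke (or quickly reprove) that for the triangular algebra $\nabla A$, an object of $\sfD^{\mrb}(\mod \nabla A)$ is perfect iff all its corner restrictions are perfect over $\Lambda$, and then simply note that the corner restrictions of $(\sfqv M)_{i}$ are, up to perfect summands, precisely the $M_{j}$'s with $i\ell \le j \le (i+1)\ell-1$, closing the loop with the $\sfqv$-free reduction above.
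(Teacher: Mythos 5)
Your proposal is correct and follows essentially the same route as the paper: both reduce the lemma to the fact that, for the triangular matrix algebra $\nabla A$ with off-diagonal bimodules of finite projective dimension over $A_{0}$, perfectness of an object of $\sfD^{\mrb}(\mod \nabla A)$ is detected by perfectness of its diagonal-corner restrictions over $A_{0}$, and both invoke that fact rather than proving it in full (the paper states it for $2\times 2$ upper triangular algebras, citing the argument of \cite[Proposition 6.1]{adasore}, and iterates, whereas you state it directly for $\nabla A$ — a cosmetic difference). Your identification of the corners of $(\sfqv M)_{i}$ with the graded pieces $M_{i\ell},\dots,M_{(i+1)\ell-1}$ and the thickness argument for the easy direction match the intended proof.
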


\begin{proof}
Let $\Gamma := \begin{pmatrix} \Gamma_{0} & D \\ 0 & \Gamma_{1} \end{pmatrix}$ be an upper triangular matrix algebra 
and $e_{0} := \begin{pmatrix} 1_{\Gamma_{0}} & 0 \\ 0 & 0 \end{pmatrix}, 
e_{1} := \begin{pmatrix} 0 & 0 \\ 0 & 1_{\Gamma_{1}}\end{pmatrix}$.  
Assume that $\Gamma$ is Noetherian. 
We note that it is equivalent  to assume that $\Gamma_{0}$ and $\Gamma_{1}$ are Noetherian and
the modules  ${}_{\Gamma_{0}}D$ and $D_{\Gamma_{1}}$ are  finitely generated over $\Gamma_{0}$ and $\Gamma_{1}$ respectively.  
Assume moreover that $\pd_{\Gamma_{1}} D < \infty$. 
Then, by a similar argument to the proof of \cite[Proposition 6.1]{adasore} 
we can prove that 
an object $N \in \sfD^{\mrb}(\mod \Gamma)$ belongs to $\sfK^{\mrb}(\proj \Gamma)$ 
 if and only if 
 $Ne_{i}$ belongs to $\sfK^{\mrb}(\proj \Gamma_{i})$ 
 for $i = 0,1$.
 
Applying the last statement repeatedly to the quasi-Veronese algebra construction, 
we prove the desired statement. 
\end{proof}

For the proof, we intrduce a notion, which is also used later. 
Let $\Lambda$ be an algebra 
and $ Q \in \sfC(\proj \Lambda)$ a complex 
which is  homotopic to a bounded complex $Q' \in \sfC^{\mrb}(\proj \Lambda)$. 
We define  the \textit{upper bound} $\ub Q \in \ZZ$ of $Q$ 
by  $\ub Q:= \max\{n \in \ZZ \mid  \tuH^{n}(Q) \neq 0 \}$.

\begin{proof}[Proof of Proposition \ref{lp_resol}] 
By Lemma \ref{sublemma of lp_resol}, 
we may assume $A = \Lambda \oplus C$.

We prove ``if" part. 
Let $M\neq 0 $ be an object  in $\sfD^{\mrb}(\grmod A)$ such that $M_i \in \sfK^{\mrb}(\proj \Lambda)$ for  $i\in\ZZ$.
Let  $P \in \sfC^{-,\mrb}(\grproj A)$ be a projective resolution of $M$.   
We note that $M_{i} = P_{i}$ in $\sfD^{\mrb}(\mod \Lambda)$.  
We set $k: = \min\{ i \in \ZZ \mid P_{i} \neq 0\}$.

We claim that $\frkp_{i}P$ belongs to $\sfK^{\mrb}(\proj \Lambda)$ for $i \in \ZZ$.  
The  case $i <  k$ is clear since $\frkp_{i} P =0$. 
The case $i= k$ follows from $P_{k} = \frkp_{k}P$. 
The case $i  > k$ is shown by induction.  
Assume that $\frkp_{i-1}P$ belongs to $\sfK^{\mrb}(\proj \Lambda)$. 
Since  $\pd C_{\Lambda} < \infty$,  
the object $\frkp_{i-1} P \lotimes_{\Lambda} C$ belongs to $\sfK^{\mrb}(\proj \Lambda)$. 
By the assumption $P_{i}$ belongs to $\sfK^{\mrb}(\proj \Lambda)$. 
Using the exact triangle  \eqref{kurumimochi}, we deduce that $\frkp_{i}P$ belongs to $\sfK^{\mrb}(\proj \Lambda)$.

By the claim,  the complex  $\frkp_{i} P \in \sfC^{-}(\proj \Lambda)$ 
is homotopic to some complex $Q_{i} \in \sfC^{\mrb}(\proj \Lambda)$ for  $i \in \ZZ$. 
We may take $Q_{i} = 0$ for $i < k$ and assume that  for $i \geq k$ the following equality holds  
\begin{eqnarray}\label{ineq_P'} 
\max\{ m \in\ZZ  \mid  (Q_{i})^m \neq 0 \} =\ub (\frkp_{i} P). 
\end{eqnarray}
By \cite[Lemma 4.7]{adasore}, 
there is a complex $P' \in \sfC(\grProj A)$ which is homotopic to $P$ such that 
$\frkp_{i}P' = Q_{i}$ in $\sfC(\proj \Lambda)$  for $i \in \ZZ$. 
Observe that $P'$ belongs to $\sfC_{\mrlp}^{-,\mrb}(\GrProj A)$.

To complete the proof, we only have to show 
that  the graded projective $A$-module  $(P')^{n}$ is finitely generated   for $n \in \ZZ$.  
We fix $n \in \ZZ$ and use the criterion of  Lemma \ref{fg lemma}. 
By the construction  
each $\frkp_{i}(P')^{n} = Q_{i}^{n}$ is finitely generated for $i \in \ZZ$ 
and 
$\frkp_{i} (P')^{n} =Q_{i}^{n} = 0$ for $i < k$. 
Therefore, 
it is enough to show  that $\frkp_{i}(P')^{n} = Q_{i}^{n} = 0$ for $i \gg 0$. 

Since $M$ is assumed to have bounded cohomology groups and $A$ is finitely graded, 
there exists $j \in \ZZ$  such that $\tuH(M)_{i} = 0$ for $i \geq j$. 
Thus, using Corollary \ref{black thunder}.(1), 
we deduce that $\ub (\frkp_{i} P )< n $ for $i \gg 0$. 
By \eqref{ineq_P'}, we conclude   that $Q_{i}^{n} = 0$ for $i  \gg 0$ as desired. 
This  completes the proof of ``if" part.

We prove ``only if" part. 
Let  $M \in \sfD^{\mrb}_{\mrlp}(\grmod A)$.
There exists a locally perfect complex $P \in \sfC^{-,\mrb}_{\mrlp}(\grproj A)$ which is quasi-isomorphic to $M$.
Since by the assumption $\frkp_{i} P $ and $\frkp_{i-1} P \lotimes_{\Lambda} C$ belong to $\sfK^{\mrb}(\proj \Lambda)$, 
we conclude by the exact triangle \eqref{kurumimochi} that $M_{i}$ belongs to $\sfK^{\mrb}(\proj \Lambda)$. 
\end{proof}

From the above proposition, 
we deduce a condition that every object $M \in \sfD^{\mrb}(\grmod A)$ is locally perfect.

\begin{corollary}
The following conditions are equivalent. 
\begin{enumerate}[(1)]
\item $\sfD^{\mrb}_{\mrlp}(\grmod A)=\sfD^{\mrb}(\grmod A)$. 

\item $\sfK^{\mrb}(\proj A_{0}) = \sfD^{\mrb}(\mod A_{0})$. 
\end{enumerate}
\end{corollary}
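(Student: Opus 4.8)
The plan is to prove the two implications separately; (2)$\Rightarrow$(1) is a quick consequence of Proposition \ref{lp_resol}, while (1)$\Rightarrow$(2) reduces to a syzygy computation. For (2)$\Rightarrow$(1): since $A$ is finitely graded and Noetherian it is finitely generated as an $A_{0}$-module, and hence so is every finitely generated graded $A$-module; in particular $A$, viewed in $\mod A_{0}$, lies in $\sfK^{\mrb}(\proj A_{0})=\sfD^{\mrb}(\mod A_{0})$ by (2), so $\pd_{A_{0}}A<\infty$. Thus Proposition \ref{lp_resol} applies: an object $M\in\sfD^{\mrb}(\grmod A)$ is locally perfect iff $M_{i}\in\sfK^{\mrb}(\proj A_{0})$ for all $i$. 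For any such $M$ each graded piece $M_{i}$ has bounded, finitely generated cohomology over $A_{0}$, hence lies in $\sfD^{\mrb}(\mod A_{0})=\sfK^{\mrb}(\proj A_{0})$ by (2); so every object of $\sfD^{\mrb}(\grmod A)$ is locally perfect, which is (1).

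For (1)$\Rightarrow$(2): since $\sfD^{\mrb}(\mod A_{0})$ is the thick hull of $\mod A_{0}$ and $\sfK^{\mrb}(\proj A_{0})$ is thick, it suffices to show $\pd_{A_{0}}N<\infty$ for every $N\in\mod A_{0}$. I would regard $N$ as a graded $A$-module $M$ concentrated in degree $0$ via $A\twoheadrightarrow A_{0}$; then $M\in\sfD^{\mrb}(\grmod A)$ is locally perfect by (1). Taking the minimal graded projective resolution $P\to M$, the complex $P$ is homotopy equivalent to any bounded-above locally perfect complex representing $M$, so $\frkp_{0}P\in\sfK^{\mrb}(\proj A_{0})$. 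The argument is then finished once $\frkp_{0}P$ is identified with $N$ in $\sfD(\mod A_{0})$, for then $N\in\sfK^{\mrb}(\proj A_{0})$, i.e.\ $\pd_{A_{0}}N<\infty$.

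For that identification, note that $M$ is concentrated in degrees $\geq 0$, so each $P^{-n}$ is generated in degrees $\geq 0$, hence concentrated in degrees $\geq 0$, and in particular $\frkp_{-1}P=0$. One clean route is to reduce first to the trivial extension case $A=\Lambda\oplus C$ via the quasi-Veronese construction (Lemma \ref{qv lp lemma}, noting that the property ``every finitely generated module over the degree-$0$ algebra has finite projective dimension'' passes between $A_{0}$ and the Beilinson algebra $\nabla A$ by the usual global-dimension estimates for triangular matrix algebras) and then to use the exact triangle $\frkp_{-1}P\otimes_{\Lambda}C\to P_{0}\to\frkp_{0}P\to\frkp_{-1}P\otimes_{\Lambda}C[1]$ of Lemma \ref{adasore lemma 4.2}: with $\frkp_{-1}P=0$ it gives $\frkp_{0}P\cong P_{0}$, and $P_{0}$ is quasi-isomorphic to $M_{0}=N$ because taking degree-$0$ parts is exact. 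Alternatively one argues directly, with no reduction: $(P^{0})_{0}$ is the projective cover of $N$ over $A_{0}$, $(\Omega_{A}M)_{0}=\Omega_{A_{0}}N$, and, inductively on syzygies, $\frkp_{0}P$ with its differential is exactly the minimal projective resolution of $N$ over $A_{0}$, so is quasi-isomorphic to $N$. This identification — the general fact that $\frkp_{0}$ applied to a graded projective resolution of a module concentrated in degree $0$ recovers an $A_{0}$-projective resolution of the underlying module — is the one genuinely new point; everything else is assembled from the results recalled above, so I expect it to be the main obstacle.
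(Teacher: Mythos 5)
Your proposal is correct and follows essentially the same route as the paper: reduce to $A=\Lambda\oplus C$, use Proposition \ref{lp_resol} for (2)$\Rightarrow$(1), and for (1)$\Rightarrow$(2) embed a $\Lambda$-module in degree $0$ and extract its $\Lambda$-projective resolution via $\frkp_{0}$. The ``genuinely new point'' you isolate — that $\frkp_{0}$ of a graded projective resolution of a degree-$0$ module is a $\Lambda$-projective resolution — is exactly \cite[Lemma 4.13]{adasore}, which the paper simply cites, and your derivation of it from the triangle \eqref{kurumimochi} with $\frkp_{-1}P=0$ is sound.
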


\begin{proof}
We may assume $A = \Lambda \oplus C$. 
Assume that the condition (1) holds. 
Then $\sfD^{\mrb}(\mod \Lambda) \subset \sfD^{\mrb}_{\mrlp}(\grmod A)$. 
It follows from \cite[Lemma 4.13]{adasore} that $\sfD^{\mrb}(\mod \Lambda) \subset \sfK^{\mrb}(\proj \Lambda)$.
This shows the implication (1) $\Rightarrow$ (2). 

Assume that the condition (2) holds. 
We remark that  since $A_{i}$ is finitely generated $A_{0}$-module, it has finite projective dimension. 
Now, it is easy to deduce the condition (1)  by using Proposition \ref{lp_resol}.
\end{proof}

In the case where $A$ is finite dimensional, 
the condition (2) is equivalent to the condition $\gldim A_{0} < \infty$. 

\begin{corollary}\label{mrlp corollary}
Assume that $A$ is a finite dimensional graded  algebra. 
Then the equality $\sfD^{\mrb}_{\mrlp}(\grmod A)=\sfD^{\mrb}(\grmod A)$ holds if and only if $\gldim A_0 < \infty$.
\end{corollary}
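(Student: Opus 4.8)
The plan is to deduce this immediately from the preceding Corollary together with the classical description of finite global dimension for finite dimensional algebras. By that Corollary, the equality $\sfD^{\mrb}_{\mrlp}(\grmod A)=\sfD^{\mrb}(\grmod A)$ is equivalent to $\sfK^{\mrb}(\proj A_{0})=\sfD^{\mrb}(\mod A_{0})$ (a finite dimensional $A$ is Noetherian and finitely graded, so the hypotheses of that Corollary are met). Since $A_{0}$ is a $\kk$-subspace of $A$ closed under multiplication, it is a finite dimensional $\kk$-algebra, so it remains to prove: for a finite dimensional algebra $\Lambda$ one has $\sfK^{\mrb}(\proj \Lambda)=\sfD^{\mrb}(\mod \Lambda)$ if and only if $\gldim \Lambda<\infty$.

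For the ``if'' direction I would argue by induction on the width of the cohomology. When $\gldim\Lambda<\infty$, every finitely generated $\Lambda$-module has a finite projective resolution, hence lies in $\sfK^{\mrb}(\proj\Lambda)$. Given an arbitrary $M\in\sfD^{\mrb}(\mod\Lambda)$, the canonical truncation triangle splits off the top nonzero cohomology module (placed in a single degree) from a complex with strictly narrower cohomology; since $\sfK^{\mrb}(\proj\Lambda)$ is a triangulated subcategory of $\sfD^{\mrb}(\mod\Lambda)$ containing all modules, induction gives $M\in\sfK^{\mrb}(\proj\Lambda)$, so the inclusion $\sfK^{\mrb}(\proj\Lambda)\subset\sfD^{\mrb}(\mod\Lambda)$ is an equality.

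For the ``only if'' direction, if $\sfK^{\mrb}(\proj\Lambda)=\sfD^{\mrb}(\mod\Lambda)$ then in particular the semisimple module $\Lambda/\rad\Lambda$ lies in $\sfK^{\mrb}(\proj\Lambda)$, i.e.\ $\pd_{\Lambda}(\Lambda/\rad\Lambda)<\infty$. As $\Lambda/\rad\Lambda$ is the direct sum of (representatives of) all simple $\Lambda$-modules and, for a finite dimensional algebra, $\gldim\Lambda$ is the supremum of the projective dimensions of the simple modules, this yields $\gldim\Lambda=\pd_{\Lambda}(\Lambda/\rad\Lambda)<\infty$. I do not expect any genuine obstacle here: the content is entirely in the previous Corollary, and the remaining input is the standard fact that the global dimension of a finite dimensional algebra is detected on simple modules; the only point needing a word of care is verifying that the general Corollary indeed applies to finite dimensional $A$.
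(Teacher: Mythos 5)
Your proposal is correct and follows the same route as the paper: the paper deduces the corollary directly from the preceding one, remarking only that for a finite dimensional algebra $A_0$ the condition $\sfK^{\mrb}(\proj A_{0})=\sfD^{\mrb}(\mod A_{0})$ is equivalent to $\gldim A_{0}<\infty$. You merely spell out that standard equivalence (truncation induction for one direction, $\pd(\Lambda/\rad\Lambda)$ detecting global dimension for the other), which is exactly the content the paper leaves implicit.
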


The following proposition concerning on the $A$-dualities plays important roles in the sequel.

\begin{proposition}\label{lp_A-dual}
Assume that $A$ is IG and that it has finite projective dimension as 
a left and  right $A_{0}$-module. 
%$\pd_{A_{0}}A< \infty $ and $\pd_{A_{0}^{\op}}A< \infty$.  
Then the $A$-duality $(-)^{*}$  induces an equivalence 
\[
(-)^{*}: =\grRHom_{A}(-,A): \sfD_{\mrlp}^{\mrb}(\grmod A) \stackrel{\sim}{\rightleftarrows}
 \sfD_{\mrlp}^{\mrb}(\grmod A^{\op})^{\op}:
\grRHom_{A^{\op}}(-,A) =:(-)^{*}.
\]
\end{proposition}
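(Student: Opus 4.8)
The plan is to show that $A$-duality preserves the class of locally perfect complexes, i.e.\ that $(-)^{*} = \grRHom_{A}(-,A)$ carries $\sfD^{\mrb}_{\mrlp}(\grmod A)$ into $\sfD^{\mrb}_{\mrlp}(\grmod A^{\op})^{\op}$ and that the two duality functors are mutually quasi-inverse on these subcategories. The starting point is the well-known fact that since $A$ is IG, $(-)^{*}$ already gives an equivalence $\sfD^{\mrb}(\grmod A) \xrightarrow{\sim} \sfD^{\mrb}(\grmod A^{\op})^{\op}$ (represent objects by bounded-above complexes of graded projectives, observe $\grHom_{A}(P,A)$ computes the duality, and use $\injdim A_{A}, \injdim {}_{A}A < \infty$ together with Iwanaga's Proposition~\ref{Iwanaga} to see the result lands in the bounded derived category). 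The biconjugation isomorphism $M \to M^{**}$ is the standard one. So the content is entirely the statement about local perfectness, and by Lemma~\ref{qv lp lemma} and the compatibility of $\sfqv, \sfqv'$ with the $\grHom$-duality recorded in Section~\ref{quasi-Veronese}, I may assume $A = \Lambda \oplus C$ is a trivial extension.

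First I would take $M \in \sfD^{\mrb}_{\mrlp}(\grmod A)$ and represent it by a locally perfect complex $P \in \sfC^{-,\mrb}_{\mrlp}(\grProj A)$, so that each $\frkp_{i}P \in \sfC^{\mrb}(\proj \Lambda)$ and $\frkp_{i}P = 0$ for $i$ below some bound. Then $P^{*} = \grHom_{A}(P,A)$ is a complex of graded projective $A^{\op}$-modules, and by the isomorphism $\frkp_{i}\grHom_{A}(P,A) \cong \Hom_{\Lambda}(\frkp_{-i}P, \Lambda)$ recalled in Section~\ref{dgpm}, each $\frkp_{i}(P^{*})$ is $\Lambda$-dual of a bounded complex of finitely generated projective $\Lambda$-modules, hence again in $\sfC^{\mrb}(\proj \Lambda)$ and zero for $i$ above a bound. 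The only missing piece is that $P^{*}$ actually has \emph{bounded} cohomology as an $A^{\op}$-complex — this is where one uses that $A$ is IG and $\pd {}_{\Lambda}A, \pd A_{\Lambda} < \infty$: combining Proposition~\ref{lp_resol} (applied over $A^{\op}$, whose degree-$0$ part is $\Lambda^{\op}$, which also has finite global dimension behaviour inherited from the hypothesis $\pd$ of $A$ over $A_0$) with the componentwise computation above shows $P^{*}$ represents a locally perfect object of $\sfD^{\mrb}(\grmod A^{\op})$. Thus $(-)^{*}$ restricts as claimed, and symmetrically for $\grRHom_{A^{\op}}(-,A)$; the biconjugation $\id \cong (-)^{**}$ restricts to these subcategories since it is already a natural isomorphism on the ambient derived categories.

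The main obstacle I anticipate is the boundedness of cohomology of $P^{*}$: one knows each graded piece $\frkp_{i}(P^{*})$ is a bounded complex, but a priori the upper bounds $\ub(\frkp_{i}P^{*})$ could grow with $i$ even though only finitely many $i$ are nonzero — so finiteness of the index set rescues us only after checking each individual piece is cohomologically bounded, which is automatic, but then one must also rule out that $P^{*}$ fails to be quasi-isomorphic to a bounded complex of \emph{finitely generated} graded projectives. Here the hypothesis $\pd {}_{A_0}A < \infty$ (both sides) is exactly what feeds Proposition~\ref{lp_resol} over $A^{\op}$: local perfectness of $P^{*}$ is equivalent to $\frkp$-pieces being in $\sfK^{\mrb}(\proj \Lambda^{\op})$, which we verified directly, \emph{plus} the standing finiteness that makes $\sfD^{\mrb}_{\mrlp}$ sit inside $\sfD^{\mrb}(\grmod A^{\op})$; since $A^{\op}$ is IG and Noetherian and $P$ was assumed in $\sfC^{-,\mrb}$, the truncation/Iwanaga argument gives the global cohomological boundedness. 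I would organize the write-up so that the duality-equivalence on the full derived categories is cited or sketched quickly, and the bulk treats the componentwise $\frkp_{i}$-computation together with the reduction to the trivial-extension case via $\sfqv$.
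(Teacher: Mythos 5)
Your argument is correct, and its decisive step takes a genuinely different route from the paper's. Both proofs share the same skeleton --- reduce to $A=\Lambda\oplus C$ by the quasi-Veronese construction, observe that the IG property already makes $(-)^{*}$ an equivalence on the bounded derived categories, and then check that local perfectness is preserved --- but you verify the last point by dualizing the representative complex itself: for $P\in\sfC^{-,\mrb}_{\mrlp}(\grproj A)$ the complex $P^{*}=\grHom_{A}^{\bullet}(P,A)$ satisfies $\frkp_{i}(P^{*})\cong(\frkp_{-i}P)^{\rightvee}$, so it is literally a locally perfect complex representing $M^{*}$, and all that remains is cohomological boundedness of $M^{*}$, which is exactly what $\injdim A_{A}<\infty$ and $\injdim {}_{A}A<\infty$ supply. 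The paper instead tests local perfectness of $M^{*}$ through its \emph{graded pieces} via the left version of Proposition~\ref{lp_resol}, using the exact triangle $\Hom_{\Lambda}(\frkp_{-i+1}P,C)\to(M^{*})_{i}\to\Hom_{\Lambda}(\frkp_{-i}P,\Lambda)\to$ coming from \cite{adasore}; this is where the hypothesis $\pd {}_{\Lambda}C<\infty$ enters, to handle the leftmost term. Your route is more economical: it bypasses that exact triangle, and in the trivial-extension step it does not visibly use the finite-projective-dimension hypothesis at all, whereas the paper's graded-piece formulation is the one reused later (Lemma~\ref{lp=finproj}, Theorem~\ref{dual_prop2}). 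Two small corrections: your appeal to Proposition~\ref{lp_resol} ``applied over $A^{\op}$'' in the boundedness discussion is misplaced --- that proposition is a criterion for local perfectness in terms of graded pieces, not a boundedness statement, and the boundedness of $\tuH(P^{*})$ comes solely from the Iwanaga--Gorenstein property, as you correctly say at the end; and $\frkp_{i}(P^{*})$ lies in $\sfC^{\mrb}(\proj\Lambda^{\op})$, not $\sfC^{\mrb}(\proj\Lambda)$.
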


\begin{proof}
By \cite[Proposition 6.1]{adasore}, 
$\Delta A$ has finite $\nabla A$-projective dimension on both sides. 
Hence, we may reduce the problem to the case where  $A = \Lambda \oplus C$ by Lemma \ref{qv lp lemma}.
 
It is enough to show that $(-)^{*}$ preserves locally perfectness. 
Since the argument is left-right symmetric, 
 it is enough to prove that if $M \in \sfD^{\mrb}(\grmod A)$ is locally perfect, 
then so is $M^{*}$. 
Since $M^{*}$ belongs to $\sfD^{\mrb}(\grmod A^{\op})$, 
it is enough to show that $(M^{*})_{i} $ belongs to $\sfK^{\mrb}(\proj \Lambda^{\op})$ by the left version of Proposition \ref{lp_resol}. 
 
Let  $P \in \sfC^{-,\mrb}_{\mrlp}(\grproj A)$ be a projective resolution of  $M$.
Then, substituting $P' =A(i)$ in the exact sequence (4-5) of \cite{adasore}, 
we obtain  an exact triangle in $\sfD(\mod \Lambda^{\op})$   for $i \in \ZZ$
\[
\Hom_{\Lambda}( \frkp_{-i+1}P, C) \to (M^{*})_{i} \to \Hom_{\Lambda}(\frkp_{-i}P, \Lambda) \to .  
\]
The right most term belongs to $\sfK^{\mrb}(\proj  \Lambda^{\op})$. 
The assumption  $\pd_{\Lambda^{\op}} C < \infty$ implies that  
so does the left most term. 
 Thus we conclude that  so does $(M^{*})_{i}$. 
\end{proof}

%\begin{remark}We can verify the results of this section  for a finitely graded algebra $A= \bigoplus_{i\geq 0}^{\ell}A_{i}$ 
%satisfying suitable assumptions by using quasi-Veronese algebra construction (see \cite[Section 2.4, Section 6]{adasore}).\end{remark}

\subsection{Locally perfect graded CM-modules}

\begin{definition}
For a graded IG-algebra $A$, we define
\[
\grCM_{\mrlp} A=\{ \tuZ^0(P) \ | \ P \in \sfC^{\mrac}_{\mrlp} (\grproj A)  \}.
\]
This is a Frobenius full subcategory of $\grCM A$ containing $\grproj A$.
So the stable category $\stabgrCM_{\mrlp}A$ is a triangulated full subcategory of $\stabgrCM A$.

By the definition, 
it is obvious that the equivalence 
$\underline{\tuZ}^0:\sfK^{\mrac}(\grproj A) \xrightarrow{\simeq} \stabgrCM A$ induces an equivalence 
\[
\underline{\tuZ}^0:\sfK^{\mrac}_{\mrlp}(\grproj A) \xrightarrow{\simeq} \stabgrCM_{\mrlp} A.
\]
\end{definition}

The equivalence $\beta: \stabgrCM A \xrightarrow{\sim} \grSing A$ can be restricted to the locally perfect subcategories. 

\begin{lemma}\label{lp=finproj}
Let $A$ be a finitely graded IG-algebra.
If $A$ has finite projective dimension as a left and right $A_0$-module, we have 
\[
\grCM_{\mrlp} A
=(\grCM A) \cap  \sfD^{\mrb}_{\mrlp}(\grmod A)  
=\{ M \in \grCM A \ | \ \pd (M_{A_0}) < \infty  \}.
\]
\end{lemma}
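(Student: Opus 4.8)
The statement to establish is the chain of equalities
\[
\grCM_{\mrlp} A
=(\grCM A) \cap  \sfD^{\mrb}_{\mrlp}(\grmod A)
=\{ M \in \grCM A \mid \pd (M_{A_0}) < \infty  \}
\]
under the hypotheses that $A$ is a finitely graded IG-algebra of finite projective dimension as a left and right $A_0$-module. I would prove it as two separate equalities, and the first reduction I would make is via the quasi-Veronese algebra construction: using Lemma \ref{qv lp lemma} (and the fact that $\sfqv$ restricts to an equivalence $\grCM A \simeq \grCM A^{[\ell]}$, together with \cite[Proposition 6.1]{adasore} which guarantees the IG and finite-$\pd$ hypotheses pass to $A^{[\ell]}$), I may assume $A = \Lambda \oplus C$ with $\Lambda = A_0$. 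From now on the ambient data is a trivial extension.

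\textbf{First equality.} For the inclusion $\grCM_{\mrlp} A \subset (\grCM A)\cap \sfD^{\mrb}_{\mrlp}(\grmod A)$: if $M = \tuZ^0(P)$ with $P \in \sfC^{\mrac}_{\mrlp}(\grproj A)$, then $M \in \grCM A$ by construction, and the brutal truncation $\sigma^{\geq 0}P$ (a complex in $\sfC^{-,\mrb}_{\mrlp}(\grproj A)$, since $\frkp_i$ commutes with brutal truncation and each $\frkp_i P \in \sfC^{\mrac}(\proj\Lambda)$ has bounded truncations) is a locally perfect resolution of $M$, so $M \in \sfD^{\mrb}_{\mrlp}(\grmod A)$. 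Conversely, given $M \in \grCM A$ which is locally perfect, take its complete resolution: a CM-module admits an acyclic complex $P \in \sfC^{\mrac}(\grproj A)$ with $\tuZ^0(P) = M$, whose positive part $P^{\geq 0}$ is a projective resolution of $M$ and whose negative part is $\grRHom_A$-dual to a projective resolution of $M^*$ over $A^{\op}$. Since $M$ is locally perfect, $P^{\geq 0}$ can be taken in $\sfC^{-,\mrb}_{\mrlp}(\grproj A)$; and by Proposition \ref{lp_A-dual}, $M^*$ is locally perfect over $A^{\op}$, so its projective resolution is also locally perfect, hence so is the dual negative part $P^{\leq 0}$. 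Splicing, $P \in \sfC^{\mrac}_{\mrlp}(\grproj A)$, giving $M \in \grCM_{\mrlp}A$. I expect the bookkeeping with splicing and ensuring local perfectness is preserved under $\grRHom_A(-,A)$ to be the main technical point, but it is handled by Proposition \ref{lp_A-dual}.

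\textbf{Second equality.} For $M \in \grCM A$, I claim $M \in \sfD^{\mrb}_{\mrlp}(\grmod A) \iff \pd(M_\Lambda) < \infty$. Here I use Proposition \ref{lp_resol}: since $\pd_\Lambda A < \infty$ by hypothesis, $M$ is locally perfect iff each graded piece $M_i \in \sfK^{\mrb}(\proj\Lambda)$, i.e.\ $\pd_\Lambda M_i < \infty$ for all $i$. Since $M$ is finitely generated and $A$ is finitely graded, $M_\Lambda = \bigoplus_i M_i$ is a finite direct sum of the $M_i$, so $\pd(M_\Lambda) < \infty \iff \pd_\Lambda M_i < \infty$ for all $i$. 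This closes the chain. Finally I would remark that (going back through $\sfqv$) the equalities transfer back to general finitely graded $A$, completing the proof.
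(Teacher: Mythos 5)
Your proposal is correct and follows essentially the same route as the paper: the second equality via Proposition \ref{lp_resol}, and the nontrivial inclusion of the first equality by splicing a locally perfect projective resolution of $M$ with the $A$-dual of a locally perfect projective resolution of $M^{*}$ (whose local perfectness is supplied by Proposition \ref{lp_A-dual}). The quasi-Veronese reduction you begin with is harmless but not needed, and the direction of the brutal truncations should be checked against the paper's cochain conventions; these are cosmetic points only.
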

\begin{proof}
By Proposition \ref{lp_resol}, we have the second equality. 
It is obvious that $\grCM_{\mrlp} A$ is contained in $
(\grCM A) \cap  \sfD^{\mrb}_{\mrlp}(\grmod A)  $.
In the following, we prove the converse inclusion.

Let $M \in (\grCM A)\cap \sfD^{\mrb}_{\mrlp}(\grmod A)$.
By  Proposition \ref{lp_A-dual}, $M^*$ belongs to $(\grCM A^{\op})\cap \sfD^{\mrb}_{\mrlp}(\grmod A^{\op})$. 
Let $f: Q \xrightarrow{\sim} M^{*}$ be a quasi-isomorphism with $Q \in \sfC^{-,\mrb}_{\mrlp}(\grproj A^{\op})$. 
We may assume that $Q^{> 0} = 0$. 
We regard $Q$ as a projective resolution  
\[ \cdots\cdots \to Q^{1} \to Q^{0} \to M^* \to 0
\]of $M^*$ in $\grmod A^{\op}$.
Applying $\grHom_{A^{\op}}(-,A)$, we have an exact sequence
\begin{equation}\label{201803140122I}
0 \to M^{**} \to P^{1} \to P^{2} \to \cdots\cdots 
\end{equation}
in $\grmod A$ 
where we set $P^{i} := \grHom_{A^{\op}}(Q^{-i +1}, A)$. 
Similarly, we take a projective resolution $P \xrightarrow{\sim} M$ with 
$P \in \sfC^{-,\mrb}_{\mrlp}(\grproj A)$ 
satisfying $P^{> 0} = 0$ 
and regard it  
as a projective resolution 
\begin{equation}\label{201803140122II}
\cdots\cdots \to P^{-1} \to P^{0} \to M \to 0
\end{equation}
of $M$ in $\grmod A$.
Splicing  the  exact sequences \eqref{201803140122I} and \eqref{201803140122II}, 
we have an acyclic locally perfect complex
\[
P \ : \ \xymatrix{
\cdots\cdots \ar[r] & P^{-1} \ar[r] & P^{0} \ar[r] \ar[d] & P^{1} \ar[r] & P^{2} \ar[r] & \cdots\cdots \\
 & & M \ar[r]^{\cong \hspace{2mm}} & M^{**} \ar[u] & & 
}
\]
such that $\tuZ^0(P) \simeq M$.
Thus $M$ belongs to $\grCM_{\mrlp} A$, and so the first equality holds.
\end{proof}

\begin{definition}
Let $A$ be a graded IG-algebra. 
Then by the observation \eqref{D_lp>K^b}, we can define the \emph{locally perfect singularity category} as the Verdier quotient
\[
\lpSing A:= \sfD_{\mrlp}^{\mrb}(\grmod A)/\sfK^{\mrb}(\grproj A),
\]
which can be regarded as a triangulated full subcategory of $\grSing A$.
\end{definition}

\begin{lemma}
Let $A$ be a finitely graded IG-algebra.
We assume that $A$ has finite projective dimension as a left and right $A_0$-module.
Then the equivalence $\beta:\stabgrCM A  \xrightarrow{\simeq} \grSing A$ induces an equivalence 
\[
\beta:\stabgrCM_{\mrlp} A \xrightarrow{\simeq} \lpSing A.
\]
\end{lemma}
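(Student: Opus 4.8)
The plan is to show that the equivalence $\beta : \stabgrCM A \xrightarrow{\sim} \grSing A$, when restricted to the locally perfect subcategories on both sides, still yields an equivalence. Since $\beta$ is the functor induced by the commutative square relating $\stabgrCM A$, $\grmod A$, and $\sfD^{\mrb}(\grmod A)$, and since it is already known to be an equivalence, the task reduces to checking that $\beta$ carries $\stabgrCM_{\mrlp} A$ \emph{onto} $\lpSing A$; full faithfulness of the restriction is automatic from full faithfulness of $\beta$.

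First I would unwind the definitions. An object of $\stabgrCM_{\mrlp} A$ is of the form $\tuZ^0(P)$ for $P \in \sfC^{\mrac}_{\mrlp}(\grproj A)$; by the displayed equivalence $\underline{\tuZ}^0 : \sfK^{\mrac}_{\mrlp}(\grproj A) \xrightarrow{\sim} \stabgrCM_{\mrlp} A$ and Lemma \ref{lp=finproj}, this is precisely the full subcategory of $\grCM A$ consisting of modules $M$ with $\pd(M_{A_0}) < \infty$, equivalently $(\grCM A) \cap \sfD^{\mrb}_{\mrlp}(\grmod A)$. On the target side, $\lpSing A = \sfD^{\mrb}_{\mrlp}(\grmod A)/\sfK^{\mrb}(\grproj A)$, viewed inside $\grSing A$. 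So I must show two containments: (i) for $M \in \grCM_{\mrlp} A$, the image $\beta(M)$, which is just $M$ regarded in $\grSing A$, lies in $\lpSing A$; and (ii) every object of $\lpSing A$ is isomorphic in $\grSing A$ to some $M \in \grCM_{\mrlp} A$.

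Containment (i) is immediate: $\grCM_{\mrlp} A \subseteq \sfD^{\mrb}_{\mrlp}(\grmod A)$ by Lemma \ref{lp=finproj}, so its image under the quotient functor $\pi$ lands in $\lpSing A$. For (ii), given $N \in \sfD^{\mrb}_{\mrlp}(\grmod A)$, I would use the standard description of the singularity category: since $A$ is IG, $\pi(N)$ is isomorphic in $\grSing A$ to some CM-module $M$ (take a high syzygy, as in the Buchweitz picture). The point is to arrange that $M$ can be chosen locally perfect. Concretely, represent $N$ by a locally perfect complex and take a sufficiently high (co)syzygy $M = \Omega^n(N)$ inside $\grCM A$; because the locally perfect condition is stable under taking syzygies of complexes in $\sfC^{-,\mrb}_{\mrlp}(\grproj A)$ — the relevant projective resolution stays in $\sfC^{-,\mrb}_{\mrlp}(\grproj A)$, using that $\sfK^{\mrb}_{\mrlp}(\grproj A)$ is thick and that truncations of locally perfect complexes are locally perfect — the module $M$ lies in $(\grCM A) \cap \sfD^{\mrb}_{\mrlp}(\grmod A) = \grCM_{\mrlp} A$. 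Then $\beta(M) = \pi(M) \cong \pi(N)$ in $\grSing A$, giving surjectivity onto objects. I expect the main obstacle to be the bookkeeping in (ii): verifying that taking a syzygy can be done within the locally perfect world, i.e. that a suitably chosen projective resolution of a locally perfect complex remains locally perfect and that its high syzygy module still satisfies $\pd(M_{A_0}) < \infty$. This is essentially a repackaging of the proof of Proposition \ref{lp_resol} and of Lemma \ref{lp=finproj}, together with thickness of $\sfK^{\mrb}_{\mrlp}(\grproj A)$, so it should go through cleanly; the key input is that $A$ has finite projective dimension over $A_0$ on both sides, which is exactly the standing hypothesis.
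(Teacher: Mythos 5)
Your proof is correct, and the first half (full faithfulness plus the containment $\beta(\grCM_{\mrlp}A)\subseteq\lpSing A$ via Lemma \ref{lp=finproj}) is exactly what the paper does. For essential surjectivity the two arguments diverge in mechanism, though both rest on Theorem \ref{Buchweitz-Happel theorem}. The paper takes the CM representative $M$ of $X\in\lpSing A$ supplied abstractly by the Buchweitz--Happel equivalence and then \emph{deduces} that $M$ is locally perfect: the isomorphism $\pi(M)\cong\pi(X')$ in the Verdier quotient is realized by a roof $M\leftarrow N\rightarrow X'$ whose cones lie in $\sfK^{\mrb}(\grproj A)$, so thickness of $\sfD^{\mrb}_{\mrlp}(\grmod A)$ together with the inclusion $\sfK^{\mrb}(\grproj A)\subseteq\sfD^{\mrb}_{\mrlp}(\grmod A)$ forces first $N$ and then $M$ to be locally perfect. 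You instead \emph{construct} the representative directly as a high syzygy of a locally perfect resolution and check that local perfectness survives the construction (which it does: $\frkp_i$ commutes with brutal truncation, so truncations of locally perfect complexes are locally perfect, and the resulting module lies in $\grCM A\cap\sfD^{\mrb}_{\mrlp}(\grmod A)=\grCM_{\mrlp}A$). Your route carries a bit more bookkeeping --- you must also account for the shift $\pi(\Omega^{n}N)\cong\pi(N)[-n]$, harmless since $\lpSing A$ is closed under shifts --- while the paper's roof argument gets the same conclusion with no computation at all; on the other hand your version makes the CM representative explicit. Either way the key inputs are the same: Lemma \ref{lp=finproj} and thickness of the locally perfect subcategory.
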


\begin{proof}
It follows  from Lemma \ref{lp=finproj} that $\beta(M)$ belongs to $\lpSing A$ for $M \in \stabgrCM_{\mrlp} A$.
So we get a fully faithful functor $\beta:\stabgrCM_{\mrlp} A \to \lpSing A$.
We prove this functor is essentially surjective. 
Let $X$ be an object of $\lpSing A$ and $X' \in \sfD_{\mrlp}^{\mrb}(\grmod A)$ a representative of $X$.  
There exists   $M \in \grCM A$ such that $\beta(M) \cong X$ by Theorem \ref{Buchweitz-Happel theorem}. 
Then there exists a diagram inside $\sfD^{\mrb}(\grmod A)$ 
\[
M \xleftarrow{ f} N \xrightarrow{g} X' 
\]
such that the cones $\cone(f), \cone(g)$ of $f,g$ belong to $\sfK^{\mrb}(\grproj A)$.  
In other words there exist exact triangles 
\begin{equation}\label{201803061452}
N \xrightarrow{g}  X' \to \cone(g) \to, \ \ N \xrightarrow{f} M \to \cone(f) \to.  
\end{equation} 
From the left exact triangle of \eqref{201803061452}, we see that $N$ is locally perfect.  
Then from the right exact triangle of \eqref{201803061452}, we deduce $M \in \sfD_{\mrlp}^{\mrb}(\mod A)$. 
Therefore, $M$ belongs to $\grCM A \cap \sfD_{\mrlp}^{\mrb}(\mod A) = \grCM_{\mrlp} A$.   
\end{proof}

\subsection{Orlov's equivalence}\label{Orlov's equivalence}

Let $A$ be a finitely graded IG-algebra.
Assume that $A$ has finite projective dimension as a left and right $A_0$-module.

Following Orlov \cite{Orlov}, we set 
\[
\mathsf{O} := \sfD^{\mrb}_{\mrlp}(\mod^{\geq 0} A) \cap \sfD^{\mrb}_{\mrlp}(\mod^{> 0} A^{\op})^*.
\]

\begin{theorem}\label{lp Orlov theorem}
The canonical functor $\pi: \sfD^{\mrb}_{\mrlp}(\grmod A) \to \grSing_{\mrlp} A$ induces an equivalence
\[
\pi|_{\sfO} : \mathsf{O} \to \grSing_{\mrlp} A.
\]
\end{theorem}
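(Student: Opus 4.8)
\textbf{Proof plan for Theorem \ref{lp Orlov theorem}.}

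The plan is to mimic Orlov's original argument (\cite{Orlov}) but working systematically inside the locally perfect subcategories, using the fact established in Proposition \ref{lp_A-dual} that the $A$-duality $(-)^{*}$ preserves local perfectness. First I would reduce to the trivial extension case $A = \Lambda \oplus C$: by Lemma \ref{qv lp lemma} the equivalence $\sfqv$ identifies $\sfD^{\mrb}_{\mrlp}(\grmod A)$ with $\sfD^{\mrb}_{\mrlp}(\grmod A^{[\ell]})$, it identifies $\sfK^{\mrb}(\grproj A)$ with $\sfK^{\mrb}(\grproj A^{[\ell]})$ (hence the singularity categories), and by \cite[Proposition 6.1]{adasore} together with the compatibility of $\sfqv$ with the $A$-duality diagram recalled in Section \ref{quasi-Veronese} it carries $\mathsf{O}$ for $A$ to $\mathsf{O}$ for $A^{[\ell]}$. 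So we may assume $A_{i}=0$ for $i\ne 0,1$.

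Next I would show that $\pi$ restricted to $\mathsf{O}$ is essentially surjective. The key device is the truncation functors $M \mapsto M_{\geq j}$ and $M \mapsto M_{<j}$ on $\sfD^{\mrb}_{\mrlp}(\grmod A)$. Given $X \in \sfD^{\mrb}_{\mrlp}(\grmod A)$, one shows: (i) for $j \ll 0$ the object $M_{\geq j}$ lies in $\sfD^{\mrb}_{\mrlp}(\mod^{\geq j}A)$ and differs from $M$ by something in $\sfK^{\mrb}(\grproj A)$ after a suitable shift/twist — here local perfectness of the graded pieces $M_{i}\in\sfK^{\mrb}(\proj\Lambda)$ (Proposition \ref{lp_resol}) is what makes the truncated tails perfect; (ii) dually, applying the same to $M^{*}$ over $A^{\op}$ and using Proposition \ref{lp_A-dual}, one can also arrange the object to lie in $\sfD^{\mrb}_{\mrlp}(\mod^{>0}A^{\op})^{*}$; (iii) combining the two one-sided truncations, every object of $\grSing_{\mrlp}A$ has a representative in $\mathsf{O}$. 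This is the place where the degree bookkeeping — which truncations to use, in what order, and how the bounds interact with the fact that $A$ is now concentrated in degrees $0,1$ — has to be done carefully; I expect this essential-surjectivity step to be the main obstacle.

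For full faithfulness I would argue that $\pi|_{\mathsf{O}}$ kills no nonzero Hom: if $f\colon X\to Y$ in $\sfD^{\mrb}_{\mrlp}(\grmod A)$ with $X,Y\in\mathsf{O}$ becomes zero in $\grSing_{\mrlp}A$, then $f$ factors through an object $Z\in\sfK^{\mrb}(\grproj A)$. Using that $X\in\sfD^{\mrb}_{\mrlp}(\mod^{\geq 0}A)$ one sees $\Hom(X,Z')=0$ for $Z'$ a graded projective supported in sufficiently negative degrees, while $Y\in\sfD^{\mrb}_{\mrlp}(\mod^{>0}A^{\op})^{*}$ controls the positive-degree projective summands via the $A$-duality and the vanishing $\grExt^{\bullet}_{A}$ between the two half-lines; a standard "Hom out of $\mathsf{O}$ into perfect complexes vanishes" argument (exactly as in Orlov's lemma) then forces $f=0$. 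Finally, since $\sfK^{\mrb}(\grproj A)\subset\sfD^{\mrb}_{\mrlp}(\grmod A)$ generates the kernel of $\pi$ and $\mathsf{O}$ meets it trivially by the same Hom-vanishing, $\pi|_{\mathsf{O}}$ is also essentially surjective onto a dense subcategory, and combined with step (iii) we conclude it is an equivalence. Throughout, the only genuinely new input over Orlov's classical setup is that every duality and truncation is checked to stay within the locally perfect world, which is guaranteed by Proposition \ref{lp_resol} and Proposition \ref{lp_A-dual}.
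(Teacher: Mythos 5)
Your plan is not the paper's proof. The paper also begins by reducing to the trivial extension case $A=\Lambda\oplus C$ via the quasi-Veronese equivalence, but after that it does \emph{not} rerun Orlov's truncation argument: it simply invokes Theorem \ref{Adaching theorem 2}, i.e.\ the commutative square relating $\underline{\tuZ}^{0}$, $\beta$, $\frkp_{0}$ and $\mathsf{in}|_{\sfT}$, in which $\frkp_{0}\colon \sfK^{\mrac}_{\mrlp}(\grproj A)\to\sfT$ and $\mathsf{in}|_{\sfT}\colon\sfT\to\sfO$ are proved to be equivalences by direct construction (using the asid subcategory $\sfT=\thick C^{\alpha}$, the semiorthogonal decomposition $\sfK^{\mrb}(\proj\Lambda)=\sfT\perp\Ker\varpi$ of Lemma \ref{T proposition}, and the $A$-duality criterion of Theorem \ref{dual_prop2} for membership in $\sfT$); the equivalence $\pi|_{\sfO}$ then falls out because every other arrow in the square is an equivalence. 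What you propose is the classical Orlov route carried out inside the locally perfect world. The paper explicitly states that this route works but deliberately declines to follow it, so in principle your strategy is legitimate and genuinely different.

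As written, however, the plan has concrete gaps. First, you only address \emph{faithfulness}: the observation that $\pi(f)=0$ forces $f$ to factor through a perfect complex, killed by Hom-vanishing. \emph{Fullness} --- that every roof $X\leftarrow W\to Y$ in the Verdier quotient with $X,Y\in\sfO$ is represented by an honest morphism $X\to Y$ --- is never argued; in Orlov's proof this is automatic because $\sfO$ is exhibited as a piece of a semiorthogonal decomposition (cf.\ Lemma \ref{right admissible lemma} and the discussion following it), and you never construct the corresponding decompositions of $\sfD^{\mrb}_{\mrlp}(\grmod A)$ by degree-truncated projectives. Second, essential surjectivity is exactly the hard part and you defer it, flagging it yourself as ``the main obstacle'': one must verify that for $j\gg 0$ the truncation $M_{\geq j}$ of a locally perfect object is perfect, that the dual truncations are controlled through $(-)^{*}$ via Proposition \ref{lp_A-dual}, and that the two one-sided conditions can be achieved simultaneously; none of this is carried out, and the closing sentence (``essentially surjective onto a dense subcategory'') does not follow from what precedes it. So the proposal identifies the right ingredients (Proposition \ref{lp_resol}, Proposition \ref{lp_A-dual}, the quasi-Veronese reduction) but is a programme rather than a proof; the quickest complete argument available in the paper is the one-line deduction from Theorem \ref{Adaching theorem 2}.
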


Although we can prove this theorem by the same method with \cite{Orlov}, 
we give a different proof in Section \ref{around}.

\section{Categorical characterizations of an asid bimodule
%Categorical characterization of asid module for an IG algebra}\label{Categorical characterization
}\label{Categorical characterization}

Thanks to the quasi-Veronese algebra construction, 
representation theoretic problem of  a finitely graded algebra 
$A = \bigoplus_{i=0}^{\ell} A_{i}$ 
can be reduced to that of a trivial extension algebra $A= \Lambda \oplus C$ 
with the canonical grading $\deg \Lambda = 0, \deg C = 1$.  

It was obtained in \cite{adasore} 
the condition that $A = \Lambda \oplus C$ is IG 
in terms of $\Lambda$ and $C$ by using derived tensor products 
and derived $\Hom$. 
The aim of Section \ref{Categorical characterization} is  
to prove two theorems. 
The first one, Theorem \ref{Adaching theorem 0}, 
gives two categorical characterizations 
that $A = \Lambda \oplus C$ is IG. 
The second, Theorem \ref{Adaching theorem 1}, 
verifies several properties of thick subcategories and invariants  
appearing in the categorical characterizations.

%Thus, after Section \ref{section lp}, we mainly deal with a trivial extension algebra. 

\subsection{The asid   conditions and the asid numbers }\label{review of adasore}

In  Section \ref{review of adasore}, we recall the notion of asid-bimodules and related results from \cite{adasore}.   

For a bimodule $C$ over $\Lambda$, 
we define  a morphism  $\tilde{\lambda}_{r}: \Lambda \to \Hom_{\Lambda}(C,C)$ by the formula 
$\tilde{\lambda}_{r}(x)(c) := xc$ for $x \in \Lambda$ and $c \in C$. 
We denote the composite morphism  $\lambda_{r} = \mathsf{can} \circ \tilde{\lambda}_{r}$ in $\sfD(\Mod \Lambda)$ 
where $\mathsf{can}$ is the canonical morphism $\Hom_{\Lambda}(C,C) \to \RHom_{\Lambda}(C,C)$.
\[
\lambda_{r}: 
\Lambda \xrightarrow{\tilde{\lambda}_{r}} \Hom_{\Lambda}(C,C) \xrightarrow{\mathsf{can}} \RHom_{\Lambda}(C,C). 
\]
We denote by $\lambda_{\ell}$  the left version of $\lambda_{r}$. 
\[
\lambda_{\ell}: \Lambda \to \RHom_{\Lambda^{\op}}(C, C). 
\]
Using these morphisms, we give a condition for $C$ that the trivial extension algebra  $A = \Lambda \oplus C$ is IG.

\begin{theorem}[{\cite[Theorem 5.14, Proposition 5.16]{adasore}}]\label{adasore:right asid theorem} 
Let $\Lambda$ be an IG-algebra and $C$ a bimodule over $\Lambda$ 
which is finitely generated on both sides. 
Then the trivial extension algebra $A = \Lambda \oplus C$ is IG 
if and only if the following conditions are satisfied. 

\begin{enumerate}[(1)]
\item  $C$ has finite projective dimensions on both sides. 

\item 
``The right asid condition''.     
The morphism $\RHom_{\Lambda}(C^{a}, \lambda_{r} )$ is an isomorphism for $a \gg 0$. 

\item 
``The left asid condition''. 
The morphism $\RHom_{\Lambda^{\op}}(C^{a}, \lambda_{\ell} )$ is an isomorphism for $a \gg 0$. 
\end{enumerate}
\end{theorem}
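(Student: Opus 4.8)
The plan is to reduce the statement "$A$ is IG" to a perfectness statement about the single complex $R:=\RHom_{A}(A/C,A)$ over $\Lambda$, and then to compute $R$ explicitly from the reduced bar resolution of $A/C$ and match the outcome with conditions (1)--(3). As for reductions: since $\Lambda$ is Noetherian and $C$ is finitely generated on both sides, $A=\Lambda\oplus C$ is Noetherian on both sides, so that half of "IG" is automatic; and finiteness of (graded) injective dimension passes between the graded and ungraded settings, so one works graded throughout. By the evident left--right symmetry it suffices to treat $\grinjdim A_{A}<\infty$ and show it is equivalent to $\pd C_{\Lambda}<\infty$ (the right half of (1)) together with condition (2), the other side being identical. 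Because $C^{2}=0$ in $A$, every finitely generated graded $A$-module $M$ fits in $0\to MC\to M\to M/MC\to 0$ with $MC$ and $M/MC$ annihilated by $C$, hence pulled back along $p\colon A\twoheadrightarrow\Lambda$ from finitely generated graded $\Lambda$-modules; therefore $\grinjdim A_{A}<\infty$ if and only if $\RHom_{A}(p^{*}N,A)$ is cohomologically bounded uniformly in such $N$. Writing $p^{*}N\cong N\lotimes_{\Lambda}(A/C)$ and using tensor--Hom adjunction gives $\RHom_{A}(p^{*}N,A)\cong\RHom_{\Lambda}(N,R)$ with $R=\RHom_{A}(A/C,A)$ regarded as a complex of (graded) $\Lambda$-modules. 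Taking $N=\Lambda$ forces $R$ bounded; $R$ has finitely generated cohomology over $\Lambda$; and since $\Lambda$ is IG, finite injective dimension over $\Lambda$ coincides with finite projective dimension. Hence $\grinjdim A_{A}<\infty$ if and only if $R$ is a perfect complex of $\Lambda$-modules.

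Next I would compute $R$. As $C^{2}=0$, the reduced bar resolution of the right $A$-module $A/C$ has $n$-th term $C^{\otimes_{\Lambda}n}\otimes_{\Lambda}A(-n)$ with differential essentially $\id\otimes\mu$, where $\mu\colon C\otimes_{\Lambda}A\to A$ is the multiplication; replacing each $C^{\otimes_{\Lambda}n}$ by a graded projective $\Lambda$-resolution and totalizing yields a graded projective resolution $P_{\bullet}\to A/C$ over $A$. Applying $\grHom_{A}(-,A)$ and the induction adjunction $\grHom_{A}(X\otimes_{\Lambda}A,A)\cong\grHom_{\Lambda}(X,A)$, one finds that $R$ is the totalization of a complex whose $n$-th column is $\grRHom_{\Lambda}(C^{n},A)$, living in internal degrees $\{-n,-n+1\}$, and whose differential from the $n$-th to the $(n+1)$-st column — read off from $\mu$ together with $C\cdot C=0$ — is the composite
\[
\grRHom_{\Lambda}(C^{n},A)\twoheadrightarrow\grRHom_{\Lambda}(C^{n},\Lambda)\xrightarrow{\grRHom_{\Lambda}(C^{n},\lambda_{r})}\grRHom_{\Lambda}\!\bigl(C^{n},\RHom_{\Lambda}(C,C)\bigr)=\grRHom_{\Lambda}(C^{n+1},C)\hookrightarrow\grRHom_{\Lambda}(C^{n+1},A),
\]
the outer maps being projection onto and inclusion of the summands of $A=\Lambda\oplus C$. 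Decomposing $R$ by internal degree: $R$ vanishes in internal degrees $\ge 2$; $R_{1}\cong C_{\Lambda}$ (in cohomological degree $0$); $R_{0}\simeq[\Lambda\xrightarrow{\lambda_{r}}\RHom_{\Lambda}(C,C)]$; and for $j\ge 1$, $R_{-j}\simeq[\RHom_{\Lambda}(C^{j},\Lambda)\xrightarrow{\RHom_{\Lambda}(C^{j},\lambda_{r})}\RHom_{\Lambda}(C^{j+1},C)]$, a two-term complex since no other column reaches internal degree $-j$.

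Now I read off the conclusion. A bounded complex of finitely generated graded projective $\Lambda$-modules is supported in finitely many internal degrees, each of which is a perfect $\Lambda$-complex; conversely, such a splitting makes the whole complex perfect. If $\grinjdim A_{A}<\infty$, then $R$ is perfect, so $R_{1}\cong C_{\Lambda}$ is perfect, giving $\pd C_{\Lambda}<\infty$, and $R$ has only finitely many nonzero internal degrees, which forces $R_{-j}$ acyclic for $j\gg 0$, i.e. $\RHom_{\Lambda}(C^{j},\lambda_{r})$ an isomorphism for $j\gg 0$ — precisely the right asid condition (2). Conversely, granting (1) on both sides and (2): by (2) the $R_{-j}$ are acyclic for $j\gg 0$, so $R$ has finitely many nonzero internal degrees, namely $R_{1}\cong C_{\Lambda}$, $R_{0}$, and finitely many $R_{-j}$; finiteness of $\pd C_{\Lambda}$ and $\pd{}_{\Lambda}C$ makes each of these a perfect $\Lambda$-complex, hence $R$ is perfect and $\grinjdim A_{A}<\infty$. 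The mirror-image argument gives $\grinjdim{}_{A}A<\infty\iff$ (1)$_{\mathrm{left}}$ and (3), and together with the automatic Noetherian property this proves the theorem.

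The main obstacle is identifying the dualized bar differential as precisely $\grRHom_{\Lambda}(C^{n},\lambda_{r})$: this requires careful bookkeeping of the left and right $\Lambda$-actions on $C$ inside $A$ and of the adjunction identification $\grRHom_{\Lambda}(C^{n+1},C)=\grRHom_{\Lambda}(C^{n},\RHom_{\Lambda}(C,C))$. Secondary points are the homological lemma turning "uniformly cohomologically bounded $\RHom_{\Lambda}(N,-)$ over all finitely generated $N$" into "finite injective dimension" and hence, over the IG algebra $\Lambda$, into "perfect"; the verification that the finitely many nonzero $R_{i}$ are perfect using finiteness of $\pd C_{\Lambda}$ and $\pd{}_{\Lambda}C$ (automatic when $\gldim\Lambda<\infty$, but needing a small bimodule-finiteness argument in general); and the routine passage between graded and ungraded injective dimension.
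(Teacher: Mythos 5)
First, a point of context: this paper does not prove Theorem \ref{adasore:right asid theorem}; it imports it from \cite{adasore}. What the paper does reproduce from \cite{adasore} (Lemma \ref{adasore lemma 4.2}, Corollary \ref{black thunder}, and the computation of $\grRHom_A(M,A)_i$ in Lemma \ref{dual_lem2}) shows that your overall architecture is the intended one: reduce $\grinjdim A_A<\infty$ to perfectness over $\Lambda$ of $R=\RHom_A(A/C,A)$ via the filtration $0\to MC\to M\to M/MC\to 0$ and tensor--Hom adjunction, decompose $R$ by internal degree, and identify $R_{-j}$ with the co-cone of $\RHom_\Lambda(C^{j},\lambda_r)$. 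Your final formula for $R$ agrees with Lemma \ref{dual_lem2} applied to $M=\Lambda$, and the logical bookkeeping of the two directions and the left--right symmetry is sound.

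There is, however, one genuine gap: the construction of the graded projective resolution of $A/C$. You resolve each \emph{underived} tensor power $C^{\otimes_\Lambda n}$ by projectives $Q_n$ and totalize the bar complex with columns $Q_n\otimes_\Lambda A$. When $C$ is not flat over $\Lambda$ this fails: the augmentation $Q_{n-1}\otimes_\Lambda A\to C^{\otimes_\Lambda (n-1)}\otimes_\Lambda A$ is not a quasi-isomorphism (its source computes $C^{\otimes_\Lambda(n-1)}\lotimes_\Lambda A$, which carries the extra terms $\Tor^\Lambda_{>0}(C^{\otimes_\Lambda(n-1)},C)$), so the lifted double complex is not a resolution of $A/C$; indeed, by Lemma \ref{adasore lemma 4.2} acyclicity of the total complex in internal degree $n$ would force $C^{\otimes_\Lambda n}\simeq C^{\otimes_\Lambda(n-1)}\lotimes_\Lambda C$ in $\sfD(\Mod\Lambda)$, which is false in general. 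The degree-$n$ generators $\frkp_nP$ of a genuine resolution are quasi-isomorphic to the \emph{derived} power $C^{n}[n]$ (Corollary \ref{black thunder}), which is exactly why conditions (2)--(3) are stated with derived powers; your write-up silently passes from $C^{\otimes_\Lambda n}$ in the construction to $C^{n}$ in the dualized complex, and the theorem would be false with underived powers. The repair is either to build $P$ iteratively so that each $\sfq_n\colon \frkp_nP\to\frkp_{n-1}P\otimes_\Lambda C[1]$ is a quasi-isomorphism, or to take a single bimodule resolution $Q\to C$ whose terms are projective on each side and use $Q^{\otimes_\Lambda n}\otimes_\Lambda A$ as the $n$-th column (then $Q^{\otimes_\Lambda n}\simeq C^{n}$ and the bar differential lifts on the nose, with $d^2=0$ because $\epsilon(Q)\cdot\epsilon(Q)\subset C^{2}=0$). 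With that fix the rest of your argument goes through.
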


\begin{remark}
In \cite{adasore} the right and the left asid  conditions are called 
``the right and the left  \asid \ condition 3". 
The first condition in the above theorem is called the ``the right and the left \asid \ condition 1".
\end{remark} 

\begin{definition}\label{adasore:asid number definition}
\begin{enumerate}[(1)]
\item 
A $\Lambda$-$\Lambda$-bimodule $C$ is called 
a \textit{asid (attaching self-injective dimension) bimodule} 
if the trivial extension algebra $A = \Lambda \oplus C$ is IG.

\item 
For an  asid bimodule $C$, we define the \textit{right asid number} $\alpha_{r}$ 
and the \textit{left asid number} $\alpha_{\ell}$  to be 
\[
\begin{split}
\alpha_{r}  &:= \min\{ a \geq 0 \mid \RHom_{\Lambda}(C^{a}, \lambda_{r} ) \textup{ is an isomorphism} \}, \\
 \alpha_{\ell}  &:= \min\{ a \geq 0 \mid \RHom_{\Lambda^{\op}}(C^{a}, \lambda_{\ell} ) \textup{ is an isomorphism} \}. 
 \end{split}
\]
\end{enumerate}
\end{definition}

Another description of the right and the left asid numbers is given in the following proposition. 
%We only give a statement for the right asid number, since the same equality holds for the left asid number. 

\begin{proposition}[{\cite[Corollary 5.12]{adasore}}]\label{adasore:asid number corollary}
Let  $C$ be a  asid bimodule over $\Lambda$ 
and $A = \Lambda \oplus C$ the trivial extension algebra.  
We regard  a minimal injective resolution $I$ of $A$ as a complex. 
Then, 
\begin{equation}\label{asid number formular 1}
\alpha_{r}  =1-  \min\{ a\in \ZZ \mid \RHom_{A}(\Lambda, A)_{a} \neq 0\}  =
1 - \min \{ a \in \ZZ \mid \frki_{a} I \neq 0 \textup{ in } \sfD(\Mod \Lambda) \}.
\end{equation}
\end{proposition}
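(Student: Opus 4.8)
The statement to establish is the formula
\[
\alpha_{r} = 1 - \min\{ a\in \ZZ \mid \RHom_{A}(\Lambda, A)_{a} \neq 0\} = 1 - \min\{ a\in \ZZ \mid \frki_{a} I \neq 0 \textup{ in } \sfD(\Mod \Lambda)\},
\]
where $I$ is a minimal graded injective resolution of $A$ regarded as a complex of graded $A$-modules. The plan is to translate the asid condition ``$\RHom_{\Lambda}(C^{a}, \lambda_{r})$ is an isomorphism'' into a vanishing statement about graded pieces of a minimal injective resolution of $A$, and then read off the threshold. The second equality is essentially a definitional unwinding: applying the graded $\Hom$-functor $\grHom_{A}(\Lambda, -)$ (equivalently $\grRHom_{A}(\Lambda, -)$ since $I$ is an injective resolution) to $I$ and extracting the degree-$a$ component, one sees that $\RHom_{A}(\Lambda, A)_{a}$ is computed by the complex whose graded pieces are $\frki_{a} I$, since $\Hom_{A}(\Lambda, I^{n}) = (I^{n})_{0}$-type considerations reduce the $\Lambda$-part. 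I would make precise that $\grHom_{A}(\Lambda, I^{n})$ picks out exactly the $A_{0} = \Lambda$-socle-relevant data encoded in $\frki_{a}$, so that $\RHom_{A}(\Lambda,A)_{a} \neq 0$ in $\sfD(\Mod\Lambda)$ if and only if $\frki_{a} I \neq 0$ in $\sfD(\Mod\Lambda)$; this gives the right-hand equality for free.

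For the main equality, I would use the description of a minimal graded injective resolution of $A = \Lambda \oplus C$ in terms of $\Lambda$ and $C$ from \cite{adasore}. The key structural input is that, degree by degree, the resolution $I$ is governed by the iterated functors $\RHom_{\Lambda}(C^{a}, -)$: roughly, the graded piece $\frki_{a} I$ (for $a \leq 0$) should be identified, in the derived category, with a mapping cone or complex built from $\RHom_{\Lambda}(C^{-a}, \Lambda)$ and $\RHom_{\Lambda}(C^{-a+1}, C)$, i.e., with the object whose vanishing/non-vanishing is controlled by whether $\RHom_{\Lambda}(C^{b}, \lambda_{r})$ is an isomorphism for $b = -a$ or nearby values. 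Concretely, $\lambda_{r}: \Lambda \to \RHom_{\Lambda}(C,C)$ fits into a triangle whose third term measures the failure of $A$ to have injective dimension concentrated appropriately; applying $\RHom_{\Lambda}(C^{b}, -)$ and tracking degrees in the trivial-extension grading, the condition ``$\RHom_{\Lambda}(C^{b},\lambda_{r})$ is an isomorphism'' becomes ``$\frki_{-b} I$ (or the relevant graded slice at position determined by $b$) vanishes in $\sfD(\Mod\Lambda)$.'' Then $\alpha_{r} = \min\{b \geq 0 : \RHom_{\Lambda}(C^{b},\lambda_{r}) \text{ iso}\}$ translates to: $\alpha_{r}$ is $1$ plus the negative of the smallest degree $a$ at which $\frki_{a} I$ is still nonzero, which is exactly the asserted formula. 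I would organize this as: (i) recall the explicit form of $I$ from \cite{adasore}; (ii) compute $\frki_{a} I$ in $\sfD(\Mod\Lambda)$ via Lemma \ref{compatibility lemma} and the $A$-duality relating $\RHom_{A}(\Lambda,A)$ to $\Lambda$-side derived Homs of $C$-powers; (iii) match the two threshold descriptions.

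The main obstacle I anticipate is step (ii): correctly bookkeeping the grading shift so that the index $a$ in $\frki_{a} I$ lines up with the power $b$ of $C$ in $\RHom_{\Lambda}(C^{b}, \lambda_{r})$, including getting the ``$+1$'' and the sign right. This is where the trivial-extension grading convention ($\deg\Lambda = 0$, $\deg C = 1$) interacts with the homological degree of the injective resolution, and one must be careful that a minimal (not arbitrary) injective resolution is used so that no spurious cancellation occurs — minimality is what guarantees $\frki_{a} I \neq 0$ in $\sfD(\Mod\Lambda)$ detects genuine nonvanishing rather than a contractible summand. I would handle this by fixing, once and for all, the explicit minimal resolution from \cite{adasore} and checking the boundary degree directly, rather than arguing abstractly; the rest (the identification $\RHom_{A}(\Lambda,A)_{a} \simeq \frki_{a}I$ and the reformulation of the asid condition) should then follow by citing \cite[Theorem 5.14, Proposition 5.16]{adasore} together with Lemma \ref{compatibility lemma}.
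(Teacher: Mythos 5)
There is nothing to compare against here: the paper does not prove this proposition, it imports it verbatim from \cite[Corollary 5.12]{adasore} and only adds the remark that the index set may be extended to all of $\ZZ$ because $\RHom_{A}(\Lambda,A)_{a}=0$ for $a>1$. Your plan is nevertheless a sound reconstruction, and the computation you describe in step (ii) is in fact already carried out later in the paper: Lemma \ref{dual_lem2} specialized to $M=\Lambda$ gives $\RHom_{A}(\Lambda,A)_{i}=0$ for $i>1$, $\RHom_{A}(\Lambda,A)_{1}\cong C$, and $\RHom_{A}(\Lambda,A)_{i}\cong\cone\left(\scrT_{C^{-i}[-i],\Lambda}\right)[-1]$ for $i<1$, and Proposition \ref{isomorphisms proposition} translates the vanishing of that cone into ``$\RHom_{\Lambda}(C^{-i},\lambda_{r})$ is an isomorphism'', which is exactly your first equality. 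Two points in your sketch need to be made explicit before it is a proof. First, to convert $\alpha_{r}=\min\{a\mid\RHom_{\Lambda}(C^{a},\lambda_{r})\text{ iso}\}$ into $1-\min\{a\mid\RHom_{A}(\Lambda,A)_{a}\neq0\}$ you must know that the iso\nobreakdash-condition, once true for some $a$, holds for all larger $a$; without this monotonicity the two thresholds need not match, so it is a genuine ingredient (it follows by applying $\RHom_{\Lambda}(C,-)$ and the tensor--Hom adjunction, or from the up-set structure implicit in Theorem \ref{adasore:right asid theorem}). Second, your treatment of the equality $\RHom_{A}(\Lambda,A)_{a}\cong\frki_{a}I$ is the vaguest part: $\frki_{a}I$ is the ``degree-$a$ cogenerated summand'' of the minimal resolution, not literally the complex $\grHom_{A}(\Lambda,I)_{a}$, and identifying the two in $\sfD(\Mod\Lambda)$ requires the injective analogue of the triangle \eqref{kurumimochi} together with minimality of $I$ --- you correctly identify minimality as the crux, but the actual comparison of differentials still has to be written down. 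Neither issue is fatal, but both must be supplied.
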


\begin{remark}
For $a > 1$, we always have $\RHom_{A}(\Lambda, A)_{a} = 0$ and $\frki_{a} I = 0$. 
Therefore the proposition above is the same with \cite[Corollary 5.12]{adasore}. 
\end{remark}

For the definition of $\frki_{-a}I$, we refer \cite[Section 2.3]{adasore}. 
Roughly speaking, it is a complex of injective $\Lambda$-modules 
formed by the cogenerating module of each term $I^{n}$ in the graded degree $-a$. 

In the case where $\Lambda$ is a finite dimensional algebra, 
the above formula turns out to be written down by graded cosyzygies of $A$. 

\begin{corollary} 
Let $\Lambda, A$ and $C$ be as in the above proposition. 
Assume moreover  that $\Lambda$ is a finite dimensional algebra. 
We consider the graded cosyzygies $\Omega^{-n}A$ of $A$. 
Then, 
\[
\alpha_{r}  = 1 - \min\{ a\in \ZZ \mid \exists n \textup{ s.t. } \soc (\Omega^{-n}A)_{a} \neq 0\}. 
\]
\end{corollary}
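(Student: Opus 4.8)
The plan is to translate the statement of Proposition~\ref{adasore:asid number corollary} into the finite-dimensional setting by identifying the complex $\frki_{a}I$ of injective $\Lambda$-modules with a cosyzygy computation. First I would invoke the previous proposition to reduce the claim to showing
\[
\min\{ a \in \ZZ \mid \frki_{a} I \neq 0 \textup{ in } \sfD(\Mod \Lambda) \} = \min\{ a \in \ZZ \mid \exists n \textup{ s.t. } \soc (\Omega^{-n}A)_{a} \neq 0\}.
\]
Here $I$ is a minimal injective resolution of $A$ as a graded $A$-module, so the cosyzygies $\Omega^{-n}A$ are exactly the cocycles of $I$, and each term $I^{n}$ is a graded-injective $A$-module. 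Since $\Lambda = A_{0}$ is finite dimensional, the base field $\kk$ is self-injective, so graded-injective $A$-modules are understood via $\tuD$; concretely $I^{n}$ decomposes according to the socle of its $A$-submodule in each internal degree, and $\frki_{a}I^{n}$ is the piece of the cogenerator sitting in internal degree $a$.

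Next I would argue that, because $I$ is a \emph{minimal} injective resolution, the cogenerating module $\soc I^{n}$ coincides with $\soc (\Omega^{-n}A)$ (this is the standard fact that in a minimal injective resolution the socle of the $n$-th injective equals the socle of the $n$-th cosyzygy, using that $A$ is finite dimensional and the grading is locally finite). Restricting to internal degree $a$, this says $\frki_{a}I^{n} \neq 0$ in $\sfD(\Mod\Lambda)$ precisely when $\soc(\Omega^{-n}A)_{a} \neq 0$ as a $\Lambda$-module; and $\soc(\Omega^{-n}A)_{a}$ is a $\Lambda = A_{0}$-module because the socle in a fixed internal degree is annihilated by the positive part $A_{1} = C$. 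Taking the union over all cohomological degrees $n$, we get $\frki_{a}I \neq 0$ in $\sfD(\Mod\Lambda)$ if and only if $\soc(\Omega^{-n}A)_{a} \neq 0$ for some $n$, which is exactly the desired equality of minima. Combining this with formula \eqref{asid number formular 1} gives the corollary.

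The main obstacle I expect is the bookkeeping of graded-injective structures: making precise the identification of $\frki_{a}I$ with the internal-degree-$a$ part of $\soc I^{n}$, and verifying that minimality of $I$ transfers to the statement that $\soc I^{n} = \soc(\Omega^{-n}A)$ compatibly with the internal grading. This requires unwinding the definition of $\frki_{-a}I$ from \cite[Section 2.3]{adasore} and checking that the functor $\frki_{a}$ commutes with taking socles of the relevant terms; the finite-dimensionality of $\Lambda$ (hence local finiteness of everything in sight, and self-injectivity of $\kk$) is what makes all these identifications clean, so the proof should be short once that dictionary is set up.
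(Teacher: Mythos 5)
Your overall strategy is the right one, and since the paper offers no proof of this corollary (it is presented as an immediate reformulation of Proposition \ref{adasore:asid number corollary}), the dictionary you set up --- indecomposable graded injective summands of $I^{n}$ cogenerated in internal degree $a$ correspond to nonzero components of $\soc (I^{n})$ in degree $a$, and minimality of $I$ gives $\soc (I^{n}) = \soc(\Omega^{-n}A)$ --- is exactly what is needed. There is, however, one genuine gap in your final step. You pass from ``$\frki_{a}I^{n}\neq 0$ for some $n$'' to ``$\frki_{a}I\neq 0$ in $\sfD(\Mod\Lambda)$'' by ``taking the union over all cohomological degrees,'' but these are not the same condition: $\frki_{a}I$ is a bounded complex of injective $\Lambda$-modules, and such a complex can have nonzero terms while being contractible, hence zero in the derived category (e.g.\ $E\xrightarrow{\;\id\;}E$). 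The condition in Proposition \ref{adasore:asid number corollary} is vanishing in $\sfD(\Mod\Lambda)$, i.e.\ acyclicity, not termwise vanishing, so the ``if'' direction of your equivalence is unjustified as written.

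The gap is fillable, but it requires invoking minimality of $I$ a second time, now for the differential rather than for the socle identification. Fix $a$ with $\frki_{a}I\neq 0$ termwise and let $n_{0}$ be minimal with $\frki_{a}I^{n_{0}}\neq 0$. Unwinding the definition, $\frki_{a}I^{n}$ is the internal degree $a$ part of $\Hom_{A}(\Lambda,I^{n})$, which contains $(\soc_{A} I^{n})_{a}$ because the socle is annihilated by $A_{\geq 1}$, and $(\soc_{A}I^{n_{0}})_{a}\neq 0$ precisely because $I^{n_{0}}$ has a summand cogenerated in degree $a$. Minimality gives $\soc_{A} I^{n_{0}}\subseteq \Omega^{-n_{0}}A=\Ker(d^{n_{0}})$, so the elements of $(\soc_{A}I^{n_{0}})_{a}$ are cocycles of $\frki_{a}I$, while $\frki_{a}I^{n_{0}-1}=0$ means there are no coboundaries in position $n_{0}$. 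Hence $\tuH^{n_{0}}(\frki_{a}I)\neq 0$, so $\frki_{a}I\neq 0$ in $\sfD(\Mod\Lambda)$. With this one-sentence repair inserted, the two minima agree and the corollary follows from \eqref{asid number formular 1} as you intend; the rest of your bookkeeping (the cogenerator--socle dictionary and $\soc(I^{n})=\soc(\Omega^{-n}A)$) is correct.
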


\begin{remark}
If we assume $C \neq 0$, 
then $\max\{ a\in \ZZ \mid \exists n \textup{ s.t. } \soc (\Omega^{-n}A)_{a} \neq 0\}  = 1$. 
Hence we obtain the equation below which tells that $\alpha_{r}$ is the amplitude of the degrees of the socles of graded cosyzygys $\Omega^{-n}A$.   
\begin{equation}\label{asid number formula}
\alpha_{r}  = \max\{ a\in \ZZ \mid \exists n \textup{ s.t. } \soc (\Omega^{-n}A)_{a} \neq 0\} -
\min\{ a\in  \ZZ \mid \exists n \textup{ s.t. } \soc (\Omega^{-n}A)_{a} \neq 0\}. 
\end{equation}
\end{remark}

\subsubsection{Happel's  functor $\varpi$ }\label{The functor varpi}

 Let $A = \Lambda \oplus C$ be a trivial extension algebra. 
 We may regard  a $\Lambda$-modules $M$ as a  graded $A$-module $M$ such that $M_{i} = 0$ for $ i\neq 0$
and   embed the category $\Mod \Lambda$ 
 as the full subcategory of $\GrMod A$ consisting of graded $A$-modules $M$ 
 such that $M_{i} = 0 $ for $ i \neq 0$. 
 We also embed the derived category $\sfD(\Mod \Lambda)$ 
 as a full subcategory of $\sfD(\GrMod A )$ consisting of objects $M$ 
 such that $\tuH(M)_{i} = 0$ for $i \neq 0$. 
 
We assume that $A$ is Noetherian or equivalently $\Lambda$ is Noetherian and $C$ is finitely generated over $\Lambda$. 
 We define a functor $\varpi: \sfD^{\mrb} (\mod \Lambda) \to \grSing A$ 
 which plays a key role in the paper 
 as the following composite functor 
 \[
 \varpi: \sfD^{\mrb}(\mod \Lambda) \hookrightarrow \sfD^{\mrb}(\grmod A)  \xrightarrow{ \pi } \grSing A
 \]
 where $\pi$ is the canonical quotient functor. 
As is mentioned in the introduction, 
in the case where $A$ is IG, 
the functor $\cH :=\beta^{-1}\varpi: \sfD^{\mrb}(\mod \Lambda ) \to \stabgrCM A$ 
is a generalization of the functor $\cH : \sfD^{\mrb}(\mod \Lambda) \to \stabmod T(\Lambda)$ 
constructed by Happel. 
Thus, we call  $\varpi$ Happel's functor.

 Assume moreover that $\pd C_{\Lambda} < \infty$. 
 Then for $a \geq 0$, the functor $(- \lotimes_{\Lambda}C^{a}) $ 
 is restricted to the endofunctor $(- \lotimes_{\Lambda}C^{a})|_{\sfK}$ 
 of $\sfK^{\mrb}(\proj \Lambda)$. 
Observe that there exists the following increasing sequence of  thick subcategories of $\sfK^{\mrb}(\proj \Lambda)$. 
\begin{equation}\label{kernel sequence}
\Ker(- \lotimes_{\Lambda}C)|_{\sfK} \subset 
\Ker(- \lotimes_{\Lambda}C^{2})|_{\sfK} \subset \cdots 
\subset 
\Ker(- \lotimes_{\Lambda}C^{a})|_{\sfK} \subset \cdots.  
\end{equation}
It was shown that the union of this sequence is the kernel of $\varpi$. 
\begin{proposition}[{\cite[Corollary 4.18]{adasore}}]\label{adasore:description of kernel} 
Under the above situation, we have 
\[
\Ker\varpi = \bigcup_{a \geq 0} \Ker  (- \lotimes_{\Lambda}C^{a})|_{\sfK}.
\]
\end{proposition}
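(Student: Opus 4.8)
\medskip

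The plan is to unwind the definition of $\varpi$. For $M \in \sfD^{\mrb}(\mod\Lambda)$, regarded as a complex of graded $A$-modules concentrated in degree $0$, one has $\varpi(M) = \pi(M) = 0$ exactly when $M$ lies in $\sfK^{\mrb}(\grproj A)$ as an object of $\sfD^{\mrb}(\grmod A)$, i.e. when $M$ admits a \emph{bounded} resolution by finitely generated graded projective $A$-modules. So the statement reduces to a comparison between graded $A$-projective resolutions of $M$ and the derived tensor powers $M \lotimes_{\Lambda} C^{a}$, and the whole argument runs through Lemma \ref{adasore lemma 4.2} and Corollary \ref{black thunder}. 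Throughout, a graded projective resolution $P$ of $M$ may be taken concentrated in non-negative degrees, so $\frkp_{i} P = 0$ for $i < 0$; then Lemma \ref{adasore lemma 4.2} with $i = 0$ gives $\frkp_{0} P \cong P_{0}$ in $\sfK(\Mod\Lambda)$, and $P_{0}$ is a complex of projective $\Lambda$-modules quasi-isomorphic to $M$.

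For the inclusion $\Ker\varpi \subseteq \bigcup_{a \geq 0}\Ker(- \lotimes_{\Lambda} C^{a})|_{\sfK}$, I would take $\varpi(M) = 0$ together with a bounded resolution $P \in \sfC^{\mrb}(\grproj A)$ of $M$. By the previous paragraph $M \cong \frkp_{0} P = P_{0}$ is a bounded complex of finitely generated projective $\Lambda$-modules, so $M \in \sfK^{\mrb}(\proj\Lambda)$; in particular this already records that $\Ker\varpi$ sits inside $\sfK^{\mrb}(\proj\Lambda)$. Since $P$ has finitely many terms, each finitely generated graded projective, Lemma \ref{fg lemma} gives $\frkp_{i} P = 0$ for $i \gg 0$; and Corollary \ref{black thunder}(1), applicable because $\tuH(P)$ is concentrated in degree $0$, yields $\frkp_{i} P \cong (\frkp_{0} P) \lotimes_{\Lambda} C^{i}[i] \cong M \lotimes_{\Lambda} C^{i}[i]$ in $\sfD(\Mod\Lambda)$ for all $i \geq 0$. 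Hence $M \lotimes_{\Lambda} C^{a} \cong 0$ for $a \gg 0$, which is the claim.

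For the reverse inclusion, I would take $M \in \sfK^{\mrb}(\proj\Lambda)$ with $M \lotimes_{\Lambda} C^{a} \cong 0$ and choose any resolution $P \in \sfC^{-,\mrb}(\grproj A)$ of $M$, again concentrated in non-negative degrees. Corollary \ref{black thunder}(1) gives $\frkp_{i} P \cong M \lotimes_{\Lambda} C^{i}[i]$ in $\sfD(\Mod\Lambda)$, which is acyclic for $i \geq a$; being a bounded-above acyclic complex of projectives it is contractible, hence so is $\frkt_{i} P = (\frkp_{i} P)\otimes_{\Lambda} A(-i)$ for $i \geq a$. Now I would invoke the decomposition $P \cong \bigoplus_{i} \frkt_{i} P$ of Section \ref{dgpm}: since the differential of $P$ is triangular for the degree index, the submodules $F_{j} P := \bigoplus_{i \leq j}\frkt_{i} P$ form an exhaustive increasing filtration of $P$ by subcomplexes whose graded pieces are the $\frkt_{i} P$. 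The quotient $P / F_{a-1}P$ is then filtered by the contractible complexes $\frkt_{i} P$ with $i \geq a$, so it is acyclic; therefore the inclusion $\bigoplus_{0 \leq i < a}\frkt_{i} P = F_{a-1}P \hookrightarrow P$ is a quasi-isomorphism and $M \simeq F_{a-1}P$. Finally, each $\frkp_{i} P \cong M \lotimes_{\Lambda} C^{i}[i]$ lies in $\sfK^{\mrb}(\proj\Lambda)$ because $\pd C_{\Lambda} < \infty$, and, being a bounded-above complex of finitely generated projectives quasi-isomorphic to a bounded one, it is isomorphic in the homotopy category to a bounded complex of finitely generated projective $\Lambda$-modules; tensoring with $A(-i)$ and then reassembling $\frkt_{0}P, \dots, \frkt_{a-1}P$ through their defining extensions shows that $F_{a-1}P$ is isomorphic in $\sfK(\grmod A)$ to a bounded complex of finitely generated graded projective $A$-modules. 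Thus $M \in \sfK^{\mrb}(\grproj A)$, i.e. $\varpi(M) = 0$.

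The hard part is the last step of the reverse inclusion: upgrading ``$\frkt_{i} P$ contractible for $i \geq a$, and $\frkp_{i} P$ perfect over $\Lambda$ for $i < a$'' to an honestly bounded graded $A$-projective complex representing $M$. One must work with the filtration $\{F_{j} P\}$ rather than a literal splitting of $P$ as a complex (the differential is only triangular, not diagonal), verify that removing the infinitely many contractible pieces keeps $F_{a-1}P \hookrightarrow P$ a quasi-isomorphism, and track finite generation while replacing the a priori unbounded-below complexes $\frkt_{i} P$ by bounded representatives. Everything else is a direct application of Corollary \ref{black thunder} together with standard facts about homotopy categories of projectives.
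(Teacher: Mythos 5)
This proposition is not proved in the paper at all: it is imported verbatim from \cite[Corollary 4.18]{adasore}, so there is no in-text proof to compare against. Judged on its own, your argument is correct and is built from exactly the ingredients this paper recalls from \cite{adasore} (the triangle \eqref{kurumimochi}, Corollary \ref{black thunder}, Lemma \ref{fg lemma}, and the $\frkt_{i}$-decomposition), so it almost certainly reconstructs the original proof. Both inclusions check out: for the forward one, a bounded graded projective resolution $P$ has $\frkp_{i}P=0$ for $i\gg 0$ by Lemma \ref{fg lemma} while $\frkp_{i}P\cong M\lotimes_{\Lambda}C^{i}[i]$ by Corollary \ref{black thunder}, and the identification $M\cong\frkp_{0}P$ correctly places $\Ker\varpi$ inside $\sfK^{\mrb}(\proj\Lambda)$; for the reverse one, your filtration argument is sound, since $d(\frkt_{i}P)\subset\bigoplus_{j\le i}\frkt_{j}P$ (a generator in degree $i$ can only hit $\frkp_{j}P^{n+1}\otimes_{\Lambda}A_{i-j}$, which forces $j\le i$), so the $F_{j}P$ really are subcomplexes, and $P/F_{a-1}P$ is exhaustively filtered with acyclic (indeed contractible) graded pieces, hence acyclic.

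For the step you flag as the hard part, the cleanest formulation is the one you gesture at: the filtration gives degreewise-split short exact sequences $0\to F_{i-1}P\to F_{i}P\to\frkt_{i}P\to 0$, hence exact triangles in $\sfK(\GrProj A)$, so $F_{a-1}P$ lies in the triangulated subcategory of $\sfK(\GrProj A)$ generated by $\frkt_{0}P,\dots,\frkt_{a-1}P$; each $\frkp_{i}P\in\sfK^{-}(\proj\Lambda)$ is quasi-isomorphic to the perfect complex $M\lotimes_{\Lambda}C^{i}[i]$ and is therefore homotopy equivalent to a bounded complex of finitely generated projectives (full faithfulness of $\sfK^{-}(\proj\Lambda)\to\sfD(\Mod\Lambda)$), whence $\frkt_{i}P=\frkp_{i}P\otimes_{\Lambda}A(-i)$ lies in $\sfK^{\mrb}(\grproj A)$, and closure of $\sfK^{\mrb}(\grproj A)$ under cones finishes it. Your write-up effectively says this, so I see no genuine gap.
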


We end this section by pointing out a relationship 
between the functor $- \lotimes_{\Lambda}C$ and 
the degree shift functor $(1)$ on $\grSing A$. 
We denote by $\varpi|_{\sfK}$ the restriction to  the homotopy category $\sfK^{\mrb}(\proj \Lambda)$. 
 
\begin{lemma}\label{varpi shift lemma} 
Under the above situation, 
we have the following natural isomorphism 
between the functors from $\sfK^{\mrb}(\proj \Lambda) $ to $\grSing A$. 
\[
\varpi|_{\sfK} \circ ( -\lotimes_{\Lambda}C[1])|_{\sfK}  \cong (1) \circ \varpi|_{\sfK}. 
\]
\end{lemma}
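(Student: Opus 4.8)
The plan is to exhibit the isomorphism, for each $P\in\sfK^{\mrb}(\proj\Lambda)$ regarded as a complex of graded $A$-modules concentrated in internal degree $0$, by comparing both sides against a bounded complex of graded \emph{projective} $A$-modules, which dies in $\grSing A$. The input is the elementary short exact sequence of graded $A$-bimodules $0\to C\to A\to\Lambda\to 0$ coming from $C=A_{\geq 1}$ and $\Lambda=A/A_{\geq 1}$, in which $C$ is concentrated in internal degree $1$. Applying $P\otimes_{\Lambda}-$ gives, functorially in $P\in\sfC^{\mrb}(\proj\Lambda)$, a short exact sequence of complexes of graded $A$-modules
\[
0\longrightarrow (P\otimes_{\Lambda}C)(-1)\longrightarrow P\otimes_{\Lambda}A\longrightarrow P\longrightarrow 0,
\]
where $P\otimes_{\Lambda}C$ is first viewed as a complex of $\Lambda$-modules placed in internal degree $0$ and then shifted by $(-1)$ to account for its living in internal degree $1$, while $P\otimes_{\Lambda}A\in\sfC^{\mrb}(\grproj A)$.

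Passing to $\sfD^{\mrb}(\grmod A)$, this short exact sequence of complexes yields a triangle, and since $\pi(P\otimes_{\Lambda}A)=0$ in $\grSing A$ its connecting morphism induces an isomorphism $\pi(P)\xrightarrow{\ \sim\ }\pi\big((P\otimes_{\Lambda}C)(-1)\big)[1]$. As $\pi$ intertwines the internal-degree shift on $\sfD^{\mrb}(\grmod A)$ with $(1)$ on $\grSing A$, and $\varpi$ is $\pi$ composed with the canonical embedding $\sfD^{\mrb}(\mod\Lambda)\hookrightarrow\sfD^{\mrb}(\grmod A)$, this reads $\varpi(P)\cong\varpi(P\otimes_{\Lambda}C)(-1)[1]$, equivalently $\varpi(P)(1)\cong\varpi(P\otimes_{\Lambda}C)[1]$. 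Since $P$ is a complex of projective $\Lambda$-modules we may replace $P\otimes_{\Lambda}C$ by $P\lotimes_{\Lambda}C$, which by hypothesis lies in $\sfK^{\mrb}(\proj\Lambda)$, so the right-hand side is $\varpi|_{\sfK}(P\lotimes_{\Lambda}C[1])$; this is the asserted isomorphism on objects.

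For naturality, the short exact sequence above, hence the connecting morphism $\delta_{P}\colon P\to(P\otimes_{\Lambda}C)(-1)[1]$ in $\sfD^{\mrb}(\grmod A)$, is functorial in $P$; applying $\pi$ and the autoequivalence $(1)$ produces a transformation between the two functors $\sfC^{\mrb}(\proj\Lambda)\to\grSing A$ that is objectwise invertible, and it descends to $\sfK^{\mrb}(\proj\Lambda)$ because $-\otimes_{\Lambda}A$ and $-\otimes_{\Lambda}C$ preserve null-homotopies and the formation of the connecting morphism is compatible with homotopies. The point requiring care — more bookkeeping than obstacle — is keeping the two shifts straight: the internal-degree shift $(-1)$, forced by $A_{\geq 1}=C$ sitting in degree $1$, against the cohomological shift $[1]$ produced by the triangle; their interplay is exactly what yields $(1)$ rather than $(-1)$ on the right-hand side. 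It is reassuring to test this on $\Lambda=C=\kk$, $A=\kk[\epsilon]/(\epsilon^{2})$ with $\deg\epsilon=1$, where the statement collapses to the identity $\Omega^{-1}S\cong S(1)$ for the graded simple module $S=A/A_{\geq 1}$.
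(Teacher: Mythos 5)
Your argument is correct and is essentially the paper's own proof: both tensor the standard short exact sequence $0\to C\to A\to \Lambda\to 0$ (the paper first twists it by $(1)$, you twist afterwards) with the perfect complex $P$, observe that the middle term $P\otimes_{\Lambda}A$ is a bounded complex of graded projective $A$-modules and hence vanishes in $\grSing A$, and conclude that the connecting morphism gives the desired natural isomorphism. The bookkeeping of the internal shift $(\pm1)$ versus the cohomological shift $[1]$ is handled correctly.
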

 
 We note that 
 since $\Lambda$ is the degree $0$-subalgebra of $A$, 
 we may regard   
 a graded  $A$-$A$-bimodule ${}_{A}X_{A}$ 
 as a graded $\Lambda$ -$A$-bimodule ${}_{\Lambda} X_{A}$. 
 
 \begin{proof}
 Let $M \in \sfK^{\mrb}(\proj \Lambda)$. 
 Applying $M \lotimes_{\Lambda}-$ to the standard exact sequence 
 \[
 0 \to C \to A(1)  \to  \Lambda(1) \to 0,  
 \] 
 we obtain a morphism $f: M (1) \to M \lotimes_{\Lambda} C[1] $ in $\sfD^{\mrb}(\grmod A)$ 
 which is natural in $M$. 
 It is easy to see that the cone $\cone(f) = M \lotimes_{\Lambda} A [1]$ 
belongs to $\sfK^{\mrb}(\grproj A)$. 
Therefore $f$ becomes an isomorphism after passing to $\grSing A$.  
 \end{proof}

\subsection{Statements of Theorems}\label{Statements of Theorems}

In Section \ref{Statements of Theorems}, we give statements of 
main theorem of this section.

Assume that $\Lambda$ is Noetherian and $C$ has finite projective dimension on both sides. 
Then the $C$-dual functors $(-)^{\star} =\RHom(-,C)$  
can be restricted to the perfect derived categories 
\[
(-)^{\star}:= \RHom_{\Lambda} ( -, C): \sfK^{\mrb}(\proj \Lambda) \rightleftarrows \sfK^{\mrb}(\proj \Lambda^{\op})^{\op} 
:  \RHom_{\Lambda^{\op}}(-,C) = : (-)^{\star}. 
\]
We also use the $\Lambda$-dual functors. 
Since it is necessary to distinguish the right $\Lambda$-duality and the left $\Lambda$-duality, 
we denote by $(-)^{\rightvee} := \RHom_{\Lambda}(-, \Lambda)$ 
the right $\Lambda$-dual functor 
and by $(-)^{\leftvee} :=\RHom_{\Lambda^{\op}}(-, \Lambda)$
the left $\Lambda$-dual functor. 
\[
(-)^{\rightvee}:= \RHom_{\Lambda} ( -, \Lambda): 
\sfD(\mod \Lambda) \rightleftarrows \sfD(\mod \Lambda^{\op})^{\op} 
:  \RHom_{\Lambda^{\op}}(-,\Lambda) = : (-)^{\leftvee}. 
\]
We remark that these functors induce contravariant equivalences 
between the homotopy categories 
$\sfK^{\mrb}(\proj \Lambda)$ and 
$\sfK^{\mrb}(\proj \Lambda^{\op})$. 
\[
(-)^{\rightvee}: \sfK^{\mrb}(\proj \Lambda) 
\stackrel{\sim}{\rightleftarrows} \sfK^{\mrb}(\proj \Lambda^{\op})^{\op} 
: (-)^{\leftvee}. 
\]

The aim of this section is to prove the following two theorems. 
The first one gives categorical characterizations of asid bimodules. 

\begin{theorem}\label{Adaching theorem 0}
Let $\Lambda$ be an IG-algebra and  
$C$ a $\Lambda$-$\Lambda$-bimodules which is finitely generated on both sides and 
has finite projective dimension on both sides. 
The following conditions  are equivalent 
\begin{enumerate}[(1)]
\item $C$ is an asid bimodule.
\item 
$\sfK^{\mrb} (\proj \Lambda)$ has an admissible subcategory $\sfT$ such that 
\begin{enumerate}[(2-a)]
\item the functor $ \cT = -\lotimes_{\Lambda} C$ acts on $\sfT$ as an equivalence, 
 i.e., 
$\cT({\sfT}) \subset {\sfT}$ and 
the restriction functor $\cT|_{\sfT}$ is an autoequivalence.

\item the functor $\cT = -\lotimes_{\Lambda} C$ nilpotently acts on $\sfT^{\perp}$, i.e., 
$\cT({\sfT}^{\perp}) \subset {\sfT}^{\perp}$ and 
there exists a natural number $a \in \NN$ such that $\cT^{a}({\sfT}^{\perp}) =0$. 
\end{enumerate} 

\item 
$\sfK^{\mrb} (\proj \Lambda)$ has a thick subcategory $\sfT$ 
such that 
\begin{enumerate}[(3-a)]
\item 
the $C$-dual functors induce  equivalences 
\[
(-)^{\star} : \sfT \stackrel{\sim}{\rightleftarrows} \sfT^{\rightvee} : (-)^{\star}
\]
where $\sfT^{\rightvee}$ denotes the image of $\sfT$ by the functor $(-)^{\rightvee}$. 

\item 
there exists a natural number $a \geq 0$ such that 
$C^{a},(C^{a})^{\leftvee} \in \sfT$. 
\end{enumerate} 
\end{enumerate}
\end{theorem}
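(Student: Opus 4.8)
The plan is to take Theorem \ref{adasore:right asid theorem} as the hinge: since ``$C$ finitely generated of finite projective dimension on both sides'' is part of the hypothesis, condition (1) is \emph{equivalent} to the conjunction of the right asid condition ($\RHom_{\Lambda}(C^{a},\lambda_{r})$ an isomorphism for $a\gg0$) and the left asid condition ($\RHom_{\Lambda^{\op}}(C^{a},\lambda_{\ell})$ an isomorphism for $a\gg0$), so everything reduces to recasting these two conditions as the existence of $\sfT$. As preparation I would record the formal consequences of $\pd C<\infty$ on both sides: the functor $\cT=-\lotimes_{\Lambda}C$ restricts to an endofunctor of $\sfK^{\mrb}(\proj\Lambda)$; the $C$-duals $(-)^{\star}$ and the $\Lambda$-duals $(-)^{\rightvee},(-)^{\leftvee}$ restrict as in the statement; and for $X\in\sfK^{\mrb}(\proj\Lambda)$ one has natural isomorphisms $\cT X\cong (X^{\rightvee})^{\star}$, $X^{\star}\cong C\lotimes_{\Lambda}X^{\rightvee}$, $(\cT X)^{\rightvee}\cong\RHom_{\Lambda^{\op}}(C,X^{\rightvee})$ and $\cT^{b}\Lambda\cong C^{b}$ (tensor--Hom adjunction and biduality for perfect complexes; cf.\ Lemma \ref{compatibility lemma}). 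Unwinding the definitions, applying $\RHom_{\Lambda}(C^{a},-)$ to $\lambda_{r}$ is, modulo these isomorphisms, exactly the map $(C^{a})^{\rightvee}\to(C^{a+1})^{\star}$ obtained by tensoring with $C$ and using $\Lambda\lotimes_{\Lambda}C\cong C$; this dictionary couples the asid conditions to $\cT$ and to the $C$-duality, and I would then prove the cycle $(1)\Rightarrow(3)\Rightarrow(2)\Rightarrow(1)$.

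For $(1)\Rightarrow(3)$: put $\alpha:=\max(\alpha_{r},\alpha_{\ell})$ and set $\sfT:=\thick\{\,C^{b},(C^{b})^{\leftvee}\mid b\geq\alpha\,\}\subseteq\sfK^{\mrb}(\proj\Lambda)$. Condition (3-b) holds by construction. For (3-a) I would use the dictionary: on a power $C^{b}$ with $b\geq\alpha$ the map $C^{b}\to C^{b\star\star}$ is, after the identifications, assembled from the maps governed by the right and left asid conditions, which are isomorphisms by the minimality defining $\alpha_r,\alpha_\ell$; the same holds for $(C^{b})^{\leftvee}$, and a dévissage along the generators promotes this to the assertion that $(-)^{\star}$ restricts to mutually inverse equivalences between $\sfT$ and $\sfT^{\rightvee}$.

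For $(3)\Rightarrow(2)$: given a thick $\sfT$ satisfying (3), the functor $\cT|_{\sfT}$ is automatically an autoequivalence, since by the preparatory isomorphism $\cT X\cong(X^{\rightvee})^{\star}$ it factors as $\sfT\xrightarrow{(-)^{\rightvee}}\sfT^{\rightvee}\xrightarrow{(-)^{\star}}\sfT$, a composite of two contravariant equivalences (the first by definition of $\sfT^{\rightvee}$, the second by (3-a)); in particular $\cT(\sfT)\subseteq\sfT$ and (2-a) holds, and $\cT|_{\sfT}$ is faithful, so (using $\cT^{a}X\in\thick(C^{a})\subseteq\sfT$ for the $a$ of (3-b)) the chain \eqref{kernel sequence} stabilizes at $a$, whence $\sfN:=\Ker(\cT^{a}|_{\sfK})=\Ker\varpi|_{\sfK}$ by Proposition \ref{adasore:description of kernel}. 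The remaining point is to upgrade $\sfT$ to an \emph{admissible} subcategory with $\sfT^{\perp}=\sfN$: here one produces, for each $X$, an exact triangle $X_{\sfT}\to X\to X_{\sfT^{\perp}}$ with $X_{\sfT}\in\sfT$ and $\cT^{a}X_{\sfT^{\perp}}=0$ by a stabilization/telescope construction inside $\sfK^{\mrb}(\proj\Lambda)$ built from $\cT^{a}(\sfK^{\mrb}(\proj\Lambda))\subseteq\sfT$ and the adjoints of $\cT$; by Lemma \ref{right admissible lemma} this yields right admissibility and (2-b), and left admissibility follows by running the same analysis on the $\Lambda^{\op}$-side under $(-)^{\rightvee}$, where $\cT$ is replaced by its adjoint $\RHom_{\Lambda}(C,-)$. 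Finally $(2)\Rightarrow(1)$: from an abstract $\sfT$ as in (2), the triangle $\Lambda_{\sfT}\to\Lambda\to\Lambda_{\sfT^{\perp}}$ given by the right adjoint of the inclusion together with $\cT^{a}(\sfT^{\perp})=0$ gives $C^{b}\cong\cT^{b}\Lambda_{\sfT}\in\sfT$ for $b\geq a$; since $\cT|_{\sfT}$ is an autoequivalence it acts invertibly on the relevant $\Hom$-groups between these $C^{b}$, and translating through the dictionary yields the right asid condition, while transporting the configuration to the $\Lambda^{\op}$-side via $(-)^{\rightvee}$ (admissibility is self-dual, interchanging left and right admissibility) yields the left asid condition; Theorem \ref{adasore:right asid theorem} then gives (1).

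The step I expect to be the main obstacle is the admissibility part of $(3)\Rightarrow(2)$ — namely producing the triangle $X_{\sfT}\to X\to X_{\sfT^{\perp}}$ and identifying $\sfT^{\perp}=\Ker(\cT^{a}|_{\sfK})$. This requires combining the stabilization of \eqref{kernel sequence} with a genuine homotopy-(co)limit construction inside $\sfK^{\mrb}(\proj\Lambda)$, and it is precisely here that the interplay between $\cT$, its right adjoint $\RHom_{\Lambda}(C,-)$, the $C$-duals $(-)^{\star}$ and the $\Lambda$-duals $(-)^{\rightvee},(-)^{\leftvee}$ — and thus the full strength of ``$C$ finitely generated of finite projective dimension on both sides'' — must be used carefully; by contrast the verification of (2-a), (3-a), (3-b) and the transport to $\Lambda^{\op}$ is formal manipulation of adjunctions and dévissage.
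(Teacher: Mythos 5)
Your overall skeleton is right: reduce (1) to the right and left asid conditions via Theorem \ref{adasore:right asid theorem}, translate those conditions through the tensor--Hom dictionary ($\cT X\cong (X^{\rightvee})^{\star}$, $\cT^{b}\Lambda\cong C^{b}$, etc.), and take $\sfT$ to be generated by the high powers $C^{b}$ and their duals. This matches the paper, which sets $\sfT=\thick C^{\max\{\alpha_r,\alpha_\ell\}}$ and runs the cycle $(1)\Rightarrow(2)\Rightarrow(3)\Rightarrow(1)$ (your order is $(1)\Rightarrow(3)\Rightarrow(2)\Rightarrow(1)$, which is fine in principle). Your arguments for (2-a), (3-b), the stabilization of the kernel chain, and the transport to $\Lambda^{\op}$ for left admissibility are all consistent with the paper's Lemma \ref{T lemma}, Proposition \ref{isomorphisms proposition} and Lemma \ref{abstraction 1}.

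The genuine gap is exactly at the step you flag as the ``main obstacle'': producing the triangle $X_{\sfT}\to X\to X_{\sfT^{\perp}}$ with $X_{\sfT}\in\sfT$ and $\cT^{a}(X_{\sfT^{\perp}})=0$. A ``stabilization/telescope construction'' is not available here --- $\sfK^{\mrb}(\proj\Lambda)$ has no countable coproducts, and passing to $\Loc C^{a}\subset\sfD(\Mod\Lambda)$ only gives a localization triangle in the big category whose pieces you would still have to identify as perfect. The paper's mechanism (Lemma \ref{T proposition}) is a single application of the counit of the adjunction $-\lotimes_{\Lambda}C^{\alpha}\dashv\RHom_{\Lambda}(C^{\alpha},-)$: the triangle is $\RHom_{\Lambda}(C^{\alpha},M)\lotimes_{\Lambda}C^{\alpha}\xrightarrow{\scrE}M\to\cone(\scrE)$, and the entire content is the verification that $\scrE\lotimes_{\Lambda}C^{\alpha}$ is an isomorphism; this reduces to $M=\Lambda$ and then to factoring the iterated counit as a composite $\epsilon_{r}\circ\epsilon_{r}^{(1)}\circ\cdots\circ\epsilon_{r}^{(\alpha-1)}$ (Lemma \ref{natural morphism 2}) each factor of which becomes invertible after $-\lotimes_{\Lambda}C^{\alpha}$ precisely because $\epsilon_{r}\lotimes C^{\alpha}$ is an isomorphism --- one of the equivalent reformulations of the asid condition in Proposition \ref{isomorphisms proposition}(4). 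Your proposal never supplies this computation, and your choice to place admissibility inside $(3)\Rightarrow(2)$ compounds the problem: at that point you have not yet established the asid conditions (that is your final step $(2)\Rightarrow(1)$), so you would first have to re-derive the invertibility of $\epsilon_{r}\lotimes C^{a}$ from (3-a) and (3-b) alone, which is essentially the paper's $(3)\Rightarrow(1)$ argument via the biduality diagram for $\scrK$ and $\scrL$. Either reorder so that admissibility is proved where the asid conditions are in hand, or carry out the counit computation explicitly; as written, the decomposition $\sfK^{\mrb}(\proj\Lambda)=\sfT\perp\Ker(\cT^{a})$ is asserted rather than proved.
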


\begin{remark}
We remark that what we actually show in the proof provided in the next section is 
that if $\Lambda$ is Noetherian and a bimodule $C$ over it has finite projective dimension 
from both sides, then 
the right and the left asid conditions are equivalent 
to the conditions (2) or (3) of the above theorem. 
\end{remark}

The second main result of this section shows that the left and right asid numbers coincide with each other 
and that 
the triangulated subcategories $\sfT$ and $\sfT^{\perp}$ appeared in the above theorem 
are uniquely determined by an asid bimodule $C$.

\begin{theorem}\label{Adaching theorem 1}
Let $\Lambda$ be an IG-algebra and $C$ an asid bimodule. 
Then the following assertions hold. 

\begin{enumerate}[(1)]
\item   
We have $\alpha_{r} = \alpha_{\ell}$. We put $\alpha := \alpha_{r} = \alpha_{\ell}$. 

\item 
The subcategory $\sfT$ of (2) or (3) of Theorem \ref{Adaching theorem 0} is uniquely determined as 
$
\sfT = \thick C^{\alpha}.$
\item 
We have 
$
\sfT^{\perp} = \Ker \varpi = \Ker (- \lotimes_{\Lambda} C^{\alpha})|_{\sfK}. 
$
\end{enumerate}

\end{theorem}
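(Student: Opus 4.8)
The plan is to derive all three assertions from the categorical characterizations of Theorem \ref{Adaching theorem 0} together with the description $\Ker\varpi=\bigcup_{a\geq0}\Ker(-\lotimes_{\Lambda}C^{a})|_{\sfK}$ of Proposition \ref{adasore:description of kernel}.

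\textbf{Step 1: the identity $\sfT^{\perp}=\Ker\varpi$.} Let $\sfT\subset\sfK^{\mrb}(\proj\Lambda)$ be any admissible subcategory as in Theorem \ref{Adaching theorem 0}(2). The inclusion $\sfT^{\perp}\subseteq\Ker\varpi$ follows from (2-b): if $\cT^{a}(\sfT^{\perp})=0$ then $\sfT^{\perp}\subseteq\Ker(-\lotimes_{\Lambda}C^{a})|_{\sfK}\subseteq\Ker\varpi$. For the reverse inclusion, note $\sfT\cap\Ker\varpi=0$, since any $Y$ in the intersection is killed by some power $\cT^{b}$ (Proposition \ref{adasore:description of kernel}) while $\cT|_{\sfT}$ is an autoequivalence by (2-a). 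Hence, for $X\in\Ker\varpi$, the triangle $\tau X\to X\to X'\to$ of Lemma \ref{right admissible lemma} (with $\tau X\in\sfT$, $X'\in\sfT^{\perp}$) has $\tau X\in\sfT\cap\Ker\varpi=0$, because $\Ker\varpi$ is thick and $X'\in\sfT^{\perp}\subseteq\Ker\varpi$; thus $X\cong X'\in\sfT^{\perp}$.

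\textbf{Step 2: $\sfT=\thick C^{\alpha}$ and uniqueness.} Suppose $a\in\NN$ satisfies $\cT^{a}(\sfT^{\perp})=0$, equivalently (by Step 1) $\Ker(-\lotimes_{\Lambda}C^{a})|_{\sfK}=\Ker\varpi$, equivalently the chain \eqref{kernel sequence} is stationary from its $a$-th term. Applying the triangulated functor $\cT^{a}=-\lotimes_{\Lambda}C^{a}$ (which preserves $\sfK^{\mrb}(\proj\Lambda)$ because $\pd C_{\Lambda}<\infty$) to the triangle $\tau\Lambda\to\Lambda\to\Lambda'\to$ with $\Lambda'\in\sfT^{\perp}$ yields $C^{a}=\cT^{a}(\Lambda)\cong\cT^{a}(\tau\Lambda)\in\cT^{a}(\sfT)\subseteq\sfT$, so $\thick C^{a}\subseteq\sfT$; conversely, every object of $\sfT$ has the form $\cT^{a}(Z)$ with $Z\in\sfT\subseteq\thick\Lambda$ (as $\cT|_{\sfT}$ is an autoequivalence), hence lies in $\thick\cT^{a}(\Lambda)=\thick C^{a}$. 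Therefore $\sfT=\thick C^{a}$ and $\sfT^{\perp}=\Ker(-\lotimes_{\Lambda}C^{a})|_{\sfK}$ for every such $a$. By Proposition \ref{adasore:asid number corollary} and the results of \cite{adasore} linking the asid condition with \eqref{kernel sequence}, the least such $a$ is the right asid number $\alpha_{r}$; taking $a=\alpha_{r}$ gives $\sfT=\thick C^{\alpha_{r}}$ and $\sfT^{\perp}=\Ker(-\lotimes_{\Lambda}C^{\alpha_{r}})|_{\sfK}=\Ker\varpi$. Since the right-hand sides do not depend on $\sfT$, the admissible subcategory in Theorem \ref{Adaching theorem 0}(2) is unique; and by the proof of that theorem a subcategory as in (3) again satisfies (2), so it is the same one. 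This proves (2) and (3) modulo the equality $\alpha_{r}=\alpha_{\ell}$.

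\textbf{Step 3: $\alpha_{r}=\alpha_{\ell}$, and the main difficulty.} Running Steps 1--2 over $\Lambda^{\op}$ identifies $\alpha_{\ell}$ with the least index from which the left analogue of \eqref{kernel sequence} (built from $C^{a}\lotimes_{\Lambda}-$ on $\sfK^{\mrb}(\proj\Lambda^{\op})$) is stationary, and produces the corresponding subcategory $\sfT_{\ell}=\thick(C^{\alpha_{\ell}})$. The $C$-duality functors $(-)^{\star}$ of Theorem \ref{Adaching theorem 0}(3) intertwine $-\lotimes_{\Lambda}C$ with $C\lotimes_{\Lambda}-$ (via $X\lotimes_{\Lambda}C\cong(X^{\rightvee})^{\star}$ on perfect complexes), hence carry $\sfT$ onto $\sfT_{\ell}$ and $\sfT^{\perp}$ onto $\sfT_{\ell}^{\perp}$; comparing $\sfT=\thick C^{\alpha_{r}}$ with $\sfT_{\ell}=\thick C^{\alpha_{\ell}}$ forces $\alpha_{r}=\alpha_{\ell}$. (Equivalently, this is the numerical content of Proposition \ref{adasore:asid number corollary} and its left counterpart, expressing $\alpha_{r}$ and $\alpha_{\ell}$ through the degrees of the graded pieces of $\RHom_{A}(\Lambda,A)$ and $\RHom_{A^{\op}}(\Lambda,A)$.) The formal content---admissibility, uniqueness, and $\sfT=\thick C^{a}$ for $a$ in the stationary range---is routine given Theorem \ref{Adaching theorem 0}; the step I expect to be genuinely delicate is the identification of the least stationary index of \eqref{kernel sequence} with the asid number $\alpha_{r}$, which is defined through the morphisms $\RHom_{\Lambda}(C^{a},\lambda_{r})$ rather than through the kernels, and which requires the $\otimes$--$\Hom$ adjunction together with the standard exact sequence $0\to C\to A(1)\to\Lambda(1)\to 0$, plus the analogous care on the left for $\alpha_{\ell}$.
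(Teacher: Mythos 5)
Your Step 1 and the first half of Step 2 are correct and essentially follow the paper's route (they reproduce the content of Lemma \ref{T proposition}, Corollary \ref{T corollary} and the condition-(2) half of the uniqueness argument). However, there are two genuine gaps.

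The serious one is assertion (1). You argue that the dualities carry $\sfT$ onto $\sfT_{\ell}$ and that ``comparing $\sfT=\thick C^{\alpha_{r}}$ with $\sfT_{\ell}=\thick C^{\alpha_{\ell}}$ forces $\alpha_{r}=\alpha_{\ell}$.'' This cannot work: the asid subcategory does not remember the asid number, since by Lemma \ref{T lemma}(1) one has $\thick C^{a}=\thick C^{a+1}$ for all $a\geq\alpha_{\ell}$, so equality of the two thick subcategories (even granting it) says nothing about the exponents. Moreover $X\lotimes_{\Lambda}C\cong(X^{\rightvee})^{\star}$ is not an intertwining of $-\lotimes_{\Lambda}C$ with $C\lotimes_{\Lambda}-$ by an equivalence; the correct relation is that the contravariant equivalence $(-)^{\rightvee}$ carries $C\lotimes_{\Lambda}-$ to the \emph{left adjoint} $-\lotimes_{\Lambda}C^{\leftvee}$ of $-\lotimes_{\Lambda}C$. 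The entire content of (1) is the passage from a functor to its adjoint, which the paper isolates as the equivalence of conditions (3) and (6) in Lemma \ref{abstraction 1} (proved via the semiorthogonal decomposition and the Claim that $G$ acts as an equivalence on $\sfE$ and nilpotently on ${}^{\perp}\sfE$); combined with Proposition \ref{isomorphisms proposition} and its left version this gives $\alpha_{r}=\mathsf{a}(\cT,\sfK^{\mrb}(\proj\Lambda))=\mathsf{a}(\cT_{\ell},\sfK^{\mrb}(\proj\Lambda^{\op}))=\alpha_{\ell}$. Appealing instead to Proposition \ref{adasore:asid number corollary} ``and its left counterpart'' does not help: those give two formulas computed from the right and left injective resolutions of $A$, and their agreement is precisely what is to be proved.

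The secondary gap is in uniqueness for a thick subcategory $\sfT$ satisfying condition (3) of Theorem \ref{Adaching theorem 0}: you assert that ``by the proof of that theorem'' such a $\sfT$ satisfies (2), but the proof of Theorem \ref{Adaching theorem 0} establishes no such implication (a $\sfT$ as in (3) is not even assumed admissible). The paper handles this case separately: (3-b) gives $\thick C^{\alpha}\subset\sfT$, hence a semiorthogonal decomposition $\sfT=(\thick C^{\alpha})\perp(\sfT\cap\Ker\varpi)$; then (3-a) together with $M\lotimes_{\Lambda}C\cong M^{\rightvee\star}$ shows that $-\lotimes_{\Lambda}C$ acts on $\sfT$ as an equivalence, forcing $\sfT\cap\Ker\varpi=0$ and hence $\sfT=\thick C^{\alpha}$. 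Finally, the identification of $\alpha_{r}$ with the least $a$ such that $\cT^{a}(\Ker\varpi)=0$ --- which you rightly flag as the delicate point --- is supplied by Proposition \ref{isomorphisms proposition} together with Lemma \ref{abstraction 1} (recorded as Proposition \ref{asid number proposition}), not by Proposition \ref{adasore:asid number corollary}.
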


We introduce a terminology.

\begin{definition}
Let $\Lambda$ be an IG-algebra, $C$ an asid bimodule and $\alpha:=\alpha_{r} = \alpha_{\ell}$. 
We call  the subcategory $\thick C^{\alpha}$ the \textit{asid} subcategory of $C$. 
\end{definition}

\subsection{Proof of Theorem \ref{Adaching theorem 0} and Theorem \ref{Adaching theorem 1}}

In the rest of Section \ref{Categorical characterization}, we keep the notation and the assumption of Theorem \ref{Adaching theorem 0}. 

\subsubsection{Interpretations of  the  asid conditions}
First, we give  interpretations of  the  asid conditions. 
We only discuss the right asid condition,   since the left version follows from the dual argument.

For the purpose, we collect natural isomorphisms which relate to 
the morphism $\lambda_{r}: \Lambda \to \RHom_{\Lambda}(C,C)$.

%Let $\cT = -\lotimes C: \sfD^{-} (\Mod \Lambda) \to \sfD^{-}(\Mod \Lambda)$ be the tensor functor.
We will show up natural isomorphisms which are also used in the proofs  
of Theorem \ref{Adaching theorem 0}  and Theorem \ref{Adaching theorem 1} 
so that the readers can easily consult their definitions. 

\vspace{4pt}
\noindent 
\underline{ $\bullet$ The natural isomorphism  $\scrT$:}  
We denote by 
$\scrT_{M,N} : \RHom_{\Lambda}(M,N) \to \RHom_{\Lambda}(M \lotimes_{\Lambda} C , N\lotimes_{\Lambda} C)$ 
the morphism induced 
by the functor $\cT = -\lotimes_{\Lambda} C$.

\vspace{4pt}
\noindent 
\underline{ $\bullet$ The natural isomorphism  $\scrE$:}  
Let $D$ be a $\Lambda$-$\Lambda$-bimodule and $M \in \Mod \Lambda$. 
We define a morphism  $\tilde{\scrE}_{D, M}:\Hom_{\Lambda}(D,M)\otimes_{\Lambda} D \to M$ 
by the formula 
$ \tilde{\scrE}_{D,M}(f\otimes d) = f(d)$ 
for $f\in \Hom_{\Lambda}(D,M) $ and $d \in D$. 
We note that $\tilde{\scrE}_{D, M} $ is natural in $D$ and $M$ 
and that 
it is the counit morphism of the adjoint pair 
$-\otimes_{\Lambda} D: \Mod \Lambda \rightleftarrows \Mod \Lambda : \Hom_{\Lambda}(D, -)$.

Let $D$ be a complex of $\Lambda$-$\Lambda$-bimodules and $M \in \sfD(\Mod \Lambda)$. 
We denote by $\scrE_{D, M}:\RHom_{\Lambda}(D,M)\lotimes_{\Lambda} D \to M$ 
the morphism which is derived  from the morphism $\tilde{\scrE}$. 
We note that $\scrE_{D, M}$ is 
the counit morphism 
of the adjoint pair $-\lotimes_{\Lambda}D : \sfD(\Mod \Lambda) \rightleftarrows \sfD(\Mod \Lambda): \RHom_{\Lambda}(D, -)$.

\vspace{4pt} 
\noindent 
\underline{ $\bullet$ The natural isomorphisms  $\epsilon_{r}, \epsilon_{\ell}$:} 
We set $\epsilon_{r}:= \scrE_{C, \Lambda}$ and $\epsilon_{\ell}$ to be the left version of $\epsilon_{r}$. 
\[
\epsilon_{r}: C^{\rightvee} \lotimes_{\Lambda} C = \RHom_{\Lambda}(C, \Lambda) \lotimes_{\Lambda} C  \to \Lambda, \ \ \ 
\epsilon_{\ell}: C \lotimes_{\Lambda} C^{\leftvee}  =C\lotimes_{\Lambda} \RHom_{\Lambda^{\op}}(C, \Lambda) \to \Lambda.
\]

\begin{proposition}\label{isomorphisms proposition}
For a natural number $a\in \NN$,  the following conditions  are equivalent. 

\begin{enumerate}[(1)]
\item The morphism $\RHom_{\Lambda}(C^{a}, \lambda_{r}) $ is an isomorphism. 

\item The morphism $\scrT_{C^{a} ,\Lambda} : \RHom_{\Lambda}(C^{a} , \Lambda) \to \RHom_{\Lambda}(C^{a+1}, C)$ 
is an isomorphism. 

\item  The morphism 
$\cT_{ M \lotimes C^{a}, N}: 
\Hom_{\Lambda}(M \lotimes_{\Lambda} C^{a}, N) \to 
\Hom_{\Lambda}(M \lotimes_{\Lambda} C^{a+1}, N \lotimes_{\Lambda} C)$ 
is an isomorphism  for $M ,N \in \sfK^{\mrb}(\proj \Lambda)$ 
where $\cT_{M \lotimes C^{a}, N}$ is the morphism of $\Hom$-spaces 
associated to the functor $\cT = - \lotimes_{\Lambda} C$.

\item 
$C^{a} \lotimes \epsilon_{\ell}$ is an isomorphism. 
\end{enumerate}
\end{proposition}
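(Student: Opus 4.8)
The plan is to deduce the equivalences by identifying, up to canonical natural isomorphisms, all four morphisms with essentially one map; concretely I would prove $(1)\Leftrightarrow(2)$, $(2)\Leftrightarrow(3)$ and $(2)\Leftrightarrow(4)$. Throughout, the standing hypotheses on $\Lambda$ and $C$ guarantee that $C$, and hence every $C^{a}$ together with the $C$-dual $C^{\star}$ and the $\Lambda$-duals $C^{\rightvee},C^{\leftvee}$, is perfect on both sides; this is what turns the various tensor--$\RHom$ evaluation morphisms into isomorphisms, and it also lets me pass freely between $\RHom$-complexes and $\Hom$-groups in $\sfD$. The key structural remark is that $\lambda_{r}\colon\Lambda\to\RHom_{\Lambda}(C,C)$ is exactly the unit at $\Lambda$ of the adjoint pair $\bigl(\cT=-\lotimes_{\Lambda}C,\ \RHom_{\Lambda}(C,-)\bigr)$, and that $\scrT_{M,N}$ is the comparison morphism attached to the functor $\cT$.

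For $(1)\Leftrightarrow(2)$: the tensor--$\RHom$ adjunction gives a natural isomorphism $\RHom_{\Lambda}\bigl(C^{a},\RHom_{\Lambda}(C,C)\bigr)\xrightarrow{\ \sim\ }\RHom_{\Lambda}\bigl(C^{a}\lotimes_{\Lambda}C,\,C\bigr)=\RHom_{\Lambda}(C^{a+1},C)$. By the general yoga of adjunctions, for a morphism the composite ``apply $\cT$'' followed by ``transpose'' equals ``postcompose with the unit''; applied to the object $\Lambda$ this says precisely that $\scrT_{C^{a},\Lambda}$ corresponds, under the displayed isomorphism, to $\RHom_{\Lambda}(C^{a},\lambda_{r})$. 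Hence one is an isomorphism if and only if the other is.

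For $(2)\Leftrightarrow(3)$: the assignment $(X,Y)\mapsto\scrT_{X,Y}$ is a natural transformation between functors that are exact (contravariantly in $X$, covariantly in $Y$), so for fixed $Y$ the class $\{X:\scrT_{X,Y}\ \textup{is invertible}\}$ is a thick subcategory, and likewise for fixed $X$. Since $\sfK^{\mrb}(\proj\Lambda)=\thick\Lambda$ and $\cT$ carries $\thick\Lambda$ into $\thick C$, for any $M\in\sfK^{\mrb}(\proj\Lambda)$ the object $M\lotimes_{\Lambda}C^{a}$ lies in $\thick C^{a}$; therefore invertibility of $\scrT_{C^{a},\Lambda}$ propagates first to $\scrT_{M\lotimes C^{a},\Lambda}$ for all such $M$, and then, varying the second variable over $\thick\Lambda$, to $\scrT_{M\lotimes C^{a},N}$ for all $M,N\in\sfK^{\mrb}(\proj\Lambda)$. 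Taking $\tuH^{0}$ turns $\scrT_{M\lotimes C^{a},N}$ into $\cT_{M\lotimes C^{a},N}$, giving $(2)\Rightarrow(3)$. Conversely, $(3)$ applied with $M=\Lambda$ and $N=\Lambda[n]$ for all $n\in\ZZ$ says that every cohomology group of $\scrT_{C^{a},\Lambda}$ is an isomorphism, i.e.\ $\scrT_{C^{a},\Lambda}$ is an isomorphism in $\sfD$, which is $(2)$.

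For $(2)\Leftrightarrow(4)$: here I would apply a $\Lambda$-duality functor to $\scrT_{C^{a},\Lambda}$ and chase it through the tensor--$\RHom$--evaluation natural isomorphisms. Perfectness on both sides makes the double $\Lambda$-dual recover $C^{a}$, so the source $\RHom_{\Lambda}(C^{a},\Lambda)$ dualizes to $C^{a}$; and the target $\RHom_{\Lambda}(C^{a+1},C)$ rewrites, via the evaluation isomorphism and the dual-of-a-tensor-product formula, as a derived tensor product assembled from $C^{a}$, $C$ and $C^{\leftvee}$, under which $\scrT_{C^{a},\Lambda}$ goes over to $C^{a}\lotimes\epsilon_{\ell}$ — the point being that the coevaluation morphism occurring in the rewriting is, after dualizing, identified with the counit $\epsilon_{\ell}$. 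I expect \emph{this} to be the main obstacle: not that it is conceptually deep, but it demands careful bookkeeping of which side each $\Hom$ and each $\otimes$ is formed over and of how the unit/counit morphisms behave under $(-)^{\rightvee}$ and $(-)^{\leftvee}$; Lemma \ref{compatibility lemma} is what makes these manipulations legitimate at the level of bimodule complexes. Once the identification is in place, $(2)\Leftrightarrow(4)$ is immediate.
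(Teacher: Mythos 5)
Your proposal is correct, and for $(1)\Leftrightarrow(2)$ and for the identification underlying $(4)$ it follows the paper's own route: the paper likewise observes that $\scrT_{C^{a},\Lambda}$ factors as the adjunction isomorphism $\mathsf{adj}_{C^{a},C}$ precomposed with $\RHom_{\Lambda}(C^{a},\lambda_{r})$, and likewise obtains $(4)$ by applying $(-)^{\leftvee}$ and matching $C^{a}\lotimes\epsilon_{\ell}$ with $\RHom_{\Lambda}(C^{a},\lambda_{r})^{\leftvee}$. The one place you genuinely diverge is $(2)\Rightarrow(3)$: you run a d\'evissage, using that the locus where the natural transformation $\scrT_{-,-}$ is invertible is thick in each variable, propagating from $(C^{a},\Lambda)$ first to $(M\lotimes_{\Lambda}C^{a},\Lambda)$ via $M\lotimes_{\Lambda}C^{a}\in\thick C^{a}$ and then to arbitrary $N\in\thick\Lambda$. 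The paper instead introduces the explicit natural isomorphism $\scrF_{M,N,D,E}$ and a commutative square identifying $\scrT_{M\lotimes C^{a},N}$ with $N\lotimes\scrT_{C^{a},\Lambda}\lotimes M^{\rightvee}$; your argument is more elementary and avoids introducing $\scrF$, while the paper's yields a concrete formula that it reuses later (e.g.\ in the proof of Lemma \ref{dual_lem2}). Both are valid; your passage back from $(3)$ to $(2)$ via $N=\Lambda[n]$ for all $n$ correctly bridges the $\Hom$-group versus $\RHom$-complex distinction. Finally, for $(2)\Leftrightarrow(4)$ you correctly name the strategy but defer the "bookkeeping"; be aware that this bookkeeping is exactly the content of the paper's auxiliary morphism $\scrH_{D,E}\colon D\lotimes_{\Lambda}E\lotimes_{\Lambda}E^{\leftvee}\to\RHom_{\Lambda}(D,\RHom_{\Lambda}(E,E))^{\leftvee}$ and the verification that it is an isomorphism making the relevant square commute, so a complete write-up would still need to produce that map explicitly rather than only assert its existence.
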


%We record an immediate consequence of Proposition \ref{isomorphisms proposition} for  the later quotation. 
%\begin{corollary}\label{IG:asid corollary}If a bimodule 
%$C$ satisfies the condition  right $\asid$ 3, then 
%\[\begin{split}\alpha_{r}  &= \min\{a \mid C^{a} \lotimes \epsilon_{\ell} \textup{ is an isomorphism} \} \\ 
%%                    & = \min\{a \mid  \cT_{M \lotimes C^{a} , N }  \textup{ is an isomorphism for } M ,N \in \sfK^{\mrb}(\proj \Lambda) \}.
%\end{split}\] \end{corollary}

Before starting the proof, we need to introduce a natural isomorphism $\scrF$. 

\vspace{4pt}
\noindent 
\underline{ $\bullet$ The natural isomorphism  $\scrF$:}  
For $\Lambda$-modules $M, N$ and $\Lambda$-$\Lambda$-bimodules $D, E$, 
we define a morphism $\tilde{\scrF}_{M, N , D, E}$ to be 
\[
\begin{split}
&\tilde{\scrF}_{M ,N , D, E} :  
N \otimes_{\Lambda} \Hom_{\Lambda}(D, E) \otimes_{\Lambda} \Hom_{\Lambda}(M, \Lambda) 
\to 
\Hom_{\Lambda}(M \otimes_{\Lambda} D, N \otimes_{\Lambda} E), \\
&\tilde{\scrF}_{M, N , D, E}( n\otimes f\otimes \phi )(m \otimes d)  
:= n \otimes f( \phi(m) d)
\end{split}
\]
for $n \in N, f \in \Hom_{\Lambda}(D, E), \phi \in \Hom_{\Lambda}(M , \Lambda), m \in M , d \in D$. 
For $M ,N \in \sfD(\Mod \Lambda)$ and complexes $D, E$ of $\Lambda$-$\Lambda$-bimodules, 
we define $\scrF_{M , N , D ,E} $ to be the morphism derived from $\tilde{\scrF}$. 
\[
\scrF_{M, N , D, E}: 
N \lotimes_{\Lambda} \RHom_{\Lambda}(D, E) \lotimes_{\Lambda} M^{\rightvee} 
\to 
\RHom_{\Lambda}(M \lotimes_{\Lambda} D, N \lotimes_{\Lambda} E). 
\]
We note that if $M ,N $ belong to $\sfK^{\mrb}(\proj \Lambda)$, 
then $\scrF_{M,N, D, E}$ is an isomorphism.

In the proof and thereafter, we (tacitly) use the following natural isomorphisms. 

\noindent 
\underline{ $\bullet$ The natural isomorphism  $\scrG$:}  
By $\scrG_{M}: M \to M^{\rightvee \leftvee}$  
we denote the unit map of the adjoint pair $(-)^{\rightvee} \dashv (-)^{\leftvee}$. 
We note that it is an isomorphism for $M \in \sfK^{\mrb}(\proj \Lambda)$.

\begin{proof}[Proof  of Proposition \ref{isomorphisms proposition}]
Let 
$\mathsf{adj}_{C^{a} ,C}: \RHom_{\Lambda}(C^{a}, \RHom_{\Lambda}(C,C)) 
\xrightarrow{\cong} \RHom_{\Lambda}(C^{a +1},C)$ 
be the isomorphism induced from the adjoint pair 
$- \lotimes_{\Lambda}C \dashv \RHom_{\Lambda}(C, -)$. 
Then, we can check the equation 
$\scrT_{C^{a},\Lambda} = \mathsf{adj}_{C^{a}, C} \circ \RHom_{\Lambda}(C^{a}, \lambda_{r})$, 
which implies the equivalence (1) $\Leftrightarrow$ (2).  
\[
\begin{xymatrix}{ 
\RHom_{\Lambda}(C^{a}, \Lambda) \ar[rr]^{\RHom(C^{a}, \lambda_{r}) \qquad \ \ } \ar[drr]_{\scrT_{C^{a}, \Lambda}} && 
\RHom_{\Lambda}(C^{a}, \RHom_{\Lambda}(C,C))  \ar[d]^{\mathsf{adj}_{C^{a}, C}}_{\cong} \\
&& 
\RHom_{\Lambda}(C^{a+1}, C)
}\end{xymatrix}
\]

We  prove the implication (3) $\Rightarrow$ (2). 
First observe that 
the condition (3) is satisfied if and only if 
(3') the morphism 
$\scrT_{M \lotimes C^{a} ,N } : 
\RHom_{\Lambda}(M \lotimes_{\Lambda} C^{a} , N) \to 
\RHom_{\Lambda}(M \lotimes_{\Lambda} C^{a+1}, N \lotimes_{\Lambda} C)$ 
is an isomorphism  for $M ,N \in \sfK^{\mrb}(\proj \Lambda)$.  
If we substitute both $M$ and $N$ with $\Lambda$ in the condition (3'), 
then we see that (3') implies (2).

We  prove the implication (2) $\Rightarrow$ (3). 
We  can check that   the following diagram is commutative. 
\[
\begin{xymatrix}{ 
N \lotimes_{\Lambda} \RHom_{\Lambda}(C^{a}, \Lambda ) \lotimes_{\Lambda} M^{\rightvee} 
\ar[d]_{\scrF_{M, N , C^{a}, \Lambda} }^{\cong} 
\ar[rrr]^{ N \lotimes \scrT_{C^{a}, \Lambda} \lotimes M^{\rightvee}} &&& 
N \lotimes_{\Lambda} \RHom_{\Lambda}(C^{a+1}, C) \lotimes_{\Lambda} M^{\rightvee}
\ar[d]^{\scrF_{M, N , C^{a + 1}, C}}_{\cong} \\
\RHom_{\Lambda}(M \lotimes_{\Lambda} C^{a} , N) 
\ar[rrr]_{ \scrT_{M \lotimes C^{a} ,N} } &&&
\RHom_{\Lambda}(M \lotimes_{\Lambda} C^{a+1} , N \lotimes_{\Lambda} C). 
}\end{xymatrix}
\]
Since  we are assuming that $M, N$ belongs to $\sfK^{\mrb}(\proj \Lambda)$, 
the vertical arrows are isomorphism. 
This shows that the condition (2) implies the condition (3') 
and hence the condition (3).

To prove the equivalence  (1) $\Leftrightarrow$ (4), 
we need to introduce one more natural morphism,  which is denoted by $\scrH$.   
First for   $\Lambda$-$\Lambda$-bimodules $D, E$, 
we define a morphism $\tilde{\scrH}_{D, E}$ to be 
\[
\begin{split}
&\tilde{\scrH}_{D, E} : D \otimes_{\Lambda}E \otimes_{\Lambda} \Hom_{\Lambda^{\op}}(E, \Lambda) \to 
\Hom_{\Lambda^{\op}}( \Hom_{\Lambda} (D, \Hom_{\Lambda}(E, E)) , \Lambda), \\
&\tilde{\scrH}_{D, E}(d\otimes e \otimes f)(\phi) := f(\phi(d)(e)) 
\end{split}
\] 
for $d \in D, \ e \in E, \ f \in \Hom_{\Lambda^{\op}}(E, \Lambda), \phi \in \Hom_{\Lambda} (D, \Hom_{\Lambda}(E,E))$. 
For   complexes $D, E$ of $\Lambda$-$\Lambda$-bimodules, 
we define a morphism $\scrH_{D, E}$ to be the morphism derived from $\tilde{\scrH}$. 
\[
\scrH_{D, E}: D \lotimes_{\Lambda} E \lotimes_{\Lambda} E^{\leftvee} 
\to \RHom_{\Lambda}( D, \RHom_{\Lambda}(E,E) )^{\leftvee}.
\]
We can check that 
the following diagram is commutative  and that 
the left vertical arrow $\scrH_{C^{a}, C}$ is an isomorphism 
\[
\begin{xymatrix}{
C^{a} \lotimes_{\Lambda} C \lotimes_{\Lambda} C^{\leftvee} \ar[rrr]^{C^{a} \lotimes \epsilon_{\ell} } 
\ar[d]^{\cong}_{\scrH_{C^{a}, C}} 
&&&
C^{a} \lotimes_{\Lambda} \Lambda \ar[d]_{\cong}^{\scrG_{C^{a}}} \\ 
\RHom_{\Lambda}(C^{a}, \RHom_{\Lambda}(C,C))^{\leftvee} \ar[rrr]_{\ \ \ \RHom(C^{a}, \lambda_{r})^{\leftvee}}  &&&
\RHom_{\Lambda}(C^{a}, \Lambda)^{\leftvee}  
}\end{xymatrix}
\]
where the right vertical arrow is a canonical isomorphism. 
Now it is clear that (1) $\Leftrightarrow$ (4).  
\end{proof}

\begin{remark}
The equivalence $(1) \Leftrightarrow (2) \Leftrightarrow (3)$ of Proposition \ref{isomorphisms proposition} 
is true without the assumption that $C \in \sfK^{\mrb}(\proj \Lambda)$ and $C \in \sfK^{\mrb}(\proj \Lambda^{\op})$.
\end{remark}

\subsubsection{The thick hull $\thick C^{a}$}

We  remind the following fact which will be frequently used in the sequel. 
Let $M, N$ be objects of $\sfK^{\mrb}(\proj \Lambda)$ and $D$  a complex of $\Lambda$-$\Lambda$-bimodules. 
Then $\thick N \subset \thick M$ if and only if $N \in \thick M$. 
Moreover if $N \in \thick M$, then $ N \lotimes_{\Lambda} D \in \thick M\lotimes_{\Lambda} D$. 

By the assumption $C_{\Lambda}$ belongs to  $ \thick \Lambda = \sfK^{\mrb}(\proj \Lambda)$. 
Thus taking $M = \Lambda, N=C, D= C^{a}$ in the above consideration, 
we see that  $\thick C^{a+1} \subset \thick C^{a}$ 
and obtain the following descending chain of thick subcategories 
\begin{equation}\label{thick C sequence}
\sfK^{\mrb}(\proj \Lambda) \supset \thick C \supset \thick C^{2} \supset \cdots \supset \thick C^{a} \supset \cdots .
\end{equation}
We show that if the  left and right asid conditions  are satisfied, 
then this chain terminates. 
But actually, we prove more properties of these and related  thick subcategories in the following lemma.

\begin{lemma}\label{T lemma} 
We assume that $C$ satisfies  
the  left and right asid conditions. 
Let $\alpha_{r}$ and $\alpha_{\ell}$ be  right and left asid number. 
Then we have the following equalities in $\sfK^{\mrb}(\proj \Lambda)$. 
 
\begin{enumerate}[(1)]

\item 
$\thick C^{a} = \thick C^{a +1} $ for $a \geq \alpha_{\ell}$.

\item 
$\thick C^{a} = \thick (C^{a})^{\leftvee}$ for $a \geq \max\{ \alpha_{r}, \alpha_{\ell} \}$.

\item 
$\thick (C^{a})^{\leftvee} = \thick (C^{a+1})^{\leftvee}$  for $a \geq \alpha_{r}$. 

\item 
$\Ker(- \lotimes_{\Lambda}C^{a})|_{\sfK} = \Ker(- \lotimes_{\Lambda}C^{a+1})|_{\sfK}$ for $a \geq \alpha_{r}$. 

\end{enumerate}
\end{lemma}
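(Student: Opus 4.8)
The plan is to derive all four equalities from the characterisations of the right and left asid conditions in Proposition \ref{isomorphisms proposition} and its left--right mirror, together with the two formal properties of thick hulls recalled just before \eqref{thick C sequence} (namely, $N\in\thick M$ forces $\thick N\subseteq\thick M$ and $N\lotimes_{\Lambda}D\in\thick(M\lotimes_{\Lambda}D)$). The single point on which everything rests is that, since $\Lambda$ is IG and $C$ has finite projective — hence, by Proposition \ref{Iwanaga}, finite injective — dimension on both sides, the dual complexes $C^{\rightvee}=\RHom_{\Lambda}(C,\Lambda)$ and $C^{\leftvee}=\RHom_{\Lambda^{\op}}(C,\Lambda)$, and likewise $(C^{a})^{\rightvee}$ and $(C^{a})^{\leftvee}$, are perfect as \emph{both} left and right $\Lambda$-modules; thus $-\lotimes_{\Lambda}C$, $-\lotimes_{\Lambda}C^{\rightvee}$ and $-\lotimes_{\Lambda}C^{\leftvee}$ all restrict to triangulated endofunctors of $\sfK^{\mrb}(\proj\Lambda)$ (resp.\ of $\sfK^{\mrb}(\proj\Lambda^{\op})$) and carry the thick hull of an object to the thick hull of its image. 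I would also record at the outset that the asid conditions are monotone in the exponent: by the equivalence of (1) and (4) in Proposition \ref{isomorphisms proposition}, $C^{a}\lotimes\epsilon_{\ell}$ being an isomorphism forces $C^{a+1}\lotimes\epsilon_{\ell}=C\lotimes_{\Lambda}(C^{a}\lotimes\epsilon_{\ell})$ to be one too, and symmetrically on the left; so the right (resp.\ left) asid condition holds for every $a\ge\alpha_{r}$ (resp.\ $a\ge\alpha_{\ell}$).

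For (4) one inclusion is \eqref{kernel sequence}. For $a\ge\alpha_{r}$, the equivalence of (1) and (4) in Proposition \ref{isomorphisms proposition} turns the right asid condition into a bimodule isomorphism $C^{a+1}\lotimes_{\Lambda}C^{\leftvee}\cong C^{a}$; tensoring by $X\in\Ker(-\lotimes_{\Lambda}C^{a+1})|_{\sfK}$ on the left and using associativity gives $X\lotimes_{\Lambda}C^{a}\cong(X\lotimes_{\Lambda}C^{a+1})\lotimes_{\Lambda}C^{\leftvee}=0$, which is the reverse inclusion. For (1), the inclusion $\thick C^{a+1}\subseteq\thick C^{a}$ is \eqref{thick C sequence}, and for the converse it suffices to show $C^{a}\in\thick C^{a+1}$ whenever $a\ge\alpha_{\ell}$. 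The left asid condition together with the left mirror of the equivalence of (1) and (4) in Proposition \ref{isomorphisms proposition} gives a bimodule isomorphism $C^{\rightvee}\lotimes_{\Lambda}C^{a+1}\cong C^{a}$; since $C^{\rightvee}$ is perfect as a right $\Lambda$-module it lies in $\thick\Lambda$, so $C^{\rightvee}\lotimes_{\Lambda}C^{a+1}\in\thick(\Lambda\lotimes_{\Lambda}C^{a+1})=\thick C^{a+1}$, and hence $C^{a}\in\thick C^{a+1}$.

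Statement (3) is the mirror of (1): running that argument over the algebra $\Lambda^{\op}$ (with the same bimodule $C$, whose left asid number is $\alpha_{r}$) yields $\thick({}_{\Lambda}C^{a})=\thick({}_{\Lambda}C^{a+1})$ inside $\sfK^{\mrb}(\proj\Lambda^{\op})$ for $a\ge\alpha_{r}$, and applying the contravariant equivalence $(-)^{\leftvee}$ transports this to $\thick(C^{a})^{\leftvee}=\thick(C^{a+1})^{\leftvee}$. For (2), fix $a\ge\max\{\alpha_{r},\alpha_{\ell}\}$. The left mirror of the equivalence of (1) and (2) in Proposition \ref{isomorphisms proposition}, combined with $\RHom_{\Lambda^{\op}}(C^{b+1},C)\cong(C^{b+1})^{\leftvee}\lotimes_{\Lambda}C$ (valid as ${}_{\Lambda}C^{b+1}$ is perfect), gives isomorphisms $(C^{b})^{\leftvee}\cong(C^{b+1})^{\leftvee}\lotimes_{\Lambda}C$ for $b\ge\alpha_{\ell}$; iterating $a$ times yields $(C^{a})^{\leftvee}\cong(C^{2a})^{\leftvee}\lotimes_{\Lambda}C^{a}$, and since $(C^{2a})^{\leftvee}$ is perfect the right-hand side lies in $\thick(\Lambda\lotimes_{\Lambda}C^{a})=\thick C^{a}$, so $(C^{a})^{\leftvee}\in\thick C^{a}$. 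Applying $(-)^{\rightvee}$ to this and using the biduality isomorphism $\scrG$ gives ${}_{\Lambda}C^{a}\in\thick(C^{a})^{\rightvee}$, while the $\Lambda^{\op}$-mirror of the step just made (this is where $a\ge\alpha_{r}$ enters) gives $(C^{a})^{\rightvee}\in\thick({}_{\Lambda}C^{a})$; hence $\thick({}_{\Lambda}C^{a})=\thick(C^{a})^{\rightvee}$ in $\sfK^{\mrb}(\proj\Lambda^{\op})$, and applying $(-)^{\leftvee}$ and biduality once more yields $\thick C^{a}=\thick(C^{a})^{\leftvee}$.

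The step I expect to be the genuine obstacle is not any of these deductions but the groundwork behind them: partly the left--right bookkeeping in the various ``mirror'' manipulations (keeping track of which module structure of $C^{\rightvee}$, $C^{\leftvee}$ and their tensor powers is being used), and more substantially the two-sided perfectness of $C^{\rightvee}$ and $C^{\leftvee}$. The earlier setup only supplies perfectness of each on its ``evident'' side — $C^{\rightvee}$ as a left module, $C^{\leftvee}$ as a right module — whereas the proofs of (1)--(3) genuinely invoke the other side, and it is exactly there that one must bring in the IG hypothesis through Iwanaga's theorem equating finite projective with finite injective dimension.
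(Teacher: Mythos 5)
Your handling of (2) and (4) is sound and essentially parallels the paper's: for (4) you argue directly from the bimodule isomorphism $C^{a+1}\lotimes_{\Lambda}C^{\leftvee}\cong C^{a}$ where the paper deduces it from the left version of (1), and your proof of (2) correctly uses only the ``evident-side'' perfectness of $(C^{2a})^{\leftvee}$ as a right module and of $(C^{2a})^{\rightvee}$ as a left module. The gap is exactly where you predicted it: parts (1) and (3) rest on the claim that $C^{\rightvee}$ is perfect as a \emph{right} $\Lambda$-module (and $C^{\leftvee}$ as a left module), and the justification you offer --- finite projective, hence by Iwanaga finite injective, dimension of $C$ on both sides --- does not deliver it. The right $\Lambda$-structure on $\RHom_{\Lambda}(C,\Lambda)$ is transported from the \emph{left} structure on $C$, not computed from a finite projective resolution of any right module; if one resolves $C$ by projective bimodules, the terms of $\RHom_{\Lambda}(C,\Lambda)$ are, as right modules, sums of $\tuD(\Lambda)$, i.e.\ injectives, and a bounded-below complex of injectives with bounded cohomology need not have finite injective (let alone projective) dimension. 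The claim can in fact be proved when $\Lambda$ is a finite-dimensional IG algebra --- use $\Lambda_{\Lambda}\in\thick(\tuD(\Lambda)_{\Lambda})$ (Iwanaga) together with $\RHom_{\Lambda}(C,\tuD(\Lambda))\cong\tuD({}_{\Lambda}C)$, which is right-perfect because ${}_{\Lambda}C$ has finite injective dimension --- but that argument is not the one you give, and in the general Noetherian IG setting of the lemma it is not available.

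The paper sidesteps the issue. From $C^{a}\cong C^{\rightvee}\lotimes_{\Lambda}C^{a+1}$ it uses only that $C^{\rightvee}$, as an arbitrary object of $\sfD(\Mod\Lambda)=\Loc\Lambda$, is carried by $-\lotimes_{\Lambda}C^{a+1}$ into the localizing subcategory $\Loc C^{a+1}$; since $C^{a}$ is compact, Neeman's identification $(\Loc C^{a+1})^{\textup{cpt}}=\thick C^{a+1}$ yields $C^{a}\in\thick C^{a+1}$. You should either import this localizing-subcategory argument for (1) (and its mirror for (3)), or supply an actual proof of the two-sided perfectness of $C^{\rightvee}$ and $C^{\leftvee}$ in the generality you need.
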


To prove this, we need to verify a compatibility between 
the iterated derived tensor products $C^{a}$ and the $\Lambda$-duality. 
For this, we introduce the following natural morphism. 

\vspace{4pt}
\noindent 
\underline{ $\bullet$ The natural isomorphism  $\scrI$:}  
Let $D$ be a complex of $\Lambda$-$\Lambda$-bimodules and $M \in \sfD(\Mod \Lambda)$. 
For simplicity, we set $\scrI_{D,M} := \scrF_{M, \Lambda, D, \Lambda}$ 
and regard this as a morphism 
$\scrI_{D,M}: D^{\rightvee} \lotimes_{\Lambda} M^{\rightvee} \to (M\lotimes_{\Lambda} D)^{\rightvee}$ 
via canonical isomorphisms as below 
\[
\scrI_{D,M} : 
D^{\rightvee} \lotimes_{\Lambda} M^{\rightvee}\cong \Lambda \lotimes_{\Lambda} \RHom_{\Lambda}(D, \Lambda) \lotimes_{\Lambda}M^{\rightvee} 
\xrightarrow{\scrF_{M, \Lambda, D, \Lambda}}
\RHom_{\Lambda}(M\lotimes_{\Lambda}D, \Lambda \lotimes_{\Lambda}\Lambda ) \cong  
 (M\lotimes_{\Lambda} D)^{\rightvee}
\]

We leave the proof of the following lemma to the readers.

\begin{lemma}\label{natural morphism 2}
Let $D_{1}, D_{2}$ be $\Lambda$-$\Lambda$-bimodule complexes which are 
perfect as left and right $\Lambda$-complexes.
Then, 
 We have the following commutative diagram 
\[
\begin{xymatrix}@C=30pt{
D_{2}^{\rightvee} \lotimes_{\Lambda} D_{1}^{\rightvee} \lotimes_{\Lambda} D_{1} \lotimes_{\Lambda} D_{2}  
\ar[rr]^{ \qquad{ (D_{2})^{\rightvee} \lotimes \scrE^{(1)} \lotimes  D_{2} }}  
\ar[d]_{\scrI_{D_2, D_{1}} \lotimes D_{1} \lotimes D_{2}}&&
D_{2}^{\rightvee} \lotimes_{\Lambda} \Lambda \lotimes_{\Lambda} D_{2}  
\ar[r]^{\cong} 
& D_{2}^{\rightvee}  \lotimes_{\Lambda} D_{2}  \ar[d]^{\scrE^{(2)}} 
\\ 
(D_{1} \lotimes_{\Lambda} D_{2})^{\rightvee} \lotimes_{\Lambda} D_{1}\lotimes_{\Lambda} D_{2} \ar[rrr]_{\scrE^{(12)}} 
&&&
\Lambda
}\end{xymatrix}\] 
where 
$\scrE^{(i)} =\scrE_{D_{i}, \Lambda}$ for  $i = 1,2$ and 
$\scrE^{(12)} = \scrE_{D_{1} \lotimes D_{2}, \Lambda}$.
\end{lemma}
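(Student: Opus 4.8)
The plan is to reduce the whole square to the cochain level and then verify it by a one-line element chase. First I would replace $D_1$ and $D_2$ by quasi-isomorphic complexes of $\Lambda$-$\Lambda$-bimodules that are well behaved under every operation in the diagram: concretely, K-projective bimodule resolutions built from projective $\Lambda^{\mre}$-modules (refined in as many stages as necessary so that, in addition, forming $\Hom_\Lambda(-,\Lambda)$ and the various $\lotimes_\Lambda$'s requires no further resolution). By Lemma \ref{sublemma of compatibility lemma} such complexes restrict to K-projective, hence K-flat, complexes of $\Lambda$-modules on each side, so every vertex of the diagram is computed naively: $D_i^{\rightvee}=\Hom_\Lambda(P_i,\Lambda)$, $D_1\lotimes_\Lambda D_2 = P_1\otimes_\Lambda P_2$, $(D_1\lotimes_\Lambda D_2)^{\rightvee}=\Hom_\Lambda(P_1\otimes_\Lambda P_2,\Lambda)$, and every arrow becomes the corresponding honest cochain map: the counits $\scrE^{(1)},\scrE^{(2)},\scrE^{(12)}$ are the evaluation maps $\tilde{\scrE}$, and $\scrI_{D_2,D_1}$ is $\tilde{\scrF}_{P_1,\Lambda,P_2,\Lambda}$ read through the canonical identifications $\Lambda\otimes_\Lambda(-)\cong(-)\cong(-)\otimes_\Lambda\Lambda$. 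It then suffices to check commutativity of the resulting square of cochain morphisms.

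Second I would run the chase on a homogeneous element $\psi\otimes\phi\otimes p_1\otimes p_2$ of $\Hom_\Lambda(P_2,\Lambda)\otimes_\Lambda\Hom_\Lambda(P_1,\Lambda)\otimes_\Lambda P_1\otimes_\Lambda P_2$. Along the top edge: $\tilde{\scrE}_{P_1,\Lambda}$ turns $\phi\otimes p_1$ into $\phi(p_1)\in\Lambda$; the unit-absorbing isomorphism carries $\psi\otimes\phi(p_1)\otimes p_2$ to $\psi\otimes\phi(p_1)p_2$; and $\tilde{\scrE}_{P_2,\Lambda}$ sends this to $\psi(\phi(p_1)p_2)\in\Lambda$. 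Along the left-then-bottom edge: $\scrI_{D_2,D_1}$ sends $\psi\otimes\phi$ to the functional $p_1'\otimes p_2'\mapsto\psi(\phi(p_1')p_2')$ in $\Hom_\Lambda(P_1\otimes_\Lambda P_2,\Lambda)$ — this is exactly the defining formula of $\tilde{\scrF}_{P_1,\Lambda,P_2,\Lambda}$ with the $N=\Lambda$-slot set to $1$ — and then $\tilde{\scrE}_{P_1\otimes_\Lambda P_2,\Lambda}$ evaluates that functional on $p_1\otimes p_2$, again producing $\psi(\phi(p_1)p_2)$. Hence both composites coincide on generators, so the square commutes on cochains and therefore in $\sfD(\Mod\Lambda)$.

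I do not expect a genuine obstacle: the content is just the ``associativity'' of the evaluation pairing, equivalently the compatibility of the canonical isomorphism $(D_1\lotimes_\Lambda D_2)^{\rightvee}\cong D_2^{\rightvee}\lotimes_\Lambda D_1^{\rightvee}$ (which is $\scrI$) with the counits — a formal consequence of the triangle identities for dualizable objects in a monoidal category, applied to $\lotimes_\Lambda$ with unit $\Lambda$, where a bimodule perfect as a one-sided module is left-dualizable with dual $(-)^{\rightvee}$. The only point demanding care is bookkeeping: correctly tracking the several commuting left and right $\Lambda$-actions on $D_i$ and $D_i^{\rightvee}$, the identifications absorbing the unit $\Lambda$, and the Koszul signs introduced when permuting graded tensor factors. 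None of these causes trouble, since the two composites contract the same pairs of factors ($\phi$ against $p_1$, then $\psi$ against $\phi(p_1)p_2$) in the same order, so the signs match term by term; I would record this observation once rather than carry the signs through the computation.
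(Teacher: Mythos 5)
The paper gives no proof of this lemma --- it is explicitly ``left to the readers'' --- so there is nothing to compare against; your argument is correct and is exactly the intended verification. Reducing to well-chosen bimodule resolutions on which every functor in the square is computed naively (justified by Lemma \ref{sublemma of compatibility lemma} and the property-(P) discussion in the proof of Lemma \ref{compatibility lemma}), and then checking that both composites send $\psi\otimes\phi\otimes p_1\otimes p_2$ to $\psi(\phi(p_1)p_2)$, settles the claim; your closing remark that the identity is just the compatibility of the counit $\scrE$ with the canonical isomorphism $\scrI$ (i.e.\ the triangle identities for the dualizable object $D_1\lotimes_{\Lambda}D_2$) correctly identifies why the Koszul signs cause no trouble.
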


The thick hull of an object $L$ of $\sfK^{\mrb}(\proj \Lambda^{\op})$ 
is denoted by $\thick_{\Lambda^{\op}} L $.

\begin{proof}[Proof of Lemma \ref{T lemma}] 
(1) By the left version of  Proposition \ref{isomorphisms proposition}, 
we have an isomorphism $C^{a} \cong C^{\rightvee} \lotimes_{\Lambda} C^{a + 1}$ 
for $a \geq \alpha_{\ell}$.

For an object $M \in \sfD(\Mod \Lambda)$, 
we denote by $\Loc M$ the localizing subcategory generated by $M$, 
i.e., the smallest triangulated subcategory of $\sfD(\Mod \Lambda)$ containing $M$ 
which is closed under taking arbitrarily coproducts. 
Since $\sfD(\Mod \Lambda)= \Loc \Lambda$, we see that 
$C^{a} \cong C^{\rightvee} \lotimes_{\Lambda}C^{a + 1}$ belongs to $\Loc C^{a +1}$. 
Therefore $C^{a}$ belongs to $\sfK^{\mrb}(\proj \Lambda) \cap \Loc C^{a +1}$.

For a triangulated category $\sfD$, we denote by $\sfD^{\textup{cpt}}$ the full subcategory of compact objects. 
It is well-known that $\sfD(\Mod \Lambda)^{\textup{cpt}} = \sfK^{\mrb}(\proj \Lambda)$.  
Therefore by \cite[Theorem 2.1, Lemma 2.2]{Neeman}, we have
\[
\sfK^{\mrb}(\proj \Lambda) \cap \Loc C^{a +1} = (\Loc C^{a + 1})^{\textup{cpt}} = \thick C^{a +1}. 
\]
Thus, we conclude $C^{a} \in \thick C^{a +1}$ as desired.

(2) 
Let $a \geq \alpha_{\ell}$. 
Since $C^{a} \cong C^{\rightvee}\lotimes_{\Lambda} C^{a+1} \cong  (C^{\rightvee})^{b} \lotimes_{\Lambda}C^{a+b}
\cong (C^{b})^{\rightvee} \lotimes_{\Lambda} C^{a+b}$ 
for $b \geq 1$, we have  ${}_{\Lambda}C^{a} \in \thick_{\Lambda^{\op}} 
(C^{b})^{\rightvee} \subset \sfD^{\mrb}(\mod \Lambda^{\op})$.  
Thus $(C^{a})^{\leftvee} \in \thick C^{b}$ for $b \geq 1$. 

In the same way,  for $a \geq \alpha_{r}$ 
we deduce 
 $C^{a} \in \thick (C^{b})^{\leftvee}$ for $b \geq 1$. 
 Hence we have $\thick C^{a} = \thick (C^{a})^{\leftvee}$.
 
 (3) and (4) follows from the left version of   (1). 
 \end{proof}

By Lemma \ref{T lemma}.(4),  
if $C$ satisfies the right and the left asid conditions, 
then the increasing sequence \eqref{kernel sequence} terminates at 
$a = \max\{ \alpha_{r},  \alpha_{\ell}\}$.  
From  Corollary \ref{adasore:description of kernel}, 
we deduce the following description of $\Ker \varpi$.  

\begin{corollary}\label{T corollary}
Assume that $C$ satisfies the right and the left asid conditions.
Then we have 
\[\Ker \varpi = \Ker( -\lotimes_{\Lambda} C^{a} )|_{\sfK} \] 
for $a \geq \max\{ \alpha_{r}, \alpha_{\ell}\}$.  
\end{corollary}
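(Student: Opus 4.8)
The plan is to deduce the corollary purely formally from Proposition \ref{adasore:description of kernel} together with the stabilization of the chain \eqref{kernel sequence} that is already packaged in Lemma \ref{T lemma}.(4). Recall that under the running hypotheses of this section (notation and assumptions of Theorem \ref{Adaching theorem 0}: $\Lambda$ Noetherian and $\pd C_\Lambda<\infty$), the functor $-\lotimes_\Lambda C^a$ restricts to an endofunctor of $\sfK^\mrb(\proj\Lambda)$ for every $a\geq 0$, so the subcategories $\Ker(-\lotimes_\Lambda C^a)|_{\sfK}$ are defined and, by \eqref{kernel sequence}, form an increasing chain whose union is identified with $\Ker\varpi$ by Proposition \ref{adasore:description of kernel}.

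First I would apply Lemma \ref{T lemma}.(4): it gives $\Ker(-\lotimes_\Lambda C^a)|_{\sfK}=\Ker(-\lotimes_\Lambda C^{a+1})|_{\sfK}$ for every $a\geq\alpha_r$. An immediate induction on $a$ then shows $\Ker(-\lotimes_\Lambda C^a)|_{\sfK}=\Ker(-\lotimes_\Lambda C^{\alpha_r})|_{\sfK}$ for all $a\geq\alpha_r$; that is, the increasing chain \eqref{kernel sequence} is constant from index $\alpha_r$ onward. An increasing chain that is eventually constant has union equal to any of its eventual terms, so combining with Proposition \ref{adasore:description of kernel} yields
\[
\Ker\varpi=\bigcup_{b\geq 0}\Ker(-\lotimes_\Lambda C^{b})|_{\sfK}=\Ker(-\lotimes_\Lambda C^{a})|_{\sfK}
\]
for every $a\geq\alpha_r$, hence in particular for every $a\geq\max\{\alpha_r,\alpha_\ell\}$, which is the assertion.

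I do not expect a real obstacle here: all the substance has been front-loaded into Lemma \ref{T lemma}, whose part (4) rests on the left-hand version of Proposition \ref{isomorphisms proposition} (the isomorphism ${}_\Lambda C^a\cong C^\rightvee\lotimes_\Lambda C^{a+1}$, valid for $a\geq\alpha_\ell$) together with a compact-generation argument. The only point worth double-checking is that Proposition \ref{adasore:description of kernel} is applicable, which requires exactly $\Lambda$ Noetherian and $\pd C_\Lambda<\infty$; both are in force by the standing assumptions of Theorem \ref{Adaching theorem 0}, so the corollary follows at once.
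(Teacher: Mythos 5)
Your proof is correct and is essentially the paper's own argument: Lemma \ref{T lemma}.(4) shows the increasing chain \eqref{kernel sequence} stabilizes, and Proposition \ref{adasore:description of kernel} identifies its union with $\Ker\varpi$. Your observation that stabilization already occurs from $a=\alpha_r$ onward (so the bound $\max\{\alpha_r,\alpha_\ell\}$ is only used a fortiori) is accurate and consistent with what Lemma \ref{T lemma}.(4) actually provides.
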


By Lemma \ref{T lemma}.(1),  
if $C$ satisfies the right and the left asid conditions, 
then the decreasing sequence \eqref{thick C sequence} terminates at 
$a =  \alpha_{\ell}$. 
In the next lemma which is a key for the main theorems  
  we study the subcategory $\thick C^{\max\{\alpha_{r}, \alpha_{\ell}\}}$.

\begin{lemma}\label{T proposition}
Assume that $C$ satisfies the right and the left asid conditions. 
We set $\alpha = \max\{\alpha_{r}, \alpha_{\ell}\}$,  $\sfT := \thick C^{\alpha}$.  
Then the following assertions hold. 

\begin{enumerate}[(1)]
\item The functor $\cT :=- \lotimes_{\Lambda} C$ acts $\sfT$ as an equivalence. 
%i.e., $\cT(\sfT) \subset \sfT$ and the restriction functor $\cT|_{\sfT} : \sfT \to \sfT$ is an equivalence.  

\item We have a semi-orthogonal decomposition 
\[
\sfK^{\mrb}(\proj  \Lambda) = \sfT \perp \Ker \varpi.
\]\end{enumerate}
\end{lemma}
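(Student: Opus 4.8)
The plan is to prove the two assertions in order, using the interpretations of the asid conditions established in Proposition \ref{isomorphisms proposition} together with Lemma \ref{T lemma}. For assertion (1), I would first observe that $\cT = -\lotimes_{\Lambda} C$ does map $\sfT = \thick C^{\alpha}$ into itself: this is clear since $C^{\alpha}\lotimes_{\Lambda} C = C^{\alpha+1}\in\thick C^{\alpha}$ by Lemma \ref{T lemma}.(1) (as $\alpha\ge\alpha_{\ell}$), and $\cT$ commutes with shifts, cones, and summands, so $\cT(\thick C^{\alpha})\subset\thick C^{\alpha+1}=\thick C^{\alpha}$. To see that $\cT|_{\sfT}$ is an autoequivalence, I would produce a quasi-inverse. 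The natural candidate is $-\lotimes_{\Lambda}C^{\rightvee}$: by the left version of Proposition \ref{isomorphisms proposition} (equivalence (1)$\Leftrightarrow$(4), or rather its consequence $C^{a}\cong C^{\rightvee}\lotimes_{\Lambda}C^{a+1}$ for $a\ge\alpha_{\ell}$ used in the proof of Lemma \ref{T lemma}.(1)), one has a natural isomorphism $C^{\rightvee}\lotimes_{\Lambda}(M\lotimes_{\Lambda}C)\cong M\lotimes_{\Lambda}(C^{\rightvee}\lotimes_{\Lambda}C)\cong M$ functorially for $M\in\sfT$; here I must check that the relevant counit $\epsilon_{\ell}$ (or its right-handed sibling) becomes an isomorphism after $-\lotimes_{\Lambda}C^{\alpha}$, which is exactly condition (4) of Proposition \ref{isomorphisms proposition} at $a=\alpha\ge\alpha_{r}$. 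I also need that $-\lotimes_{\Lambda}C^{\rightvee}$ preserves $\sfT$, which again follows from Lemma \ref{T lemma}.(2), since $(C^{\alpha})^{\leftvee}\in\thick C^{\alpha}$ forces $C^{\rightvee}\lotimes_{\Lambda}C^{\alpha}\in\thick C^{\alpha}$. Thus $\cT|_{\sfT}$ and $(-\lotimes_{\Lambda}C^{\rightvee})|_{\sfT}$ are mutually quasi-inverse.

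For assertion (2), I would use the characterization of right admissibility in Lemma \ref{right admissible lemma}. By Corollary \ref{T corollary}, $\Ker\varpi = \Ker(-\lotimes_{\Lambda}C^{\alpha})|_{\sfK}$, so I must show every $M\in\sfK^{\mrb}(\proj\Lambda)$ fits into an exact triangle $T\to M\to K\to T[1]$ with $T\in\sfT=\thick C^{\alpha}$ and $K\in\Ker(-\lotimes_{\Lambda}C^{\alpha})|_{\sfK}$, and moreover that $\thick C^{\alpha}\subset {}^{\perp}(\Ker\varpi)$ so that $\sfT^{\perp}\supset\Ker\varpi$ and in fact equals it. The construction of the triangle is the heart of the matter: I would apply the counit $\scrE_{C^{\alpha},-}$-type map. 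Concretely, set $T(M) := \RHom_{\Lambda}(C^{\alpha},M\lotimes_{\Lambda}C^{\alpha})\lotimes_{\Lambda}C^{\alpha}$ — wait, that is not quite right; rather the natural map to use is built from $M\cong M\lotimes_{\Lambda}\Lambda$ and the morphism $C^{\alpha}\lotimes_{\Lambda}(C^{\alpha})^{\leftvee}\to\Lambda$. The cleaner route: consider $M\lotimes_{\Lambda}\epsilon_{\ell}^{(\alpha)}: M\lotimes_{\Lambda}C^{\alpha}\lotimes_{\Lambda}(C^{\alpha})^{\leftvee}\to M$ where $\epsilon_{\ell}^{(\alpha)}: C^{\alpha}\lotimes_{\Lambda}(C^{\alpha})^{\leftvee}\to\Lambda$, let $K$ be its cone shifted appropriately, and let $T = M\lotimes_{\Lambda}C^{\alpha}\lotimes_{\Lambda}(C^{\alpha})^{\leftvee}$, which lies in $\thick C^{\alpha}$ because $(C^{\alpha})^{\leftvee}\in\thick C^{\alpha}$ (Lemma \ref{T lemma}.(2)) hence $M\lotimes_{\Lambda}C^{\alpha}\lotimes_{\Lambda}(C^{\alpha})^{\leftvee}\in \thick(C^{\alpha}\lotimes_{\Lambda}C^{\alpha})\subset\thick C^{\alpha}$ — here I use that $\thick C^{2\alpha}=\thick C^{\alpha}$, again from Lemma \ref{T lemma}.(1). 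Then I must verify $K\in\Ker(-\lotimes_{\Lambda}C^{\alpha})$: applying $-\lotimes_{\Lambda}C^{\alpha}$ to the defining triangle and using that $(C^{\alpha})^{\leftvee}\lotimes_{\Lambda}C^{\alpha}\cong\Lambda$ via $\epsilon_{r}^{(\alpha)}$ after the twist (this is the right-asid content, condition (4) at $a=\alpha$), the map $M\lotimes_{\Lambda}\epsilon_{\ell}^{(\alpha)}\lotimes_{\Lambda}C^{\alpha}$ becomes an isomorphism, so its cone $K\lotimes_{\Lambda}C^{\alpha}$ vanishes.

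Finally, to upgrade the semiorthogonal decomposition from "right admissible" to the stated form, I would check $\Hom(\sfT,\Ker\varpi)=0$, i.e. $\sfT^{\perp}=\Ker\varpi$. Given $T'\in\sfT$ and $K\in\Ker(-\lotimes_{\Lambda}C^{\alpha})$, I can write $T'$ as built from $C^{\alpha}=C^{\rightvee}\lotimes_{\Lambda}C^{2\alpha}\cong\cdots$, i.e. using $\thick C^{\alpha}=\thick(C^{\alpha})^{\leftvee}$ to move $C^{\alpha}$ across the $\Hom$; concretely $\Hom(C^{\alpha},K)\cong\Hom((C^{\alpha})^{\leftvee\,\rightvee},K)$ and by adjunction/Proposition \ref{isomorphisms proposition}(3) this is computed by $\Hom(\Lambda, K\lotimes_{\Lambda}C^{\alpha}\lotimes_{\Lambda}\cdots)$ which vanishes since $K\lotimes_{\Lambda}C^{\alpha}=0$. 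Care is needed to get the bookkeeping of $C^{a}\cong C^{\rightvee}\lotimes_{\Lambda}C^{a+1}$ and the duals pointing the right way; iterating Proposition \ref{isomorphisms proposition}(3) with $M$ a suitable power gives the $\Hom$-vanishing directly. The main obstacle I anticipate is precisely this last bookkeeping — tracking which of $\alpha_{r}$, $\alpha_{\ell}$ is needed at each step and ensuring the counit maps $\epsilon_{r},\epsilon_{\ell}$ become isomorphisms after tensoring with the correct power of $C$ — together with checking the cone $K$ genuinely lies in $\sfK^{\mrb}(\proj\Lambda)$ rather than a larger category, which follows from $\sfT\subset\sfK^{\mrb}(\proj\Lambda)$ being thick and $M\lotimes_{\Lambda}C^{\alpha}\lotimes_{\Lambda}(C^{\alpha})^{\leftvee}$ being perfect by the finite projective dimension hypotheses on $C$.
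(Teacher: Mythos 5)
Your overall architecture matches the paper's (show $\cT$ preserves $\sfT$ and is an equivalence there; build the decomposition triangle from a counit map and kill its cone with $-\lotimes_{\Lambda}C^{\alpha}$; check $\Hom(\sfT,\Ker\varpi)=0$ by moving $(C^{\alpha})^{\leftvee}$ across the $\Hom$), but several of your key manipulations have the handedness wrong, and these are not cosmetic. First, the isomorphism you invoke for the quasi-inverse in (1), $C^{\rightvee}\lotimes_{\Lambda}(M\lotimes_{\Lambda}C)\cong M\lotimes_{\Lambda}(C^{\rightvee}\lotimes_{\Lambda}C)$, does not exist: derived tensor products over $\Lambda$ do not commute, and the composite $(-\lotimes_{\Lambda}C^{\rightvee})\circ(-\lotimes_{\Lambda}C)$ is $M\mapsto M\lotimes_{\Lambda}C\lotimes_{\Lambda}C^{\rightvee}$, whose comparison with $M$ is the \emph{unit} of $-\lotimes_{\Lambda}C\dashv\RHom_{\Lambda}(C,-)$; neither this unit nor the counit $C^{\alpha}\lotimes_{\Lambda}\epsilon_{r}$ is one of the four equivalent conditions of Proposition \ref{isomorphisms proposition}, so your verification obligation is not discharged by the citations you give. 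The paper sidesteps the quasi-inverse entirely: it deduces full faithfulness of $\cT|_{\sfT}$ from condition (3) of Proposition \ref{isomorphisms proposition} applied to the generator $C^{\alpha}$, and essential surjectivity from $\thick C^{\alpha+1}=\thick C^{\alpha}$ (Lemma \ref{T lemma}.(1)).

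Second, in (2) your choice of counit $M\lotimes_{\Lambda}C^{\alpha}\lotimes_{\Lambda}(C^{\alpha})^{\leftvee}\to M$ creates a real obstruction when you tensor with $C^{\alpha}$: decomposing it into elementary counits produces maps of the form $C^{a}\lotimes_{\Lambda}\epsilon_{\ell}\lotimes_{\Lambda}(\cdots)$ with $a<\alpha_{r}$, which are \emph{not} isomorphisms, and the extra $C^{\alpha}$ sitting on the far right cannot repair them. Your claim that $(C^{\alpha})^{\leftvee}\lotimes_{\Lambda}C^{\alpha}\cong\Lambda$ is false in general (there is not even a natural evaluation in that order). The paper instead uses $\scrE_{C^{\alpha},M}:\RHom_{\Lambda}(C^{\alpha},M)\lotimes_{\Lambda}C^{\alpha}\to M$, which puts the dual in the middle so that after $-\lotimes_{\Lambda}C^{\alpha}$ every elementary counit has the form $\epsilon_{r}\lotimes_{\Lambda}C^{b}$ with $b\geq\alpha\geq\alpha_{\ell}$, an isomorphism by the left version of Proposition \ref{isomorphisms proposition} (this is exactly Lemma \ref{natural morphism 2}). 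A related slip: from $(C^{\alpha})^{\leftvee}\in\thick C^{\alpha}$ \emph{as right modules} you cannot conclude $X\lotimes_{\Lambda}(C^{\alpha})^{\leftvee}\in\thick(X\lotimes_{\Lambda}C^{\alpha})$; tensoring on the left requires the left-module containment ${}_{\Lambda}(C^{\alpha})^{\leftvee}\in\thick_{\Lambda^{\op}}{}_{\Lambda}C^{\alpha}$, which holds but is the mirror statement of what you cite. Your orthogonality argument $\RHom_{\Lambda}((C^{\alpha})^{\leftvee},Y)\cong Y\lotimes_{\Lambda}C^{\alpha}=0$ is correct and is the paper's. So the plan is repairable, but as written the two central verifications — that $\cT|_{\sfT}$ is an equivalence and that the cone $K$ satisfies $K\lotimes_{\Lambda}C^{\alpha}=0$ — rest on false identities.
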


In the proof and thereafter, we (tacitly) use the following natural isomorphisms. 

\noindent 
\underline{ $\bullet$ The natural isomorphism  $\scrJ$:} 
Let $M, N$ be complex of $\Lambda$-modules. 
For simplicity, 
we set $\scrJ_{M, N} := \scrF_{M, N, \Lambda, \Lambda}$ 
and regard this as a morphism 
$\scrJ_{M, N}: N \lotimes_{\Lambda} M^{\rightvee} \to \RHom_{\Lambda}(M, N)$. 
We note that $\scrJ_{M, N}$ is an isomorphism 
if one of $M, N$ belongs to $\sfK^{\mrb}(\proj \Lambda)$.

\begin{proof}
(1)  It is clear that $\cT(\sfT) \subset \sfT$ by Lemma \ref{T lemma}. (1).  

 Applying Proposition \ref{isomorphisms proposition}.3,  
we see that   
the morphism $\cT_{C^{\alpha}, C^{\alpha}[n]}$ is an isomorphism for $n \in \ZZ$.  
\[
\cT_{C^{\alpha}, C^{\alpha}[n]}: \Hom_{\Lambda}(C^{\alpha}, C^{\alpha}[n]) 
\to \Hom_{\Lambda}(C^{\alpha + 1}, C^{\alpha + 1} [n])
\] 
Since $\sfT = \thick C^{\alpha}$, 
we conclude that $\cT|_{\sfT} $ is fully faithful by standard argument. 

Since $C^{\alpha +1} \in \image \cT|_{\sfT} $ 
and $\cT|_{\sfT}$ is fully faithful, 
we see that $\thick C^{ \alpha + 1} \subset \image \cT|_{\sfT}$ by standard argument. 
Thus by Lemma \ref{T lemma},  we conclude that $\cT|_{\sfT}$ is essentially surjective. 

(2)
First we claim that 
 $\Hom_{\Lambda}(X,Y)= 0$ for $X \in \sfT, Y \in \Ker \varpi$. 
Indeed, 
since  $\sfT = \thick (C^{\alpha})^{\leftvee}$, 
it is enough to check the case $X = (C^{\alpha})^{\leftvee}$. 
It follows from $\Ker \varpi = \Ker(-\lotimes_{\Lambda} C^{\alpha})$
that 
\[
\RHom_{\Lambda}((C^{\alpha})^{\leftvee},Y) \cong 
Y \lotimes_{\Lambda} (C^{\alpha})^{\leftvee\rightvee} \cong Y \lotimes_{\Lambda} C^{\alpha}= 0. 
\]

Now it is enough to prove that 
 every $M \in \sfK^{\mrb}(\proj \Lambda)$ fits into an exact triangle $X \to M \to Y \to $ 
with $X \in \sfT, Y \in \Ker \varpi$. 
Since $\RHom_{\Lambda}(C^{\alpha},M)\lotimes_{\Lambda}C^{\alpha}$ belongs to $\sfT = \thick C^{\alpha}$ 
it is enough to show  that the cone of the derived evaluation map 
$\scrE_{C^{\alpha}, M} :\RHom_{\Lambda}(C^{\alpha},M)\lotimes_{\Lambda}C^{\alpha} \to M$ 
belongs to $\Ker(-\lotimes_{\Lambda} C^{\alpha})$. 
In other words, 
if we set $\scrE_{M}' := \scrE_{C^{\alpha}, M}$,   
we only have to show that $\scrE'_{M}\lotimes_{\Lambda}C^{\alpha}$ is an isomorphism. 
 
First observe that 
we have the following commutative diagram where the bottom arrow is the canonical isomorphism.
\[
\begin{xymatrix}{
M \lotimes_{\Lambda} (C^{\alpha})^{\rightvee} \lotimes_{\Lambda} C^{\alpha} 
\ar[d]_{M \lotimes \scrE'_{\Lambda} }  \ar[rr]^{\scrJ \lotimes C^{\alpha} }  && 
\RHom_{\Lambda} (C^{\alpha}, M) \lotimes_{\Lambda} C^{\alpha} 
\ar[d]^{\scrE'_{M} } \\ 
M \lotimes_{\Lambda} \Lambda \ar[rr]^{\cong} && M
}\end{xymatrix}
\]
Therefore the problem is reduced to the case where $M = \Lambda$. 

Let $\scrI': (C^{\rightvee})^{\alpha} \to  (C^{\alpha})^{\rightvee}$ 
be the isomorphism 
which is obtained from the morphism $\scrI$.  
For simplicity we set $\epsilon_{r}^{(a)} := (C^{\rightvee})^{a}\lotimes \epsilon_{r} \lotimes C^{a}$. 
We identify $(C^{\rightvee})^{a-1}\lotimes_{\Lambda} \Lambda \lotimes_{\Lambda} C^{a-1}$ 
with $(C^{\rightvee})^{a-1}\lotimes_{\Lambda} C^{a-1}$ via the canonical isomorphism 
and regard $\epsilon_{r}^{(a)}$ as the morphism 
from $(C^{\rightvee})^{a}\lotimes_{\Lambda} C^{a}$ to $(C^{\rightvee})^{a-1}\lotimes_{\Lambda} C^{a-1}$.
Then by Lemma \ref{natural morphism 2}, 
we have $\scrE'_{\Lambda} \circ (\scrI' \lotimes C^{\alpha}) = \epsilon_{r} \circ \epsilon_{r}^{(1)} \circ \cdots \circ \epsilon_{r}^{(\alpha -1)}$. 
\[
(C^{\rightvee})^{\alpha} \lotimes_{\Lambda} C^{\alpha} 
\xrightarrow{ \epsilon_{r}^{(\alpha-1)}}
(C^{\rightvee})^{\alpha -1} \lotimes_{\Lambda} C^{\alpha -1 } 
\xrightarrow{ \epsilon_{r}^{(\alpha-2)}}
\cdots 
\to
C^{\rightvee}\lotimes_{\Lambda} C
\xrightarrow{ \epsilon_{r}} 
\Lambda
\] 
Since $\epsilon_{r} \lotimes C^{\alpha}$ is an isomorphism by the left version of Proposition \ref{isomorphisms proposition}, 
we conclude that $\scrE'_{\Lambda} \lotimes C^{\alpha}$ is an isomorphism as desired. 
\end{proof}

In the proof of Proposition \ref{T proposition}, we obtained the following result. 

\begin{corollary}\label{right adjoint corollary}
The functor $\tau: 
\sfK^{\mrb}(\proj \Lambda) \to \sfT, \  \tau(M) := \RHom_{\Lambda}(C^{\alpha}, M) \lotimes_{\Lambda} C^{\alpha}$ 
is 
a right adjoint functor of the inclusion functor $\mathsf{in}: \sfT \to \sfK^{\mrb}(\proj \Lambda)$. 
\end{corollary}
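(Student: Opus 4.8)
The plan is to read off the adjunction directly from the semi-orthogonal decomposition constructed in the proof of Proposition~\ref{T proposition}. There, for each $M \in \sfK^{\mrb}(\proj \Lambda)$ we produced the exact triangle
\[
\RHom_{\Lambda}(C^{\alpha}, M) \lotimes_{\Lambda} C^{\alpha} \xrightarrow{\ \scrE_{C^{\alpha}, M}\ } M \to Y_{M} \to
\]
coming from the derived evaluation morphism, and we verified that its left-hand term $\tau(M) := \RHom_{\Lambda}(C^{\alpha}, M) \lotimes_{\Lambda} C^{\alpha}$ lies in $\sfT = \thick C^{\alpha}$ (using $\RHom_{\Lambda}(C^{\alpha}, M) \cong M \lotimes_{\Lambda}(C^{\alpha})^{\rightvee} \in \sfK^{\mrb}(\proj \Lambda)$ via $\scrJ$), while its cone $Y_{M}$ lies in $\Ker(- \lotimes_{\Lambda} C^{\alpha})|_{\sfK} = \Ker \varpi = \sfT^{\perp}$. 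By Lemma~\ref{right admissible lemma} this already identifies the value at $M$ of the right adjoint of $\mathsf{in}$ with $\tau(M)$; the remaining task is to promote this objectwise statement to the functorial assertion of the corollary.

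First I would observe that $\tau$ is genuinely a functor on $\sfK^{\mrb}(\proj \Lambda)$ — it is the composite of $\RHom_{\Lambda}(C^{\alpha}, -)$ with $- \lotimes_{\Lambda} C^{\alpha}$, both well defined on perfect complexes — with essential image inside $\sfT$, and that $\scrE_{C^{\alpha}, M}$, being the counit of the derived adjunction $- \lotimes_{\Lambda} C^{\alpha} \dashv \RHom_{\Lambda}(C^{\alpha}, -)$, is natural in $M$; hence $\scrE$ is a natural transformation $\mathsf{in} \circ \tau \Rightarrow \id$.

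Next, for $T \in \sfT$ and $M \in \sfK^{\mrb}(\proj \Lambda)$ I would apply $\Hom_{\sfK^{\mrb}(\proj \Lambda)}(T, -)$ to the triangle above. Since $T[n] \in \sfT$ for all $n \in \ZZ$, both $\Hom(T, Y_{M})$ and $\Hom(T, Y_{M}[-1])$ vanish, so the long exact sequence collapses to an isomorphism
\[
\Hom_{\sfT}(T, \tau(M)) \xrightarrow{\ \sim\ } \Hom_{\sfK^{\mrb}(\proj \Lambda)}(\mathsf{in}(T), M)
\]
given by post-composition with $\scrE_{C^{\alpha}, M}$. This is visibly natural in $T$, and natural in $M$ because $\scrE$ is; therefore it exhibits $\tau$ as right adjoint to $\mathsf{in}$, with counit $\scrE$.

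I do not anticipate a genuine difficulty: essentially all the content was already isolated in the proof of Proposition~\ref{T proposition}. The one point deserving care — and the closest thing to an obstacle — is the transition from ``the right adjoint takes the value $\tau(M)$'' (which Lemma~\ref{right admissible lemma} hands us immediately) to the statement that the explicit formula $M \mapsto \tau(M)$, together with the map $\scrE$, defines the right adjoint \emph{as a functor}; this is taken care of by the collapse of the long exact sequence above together with the naturality of the evaluation counit.
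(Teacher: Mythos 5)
Your proposal is correct and follows essentially the same route as the paper: the paper simply notes that the corollary was ``obtained in the proof of Proposition \ref{T proposition}'', i.e.\ the exact triangle $\tau(M)\xrightarrow{\scrE_{C^{\alpha},M}} M \to Y_{M}\to$ with $\tau(M)\in\sfT$ and $Y_{M}\in\sfT^{\perp}$, combined with the last clause of Lemma \ref{right admissible lemma}. Your extra paragraph upgrading the objectwise identification to a functorial adjunction via the collapse of the long exact sequence and the naturality of the counit $\scrE$ is a welcome (and correct) elaboration of a step the paper leaves implicit.
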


\subsubsection{Lemmas}

The following abstract lemma clarify the situation.

\begin{lemma}\label{abstraction 1}
Let $\sfD$ be a triangulated subcategory, 
$\sfE \subset \sfD$ an admissible subcategory 
and $G:\sfD\rightleftarrows  \sfD: F$ an adjoint pair of  exact endofunctors. 
Assume that 
$F$ acts on $\sfE$ as an equivalence and nilpotently acts on ${\sfE}^{\perp}$.   
Then for a natural number $a\geq 0 $, the following conditions  are equivalent.  
 
\begin{enumerate}[(1)] 
\item $F^{a}(\sfD) \subset \sfE$. 

\item $F^{a}(\sfE^{\perp}) = 0$.

\item 
The morphism $F_{F^{a}(d),d'}: \Hom_{\sfD}(F^{a}(d),d') \to \Hom_{\sfD}(F^{a+1}(d),F(d'))$ is an isomorphism 
for $d,d' \in \sfD$.

\item  $G^{a}(\sfD) \subset \sfE$. 

\item $G^{a}({}^{\perp}\sfE) = 0$.

\item 
The morphism $G_{ d, G^{a}(d')}: \Hom_{\sfD}(d, G^{a}(d')) \to \Hom_{\sfD}(G(d) , G^{a+1}(d'))$ is an isomorphism 
for $d,d' \in \sfD$.
 \end{enumerate}
\end{lemma}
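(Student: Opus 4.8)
The statement is an abstract categorical fact about an adjoint pair $G \dashv F$ of exact endofunctors on a triangulated category $\sfD$ with an admissible subcategory $\sfE$, under the hypothesis that $F$ restricts to an autoequivalence on $\sfE$ and acts nilpotently on $\sfE^{\perp}$. Since $\sfE$ is admissible we have a semiorthogonal decomposition $\sfD = \sfE \perp \sfE^{\perp}$, hence every $d \in \sfD$ sits in a canonical triangle $e \to d \to k \to e[1]$ with $e \in \sfE$, $k \in \sfE^{\perp}$, and $e = \mathsf{in}_{\sfE}\tau(d)$ by Lemma \ref{right admissible lemma}. The first goal is to verify the equivalences $(1) \Leftrightarrow (2) \Leftrightarrow (3)$; by the evident left-right (co)symmetry — replacing $\sfE^{\perp}$ by ${}^{\perp}\sfE$, $F$ by $G$, and using that $G$ is a left adjoint — the equivalences $(4) \Leftrightarrow (5) \Leftrightarrow (6)$ follow by the dual argument, so the real work is to bridge one of the first three to one of the last three.

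First I would handle $(2) \Leftrightarrow (1)$: applying $F^{a}$ to the triangle above gives $F^{a}(e) \to F^{a}(d) \to F^{a}(k) \to$, where $F^{a}(e) \in \sfE$ (since $F$ preserves $\sfE$) and $F^{a}(k) = 0$ precisely under $(2)$, forcing $F^{a}(d) \cong F^{a}(e) \in \sfE$; conversely $(1)$ applied to $d \in \sfE^{\perp}$ gives $F^{a}(d) \in \sfE \cap \sfE^{\perp} = 0$. For $(3) \Rightarrow (1)$ or $(3) \Leftrightarrow$ the others, the natural plan is: the morphism $F_{F^{a}(d),d'}$ being an isomorphism for all $d, d'$ says that $F$ is fully faithful on the essential image of $F^{a}$; combined with the adjunction $G \dashv F$ one translates this into a statement about the unit/counit. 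Concretely, $F_{x,y}\colon \Hom(x,y) \to \Hom(Fx, Fy)$ factors through the adjunction as composition with the unit $\eta_y\colon y \to FGy$ followed by $\Hom(Fx, Fy) \cong \Hom(GFx, y)$; so $F_{F^a(d),d'}$ is iso for all $d'$ iff the counit $\varepsilon_{F^a(d)}\colon GF(F^a d) \to F^a d$ — no, more cleanly: iff $F^a d$ is right-orthogonal to the cone of every relevant unit map. I would instead argue that $(3)$ is equivalent to: $F$ kills no nonzero map out of $\operatorname{im} F^a$ and is "surjective enough", which by the nilpotence hypothesis on $\sfE^\perp$ is equivalent to $\operatorname{im}F^a \subseteq \sfE$; the cleanest route is to use the triangle decomposition to reduce $\Hom_{\sfD}(F^a d, d')$ to $\Hom(F^a e, e') \oplus (\text{terms involving } \sfE^\perp)$ and observe that $F|_{\sfE}$ being an equivalence makes the $\sfE$-part always an isomorphism, so $(3)$ holds iff the $\sfE^{\perp}$-contribution vanishes, i.e. iff $F^a(k) = 0$ for the relevant $k$'s, i.e. $(2)$.

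Finally, to connect the $F$-side to the $G$-side, I would use that $G \dashv F$ together with the fact that $F$ is an autoequivalence on $\sfE$: on $\sfE$, $G|_{\sfE}$ is then necessarily the inverse autoequivalence (the adjoint of an equivalence, restricted appropriately), so $G$ also preserves $\sfE$ and acts invertibly there; and adjunction swaps ${}^{\perp}\sfE \leftrightarrow \sfE^{\perp}$-type vanishing into the nilpotence of $G$ on ${}^{\perp}\sfE$. Then $(4),(5),(6)$ are proved by the same three arguments verbatim with $(G,{}^{\perp}\sfE)$ in place of $(F,\sfE^{\perp})$, and $(2) \Leftrightarrow (5)$ is extracted from the numerical fact that the nilpotence index of $F$ on $\sfE^{\perp}$ equals that of $G$ on ${}^{\perp}\sfE$ — which itself follows because both count the length of the filtration of $\sfD$ by $\{d : F^n d \in \sfE\}$, a filtration visible symmetrically from both adjoints via the triangle decomposition. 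I expect the main obstacle to be the careful bookkeeping in $(3) \Leftrightarrow (2)$: making precise, via the semiorthogonal decomposition and the adjunction isomorphisms, exactly which $\Hom$-groups the failure of $F_{F^a(d),d'}$ to be an isomorphism detects, and checking that the $\sfE$-part genuinely contributes an isomorphism (using $F|_{\sfE}$ an equivalence) with no cross terms obstructing — a diagram chase that is routine in spirit but where signs of orthogonality must be tracked attentively.
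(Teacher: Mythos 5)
Your overall architecture matches the paper's: the triangle $e \to d \to k \to e[1]$ coming from the semiorthogonal decomposition gives $(1)\Leftrightarrow(2)$ exactly as you describe; the claim that $G|_{\sfE}=(F|_{\sfE})^{-1}$, that $G$ preserves $\sfE$ and ${}^{\perp}\sfE$, and that $G$ acts nilpotently on ${}^{\perp}\sfE$ is precisely the paper's intermediate Claim; and $(4)\Leftrightarrow(5)\Leftrightarrow(6)$ then follow by the dual argument. Two of your specific mechanisms, however, would not go through as written. First, for $(3)\Rightarrow(2)$ you propose splitting $\Hom_{\sfD}(F^{a}d,d')$ into an $\sfE$-part and an $\sfE^{\perp}$-part with ``no cross terms''; but a semiorthogonal decomposition only kills $\Hom(\sfE,\sfE^{\perp})$, not $\Hom(\sfE^{\perp},\sfE)$, so the Hom-group does not decompose and the cross terms you worry about are genuinely present. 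The paper sidesteps this entirely: take $d=d'=k\in\sfE^{\perp}$ and iterate (3) to get $\Hom(F^{a}k,F^{a}k)\cong\Hom(F^{a+b}k,F^{a+b}k)$ for all $b\ge 0$; since $F^{a+b}k=0$ for $b\gg 0$ by nilpotence, the identity of $F^{a}k$ is zero and $F^{a}k=0$. (Your decomposition idea is only needed, and only works, in the direction $(1)\Rightarrow(3)$, where one reduces to showing $F_{e,d}$ is an isomorphism for $e\in\sfE$, $d\in\sfD$, using $\Hom(e,k[n])=0$ and $\Hom(F(e),F(k)[n])=0$ for $n=0,-1$ together with $F(\sfE)\subset\sfE$ and $F(\sfE^{\perp})\subset\sfE^{\perp}$.)

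Second, your bridge between the two triples rests on the assertion that the nilpotence index of $F$ on $\sfE^{\perp}$ equals that of $G$ on ${}^{\perp}\sfE$ because both ``count the length of the filtration $\{d: F^{n}d\in\sfE\}$''; this is unsubstantiated (for an individual object, $F^{n}d\in\sfE$ and $G^{n}d\in\sfE$ are unrelated conditions), and no filtration is needed. The correct one-line bridge, which is what the paper does, is the adjunction computation: if $F^{a}(\sfD)\subset\sfE$, then for $\ell\in{}^{\perp}\sfE$ and any $d$ one has $\Hom_{\sfD}(G^{a}(\ell),d)\cong\Hom_{\sfD}(\ell,F^{a}(d))=0$, whence $G^{a}(\ell)=0$; this gives $(1)\Rightarrow(5)$, and the dual computation gives $(4)\Rightarrow(2)$, closing the cycle $(1)\Rightarrow(5)\Rightarrow(4)\Rightarrow(2)\Rightarrow(1)$ at the same exponent $a$. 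With these two repairs your plan coincides with the paper's proof.
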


\begin{proof}

%(1)$\Rightarrow$(2). $F^{a}(\sfE^{\perp}) \subset \sfE \cap \sfE^{\perp} = 0$. 

(2) $\Rightarrow$ (1). 
Let $d$ be an object of $\sfD$ and $e' \to d \to k\to $ an exact triangle such that $e' \in \sfE, k\in \sfE^{\perp}$. 
Since $F^{a}(k) = 0$, we have  $F^{a}(d) \cong F^{a}(e')$.  

(3) $\Rightarrow$ (2). 
Let $k \in \sfE^{\perp}$. 
Then 
the morphism \[
F^{b}_{F^{a}(k),F^{a}(k)}: \Hom_{\sfD}(F^{a}(k), F^{a}(k)) \to \Hom_{\sfD}(F^{a + b}(k),F^{a + b}(k))\] 
is an isomorphism for $b \geq 0$. 
Since $F^{a + b}(k)=0$ for $b \gg 0$, we conclude $F^{a}(k) = 0$. 

For 
the implication 
(1)$\Rightarrow$(3), it is enough to show that  
 the map $F_{e,d}:\Hom_{\sfD}(e,d) \to \Hom_{\sfD}(F(e),F(d))$ is an isomorphism 
for $e \in \sfE, d \in \sfD$.  

Let $e' \xrightarrow{\varphi} d \to k\to $ be the exact triangle such that $e' \in \sfE, k\in \sfE^{\perp}$.  
Since $\Hom_{\sfD}(e,k[n]) = 0$ for $n =0 ,-1$, 
the induced map 
$\varphi_{*}: \Hom_{\sfD}(e,e') \to \Hom_{\sfD}(e,d)$ is an isomorphism.
By the assumptions, we have $F(\sfE) \subset \sfE$ and $F(\sfE^{\perp}) \subset \sfE^{\perp}$. 
Thus, $\Hom_{\sfD}(F(e),F(k)[n]) = 0$ for $n =0 ,-1$. 
Hence,  the induced map  $ F(\varphi)_{*}: \Hom_{\sfD}(F(e),F(e')) \to \Hom_{\sfD}(F(e),F(d))$ 
is an isomorphism.
Since $F_{e,d}\circ \varphi_{*} = F(\varphi)_{*} \circ F_{e,e'}$ and $F_{e,e'}$ is an isomorphism, we conclude that $F_{e,d}$ is an isomorphism. 

We prove  (1) $\Rightarrow$ (5). 
Let  $\ell \in {}^{\perp}\sfE, d\in \sfD$. 
We have  
 $\Hom_{\sfD}(G^{a}(\ell),d) = \Hom_{\sfD}(\ell, F^{a}(d)) =0$. 
Therefore $G^{a}(\ell) = 0$.

We have proved the equivalence  (1) $\Leftrightarrow$ (2) $\Leftrightarrow $ (3) 
and the implication (1) $\Rightarrow$ (5).

Next we claim that

\begin{claim}\label{abstraction claim}
\begin{enumerate}[(a)] 

\item  $G$ acts $\sfE$ as an equivalence. 

\item $G$ nilpotently acts on ${}^{\perp}\sfE$. 
\end{enumerate}
\end{claim} 

\begin{proof}[Proof of Claim]
(a) 
For $e \in \sfE $, we have $\Hom_{\sfD}(G(e),k ) = \Hom_{\sfD}(e,F(k)) = 0$ for any $k \in \sfE^{\perp}$. 
Hence $ G(e) $ belongs to $ {}^{\perp}(\sfE^{\perp}) = \sfE$. 
This shows that $G(\sfE) \subset \sfE$. 
Now $G|_{\sfE}$ can be  regarded as an endofunctor of $\sfE$. 
Then it is a left adjoint of the equivalence $F|_{\sfE}$. 
Hence  $G|_{\sfE} = (F|_{\sfE})^{-1}$ and in particular $G|_{\sfE}$ is an equivalence. 

(b)
We claim that $G({}^{\perp}\sfE) \subset {}^{\perp} \sfE $. 
Indeed, for $\ell \in {}^{\perp}\sfE, t \in \sfE$, 
we have the equality $\Hom_{\sfD}(G(\ell),e) = \Hom_{\sfD}(\ell, F(e)) = 0$. 
Thus $G(\ell)  \in {}^{\perp}\sfE$. 

By the assumption $F$ nilpotently acts on $\sfE^{\perp}$. 
Therefore by the implication (2) $\Rightarrow$ (5), 
we check that $G^{b}({}^{\perp}\sfE) = 0$ for $b \gg 0$. 
\end{proof}

Since $G$ acts $\sfE$ as an equivalence and nilpotently acts on ${}^{\perp } \sfE$, 
we can apply dual argument to $G$ to prove the equivalences
 (4) $\Leftrightarrow$ (5) $\Leftrightarrow$ (6) and the implication (4) $\Rightarrow$ (2).
\end{proof}

To apply above lemma to the proofs of 
Theorem \ref{Adaching theorem 0} and Theorem \ref{Adaching theorem 1}, 
we need to show that the functor $- \lotimes_{\Lambda} C$ has 
a left adjoint functor.

%mmmmmm
\begin{lemma}\label{adjoint lemma}
We have the following  adjoint pair 
\[
%\begin{split}
-\lotimes_{\Lambda} C^{\leftvee} : %&
\sfK^{\mrb}(\proj \Lambda)
 \rightleftarrows \sfK^{\mrb}(\proj \Lambda) : -\lotimes_{\Lambda} C. 
%, \\C^{\rightvee} \lotimes_{\Lambda}-  : &\sfK^{\mrb}(\proj \Lambda^{\op})\rightleftarrows \sfK^{\mrb}(\proj \Lambda^{\op}) 
%: C\lotimes_{\Lambda}-.\end{split}
\]
\end{lemma}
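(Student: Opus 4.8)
The plan is to obtain the adjunction by combining the usual tensor--Hom adjunction for the bimodule $C^{\leftvee}$ with the natural isomorphisms $\scrJ$ and $\scrG$ recorded above, restricted to the perfect derived category.

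First I would check that both functors genuinely take $\sfK^{\mrb}(\proj\Lambda)$ to itself. Since $C_{\Lambda}$ has finite projective dimension, $C_{\Lambda}\in\sfK^{\mrb}(\proj\Lambda)=\thick\Lambda$, so for $M\in\sfK^{\mrb}(\proj\Lambda)$ we get $M\lotimes_{\Lambda}C\in\thick(\Lambda\lotimes_{\Lambda}C)=\thick C_{\Lambda}\subseteq\sfK^{\mrb}(\proj\Lambda)$; this is exactly the restriction $(-\lotimes_{\Lambda}C)|_{\sfK}$ already used in Section \ref{review of adasore}. Dually, ${}_{\Lambda}C\in\sfK^{\mrb}(\proj\Lambda^{\op})$ forces, via Lemma \ref{compatibility lemma}, the underlying right $\Lambda$-complex of $C^{\leftvee}=\RHom_{\Lambda^{\op}}(C,\Lambda)$ to be perfect, so $-\lotimes_{\Lambda}C^{\leftvee}$ is also an endofunctor of $\sfK^{\mrb}(\proj\Lambda)$.

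Now the tensor--Hom adjunction for the $\Lambda$-$\Lambda$-bimodule $C^{\leftvee}$ gives, already on the unbounded level, an adjoint pair $-\lotimes_{\Lambda}C^{\leftvee}:\sfD(\Mod\Lambda)\rightleftarrows\sfD(\Mod\Lambda):\RHom_{\Lambda}(C^{\leftvee},-)$, so it suffices to identify the right adjoint on $\sfK^{\mrb}(\proj\Lambda)$ with $-\lotimes_{\Lambda}C$. For $N\in\sfK^{\mrb}(\proj\Lambda)$ I would apply the natural isomorphism $\scrJ_{C^{\leftvee},N}\colon N\lotimes_{\Lambda}(C^{\leftvee})^{\rightvee}\xrightarrow{\ \sim\ }\RHom_{\Lambda}(C^{\leftvee},N)$ (an isomorphism because $N$, equivalently $(C^{\leftvee})_{\Lambda}$, is perfect) followed by the biduality isomorphism $(C^{\leftvee})^{\rightvee}=C^{\leftvee\rightvee}\cong C$ of $\Lambda$-$\Lambda$-bimodules (the same comparison used in the proof of Lemma \ref{T proposition}). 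Both are natural in $N$, so together they give a natural isomorphism $\RHom_{\Lambda}(C^{\leftvee},-)|_{\sfK}\cong(-\lotimes_{\Lambda}C)|_{\sfK}$ of endofunctors of $\sfK^{\mrb}(\proj\Lambda)$, and transporting the adjunction along it yields the asserted adjoint pair. Equivalently, one may write out the $\Hom$-level chain
\[
\RHom_{\Lambda}(M\lotimes_{\Lambda}C^{\leftvee},N)\cong N\lotimes_{\Lambda}(M\lotimes_{\Lambda}C^{\leftvee})^{\rightvee}\cong N\lotimes_{\Lambda}C^{\leftvee\rightvee}\lotimes_{\Lambda}M^{\rightvee}\cong N\lotimes_{\Lambda}C\lotimes_{\Lambda}M^{\rightvee}\cong\RHom_{\Lambda}(M,N\lotimes_{\Lambda}C)
\]
for $M,N\in\sfK^{\mrb}(\proj\Lambda)$, using $\scrJ$, $\scrI$, $\scrG$ in turn, and take $H^{0}$.

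The only point needing care is the bookkeeping of bimodule structures: one must confirm that the biduality comparison $C^{\leftvee\rightvee}\cong C$ is an isomorphism of $\Lambda$-$\Lambda$-bimodules, so that $-\lotimes_{\Lambda}C^{\leftvee\rightvee}\cong-\lotimes_{\Lambda}C$ as functors, and that it is the restriction-compatibility of Lemma \ref{compatibility lemma} (which uses that the base ring is a field, cf.\ Lemma \ref{sublemma of compatibility lemma}) that makes $(C^{\leftvee})_{\Lambda}$ perfect in the first step. Both are routine given the machinery of Section \ref{Derived tensor products} and Section \ref{Statements of Theorems}, and no new ideas are required; I expect this bimodule bookkeeping to be the main, though minor, obstacle.
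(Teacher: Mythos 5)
Your argument is correct and is essentially the paper's own proof: restrict the tensor--Hom adjunction $-\lotimes_{\Lambda}C^{\leftvee}\dashv\RHom_{\Lambda}(C^{\leftvee},-)$ to $\sfK^{\mrb}(\proj\Lambda)$ using that $C$ is perfect on both sides, then identify the right adjoint with $-\lotimes_{\Lambda}C$ via $\scrJ$ and the biduality isomorphism $C\cong C^{\leftvee\rightvee}$ (the left version of $\scrG$). The extra bookkeeping you flag (bimodule structure of the biduality map, perfectness of $(C^{\leftvee})_{\Lambda}$ via Lemma \ref{compatibility lemma}) is exactly what the paper leaves implicit.
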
 

\begin{proof}
By the assumption that the complex $C$ is perfect as left modules and right modules, 
we see that the adjoint pair $- \lotimes C^{\leftvee} \dashv \RHom_{\Lambda}(C^{\leftvee}, -)$ 
of endofunctors on $\sfD(\Mod \Lambda)$ 
can be restricted $\sfK^{\mrb}(\proj \Lambda)$. 
\[
-\lotimes_{\Lambda} C^{\leftvee} : 
\sfK^{\mrb}(\proj \Lambda) \rightleftarrows \sfK^{\mrb}(\proj \Lambda) : \RHom_{\Lambda}(C^{\leftvee},-).
\]
On the other hands
by the assumption, we have isomorphism of functors below induced by the natural morphisms $\scrJ$ and the left version $\scrG_{\ell}$
 of $\scrG$
\[
-\lotimes C \xrightarrow{\ \cong \scrG_{\ell}} 
-\lotimes C^{\leftvee\rightvee} 
\xrightarrow{\ \cong\scrJ} 
\RHom_{\Lambda}(C^{\leftvee}, -).
\]
Combining these observations, we obtain the desired adjoint pair. 
\end{proof}

\subsubsection{Proof of Theorem \ref{Adaching theorem 0}}

We proceed a proof of Theorem \ref{Adaching theorem 0}.

\begin{proof}[Proof   of Theorem \ref{Adaching theorem 0}]
We prove the implication (1) $\Rightarrow$ (2). 
We set $\sfT = \thick C^{\max\{\alpha_{r}, \alpha_{\ell} \}}$.  
By Lemma \ref{T proposition}, 
the functor $\cT$ acts on $\sfT$ as an equivalence and 
the subcategory $\sfT$ is right admissible. 
It follows from Lemma \ref{T proposition} and Corollary \ref{T corollary} that 
the functor $\cT$ nilpotently acts on $\sfT^{\perp}$.

By the  left version of Lemma \ref{T proposition}, 
the subcategory $\thick_{\Lambda^{\op}}C^{\alpha}$ is  a  right admissible subcategory of $\sfK^{\mrb}(\proj \Lambda^{\op})$. 
Since the $\Lambda$-dual functor  $(-)^{\leftvee}: \sfK^{\mrb}(\proj \Lambda^{\op}) 
\xrightarrow{\sim} \sfK^{\mrb}(\proj \Lambda)$ 
gives a contravariant equivalence, 
the subcategory $\sfT = \thick(C^{\alpha})^{\leftvee} = (\thick_{\Lambda^{\op}} C^{\alpha})^{\leftvee}$ is left admissible. 
This finishes the proof (1) $\Rightarrow$ (2).

We prove the implication (2) $\Rightarrow$ (3). 
First note that by  the assumption, Proposition \ref{isomorphisms proposition} and Lemma \ref{adjoint lemma}, 
we can apply Lemma \ref{abstraction 1} and Claim \ref{abstraction claim} 
to the adjoint pair $-\lotimes_{\Lambda} C^{\leftvee} \dashv -\lotimes_{\Lambda} C$.

By Claim \ref{abstraction claim},  
 the functor  $- \lotimes_{\Lambda} C^{\leftvee}$ equivalently act on $\sfT$. 
 Therefore 
the functor $C \lotimes_{\Lambda} -$ equivalently acts on 
the full subcategory $\sfT^{\rightvee}$ of $\sfK^{\mrb}(\proj \Lambda^{\op})$. 

Recall that we have an isomorphism $\scrJ_{C, M}: C\lotimes_{\Lambda} M^{\rightvee} \cong M^{\star}$ 
which is natural in $M \in\sfK^{\mrb}(\proj \Lambda)$. 
We also  have 
an isomorphism $N^{\leftvee} \lotimes_{\Lambda} C \cong N^{\star}$ 
as the left version of the above isomorphism 
which is natural in $N \in \sfK^{\mrb}(\proj \Lambda^{\op})$. 
Therefore, we see that 
the $C$-dual functors $(-)^{\star}$ induce an equivalence 
\[
(-)^{\star} : \sfT \stackrel{\sim}{\rightleftarrows} \sfT^{\rightvee} : (-)^{\star}.  
\]

By (1) of  Lemma \ref{abstraction 1}, we conclude that $C^{a} \in \sfT$. 
By (4) of Lemma \ref{abstraction 1}  we conclude that   $(C^{a})^{\leftvee} \cong (C^{\leftvee})^{a}  \in \sfT$. 
This finishes the proof of the implication (2) $ \Rightarrow$ (3). 

Finally,  we prove the implication (3) $\Rightarrow$ (1). 
Let $\scrK_{N}: N \to N^{\star\star}$ be the evaluation morphism for $N \in \sfD(\Mod \Lambda^{\op})$. 
We remark that  the assumption (3-a) implies that 
if $N \in \sfT$, then $\scrK_{N}$ is an isomorphism. 
For $M \in \sfD(\Mod \Lambda)$, 
we denote by $\scrL_{M}: M \lotimes_{\Lambda} C \rightarrow M^{\rightvee \star}$ 
the composite morphism 
\[
\scrL_{M}: M \lotimes_{\Lambda} C \xrightarrow{ \scrG \lotimes C} 
M^{\rightvee \leftvee} \lotimes_{\Lambda} C \xrightarrow{ \scrJ_{\ell} } 
\RHom_{\Lambda^{\op}}(M^{\rightvee}, C)  = M^{\rightvee\star}.
\]
Then we can check that  the following diagram is commutative  for $M \in \sfD(\Mod \Lambda)$
\[
\begin{xymatrix}{
M^{\rightvee}  \ar@{=}[d]\ar[rr]^{\scrK_{M^{\rightvee}}} && M^{\rightvee\star\star} \ar[d]^{(\scrL_{M})^{\star}} \\
\RHom_{\Lambda}(M, \Lambda) \ar[rr]_{\scrT_{M,\Lambda}} && \RHom_{\Lambda}( M\lotimes_{\Lambda} C, C ) .
}\end{xymatrix}
\]
Since the object  $C^{\alpha}$ belongs to $\sfT$ by the assumption (3 -b),  
the morphism $\scrK_{(C^{a})^{\rightvee}}$ is an isomorphism. 
Since $C^{a}$ belongs to $\sfK^{\mrb}(\proj \Lambda)$, the morphism $\scrL_{C^{a}}$ 
is an isomorphism and hence so is $\scrT_{C^{a}, \Lambda}$. 
By Proposition \ref{isomorphisms proposition} we check  the right asid condition. 
Since the condition (3) is right-left symmetric, 
in the same way,  
we can check the left asid condition.   
This finishes the proof of (3)  $\Rightarrow$ (1). 
\end{proof}

\subsubsection{Proof of Theorem \ref{Adaching theorem 1}}

We proceed a proof of Theorem \ref{Adaching theorem 1}. 

\begin{proof}[Proof of Theorem \ref{Adaching theorem 1}]
(1) 
For a triangulated category $\sfD$ and its exact endofunctor $F$, 
we set \[
\mathsf{a}(F, \sfD) :=\min\{a \geq 0 \mid F_{F^{a}(d), d'} \textup{ is an isomorphism for all } d, d' \in \sfD. \}\] 
where we are assuming that the set of which we take the minimal value is not empty.

By Proposition \ref{T proposition},
we can apply Lemma \ref{abstraction 1} 
to the case where $\sfD= \sfK^{\mrb}(\proj \Lambda),$ 
$ \sfE = \thick C^{\alpha}, $
 $F = \cT = -\lotimes_{\Lambda} C$ and $ G= - \lotimes_{\Lambda} C^{\leftvee}$. 
Set $\cT_{\ell}:= C\lotimes_{\Lambda} -$. 
Then we have the following equalities
\[
\begin{split}
\alpha_{r} &\stackrel{(a)}{=} 
\mathsf{a}(\cT, \sfK^{\mrb}(\proj \Lambda) ) \stackrel{(b)}{=} 
\min\{a \geq 0 \mid G_{d, G^{a}(d')} \textup{ is an isomorphism for all } d, d' \in \sfD. \} \\ 
&\stackrel{(c)}{=} \mathsf{a}(\cT_{\ell} , \sfK^{\mrb}(\proj \Lambda^{\op})) 
 \stackrel{(d)}{=} \alpha_{\ell}
 \end{split}
\]
where the equality $(a)$ follows from Proposition
\ref{isomorphisms proposition}, 
the equality $(b)$ follows from  Lemma \ref{abstraction 1}, 
the equality $(d)$ follows from the left version of Proposition
\ref{isomorphisms proposition} 
and the equality $(c)$ follows from the  observation that  
the endofunctor $G$ of $\sfK^{\mrb}(\proj \Lambda)$
 corresponds to the endofunctor $ \cT_{\ell} = C\lotimes_{\Lambda} -$  of $\sfK^{\mrb}(\proj \Lambda^{\op})$ 
via the contravariant equivalence $(-)^{\rightvee}$. 

(2) 
Assume that an admissible subcategory $\sfT \subset \sfK^{\mrb}(\proj \Lambda)$ satisfies 
the conditions (2-a) and (2-b) of Theorem \ref{Adaching theorem 0}. 
Then, by Lemma \ref{abstraction 1}, $C^{\alpha} = \cT^{\alpha}(\Lambda) $ belongs to $\sfT$. 
Hence $\thick C^{\alpha} \subset \sfT$. 
On the other hand, 
since $\sfT \subset \sfK^{\mrb}(\proj \Lambda)$, we have 
$\cT^{\alpha}(\sfT) \subset \thick C^{\alpha}$. 
Moreover, 
the conditions (2-a) implies $\sfT = \cT^{\alpha}(\sfT)$ and hence 
$\sfT = \cT^{\alpha}(\sfT) \subset \thick C^{\alpha}$. 
Thus, $\sfT = \thick C^{\alpha}$. 

Assume that a thick  subcategory $\sfT \subset \sfK^{\mrb}(\proj \Lambda)$ satisfies 
the conditions (3-a) and (3-b) of Theorem \ref{Adaching theorem 0}. 
By (3-b), $\thick C^{\alpha} \subset \sfT$. 
Therefore, we have a semi-orthogonal decomposition 
$\sfT = (\thick C^{\alpha}) \perp (\sfT \cap \Ker \varpi)$ induced from that of $\sfK^{\mrb}(\proj \Lambda)$ in Lemma \ref{T proposition}. 
Since $M \lotimes_{\Lambda} C \cong M^{\rightvee \star}$ for $M \in \sfK^{\mrb}(\proj \Lambda)$, 
the functor $- \lotimes C$ acts $\sfT$ as an equivalence. 
It follows that $\sfT \cap \Ker \varpi  = 0$ by Lemma \ref{T corollary}. 
Thus we conclude that $\sfT = \thick C^{\alpha}$. 

(3) follows from Corollary \ref{T corollary} and Lemma \ref{T proposition}. 
\end{proof}

We can  verify the following assertion 
whose proof is left to the readers. 

\begin{proposition}\label{asid number proposition}
Let $\Lambda$ and $C$ as in Theorem \ref{Adaching theorem 0}. 
Then we have 
\[
\alpha_{r} = \min \{ a \geq 0 \mid (\Ker \varpi) \lotimes_{\Lambda}C^{a} \}.
\]
\end{proposition}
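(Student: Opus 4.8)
The plan is to obtain the formula as a direct corollary of Lemma \ref{abstraction 1}, applied in precisely the situation already set up in the proof of Theorem \ref{Adaching theorem 1}. Since $C$ is an asid bimodule, Lemma \ref{T proposition}, Corollary \ref{T corollary} and Lemma \ref{adjoint lemma} furnish the triangulated category $\sfD = \sfK^{\mrb}(\proj \Lambda)$, the admissible subcategory $\sfE = \sfT = \thick C^{\alpha}$ with $\alpha = \alpha_{r} = \alpha_{\ell}$, the semi-orthogonal decomposition $\sfD = \sfE \perp \Ker \varpi$ (hence $\sfE^{\perp} = \Ker \varpi$), and the adjoint pair $- \lotimes_{\Lambda} C^{\leftvee} \dashv \cT = - \lotimes_{\Lambda} C$ of exact endofunctors of $\sfD$, under which $\cT$ acts on $\sfE$ as an equivalence and nilpotently on $\sfE^{\perp}$. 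So all the hypotheses of Lemma \ref{abstraction 1} hold with $F = \cT$.

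I would then argue in three short steps. First, recall from equality $(a)$ in the proof of Theorem \ref{Adaching theorem 1}.(1) the identity $\alpha_{r} = \mathsf{a}(\cT, \sfK^{\mrb}(\proj \Lambda))$; by definition of $\mathsf{a}(\cdot,\cdot)$ this says exactly that $\alpha_{r}$ is the least $a \geq 0$ for which condition (3) of Lemma \ref{abstraction 1} holds. Second, Lemma \ref{abstraction 1} asserts that for every fixed $a \geq 0$ its conditions (1)--(6) are equivalent; in particular the set of $a$ satisfying (3) equals the set of $a$ satisfying (2), so these two sets have the same least element, whence $\alpha_{r} = \min\{\, a \geq 0 \mid \cT^{a}(\sfE^{\perp}) = 0 \,\}$. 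Third, using the natural isomorphism $\cT^{a} \cong - \lotimes_{\Lambda} C^{a}$ together with the identification $\sfE^{\perp} = \Ker \varpi$, the condition $\cT^{a}(\sfE^{\perp}) = 0$ is exactly $(\Ker \varpi) \lotimes_{\Lambda} C^{a} = 0$, which yields the asserted equality.

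There is essentially no analytic difficulty here, since the substantive work --- the semi-orthogonal decomposition, the equivalences of Lemma \ref{abstraction 1}, and the identity $\alpha_{r} = \mathsf{a}(\cT, \sfK^{\mrb}(\proj \Lambda))$ --- is already in place. The only points demanding a little care are the elementary logical observation that equivalence of conditions (2) and (3) for each fixed $a$ forces their minimal exponents to coincide, and the routine bookkeeping translating $\cT^{a}(\sfE^{\perp}) = 0$ into $(\Ker \varpi) \lotimes_{\Lambda} C^{a} = 0$ (reading the displayed set on the right-hand side with the suppressed predicate ``$=0$''). As a side remark, combined with Corollary \ref{adasore:description of kernel} this shows that the increasing chain \eqref{kernel sequence} first reaches $\Ker \varpi$ exactly at $a = \alpha_{r}$, which is perhaps the only mildly non-obvious consequence.
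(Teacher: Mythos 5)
Your proof is correct and is exactly the argument the paper intends: the proposition is stated with its proof ``left to the readers'' immediately after the proof of Theorem \ref{Adaching theorem 1}, where the identification $\alpha_{r}=\mathsf{a}(\cT,\sfK^{\mrb}(\proj\Lambda))$ and the applicability of Lemma \ref{abstraction 1} (with $\sfE=\sfT$, $\sfE^{\perp}=\Ker\varpi$) are already established, so the equivalence of conditions (2) and (3) there for each fixed $a$ gives the formula at once. Your reading of the right-hand side with the suppressed ``$=0$'' and the implicit assumption that $C$ is asid (so that $\alpha_{r}$ is defined and the minimum is attained) are both the intended ones.
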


%We close this section by  giving  an optimistic conjecture. 
%\begin{conjecture}
%For any Noetherian algebras $\Lambda$ and any asid modules $C$, the right and the left asid numbers coincide. 
%\end{conjecture}

\section{On an asid subcategory}\label{around}

In Section \ref{around}, we discuss an asid subcategory $\sfT = \thick C^{\alpha}$ for an asid bimodule $C$ over an IG-algebra $\Lambda$. 
The main result given in  Theorem \ref{Adaching theorem 2} proves  that $\sfT$ is equivalent to 
the stable category $\stabgrCM_{\mrlp} A$ of locally perfect  graded CM-modules over $A = \Lambda \oplus C$.

\subsection{$A$-duality and $C$-duality}\label{dual dual}

The aim of Section \ref{dual dual}  
 is to prove Theorem \ref{dual_prop2} 
which gives conditions that $M \in \sfK^{\mrb}(\proj \Lambda)$ belongs to $\sfT$ 
in terms of $A$-duality $(-)^{*} = \grRHom_{A}(-,A)$.  

We need a preparation. 
Let $\Lambda$ be a Netherian algebra and $C$ a bimodule over $\Lambda$ 
finitely generated on both sides.  
Then the trivial extension algebra  $A = \Lambda \oplus C$ is Noetherian. 
Let $M$ be an object of $\sfD^{\mrb}(\mod \Lambda)$. 
and 
$\gamma_{M}$ the morphism induced from the canonical morphism $C \to A(1)$.  
\[
\gamma_{M} : M^{\star}  = \RHom_{\Lambda}(M, C) \to \grRHom_{A}(M ,A)(1) = M^{*} (1).
\]

We give a description of $(M^{*})_{i}$ by using $\gamma_{M}$ and $\scrT$. 

\begin{lemma}\label{dual_lem2}
Let $M$ be an object in $\sfD^{\mrb}(\mod\Lambda)$.
Then,  
there exist isomorphisms  below in $\sfD(\mod\Lambda^{\op})$
\[
\grRHom_{A}(M,A)_{i} 
\ \cong  \
\begin{cases}
0 & (i > 1) \\
\RHom_{\Lambda}(M,C) & (i=1) \\
\cone\left(\scrT_{M\lotimes_{\Lambda}C^{-i}[-i],\Lambda}\right)[-1] & (i < 1). 
\end{cases} 
\]
Moreover, the isomorphism of  the  case $i=1$ is the degree $0$-part of $\gamma_{M}$. 
\end{lemma}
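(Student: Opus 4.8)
The plan is to compute $\grRHom_A(M,A)$ degree by degree using the standard two-term description of $A$ as a graded module together with the decomposition machinery of Section \ref{dgpm}. First I would replace $M$ by a projective resolution $P \in \sfC^{-,\mrb}(\grproj A)$ (equivalently, view $M$ as concentrated in degree $0$ and resolve it over $A$); since $\Lambda = A_0$ has finite global dimension we may moreover arrange that $\frkp_i P \in \sfC^{\mrb}(\proj\Lambda)$, i.e. $P$ is locally perfect. The key input is the short exact sequence of graded $A$-bimodules
\[
0 \to C \to A(1) \to \Lambda(1) \to 0
\]
(already used in the proof of Lemma \ref{varpi shift lemma}), from which, after applying $\grRHom_A(-,A)$ or rather working with the graded pieces via the isomorphism $\frkp_i\grHom_A(P,A) \cong \Hom_\Lambda(\frkp_{-i}P,\Lambda)$ from Section \ref{dgpm}, one obtains the exact triangle (4-5) of \cite{adasore}, i.e.
\[
\Hom_\Lambda(\frkp_{-i+1}P, C) \to (M^{*})_i \to \Hom_\Lambda(\frkp_{-i}P,\Lambda) \to
\]
in $\sfD(\mod\Lambda^{\op})$ for each $i\in\ZZ$ (this is exactly the triangle invoked in the proof of Proposition \ref{lp_A-dual}).

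Next I would identify the two outer terms of this triangle with the objects appearing in the statement. For $i > 1$: since $P$ is a resolution of a degree-$0$ module we have $\frkp_{-i}P = 0 = \frkp_{-i+1}P$, so $(M^*)_i = 0$. For $i = 1$: the term $\Hom_\Lambda(\frkp_0 P, \Lambda)$ vanishes in the relevant sense after shifting — more precisely $\frkp_1 P = 0$, so the triangle degenerates and gives $(M^*)_1 \cong \Hom_\Lambda(\frkp_0 P, C)$; since $\frkp_0 P$ represents $M$ in $\sfD^{\mrb}(\mod\Lambda)$ this is $\RHom_\Lambda(M,C) = M^{\star}$. For $i < 1$: here Corollary \ref{black thunder}.(1) applies — because $\tuH(M)_{>0}=0$, for $j := -i \geq 1$ we get $\frkp_{-i}P \cong \frkp_0 P \lotimes_\Lambda C^{-i}[-i] \cong M\lotimes_\Lambda C^{-i}[-i]$ and similarly $\frkp_{-i+1}P \cong M \lotimes_\Lambda C^{-i+1}[-i+1] = (M\lotimes_\Lambda C^{-i}[-i])\lotimes_\Lambda C[1]$ in $\sfD(\mod\Lambda)$. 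Feeding this into the triangle and using the identification of $\RHom_\Lambda(-,\Lambda) \to \RHom_\Lambda(-\lotimes_\Lambda C, C)$ as the functorial morphism $\scrT_{-,\Lambda}$ (Proposition \ref{isomorphisms proposition}, and the compatibility of $\scrT$ with the adjunction), the triangle becomes
\[
\RHom_\Lambda(N,\Lambda) \xrightarrow{\ \scrT_{N,\Lambda}\ } \RHom_\Lambda(N\lotimes_\Lambda C, C) \to (M^*)_i \to
\]
with $N := M\lotimes_\Lambda C^{-i}[-i]$, after a shift. Reading off the middle term gives $(M^*)_i \cong \cone(\scrT_{N,\Lambda})[-1]$, which is the claimed formula.

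Finally I would check the last sentence: that the $i=1$ isomorphism is the degree-$0$ part of $\gamma_M$. This is a matter of chasing the morphism $C \to A(1)$ through the construction of the triangle (4-5) of \cite{adasore} and observing it is precisely what induces $\gamma_M : M^\star \to M^*(1)$ in degree $0$; this is bookkeeping rather than a substantive difficulty.

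The main obstacle I anticipate is not any single computation but rather pinning down the identification of the connecting maps in the triangle (4-5) of \cite{adasore} with the \emph{canonical} natural transformation $\scrT_{-,\Lambda}$ (as opposed to $\scrT$ up to some sign or automorphism) and keeping the degree-shift bookkeeping consistent — in particular making sure the shifts $[-i]$ arising from Corollary \ref{black thunder} match the shift implicit in passing from $\frkp_{-i}P \otimes_\Lambda C$ to $\frkp_{-i}P \lotimes_\Lambda C$ and in the rotation of the triangle. Once the triangle is set up with the correct functoriality, everything else is a direct substitution using Corollary \ref{black thunder}.(1) and Proposition \ref{isomorphisms proposition}.
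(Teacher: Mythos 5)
Your plan is essentially the paper's own proof: resolve $M$ over $A$ with $P_{<0}=0$, specialize the exact triangle (4-5) of \cite{adasore} to $P'=A(i)$, kill the outer terms for $i\geq 1$ using $\frkp_{j}P=0$ for $j<0$, and for $i<1$ identify the connecting map with $\scrT_{\frkp_{-i}P,\Lambda}$ via Lemma \ref{adasore lemma 4.2} and then substitute $\frkp_{-i}P\cong M\lotimes_{\Lambda}C^{-i}[-i]$ by Corollary \ref{black thunder}.(1); your anticipated ``main obstacle'' (pinning the connecting map down as $\scrT$ on the nose) is exactly the point the paper spells out. Two small corrections: in the $i=1$ case the term that vanishes is $\Hom_{\Lambda}(\frkp_{-1}P,\Lambda)$, because $\frkp_{-1}P=0$ --- not ``$\Hom_{\Lambda}(\frkp_{0}P,\Lambda)$'' and not because ``$\frkp_{1}P=0$'' (the positive-degree pieces $\frkp_{1}P,\frkp_{2}P,\dots$ are generally nonzero). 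Also, the appeal to $\gldim\Lambda<\infty$ and local perfectness of $P$ should be dropped: the lemma is stated (and later used) without that hypothesis, and nothing in the argument requires it, since $\frkp_{i}P\in\sfC^{-}(\Proj\Lambda)$ automatically for $P\in\sfC^{-}(\grproj A)$, which is all that Corollary \ref{black thunder} needs.
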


Before giving  a proof, 
we collect two immediate consequences of Lemma \ref{dual_lem2}. 

\begin{corollary}
The following conditions are equivalent for $M \in \sfD^{\mrb}(\mod \Lambda)$.
\begin{enumerate}[(1)]
\item 
The morphism
\[
\scrT_{M \lotimes_{\Lambda}C^{i},\Lambda}: 
\RHom_{\Lambda}(M\lotimes_{\Lambda} C^i,\Lambda ) \to \RHom_{\Lambda}(M\lotimes_{\Lambda} C^{i+1},C )
\]
is an isomorphism in $\sfD(\mod \Lambda^{\op})$ for any $i \geq 0$.
\item 
The morphism $\gamma_{M}$ is an isomorphism in $\sfD(\grmod A^{\op})$.
\item
The object $(M^{*})_{\leq 0}$ is isomorphic to $0$ in $\sfD(\grmod A^{\op})$.
\end{enumerate}
\end{corollary}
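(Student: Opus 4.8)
The plan is to deduce everything directly from Lemma~\ref{dual_lem2}, which identifies \emph{all} graded pieces $(M^{*})_{i}=\grRHom_{A}(M,A)_{i}$. I would prove $(2)\Leftrightarrow(3)$ and $(1)\Leftrightarrow(3)$ separately, in each case by a degree-by-degree inspection; no new idea beyond Lemma~\ref{dual_lem2} is required.

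For $(2)\Leftrightarrow(3)$: the map $\gamma_{M}\colon M^{\star}\to M^{*}(1)$ is a morphism in $\sfD(\grmod A^{\op})$ with $M^{\star}=\RHom_{\Lambda}(M,C)$ regarded as concentrated in degree $0$, and it is an isomorphism iff each graded component is. In degree $0$ the component is the map $M^{\star}\to (M^{*})_{1}$, which by the last sentence of Lemma~\ref{dual_lem2} is always an isomorphism. In any degree $n>0$ source and target both vanish (the source because $M^{\star}$ sits in degree $0$, the target $(M^{*}(1))_{n}=(M^{*})_{n+1}$ because $n+1>1$ by Lemma~\ref{dual_lem2}). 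In any degree $n<0$ the source vanishes, so the component is an isomorphism iff $(M^{*})_{n+1}\cong 0$; as $n$ ranges over the negative integers, $n+1$ ranges over all integers $\leq 0$, so this says exactly $(M^{*})_{\leq 0}\cong 0$, which is $(3)$.

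For $(1)\Leftrightarrow(3)$: by Lemma~\ref{dual_lem2}, for $i\leq 0$ we have $(M^{*})_{i}\cong\cone\!\big(\scrT_{M\lotimes_{\Lambda}C^{-i}[-i],\Lambda}\big)[-1]$, so $(M^{*})_{i}\cong 0$ iff $\scrT_{M\lotimes_{\Lambda}C^{-i}[-i],\Lambda}$ is an isomorphism. Putting $j:=-i\geq 0$, condition $(3)$ thus becomes: $\scrT_{M\lotimes_{\Lambda}C^{j}[j],\Lambda}$ is an isomorphism for all $j\geq 0$. Since $\scrT$ is derived from the functor $\cT=-\lotimes_{\Lambda}C$, it is compatible with the triangulated shift; concretely, under the canonical isomorphisms $\RHom_{\Lambda}(X[j],\Lambda)\cong\RHom_{\Lambda}(X,\Lambda)[-j]$ and $\RHom_{\Lambda}(X[j]\lotimes_{\Lambda}C,C)\cong\RHom_{\Lambda}(X\lotimes_{\Lambda}C,C)[-j]$ the morphism $\scrT_{X[j],\Lambda}$ corresponds to $\scrT_{X,\Lambda}[-j]$ (up to sign), so the shift $[j]$ is irrelevant for being an isomorphism. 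Hence $(3)$ is equivalent to $\scrT_{M\lotimes_{\Lambda}C^{j},\Lambda}$ being an isomorphism for all $j\geq 0$, i.e. condition $(1)$.

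There is no genuine obstacle: the statement is a formal corollary of Lemma~\ref{dual_lem2}. The only points deserving a line of care are the bookkeeping of the degree shift in $M^{*}(1)$ versus $M^{*}$ (which is what converts the range $i<1$ in Lemma~\ref{dual_lem2} into the condition $(M^{*})_{\leq 0}\cong 0$), and the remark that the shift $[-i]$ occurring in Lemma~\ref{dual_lem2} may be dropped when testing whether $\scrT$ is an isomorphism.
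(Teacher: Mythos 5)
Your proof is correct and is exactly the argument the paper intends: the corollary is stated there as an immediate consequence of Lemma \ref{dual_lem2}, obtained by the same degree-by-degree reading of $\gamma_{M}$ and the identification $(M^{*})_{i}\cong\cone(\scrT_{M\lotimes_{\Lambda}C^{-i}[-i],\Lambda})[-1]$ for $i\leq 0$. Your explicit remarks on the degree shift in $M^{*}(1)$ and on discarding the complex shift $[-i]$ when testing invertibility of $\scrT$ are the only points of care, and you handle both correctly.
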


\begin{corollary}\label{dual_prop1}
We assume that $\Lambda$ is IG and 
$C$ has finite injective dimension on both sides.
Then the following conditions are equivalent.
\begin{enumerate}[(1)]
\item 
$C$ is a right asid bimodule.
\item 
The morphism $\gamma_{C^{a}}$ is an isomorphism in $\sfD^{\mrb}(\grmod A)$ for some integer $a \geq 0$.
\end{enumerate} 
Moreover we have $\alpha_{r} = \min\{ a \geq 0 \mid \gamma_{C^{a}} \textup{ is an isomorphism} \}$.
\end{corollary}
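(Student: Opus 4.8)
The plan is to deduce Corollary \ref{dual_prop1} directly from Lemma \ref{dual_lem2} together with the characterization of the right asid condition given in Proposition \ref{isomorphisms proposition}. First I would fix notation: under the standing hypotheses $\Lambda$ is IG and $C$ has finite injective (hence, by Proposition \ref{Iwanaga}, finite projective) dimension on both sides, so all the dualities and derived tensor constructions in Lemma \ref{dual_lem2} make sense and $C^{a} \in \sfK^{\mrb}(\proj \Lambda)$ for every $a \geq 0$. I would then apply Lemma \ref{dual_lem2} with $M = C^{a}$: it shows that $\grRHom_{A}(C^{a},A)_{i} = 0$ for $i > 1$, that $\grRHom_{A}(C^{a},A)_{1} \cong \RHom_{\Lambda}(C^{a},C)$ via the degree-$0$ part of $\gamma_{C^{a}}$, and that for $i < 1$ the graded piece $\grRHom_{A}(C^{a},A)_{i}$ is $\cone(\scrT_{C^{a}\lotimes_{\Lambda}C^{-i}[-i],\Lambda})[-1] = \cone(\scrT_{C^{a-i},\Lambda}[-i])[-1]$, using the shift-compatibility of $\scrT$.

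Next I would unwind what $\gamma_{C^{a}}$ being an isomorphism in $\sfD^{\mrb}(\grmod A)$ means degreewise. Since $\gamma_{C^{a}}: (C^{a})^{\star} \to (C^{a})^{*}(1)$ and $(C^{a})^{\star} = \RHom_{\Lambda}(C^{a},C)$ is concentrated in degree $0$, $\gamma_{C^{a}}$ is an isomorphism if and only if $(C^{a})^{*}(1)$ is concentrated in degree $0$ and the degree-$0$ map is an isomorphism; equivalently $\grRHom_{A}(C^{a},A)_{i} = 0$ for all $i \leq 0$ and the degree-$1$ part of $\gamma_{C^{a}}$ is an isomorphism. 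The latter is automatic by the ``moreover'' clause of Lemma \ref{dual_lem2}. By the description above, $\grRHom_{A}(C^{a},A)_{i} = 0$ for all $i \leq 0$ is equivalent to $\scrT_{C^{a-i},\Lambda}$ being an isomorphism for all $i \leq 0$, i.e. $\scrT_{C^{b},\Lambda}$ an isomorphism for all $b \geq a$. By Proposition \ref{isomorphisms proposition} (equivalence (1) $\Leftrightarrow$ (2)), $\scrT_{C^{b},\Lambda}$ is an isomorphism precisely when $\RHom_{\Lambda}(C^{b},\lambda_{r})$ is an isomorphism, and, by the remark following that proposition, this equivalence does not even need $C$ perfect. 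Hence $\gamma_{C^{a}}$ is an isomorphism $\iff$ $\RHom_{\Lambda}(C^{b},\lambda_{r})$ is an isomorphism for all $b \geq a$.

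From here both claims follow quickly. For the equivalence (1) $\Leftrightarrow$ (2): by definition (Theorem \ref{adasore:right asid theorem} and Definition \ref{adasore:asid number definition}) $C$ is a right asid bimodule iff $\RHom_{\Lambda}(C^{a},\lambda_{r})$ is an isomorphism for $a \gg 0$, which by the previous paragraph is exactly the existence of some $a \geq 0$ with $\gamma_{C^{a}}$ an isomorphism. For the formula for $\alpha_{r}$: I need the standard monotonicity fact that if $\RHom_{\Lambda}(C^{a},\lambda_{r})$ is an isomorphism then so is $\RHom_{\Lambda}(C^{b},\lambda_{r})$ for every $b \geq a$ — this is recorded in \cite{adasore} and also follows from Lemma \ref{T lemma} / Proposition \ref{isomorphisms proposition} by applying $- \lotimes_{\Lambda} C$; granting it, ``$\RHom_{\Lambda}(C^{b},\lambda_{r})$ is an isomorphism for all $b \geq a$'' is the same as ``$\RHom_{\Lambda}(C^{a},\lambda_{r})$ is an isomorphism'', so $\{a \geq 0 \mid \gamma_{C^{a}} \text{ iso}\} = \{a \geq 0 \mid \RHom_{\Lambda}(C^{a},\lambda_{r}) \text{ iso}\}$, and taking minima gives $\alpha_{r} = \min\{a \geq 0 \mid \gamma_{C^{a}} \text{ is an isomorphism}\}$. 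The main obstacle is purely bookkeeping: correctly tracking the degree shift in $\gamma_{C^{a}}: (C^a)^\star \to (C^a)^*(1)$ so that ``$\gamma_{C^{a}}$ iso'' translates into the vanishing of $\grRHom_A(C^a,A)_i$ for $i \le 0$ rather than $i < 0$, and confirming via Lemma \ref{dual_lem2} that the surviving degree-$1$ comparison is always an isomorphism so it imposes no extra condition; once the indexing in Lemma \ref{dual_lem2} is pinned down, everything else is formal.
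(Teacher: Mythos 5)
Your proposal is correct and follows essentially the route the paper intends: specialize Lemma \ref{dual_lem2} to $M=C^{a}$ (equivalently, the unlabeled corollary preceding the statement), observe that the degree-$1$ comparison is automatically an isomorphism so that ``$\gamma_{C^{a}}$ iso'' reduces to the vanishing of $\grRHom_{A}(C^{a},A)_{i}$ for $i\le 0$, i.e.\ to $\scrT_{C^{b},\Lambda}$ being an isomorphism for all $b\ge a$, and then translate via Proposition \ref{isomorphisms proposition} and the monotonicity of the condition in $a$. Your bookkeeping of the shift in $\gamma_{C^{a}}$ and of the index range $b\ge a$ is accurate, so nothing is missing.
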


\begin{proof}[Proof of Lemma \ref{dual_lem2}]
Let $P \in \sfC^{-}(\grproj A)$ be a projective resolution of $M$. 
We may assume that $P_{< 0} = 0$ and hence $\frkp_{i} P = 0$ for $i < 0$. 
Thus, for $i > 1$, 
\[
\grRHom_{A}(M,A)_{i} = \Hom_{\GrMod A}(P, A(i)) = 0.
\]

Since $\frkp_{0} P = P_{0}$ is a projective resolution of $M$ in $\sfC(\proj \Lambda)$ by Lemma \cite[Lemma 4.13]{adasore}, 
we have  isomorphisms 
\[
\begin{split}
\grRHom_{A}(M,A)_{1} & \cong \Hom_{\GrMod A}(P, A(1)) \\ 
&\cong \Hom_{ \Lambda}(\frkp_{0} P, C) \cong \RHom_{\Lambda}(M, C)
\end{split}
\]

We consider the remaining case $i < 1$. 
Substituting $P' = A(i)$ in the exact sequence (4-5) of  \cite{adasore} 
we obtain the following exact triangle in $\sfD(\mod \Lambda^{\op})$ 
\[
\RHom_{\Lambda}(\frkp_{- i+1} P , C) \to \RHom_{\GrMod A}(P, A(i))
 \to \RHom_{\Lambda}(\frkp_{-i}P, \Lambda) \xrightarrow{F_{i}} 
 \RHom_{\Lambda}(\frkp_{-i + 1} P, C[1] ). 
\]

We can check that  $F_{i}$ to be the composite morphism 
\[
\begin{split}
\RHom_{\Lambda}(\frkp_{-i}P, \Lambda) 
&\xrightarrow{\ \ \ \scrT_{{ \frkp_{-i}}P, \Lambda} \ \ \ } 
\RHom_{ \Lambda}(\frkp_{-i}P \otimes_{\Lambda} C , C) \\
& 
\xrightarrow{\ \ \ \Sigma \ \ \ } 
\RHom_{ \Lambda}(\frkp_{-i}P \otimes_{\Lambda} C[1] , C[1])
\xrightarrow{\Hom ( \sfq_{-i+1}, C[1] )} 
\RHom_{\Lambda}(\frkp_{-i+ 1} P, C[1]) 
\end{split}
\]
where $\Sigma$ is the map induced from the shift functor $[1]$. 

By Lemma \ref{adasore lemma 4.2}, $\sfq_{-i+1}$ is isomorphism in $\sfD^{\mrb}(\mod \Lambda)$ for $i < 1$. 
Since the morphism $\Sigma$ is also isomorphism, 
we see  that $\RHom_{\GrMod A}(M,A(i))$ is  a co-cone of $\scrT_{\frkp_{-i} P, \Lambda}$. 
On the other hand, 
since $\frkp_{-i}P \cong M \lotimes_{\Lambda} C^{-i}[-i]$ for $i \leq 0$ by Corollary \ref{black thunder}.(1),  
we finish the proof. 
\end{proof}

We give the main result of Section \ref{dual dual}.

\begin{theorem}\label{dual_prop2}
Assume that $\Lambda$ and $A = \Lambda \oplus C$ is IG. 
We set $\alpha: = \alpha_{r} = \alpha_{\ell}$ and  $\sfT := \thick C^{\alpha}$.  
Then for an object $M \in \sfK^{\mrb}(\proj \Lambda)$  
the following conditions are equivalent. 
\begin{enumerate}[(1)]
\item  $M \in \sfT$. 
\item  $\gamma_{M}$ is an isomorphism. 
\item $(M^{*})_{\leq 0} =0$ in $\sfD^{\mrb}(\grmod A^{\op})$. 
\end{enumerate} 
%In other words, we have 
%\[\begin{split}
%\sfT =& \{M \in \sfK^{\mrb}(\proj\Lambda) \mid  \mbox{ $\gamma_{M}$ is an isomorphism in $\sfD(\grmod A^{\op})$ } \} \\= & \{M \in \sfK^{\mrb}(\proj\Lambda) \mid  (M^{*})_{\leq 0} = 0 \}.\end{split} \]
%\item 
%The functor $(-)^{\star}:  \sfT_{\Lambda} \to  \sfT_{\Lambda^{\op}}$ is a duality such that $(-)^{\star\star} \simeq 1_{\sfT}$.
\end{theorem}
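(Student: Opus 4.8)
The equivalence $(2)\Leftrightarrow(3)$ is already contained in the corollary to Lemma~\ref{dual_lem2}: that corollary shows that for any $M\in\sfD^{\mrb}(\mod\Lambda)$ the three conditions ``$\gamma_{M}$ is an isomorphism'', ``$(M^{*})_{\leq 0}=0$'', and ``$\scrT_{M\lotimes_{\Lambda}C^{i},\Lambda}$ is an isomorphism for all $i\geq 0$'' are equivalent. So it suffices to compare this last condition, which I will abbreviate $(\ast)$, with membership in $\sfT=\thick C^{\alpha}$.

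\textbf{$(1)\Rightarrow(2)$.} I would first observe that the full subcategory $\{N\in\sfK^{\mrb}(\proj\Lambda)\mid \scrT_{N,\Lambda}\text{ is an isomorphism}\}$ is the isomorphism locus of a natural transformation between the contravariant exact functors $(-)^{\rightvee}$ and $\RHom_{\Lambda}(-\lotimes_{\Lambda}C,C)$, hence is a thick subcategory; by Proposition~\ref{isomorphisms proposition} (with $a=\alpha=\alpha_{r}$) it contains $C^{\alpha}$, so it contains all of $\sfT=\thick C^{\alpha}$. Next, for $M\in\sfT$ and any $i\geq 0$ one has $M\lotimes_{\Lambda}C^{i}\in\thick C^{\alpha+i}\subseteq\thick C^{\alpha}=\sfT$ by the descending chain \eqref{thick C sequence}, so $\scrT_{M\lotimes_{\Lambda}C^{i},\Lambda}$ is an isomorphism for every $i\geq 0$; that is, $(\ast)$ holds, and $\gamma_{M}$ is an isomorphism by the corollary to Lemma~\ref{dual_lem2}.

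\textbf{$(2)\Rightarrow(1)$.} By Lemma~\ref{T proposition}(2) there is a semi-orthogonal decomposition $\sfK^{\mrb}(\proj\Lambda)=\sfT\perp\Ker\varpi$, so $M$ sits in an exact triangle $X\to M\to Y\to X[1]$ with $X\in\sfT$ and $Y\in\Ker\varpi$. Since $\gamma_{\bullet}$ is a natural transformation of exact functors and both $\gamma_{X}$ (by the case just proved) and $\gamma_{M}$ (by hypothesis) are isomorphisms, $\gamma_{Y}$ is an isomorphism, so $(\ast)$ holds for $Y$. By Theorem~\ref{Adaching theorem 1}(3), $Y\in\Ker\varpi=\Ker(-\lotimes_{\Lambda}C^{\alpha})|_{\sfK}$, hence $Y\lotimes_{\Lambda}C^{\alpha}=0$. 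Now I run a descending induction: if $Y\lotimes_{\Lambda}C^{j+1}=0$ for some $0\le j<\alpha$, then the instance $i=j$ of $(\ast)$ gives an isomorphism $(Y\lotimes_{\Lambda}C^{j})^{\rightvee}\xrightarrow{\ \sim\ }\RHom_{\Lambda}(Y\lotimes_{\Lambda}C^{j+1},C)=0$; since $Y\lotimes_{\Lambda}C^{j}\in\sfK^{\mrb}(\proj\Lambda)$ and $(-)^{\rightvee}$ is a contravariant equivalence on $\sfK^{\mrb}(\proj\Lambda)$, this forces $Y\lotimes_{\Lambda}C^{j}=0$. At $j=0$, using $C^{0}=\Lambda$, we get $Y=0$, whence $M\cong X\in\sfT$.

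\textbf{Main obstacle.} Everything except the last paragraph is formal bookkeeping with the thick subcategories and natural transformations already assembled in Sections~\ref{Categorical characterization} and \ref{dual dual}; the one genuinely new idea, and the step I expect to be the crux, is the descending induction in $(2)\Rightarrow(1)$ — namely the observation that, once $Y$ satisfies $(\ast)$, the $\scrT$-isomorphisms let one strip off one factor of $C$ at a time, so that $Y\lotimes_{\Lambda}C^{\alpha}=0$ propagates backwards to $Y=0$. It is worth double-checking there that $Y\lotimes_{\Lambda}C^{j}$ indeed lands in $\sfK^{\mrb}(\proj\Lambda)$ (which holds because $C$ has finite projective dimension on both sides, Theorem~\ref{adasore:right asid theorem}(1)) so that $(-)^{\rightvee}$ reflects vanishing.
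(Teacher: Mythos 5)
Your proof is correct, and the equivalence $(2)\Leftrightarrow(3)$ together with $(1)\Rightarrow(2)$ matches the paper's argument in substance (the paper routes $(1)\Rightarrow(2)$ through Corollary \ref{dual_prop1} and the thickness of the isomorphism locus of $\gamma$, you route it through condition $(\ast)$ and the thickness of the isomorphism locus of $\scrT_{-,\Lambda}$; these are the same idea). Where you genuinely diverge is the converse implication. The paper proves $(3)\Rightarrow(1)$ constructively: from $(M^{*})_{\leq 0}=0$ it deduces that $M^{*}\cong M^{\star}(-1)$ is locally perfect, takes a graded projective resolution $Q$ of $M^{*}$ over $A^{\op}$, dualizes to $P=\grHom_{A}^{\bullet}(Q,A)$, and uses Corollary \ref{black thunder} to exhibit $M\cong M^{**}$ explicitly as $\frkp_{-\alpha}P\lotimes_{\Lambda}C^{\alpha}[\alpha-1]\in\thick C^{\alpha}$. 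You instead prove $(2)\Rightarrow(1)$ by decomposing $M$ along the semiorthogonal decomposition $\sfK^{\mrb}(\proj\Lambda)=\sfT\perp\Ker\varpi$ of Lemma \ref{T proposition}, using the two-out-of-three property of the isomorphism locus of $\gamma$ to transfer the hypothesis to the component $Y\in\Ker\varpi$, and then running a descending induction in which the isomorphisms $\scrT_{Y\lotimes C^{j},\Lambda}$ strip off one tensor factor of $C$ at a time from $Y\lotimes_{\Lambda}C^{\alpha}=0$ down to $Y=0$; the needed technical points (that $Y\lotimes_{\Lambda}C^{j}$ stays in $\sfK^{\mrb}(\proj\Lambda)$ and that $(-)^{\rightvee}$ reflects vanishing there) are correctly flagged and hold. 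Your route stays entirely inside $\sfK^{\mrb}(\proj\Lambda)$ and leans on the already-established categorical machinery of Section \ref{Categorical characterization}, which makes it shorter and more formal; the paper's route requires manipulating acyclic graded complexes over $A$ but yields, as a by-product, the explicit presentation of $M$ via the $A$-double-dual, which is then reused in the proof of Theorem \ref{Adaching theorem 2}.
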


\begin{proof}
The equivalence (2) $\Leftrightarrow$ (3) follows from Lemma \ref{dual_lem2}. 

We prove the implication (1) $\Rightarrow$ (2). 
By Corollary \ref{dual_prop1}, $\gamma_{C^{\alpha}}$ is an isomorphism. 
Since $\sfT = \thick C^{\alpha}$, we conclude that $\gamma_{M} $ is an isomorphism for $M \in \sfT$. 

We prove the implication (3) $\Rightarrow$ (1).  
By the assumption $M^{*} \cong M^{\star}(-1)$  belongs to $\sfK^{\mrb}(\proj \Lambda^{\op})(-1)$. 
We remark that this implies that $M^{*}$ is locally perfect by  Proposition \ref{lp_resol}. 

Let $Q \in \sfC(\grproj A^{\op})$ be a projective resolution of $M^{*}$. 
We may assume that $Q_{\leq 0} = 0$ and $\frkp_{i} Q \in \sfC^{\mrb}(\proj \Lambda^{\op})$ 
for $i \geq 0$.     
Set   $P := \grHom_{A}^{\bullet}(Q, A)$. 
Then since $\frkp_{i}P = \Hom_{\Lambda}(\frkp_{-i} Q, \Lambda)$, 
we see that $\frkp_{i} P \in \sfK^{\mrb}(\proj \Lambda)$ for $i <0$ 
and $\frkt_{\geq 0} P = 0$. 
The latter property implies that $P_{0} =   \frkp_{-1}P \otimes_{\Lambda} C$. 
Since $P$ is isomorphic to $M^{**}= M$ in $\sfD(\grmod A)$, 
$P_{0} \cong M$ in $\sfD^{\mrb}(\mod \Lambda)$ and the complex $P_{< 0}$ is acyclic. 
By Corollary \ref{black thunder}, 
we obtain an isomorphism  $\frkp_{-1} P \cong \frkp_{-i} P \lotimes_{\Lambda} C^{i-1}[i-1]$ for $i  >0$.  
Therefore, 
$M  \cong \frkp_{-\alpha} P \lotimes_{\Lambda} C^{\alpha}[\alpha -1]$.  
\end{proof}

\subsection{The equivalence $\sfT\simeq \stabgrCM_{\mrlp} A$}\label{T=stabgrCM} 

In this section \ref{T=stabgrCM}, 
$\Lambda$ is an IG-algebra and $C$ is a $\Lambda$-$\Lambda$-bimodule 
such that the trivial extension algebra $A = \Lambda \oplus C$ is IG. 
We set $\alpha:= \alpha_{r} = \alpha_{\ell}$ and $\sfT := \thick C^{\alpha}$. 
To state the main theorem of this section, we point out the following observation. 

\begin{lemma}
\begin{enumerate}[(1)]
\item 
The essential image of the functor $\frkp_{0} : \sfK^{\mrac}_{\mrlp}(\grproj A) \to \sfK^{\mrb}(\proj \Lambda)$ 
is contained in $\sfT$. 
\item 
Let $\mathsf{in}: \sfD^{\mrb}(\mod \Lambda) \hookrightarrow \sfD^{\mrb}(\grmod A)$ 
be the canonical inclusion. 
Then the essential image  of $\sfT$ by $\mathsf{in}$  is contained in the Orlov subcategory $\sfO = \sfD^{\mrb}(\mod^{\geq 0} A) \cap \sfD^{\mrb}(\mod^{>0} A^{\op})^{*}$. 
\end{enumerate}
\end{lemma}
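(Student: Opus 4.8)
The plan is to treat the two assertions separately, reducing each to results already established in the paper. For (1), I would take an acyclic, locally perfect complex $P \in \sfC^{\mrac}_{\mrlp}(\grproj A)$ and show that $\frkp_{0}P$ lies in $\sfT = \thick C^{\alpha}$. Since $P$ is acyclic, $\tuH(P)_{i} = 0$ for every $i$, so by Lemma \ref{adasore lemma 4.2} each connecting map $\sfq_{i}$ in the triangle \eqref{kurumimochi} is an isomorphism in $\sfD(\Mod\Lambda)$, giving $\frkp_{i}P \cong \frkp_{i-1}P \otimes_{\Lambda} C[1]$; and since $P$ is locally perfect each $\frkp_{i}P$ is a bounded complex of finitely generated projective $\Lambda$-modules, so $-\otimes_{\Lambda}C$ computes $-\lotimes_{\Lambda}C$. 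Iterating $\alpha$ steps starting from $i=0$ then yields (this is exactly Corollary \ref{black thunder}.(1) with $j=-\alpha$)
\[
\frkp_{0}P \ \cong\ \frkp_{-\alpha}P \lotimes_{\Lambda} C^{\alpha}[\alpha] \qquad\text{in } \sfD(\Mod\Lambda).
\]
As $\frkp_{-\alpha}P \in \sfK^{\mrb}(\proj\Lambda) = \thick\Lambda$, the exact functor $-\lotimes_{\Lambda}C^{\alpha}$ carries it into $\thick(\Lambda\lotimes_{\Lambda}C^{\alpha}) = \thick C^{\alpha} = \sfT$, and since $\sfT$ is closed under shifts we conclude $\frkp_{0}P \in \sfT$.

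For (2), I would first record that, $A = \Lambda\oplus C$ being IG, condition (1) of Theorem \ref{adasore:right asid theorem} holds, i.e.\ $C$ has finite projective dimension on both sides; consequently $A$ has finite projective dimension as a left and as a right $A_{0} = \Lambda$-module, which is exactly the hypothesis that makes Propositions \ref{lp_resol} and \ref{lp_A-dual} applicable in the present (merely IG) setting. Now fix $M \in \sfT \subset \sfK^{\mrb}(\proj\Lambda)$ and view $\mathsf{in}(M)$ as a complex of graded $A$-modules concentrated in degree $0$: it plainly lies in $\sfD^{\mrb}(\mod^{\geq 0}A)$, and its graded pieces are $\mathsf{in}(M)_{0} = M \in \sfK^{\mrb}(\proj\Lambda)$ and $\mathsf{in}(M)_{i} = 0$ for $i \neq 0$, so Proposition \ref{lp_resol} shows $\mathsf{in}(M)$ is locally perfect. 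It then remains to check that $\mathsf{in}(M) \in \sfD^{\mrb}_{\mrlp}(\mod^{>0}A^{\op})^{*}$. In the notation of Lemma \ref{dual_lem2} we have $\mathsf{in}(M)^{*} = M^{*} = \grRHom_{A}(M,A)$, and since $M \in \sfT$ the implication $(1)\Rightarrow(3)$ of Theorem \ref{dual_prop2} gives $(M^{*})_{\leq 0} = 0$ in $\sfD^{\mrb}(\grmod A^{\op})$; hence the cohomology of $M^{*}$ is concentrated in positive degrees and $M^{*} \in \sfD^{\mrb}(\mod^{>0}A^{\op})$. Because $\mathsf{in}(M)$ is locally perfect, Proposition \ref{lp_A-dual} shows $M^{*} = \mathsf{in}(M)^{*}$ is again locally perfect and that $\mathsf{in}(M)^{**} \cong \mathsf{in}(M)$; therefore $M^{*} \in \sfD^{\mrb}_{\mrlp}(\mod^{>0}A^{\op})$ and $\mathsf{in}(M) = (M^{*})^{*} \in \sfD^{\mrb}_{\mrlp}(\mod^{>0}A^{\op})^{*}$. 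Together with the previous point, this gives $\mathsf{in}(M) \in \sfO$.

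I do not expect a genuine obstacle: each part is short once the correct earlier statement is invoked. The two spots that need care are the bookkeeping of hypotheses in the general IG case — making sure that finite projective dimension of $C$ on both sides, which is built into $A$ being IG, is exactly what licenses Propositions \ref{lp_resol} and \ref{lp_A-dual} — and, in (1), the observation that the relevant $C^{\alpha}$-periodicity (Lemma \ref{adasore lemma 4.2} iterated, i.e.\ Corollary \ref{black thunder}.(1)) persists for the unbounded acyclic complexes making up $\sfC^{\mrac}_{\mrlp}(\grproj A)$, since the argument really only uses the triangle \eqref{kurumimochi} together with boundedness of the individual layers $\frkp_{i}P$. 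Beyond that it is all formal manipulation of thick hulls, the $A$- and $C$-dualities, and the decomposition functors $\frkp_{i}$.
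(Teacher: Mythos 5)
Your proof is correct and follows essentially the same route as the paper: part (1) is the identical application of Corollary \ref{black thunder}.(1) to get $\frkp_{0}P\cong\frkp_{-\alpha}P\lotimes_{\Lambda}C^{\alpha}[\alpha]\in\thick C^{\alpha}$, and part (2) rests on the same $A$-duality/$C$-duality comparison from Section \ref{dual dual}, with your use of Theorem \ref{dual_prop2}(1)$\Rightarrow$(3) being equivalent to the paper's direct invocation of $M^{*}\cong M^{\star}(-1)$ and $\sfT^{\star}=\sfT_{\Lambda^{\op}}$. Your explicit appeal to Proposition \ref{lp_A-dual} for the biduality $M\cong M^{**}$, and your remark that the $C^{\alpha}$-periodicity of Corollary \ref{black thunder} persists for unbounded acyclic complexes because only boundedness of the layers $\frkp_{i}P$ is used, make precise two points the paper leaves tacit.
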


\begin{proof}
(1) 
Let $P$ be an object of $\sfC^{\mrac}_{\mrlp}(\grproj A)$.
Then, by Corollary \ref{black thunder}, we have isomorphisms 
$
\frkp_{0} P \cong \frkp_{-\alpha} P \lotimes_{\Lambda} C^{\alpha} [\alpha]. 
$
Thus $\frkp_{0} P \in \thick C^{\alpha} = \sfT$. 

(2) Let  $M$ be an object of $\sfT$. 
It is clear that $M$ belongs to  $\sfD^{\mrb}(\mod^{\geq 0} A)$. 
We set $\sfT_{\Lambda^{\op}} :=\thick_{\Lambda^{\op}} C^{\alpha}$. 
Then, 
$\sfT^{\star} = \sfT_{\Lambda^{\op}}$
 by Theorem \ref{Adaching theorem 0} and the left version of  Lemma \ref{T lemma}. 
In particular $M^{\star} $ belongs to $\sfT_{\Lambda^{\op}} \subset \sfD^{\mrb}(\mod \Lambda^{\op})$.
It follows from the isomorphism $M^{*} = M^{\star}(-1)$ obtained in Theorem \ref{dual_prop2} that 
$M^{*} \in \sfD^{\mrb}(\mod^{>0} A^{\op})$. 
This shows that $M$ belongs to $\sfO$. 
\end{proof}

Now we can state the main theorem of this section. 

\begin{theorem}\label{Adaching theorem 2} 
\begin{enumerate}[(1)]
\item  
The following diagram is commutative up to natural isomorphism.
\[
\begin{xymatrix}{
\sfK_{\mrlp}^{\mrac} (\grproj A) \ar[rr]^{\qquad \underline{\tuZ}^{0}} \ar[d]_{\frkp_{0} } & & 
\stablpCM A \ar[d]^{ \beta} \\
\sfT  \ar[r]_{\mathsf{in}|_{\sfT}} & \sfO \ar[r]_{\pi|_{\sfO}\ \ \ }  & \lpSing A 
}
\end{xymatrix}
\]

\item
All the functors appeared in the above diagram  are equivalences.

\end{enumerate}
\end{theorem}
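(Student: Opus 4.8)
### Proof plan

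The plan is to build the commutative diagram from the three already-established equivalences of \eqref{equivalences} and Theorem \ref{lp Orlov theorem}, then verify that the newly added functors $\frkp_0$ and $\mathsf{in}|_{\sfT}$ are also equivalences. The outer square with the equivalences $\underline{\tuZ}^0$, $\beta$ and $\pi|_{\sfO}$ is known to commute (this is the restriction to locally perfect subcategories of the diagram mentioned in the introduction), so the content of (1) is that $\mathsf{in}\circ\frkp_0$ followed by $\pi$ agrees with $\beta\circ\underline{\tuZ}^0$, i.e.\ that for an acyclic locally perfect $P\in\sfC^{\mrac}_{\mrlp}(\grproj A)$ we have a natural isomorphism $\pi(\frkp_0 P)\cong \pi(\tuZ^0(P))$ in $\lpSing A$, where on the left $\frkp_0 P\in\sfT\subset\sfD^{\mrb}(\mod\Lambda)$ is viewed inside $\sfD^{\mrb}(\grmod A)$ via $\mathsf{in}$. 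First I would write down the standard soft truncation / brutal truncation exact triangle of $P$ (using that $P$ is acyclic so $\tuZ^0(P)$ is the kernel of $P^0\to P^1$), observe that the canonical map relating $\tuZ^0(P)$ to $\frkp_0 P$ has cone in $\sfK^{\mrb}(\grproj A)$ — this uses the decomposition $P\cong\bigoplus_i\frkt_i P$ of Section \ref{dgpm} together with the fact that $\frkt_i P$ is a genuine (unbounded-in-cohomology but termwise bounded) complex of graded projectives, and that the part in strictly positive internal degree, once acyclic, becomes zero in the singularity category — and hence the two images agree after applying $\pi$. Naturality in $P$ is clear from naturality of $\frkt_i$ and of the truncation triangles.

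Next, for (2), it suffices — since $\underline{\tuZ}^0$ is an equivalence by Theorem \ref{Buchweitz theorem} (restricted to locally perfect complexes, which is noted right after the definition of $\grCM_{\mrlp}A$), $\beta$ is an equivalence by the locally perfect version of Theorem \ref{Buchweitz-Happel theorem} established earlier, and $\pi|_{\sfO}$ is an equivalence by Theorem \ref{lp Orlov theorem} — to show that $\frkp_0:\sfK^{\mrac}_{\mrlp}(\grproj A)\to\sfT$ is an equivalence; then $\mathsf{in}|_{\sfT}$ is an equivalence by two-out-of-three (or rather three-out-of-four) in the commutative diagram. For $\frkp_0$ I would argue as follows. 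Faithfulness and fullness: using that $P\cong\bigoplus_i\frkt_i P$ and the exact triangles \eqref{kurumimochi} identifying $\frkp_i P\cong\frkp_0 P\lotimes_\Lambda C^i[i]$ for $i\ge 0$ (and $\frkp_i P=0$ for $i<$ the bottom, via acyclicity using Corollary \ref{black thunder}, which applies because of local perfectness), one reconstructs $\sfK$-level $\Hom$'s between acyclic locally perfect complexes from $\Hom$'s between their degree-zero parts; here the asid condition enters through Proposition \ref{isomorphisms proposition}.3, exactly as it was used in the proof of Lemma \ref{T proposition}(1) to show $\cT|_{\sfT}$ is fully faithful. Essential surjectivity: given $M\in\sfT=\thick C^\alpha$, the implication (3)$\Rightarrow$(1) reversed, i.e.\ Theorem \ref{dual_prop2}, tells us $M^*\cong M^\star(-1)$ is concentrated appropriately, and the construction in the proof of Theorem \ref{dual_prop2} produces from a projective resolution of $M^*$ over $A^{\op}$ a complex $P\in\sfC(\grproj A)$ with $\frkp_0 P\cong M$ (up to shift); one then splices it, as in the proof of Lemma \ref{lp=finproj}, with a projective resolution of $M$ to obtain an \emph{acyclic} locally perfect complex whose degree-zero part is still $M$, giving the preimage.

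The main obstacle I expect is the essential surjectivity of $\frkp_0$ together with the bookkeeping needed to make the preimage genuinely acyclic \emph{and} locally perfect \emph{and} termwise finitely generated — in other words, honestly running the splicing argument of Lemma \ref{lp=finproj} in the graded locally perfect setting while keeping track of internal degrees so that Lemma \ref{fg lemma}'s criterion ($\frkp_i$ finitely generated and vanishing for $|i|\gg 0$) holds. This is where $A$ being finitely graded and $M$ having bounded cohomology are used (as in the proof of Proposition \ref{lp_resol}, via Corollary \ref{black thunder}.(1), to force $\frkp_i P=0$ for $i\gg 0$), and where one must invoke \cite[Lemma 4.7]{adasore} to replace $P$ by a homotopic complex with prescribed $\frkp_i$. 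The commutativity in (1) is comparatively routine once the decomposition $P\cong\bigoplus_i\frkt_i P$ and the triangles \eqref{kurumimochi} are in hand; and having (1) with three of the four functors known to be equivalences immediately upgrades the remaining two to equivalences, completing (2).
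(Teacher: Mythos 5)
Your outline of part (1) and of the proof that $\frkp_{0}$ is an equivalence is essentially the paper's argument: the commutativity is established by a zig-zag of maps between $\tuZ^{0}(P)$, brutal truncations $\sigma^{\leq n}P$, $\frkt_{\geq 0}P$ and $\frkp_{0}P$ whose cones lie in $\sfK^{\mrb}(\grproj A)$ (using Lemma \ref{boundedness lemma} and minimal-amplitude representatives), and essential surjectivity and fullness of $\frkp_{0}$ are obtained by gluing the family $Q_{i}$ representing $(-\lotimes_{\Lambda}C[1])^{i}(M)$ via \cite[Lemma 4.6, Section 4.1.3]{adasore}, with Lemma \ref{fg lemma} and Lemma \ref{boundedness lemma}.(3) handling exactly the finite-generation bookkeeping you flag. (The paper gets faithfulness of $\frkp_{0}$ for free from the commutative square, since $\beta\circ\underline{\tuZ}^{0}$ is an equivalence; your route through Proposition \ref{isomorphisms proposition}.(3) would also work but is heavier.)

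However, there is a genuine circularity in your deduction of part (2). You invoke Theorem \ref{lp Orlov theorem} to know that $\pi|_{\sfO}$ is an equivalence and then obtain $\mathsf{in}|_{\sfT}$ by two-out-of-three. But in this paper Theorem \ref{lp Orlov theorem} is \emph{not} proved independently: its proof (end of Section \ref{T=stabgrCM}) reduces to the trivial extension case and then cites Theorem \ref{Adaching theorem 2}. So you cannot use it here without supplying a separate Orlov-style proof, which you do not. Note also that knowing the composite $\pi|_{\sfO}\circ\mathsf{in}|_{\sfT}$ is an equivalence and that $\mathsf{in}|_{\sfT}$ is fully faithful does not by itself force either factor to be an equivalence. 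The missing step is a direct proof that $\mathsf{in}|_{\sfT}:\sfT\to\sfO$ is essentially surjective; the paper does this with Theorem \ref{dual_prop2}: since $A_{i}=0$ for $i\neq 0,1$, one has $\sfO\subset\sfD^{\mrb}_{\mrlp}(\mod^{\geq 0}A)\cap\sfD^{\mrb}_{\mrlp}(\mod^{\leq 0}A)$, which is identified with $\sfK^{\mrb}(\proj\Lambda)$ via $\mathsf{in}$, and any $M\in\sfO$ then satisfies $(M^{*})_{\leq 0}=0$, hence lies in $\sfT$ by Theorem \ref{dual_prop2}. Only after this does $\pi|_{\sfO}$ inherit the equivalence property from the rest of the diagram. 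You should replace your appeal to Theorem \ref{lp Orlov theorem} by this argument.
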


We use the upper bound $\ub Q$ which was used in the proof of Proposition \ref{lp_resol}.  
Let $ Q \in \sfC(\proj \Lambda)$ be a complex which is  homotopic to a bounded complex $Q' \in \sfC^{\mrb}(\proj \Lambda)$. 
We define  the lower bound $\lb Q$ of $Q$ by $\lb Q  :=   -\ub ( Q^{\rightvee} )$.  
A complex $Q$  is said to be {\it of minimal amplitude}
if  $Q^{n} = 0$ for $n <\lb Q$ or $n  >\ub Q$. 
We remark that 
a complex $ Q \in \sfC(\proj \Lambda)$ which is  homotopic to a bounded complex $Q' \in \sfC^{\mrb}(\proj \Lambda)$ 
is homotopic to a complex of minimal amplitude.

\begin{lemma}\label{boundedness lemma} 
Let
 $P \in \sfC^{\ac}(\GrProj A)$. 
Assume that  for $i \in \ZZ$, 
the complex  $\frkp_{i} P$ belongs to $\sfC(\proj \Lambda)$ 
and is homotopic to an object of $\sfC^{\mrb}(\proj \Lambda)$ for all $i \in \ZZ$. 
 Then the following assertions hold. 
\begin{enumerate}[(1)]
\item  $\ub (\frkp_{i} P)  \leq \ub(\frkp_{0} P)  - i $ for $i \geq 1$.
\item $\lb(\frkp_{-i} P)  \geq  \lb(\frkp_{0} P) +i $ for $i \geq  1$. 
\item Assume moreover that  $\frkp_{i} P$ is of minimal amplitude for $i \in \ZZ$. 
Then $P$ belongs to $ \sfC_{\mrlp}^{\mrac}(\grproj A)$. 
\end{enumerate}
\end{lemma}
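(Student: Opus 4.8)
The plan is to exploit the exact triangle \eqref{kurumimochi} from Lemma \ref{adasore lemma 4.2} together with the vanishing of cohomology of $P$ (since $P$ is acyclic, $\tuH(P)=0$, so in particular $\tuH(P)_i=0$ for all $i$) to control the upper and lower bounds of the complexes $\frkp_i P$ as $i$ varies. For part (1), I would argue by induction on $i\geq 1$. Since $P$ is acyclic, $\tuH(P)_i = 0$ for all $i$, so by the last assertion of Lemma \ref{adasore lemma 4.2} the morphism $\sfq_i : \frkp_i P \to \frkp_{i-1}P\otimes_\Lambda C[1]$ is an isomorphism in $\sfD(\Mod\Lambda)$. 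Now $\frkp_{i-1}P$ is homotopic to a bounded complex of projectives, hence $\frkp_{i-1}P\otimes_\Lambda C \simeq \frkp_{i-1}P\lotimes_\Lambda C$; since $C_\Lambda$ is a module (concentrated in degree $0$), tensoring with $C$ cannot increase the upper bound, i.e.\ $\ub(\frkp_{i-1}P\lotimes_\Lambda C)\leq \ub(\frkp_{i-1}P)$. Combined with the shift $[1]$ in the triangle and the isomorphism $\sfq_i$, this gives $\ub(\frkp_i P) = \ub(\frkp_{i-1}P\otimes_\Lambda C) - 1 \leq \ub(\frkp_{i-1}P) - 1$, and the induction closes out with $\ub(\frkp_i P)\leq \ub(\frkp_0 P) - i$.

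For part (2), I would apply part (1) to the complex $P^{\rightvee} := \grHom_A^\bullet(P,A) \in \sfC^{\ac}(\GrProj A^{\op})$. Using the identification $\frkp_i\grHom_A(P,A)\cong \Hom_\Lambda(\frkp_{-i}P,\Lambda)$ recalled before Lemma \ref{fg lemma}, we have $\frkp_i(P^{\rightvee}) \simeq (\frkp_{-i}P)^{\rightvee}$, and $P^{\rightvee}$ is again acyclic (the $A$-dual of an acyclic complex of projectives is acyclic, which can be checked termwise or via the fact that acyclic complexes of projectives are totally acyclic here since $A$ is Noetherian—actually acyclicity of the dual of an acyclic complex of projectives holds because each term is projective, hence $\Hom_A(-,A)$-acyclic in the relevant sense; more carefully, $\tuH(P^{\rightvee})_j = 0$ follows from $\sfq$-isomorphism being preserved under $(-)^{\rightvee}$). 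Granting this, part (1) applied to $P^{\rightvee}$ gives $\ub(\frkp_i(P^{\rightvee}))\leq \ub(\frkp_0(P^{\rightvee})) - i$ for $i\geq 1$, i.e.\ $\ub((\frkp_{-i}P)^{\rightvee})\leq \ub((\frkp_0 P)^{\rightvee}) - i$, which upon applying the definition $\lb Q = -\ub(Q^{\rightvee})$ translates to $\lb(\frkp_{-i}P)\geq \lb(\frkp_0 P) + i$.

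For part (3), assuming each $\frkp_i P$ is of minimal amplitude, parts (1) and (2) show that $\ub(\frkp_i P)\to -\infty$ as $i\to +\infty$ and $\lb(\frkp_{-i}P)\to+\infty$ as $i\to+\infty$; since a complex of minimal amplitude vanishes outside $[\lb,\ub]$, this forces $\frkp_i P = 0$ for $|i|\gg 0$. Moreover each $\frkp_i P$ is a bounded complex of finitely generated projective $\Lambda$-modules (by the minimal amplitude hypothesis together with the homotopy-boundedness and the fact that the terms of $P$ are finitely generated over $A$, hence their images under $\frkp_i$ are finitely generated over $\Lambda$). Applying Lemma \ref{fg lemma} termwise then shows $P^n$ is a finitely generated graded projective $A$-module for each $n$, so $P\in\sfC_{\mrlp}^{\mrac}(\grproj A)$ by the definition of locally perfect. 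The main obstacle I anticipate is the bookkeeping in part (2): one must be careful that $P^{\rightvee}$ is genuinely acyclic and that the dual complexes $\frkp_i(P^{\rightvee})$ remain homotopic to bounded complexes, so that part (1) is legitimately applicable; this is where the hypotheses that $C$ has finite projective dimension on both sides and $A$ is IG (implicitly in force throughout Section \ref{around}) are really used, via Proposition \ref{lp_A-dual} or a direct termwise argument.
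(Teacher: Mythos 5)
Your proposal is correct and follows essentially the same route as the paper: part (1) is Corollary \ref{black thunder}.(1) (which you re-derive from the triangle \eqref{kurumimochi}) plus the observation that $-\lotimes_{\Lambda}C$ does not raise the upper bound, part (2) is obtained by applying (1) to the $A$-dual complex via $\frkp_{i}(P^{*})\cong(\frkp_{-i}P)^{\rightvee}$, and part (3) combines (1), (2), minimal amplitude and Lemma \ref{fg lemma}. The only point to state cleanly is the acyclicity of $P^{*}$: it is \emph{not} automatic from projectivity of the terms, but is precisely the total-acyclicity property of acyclic complexes of projectives over the IG-algebra $A$ — which you do eventually identify as the hypothesis being used, matching the paper's one-line justification.
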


\begin{proof} 
(1)  
follows from Corollary \ref{black thunder}.(1). 

(2)  
Since $A$ is IG, the $A$-dual complex $P^{*} $ is acyclic. 
Since $\frkp_{i} (P^{*}) \cong (\frkp_{-i}P)^{\rightvee}$, 
the complex $P^{*} \in \sfC^{\mrac}(\GrProj A^{\op})$ 
satisfies the assumption of (1). 
Thus by (1)
for $i \geq   1$, 
we have the inequality $\ub \frkp_{i}(P^{*}) \leq \ub \frkp_{0} (P^{*}) - i$, 
from which we deduce the inequality of (2). 

(3)   
For a fixed integer $n$,  
we have $(\frkp_{i} P)^{n} = 0$ for $|i| \gg 0$ by (1)  and (2). 
Since  each term $P^{n}$ is  finitely generated, 
hence   
$P$ belongs to $\sfC^{\mrac}_{\mrlp}(\grproj A)$ by Lemma \ref{fg lemma}.   
\end{proof}

Let $n$ be an integer. 
We denote  by $\sigma^{\leq n} P$ the brutal truncation of $P \in \sfC(\GrProj A)$. 
\[
\sigma^{\leq n} P : \cdots  \to P^{n-2} \to P^{n-1} \to P^{n} \to 0 \to 0 \to \cdots .
\]

\begin{proof}[Proof of Theorem \ref{Adaching theorem 2}] 
(1)  
Let $P$ be an object of $\sfK_{\mrlp}^{\mrac}(\grproj A)$. 
By Lemma \cite[Lemma 4.7]{adasore} and  Lemma \ref{boundedness lemma}.(3),  
we may assume that  $P$ belongs to $\sfC_{\mrlp}^{\mrac}(\grproj A)$ 
such that $\frkp_{i} P $ is of minimal amplitude for $i \in\ZZ$.  
We construct an isomorphism $\beta \circ \underline{\tuZ}^{0}(P) \cong \varpi \circ \frkp_{0} (P)$.
The canonical map $\sigma^{\leq 0} P \to \tuZ^{0}(P)$ is an isomorphism in $\sfD^{\mrb}(\grmod A)$. 
Let $n >\max\{ \ub (\frkp_{0} P), 0\}$ be an integer.  
Since the kernel $\Ker \mathsf{cs}$ 
of the  canonical surjection  $\mathsf{cs} :\sigma^{\leq n} P \to \sigma^{\leq 0} P$ 
is of the following form 
\[
\Ker \mathsf{cs}: 
\cdots \to 0 \to 0 \to P^{1} \to P^{2} \to \cdots \to P^{n-1} \to P^{n} \to 0 \to 0 \to \cdots, 
\]
it belongs to $\sfC^{\mrb}(\grproj A)$.  
Therefore the morphism $\mathsf{cs}$
becomes an isomorphism in $\lpSing A$.   
Since $\sigma^{\leq n} P $ is bounded above, 
the complex  $\frkt_{<0} \sigma^{\leq n} P$ belongs to $\sfC^{\mrb}(\grproj A)$ 
by Lemma \ref{boundedness lemma}.(2). 
Therefore  
the canonical map $\sigma^{\leq n} P \to \frkt_{\geq 0} \sigma^{\leq n} P$ become an isomorphism 
in $\lpSing A$. 
By Lemma \ref{boundedness lemma}.(1)  
and the definition of $n$,  
the canonical map $\frkt_{\geq 0} P \to \frkt_{ \geq 0}\sigma^{\leq n} P$ is 
an isomorphism in $\sfC(\grproj A)$.  
 %Then the canonical map $\frkp_{0} P \to \frkp_{0} \sigma^{\leq a} P$ is also isomorphism. 
 Since $P$ is acyclic, $\frkt_{\geq 0} P$ is acyclic in degree greater than 0, i.e., $\tuH(\frkt_{ \geq 0} P)_{> 0} = 0$. 
Thus the canonical map $\frkt_{\geq  0} P \to (\frkt_{\geq 0} P)_{\leq 0} = \frkp_{0} P$ is an isomorphism in $\sfD^{\mrb}(\grmod A)$. 
Combining all, we obtain the following diagram which give an isomorphism $\beta \circ \underline{\tuZ}^{0}(P) \cong \varpi \circ \frkp_{0} (P)$ in  $\lpSing A$. 
\[
\tuZ^{0}(P) \leftarrow \sigma^{\leq 0} 
P \leftarrow \sigma^{\leq n} P \rightarrow \frkt_{\geq 0} \sigma^{\leq n} P 
\leftarrow \frkt_{\geq  0} P \rightarrow \frkp_{0} P.
\]

It can be easily checked that 
this isomorphism does not depend on the choice of $n$ and is natural in $P$. 
Therefore, this diagram gives the desired natural isomorphism $\beta \circ \underline{\tuZ}^{0} \cong \varpi \circ \frkp_{0}$.

(2)  We have already proved that $\beta, \underline{\tuZ}^{0}$ are equivalence.

We prove that the functor $\frkp_{0}$ is an equivalence. 

Since the functors $\beta, \underline{\tuZ}^{0}$ gives equivalences, 
it follows from (1) that the functor $\frkp_{0}$ is faithful.  
Thus, 
it is enough to prove that 
$\frkp_{0}: \sfK^{\mrac}_{\mrlp}(\grproj A) \to \sfT$ is essentially surjective and full.  

We prove $\frkp_{0}$ is essentially surjective. 
Let $M \in \sfT$.  
It is enough to give  a way  to construct 
$P \in \sfK^{\mrac}_{\mrlp}(\grproj A)$ such that $\frkp_{0}P  = M$. 
We denote by $\cT'$ the autoequivalence  $ - \lotimes_{\Lambda} C[1]$ of $\sfT$. 
For $i \in \ZZ$, 
we take $Q_{i} \in \sfC^{\mrb}(\proj \Lambda)$ be a projective representative of $(\cT')^{i}(M)$ 
having minimal amplitude. 
Then there exists  a quasi-isomorphism $\sfq'_{i} : Q_{i}  \to Q_{i-1} \otimes_{\Lambda} C[1]$ in $\sfC(\mod \Lambda)$. 
By the argument of beginning of \cite[Section 4.1.3]{adasore}  
there is an object $P \in \sfC(\GrProj A)$  
such that $\frkp_{i}P  = Q_{i}$ in $\sfC(\proj \Lambda)$ 
and that the morphisms $\sfq'_{i}$ equals to $\sfq_{i}$ in the exact triangle 
(\ref{kurumimochi}). 
Thus the complex $P$ is acyclic by Lemma \ref{adasore lemma 4.2}. 
%and $\frkq_{i}(d_{P}) = \sfq_{i}$ where $d_{P}$ denotes the differentail of $P$. 
By Lemma \ref{boundedness lemma}.(3),    $P$ belongs to $ \sfC^{\mrac}_{\mrlp}(\grproj A)$.

We proceed a proof that  $\frkp_{0}$ is full. 
Let  $g: M \to N $ be a morphism in $\sfT$. 
 We construct  
 a morphism $f: P^{M} \to P^{N}$ 
such that $\frkp_{0}(f) = g$ 
where $P^{M}$ and $P^{N}$ are objects of $\sfC^{\mrac}_{\mrlp}(\grproj A)$ 
such that $\frkp_{0} P^{M} = M, \frkp_{0} P^{N} = N$  
which are constructed as in the previous paragraph. 
Let $Q^{M}_{i}$ and $Q^{N}_{i}$ be the objects of $\sfC^{\mrb}(\proj \Lambda)$ 
which are constructed as in the previous paragraph for $M$ and $N$.

For $i \in \ZZ$,  
we take $g_{i}: Q_{i}^{M} \to Q_{i}^{N}$ 
to be a representative of $(\cT')^{i}(g) : (\cT')^{i}( M )  \to (\cT')^{i}( N )$. 
First note that since the complex $Q^{M}_{i}$ is a bounded above complex of projective $\Lambda$-modules, 
we have an isomorphism  
$\Hom_{\sfK(\mod \Lambda)}(Q_{i}^{M}, Q_{i-1}^{N} \otimes_{\Lambda} C[1])
 \cong \Hom_{\sfD(\mod \Lambda)}(Q_{i}^{M}, Q_{i-1}^{N} \otimes_{\Lambda} C[1])$.  
%By the definitions of $\sfq_{i}^{M}, \sfq_{i}^{N}$ and $g_{i}$
For $i \in \ZZ$,  
the morphisms 
$(g_{i-1} \otimes C[1])  \circ \sfq^{M}_{i}$ and $\sfq^{N}_{i} \circ g_{i}$ 
from $Q^{M}_{i}$ to $Q^{N}_{i-1}\otimes_{\Lambda} C[1]$ in $\sfC(\mod \Lambda)$ 
represent the same morphism 
$(\cT')^{i}(g) : (\cT')^{i}(M) \to (\cT')^{i}(N)$ in $\sfD(\mod \Lambda)$. 
Therefore 
these two morphisms are homotopic. 
%Let  $h_{i}: Q_{i}^{M} \to Q^{N}_{i-1} \otimes C$  be a homotopy between them. 
%Then we check the equation 
%\[(\partial_{i}^{N} \otimes C) \circ h_{i} - h_{i} \circ \partial_{i}^{M}  
 %+ \sfq^{N}_{i}\circ g_{i} -(g_{i-1} \otimes C ) \circ\sfq^{M}_{i}= 0.\]
Thus by \cite[Lemma 4.6]{adasore},  
%the collection $(g_{i},h_{i})_{i \in\ZZ}$ gives 
there exists a morphism $f: P^{M} \to P^{N}$ in $\sfC(\grproj A)$
such that $\frkp_{i}(f) = g_{i}$. In particular  we have $\frkp_{0}(f) = g$. 
This completes the proof that $\frkp_{0}$ is an equivalence.

We prove that the functor $\mathsf{in}|_{\sfT}$ is equivalence. 

Since $\mathsf{in}$ is  fully faithful, so is $\mathsf{in}|_{\sfT}$. 
It only remain to prove that $\mathsf{in}|_{\sfT}:\sfT_{\Lambda} \to \mathsf{O}$ is essentially surjective. 
We remark that since $A_{i} = 0 $ for $i \neq 0, 1$, 
we have $\sfD^{\mrb}_{\mrlp}(\mod^{\geq 1} A^{\op})^{*} \subset \sfD^{\mrb}_{\mrlp}(\mod^{\leq 0} A^{\op})$. 
Therefore $\mathsf{O}$ is contained in  the intersection 
$\sfD^{\mrb}_{\mrlp}(\mod^{\geq 0} A) \cap \sfD^{\mrb}_{\mrlp}(\mod^{\leq 0} A)$, 
which is identified with $\sfK^{\mrb}(\proj \Lambda )  $ via $\mathsf{in}$. 
Let $M$ be an object in $\mathsf{O}$. 
We regard  $M$ as an object of $\sfK^{\mrb}(\proj \Lambda)$. 
Then since $M^{*} \in \sfD^{\mrb}(\mod^{\geq 1}A^{\op})$, we have   $(M^{*})_{\leq 0} = 0$. 
Therefore by Theorem \ref{dual_prop2}, $M$ belongs to $\sfT$. 
This completes the proof that $\mathsf{in}|_{\sfT}$ is an equivalence.   

Finally, $\pi|\sfO$ is an equivalence, since the other functors in the diagram of (1)  are equivalences. 
\end{proof}

As a corollary, we obtain a description 
 of the image of the Happel's embedding functor 
in the case where $\Lambda$ is IG, which is obtained by Chen-Zhang \cite[Theorem 3.1]{CZ}.

\begin{corollary}
Let $\Lambda$ is IG finite dimensional algebra 
and $T(\Lambda) := \Lambda \oplus \tuD(\Lambda)$ the trivial extension algebra. 
Then the Happel embedding functor $\varpi: \sfD^{\mrb}(\mod \Lambda) \to \stabgrmod T(\Lambda)$ 
induces an equivalence 
\[
\varpi : \sfK^{\mrb}(\proj \Lambda) \xrightarrow{\sim } \stabgrmod_{\mrlp} T(\Lambda).
\]
\end{corollary}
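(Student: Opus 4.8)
The plan is to read this statement off Theorem \ref{Adaching theorem 2} applied to the bimodule $C=\tuD(\Lambda)$, after identifying the asid subcategory of $\tuD(\Lambda)$ with all of $\sfK^{\mrb}(\proj\Lambda)$. Throughout, ``$\varpi$'' denotes Happel's classical functor $\sfD^{\mrb}(\mod\Lambda)\to\stabgrmod T(\Lambda)$, which is the functor $\cH=\beta^{-1}\circ\varpi$ of Section \ref{The functor varpi} attached to $A=T(\Lambda)$; it takes values in $\stabgrmod T(\Lambda)$ because $T(\Lambda)$ is self-injective.

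First I would record the setup. The trivial extension $T(\Lambda)=\Lambda\oplus\tuD(\Lambda)$ is a symmetric algebra, in particular self-injective, hence IG of self-injective dimension $0$; consequently $\grCM T(\Lambda)=\grmod T(\Lambda)$ and $\stabgrCM_{\mrlp} T(\Lambda)=\stabgrmod_{\mrlp} T(\Lambda)$. Since $\Lambda$ is finite dimensional and IG, the bimodule $C:=\tuD(\Lambda)$ is finitely generated on both sides, is injective as a one-sided module, and therefore has finite projective dimension on each side by Proposition \ref{Iwanaga}; likewise $T(\Lambda)=\Lambda\oplus C$ has finite projective dimension as a left and as a right $\Lambda$-module. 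Thus $\Lambda$ and $C$ meet the running hypotheses of Section \ref{T=stabgrCM}, so with $\alpha:=\alpha_{r}=\alpha_{\ell}$ and $\sfT:=\thick C^{\alpha}$, Theorem \ref{Adaching theorem 2} yields an equivalence $\cH|_{\sfT}\colon\sfT\xrightarrow{\sim}\stabgrCM_{\mrlp} T(\Lambda)$ fitting into the commutative diagram of that theorem.

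It then remains only to show $\sfT=\sfK^{\mrb}(\proj\Lambda)$, equivalently $\Ker\varpi=0$. The key point is that on $\sfK^{\mrb}(\proj\Lambda)$ the endofunctor $\cT=-\lotimes_{\Lambda}\tuD(\Lambda)$ coincides with the Nakayama functor $\nu$ (both are right exact, commute with direct sums, and agree on $\Lambda$). Now $\nu$ sends $\proj\Lambda$ to $\inj\Lambda$ and is an additive equivalence, hence induces an equivalence $\sfK^{\mrb}(\proj\Lambda)\xrightarrow{\sim}\sfK^{\mrb}(\inj\Lambda)$; and because $\Lambda$ is IG we have $\sfK^{\mrb}(\inj\Lambda)=\sfK^{\mrb}(\proj\Lambda)$ inside $\sfD^{\mrb}(\mod\Lambda)$ (by Proposition \ref{Iwanaga}, $\Lambda$ has finite injective dimension and $\tuD(\Lambda)$ finite projective dimension, so $\proj\Lambda$ and $\inj\Lambda$ generate the same thick subcategory). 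Hence $\cT$ is an autoequivalence of $\sfK^{\mrb}(\proj\Lambda)$, so $\Ker(-\lotimes_{\Lambda}C^{a})|_{\sfK}=0$ for every $a\geq 0$, and Theorem \ref{Adaching theorem 1}.(3) gives $\sfT^{\perp}=\Ker\varpi=0$. The semiorthogonal decomposition $\sfK^{\mrb}(\proj\Lambda)=\sfT\perp\sfT^{\perp}$ of Lemma \ref{T proposition}.(2) then forces $\sfT=\sfK^{\mrb}(\proj\Lambda)$ (and $\alpha=0$). Alternatively, one can obtain $\alpha=0$ directly from Proposition \ref{isomorphisms proposition}, using that $\tuD(\Lambda)$ has no higher self-extensions on one side and that $\lambda_{r}\colon\Lambda\to\End_{\Lambda}(\tuD(\Lambda))$ is the canonical isomorphism coming from the duality $\tuD$.

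Substituting $\sfT=\sfK^{\mrb}(\proj\Lambda)$ into Theorem \ref{Adaching theorem 2} completes the argument: $\cH=\beta^{-1}\circ\varpi$ restricts to an equivalence $\sfK^{\mrb}(\proj\Lambda)\xrightarrow{\sim}\stabgrCM_{\mrlp} T(\Lambda)=\stabgrmod_{\mrlp} T(\Lambda)$, which is the assertion. I expect the only genuine difficulty to be the identification $\sfT=\sfK^{\mrb}(\proj\Lambda)$: it rests on recognizing $-\lotimes_{\Lambda}\tuD(\Lambda)$ as the Nakayama functor and on using that $\Lambda$ is IG to conclude it is invertible on the perfect derived category; everything else is a direct specialization of Theorem \ref{Adaching theorem 2}.
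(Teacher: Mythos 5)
Your argument is correct and is exactly the specialization the paper intends: the corollary is stated as an immediate consequence of Theorem \ref{Adaching theorem 2}, and the only substantive point is the identification $\sfT=\sfK^{\mrb}(\proj\Lambda)$ (equivalently $\alpha=0$), which you establish correctly both via the Nakayama functor being an autoequivalence of $\sfK^{\mrb}(\proj\Lambda)$ for IG $\Lambda$ and via the direct computation $\RHom_{\Lambda}(\tuD(\Lambda),\tuD(\Lambda))\cong\Lambda$. No gaps.
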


We give our proof of Orlov's equivalence. 

\begin{proof}[Proof of Theorem \ref{lp Orlov theorem}] 
From the discussion in Section \ref{quasi-Veronese}, 
we see that the Orlov subcategory $\sfO$ is invariant under quasi-Veronese algebra construction. 
\[
\sfD^{\mrb}_{\mrlp}(\mod^{\geq 0} A) \cap \sfD^{\mrb}_{\mrlp}(\mod^{> 0} A^{\op})^* 
\xrightarrow{\sfqv \ \sim} 
\sfD^{\mrb}_{\mrlp}(\mod^{\geq 0} A^{[\ell]}) \cap \sfD^{\mrb}_{\mrlp}(\mod^{> 0} (A^{[\ell]})^{\op})^*.
\]
Therefore,  we may assume that $A = \Lambda \oplus C$.  
In that case $\pi|_{\sfO}$ gives an equivalence by Theorem \ref{Adaching theorem 2}. 
\end{proof}

\subsection{A recollement involving the canonical functor $\varpi|_{\sfK}$}\label{Locally perfect complexes:Applications}

Combining results which we have obtained, 
we see that 
the canonical functor $\varpi|_{\sfK}: \sfK^{\mrb}(\proj \Lambda ) \to \grSing_{\mrlp} A$ 
fits into a recollement as follows. 
We also obtain a description  of $\Hom$-space of the singularity category $\grSing_{\mrlp} A$.

\begin{theorem}\label{recollement theorem}
Let $\Lambda$ be an IG-algebra and $C$ a $\Lambda$-$\Lambda$-bimodule 
 such that  the trivial extension algebra $A = \Lambda \oplus C$ is IG. 
Then the following assertions hold.

\begin{enumerate}[(1)]
\item 

There exists the recollement of the following form 
\[
\begin{xymatrix}{
\grSing_{\mrlp} A \ar[rr] 
&& \sfK^{\mrb}(\proj \Lambda) \ar[rr] \ar@/^4mm/[ll]^{\varpi|_{\sfK}} \ar@/_4mm/[ll]
&& \Ker \varpi \ar@/^4mm/[ll]^{\mathsf{in}}  \ar@/_4mm/[ll]
}\end{xymatrix}
\]
where  $\mathsf{in}$ is a canonical embedding. 

\item 
 For $M, N \in \sfK^{\mrb}(\proj \Lambda)$, we have 
\[
\Hom_{\grSing A}(\varpi M, \varpi N) \cong \Hom_{\sfD^{\mrb}(\mod \Lambda)}(\RHom(C^{\alpha}, M), \RHom(C^{\alpha} ,N) ).
\]
\end{enumerate}
\end{theorem}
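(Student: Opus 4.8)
The plan is to read off both parts of Theorem~\ref{recollement theorem} from the semi-orthogonal decomposition $\sfK^{\mrb}(\proj\Lambda)=\sfT\perp\Ker\varpi$ established in Lemma~\ref{T proposition}, together with the identification $\sfT\simeq\stabgrCM_{\mrlp}A\simeq\lpSing A=\grSing_{\mrlp}A$ of Theorem~\ref{Adaching theorem 2} and Theorem~\ref{lp Orlov theorem}. Concretely, by Theorem~\ref{Adaching theorem 1} we have $\sfT=\thick C^{\alpha}$, $\sfT^{\perp}=\Ker\varpi$, and by Lemma~\ref{right admissible lemma} (or rather its bilateral version recorded in Section~\ref{Recollection of triangulated categories}) an admissible subcategory sits inside a recollement. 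So the first step is to recall that $\sfT$ is admissible in $\sfK^{\mrb}(\proj\Lambda)$ — right admissibility is Lemma~\ref{T proposition}(1)--(2), left admissibility follows from the $\Lambda$-duality argument used in the proof of the implication (1)$\Rightarrow$(2) in Theorem~\ref{Adaching theorem 0} — and then invoke the discussion around \eqref{201803061422}: the recollement attached to the admissible $\sfT\subset\sfK^{\mrb}(\proj\Lambda)$ with $\sfT^{\perp}=\Ker\varpi$ reads
\[
\begin{xymatrix}{
\sfT \ar[rr] && \sfK^{\mrb}(\proj\Lambda) \ar@/^4mm/[ll]^{\tau} \ar@/_4mm/[ll] \ar[rr] && \sfK^{\mrb}(\proj\Lambda)/\sfT\simeq\Ker\varpi \ar@/^4mm/[ll]^{\mathsf{in}} \ar@/_4mm/[ll].
}\end{xymatrix}
\]

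The second step is to replace each of the two outer terms by an equivalent category so that the middle inclusion becomes the asserted one. On the left, the restriction $\varpi|_{\sfK^{\mrb}}$ factors as $\sfK^{\mrb}(\proj\Lambda)\to\sfT\xrightarrow{\sim}\grSing_{\mrlp}A$; more precisely, from Theorem~\ref{Adaching theorem 2}(1) the composite $\pi|_{\sfO}\circ\mathsf{in}|_{\sfT}$ is the equivalence $\sfT\xrightarrow{\sim}\lpSing A$, and since $\varpi=\pi\circ\mathsf{in}$ and $\mathsf{in}|_{\sfT}$ lands in $\sfO$ by the lemma preceding Theorem~\ref{Adaching theorem 2}, the functor $\varpi|_{\sfK^{\mrb}}$ is precisely the composite of the quotient $\sfK^{\mrb}(\proj\Lambda)\to\sfK^{\mrb}(\proj\Lambda)/\sfT^{\perp}$ (equivalently the right adjoint $\tau$ up to the equivalence $\sfT\simeq\sfD/\sfT^{\perp}$ of Section~\ref{Recollection of triangulated categories}) followed by $\sfT\xrightarrow{\sim}\grSing_{\mrlp}A$. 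Thus, transporting the recollement above along this equivalence on the left-hand term turns the three functors into $\varpi|_{\sfK}$ and its two adjoints, and the canonical inclusion $\mathsf{in}:\Ker\varpi\hookrightarrow\sfK^{\mrb}(\proj\Lambda)$ on the right, which is assertion (1). One should double-check that the left adjoint $\tau$ of the inclusion $\sfT\hookrightarrow\sfK^{\mrb}(\proj\Lambda)$ is indeed given by $\RHom_{\Lambda}(C^{\alpha},-)\lotimes_{\Lambda}C^{\alpha}$ — this is Corollary~\ref{right adjoint corollary} — and that it corresponds under $\sfT\simeq\grSing_{\mrlp}A$ to one of the adjoints of $\varpi|_{\sfK}$.

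For part (2), the cleanest route is Lemma~\ref{right adjoint lemma}: for $d,d'$ in a triangulated category with a semi-orthogonal decomposition $\sfD=\sfE\perp\sfF$ one has $\Hom_{\sfD}(\tau d,d')\cong\Hom_{\sfD/\sfF}(\mathsf{qt}_{\sfF}d,\mathsf{qt}_{\sfF}d')$. Apply this with $\sfD=\sfK^{\mrb}(\proj\Lambda)$, $\sfE=\sfT$, $\sfF=\Ker\varpi$: since $\mathsf{qt}_{\sfF}$ composed with the equivalence $\sfK^{\mrb}(\proj\Lambda)/\Ker\varpi\xrightarrow{\sim}\grSing_{\mrlp}A$ is $\varpi|_{\sfK}$, we get $\Hom_{\grSing A}(\varpi M,\varpi N)\cong\Hom_{\sfK^{\mrb}(\proj\Lambda)}(\tau M, N)$ for $M,N\in\sfK^{\mrb}(\proj\Lambda)$ (here $\Hom_{\grSing A}$ may be written in place of $\Hom_{\grSing_{\mrlp}A}$ because $\grSing_{\mrlp}A$ is a full subcategory of $\grSing A$). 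Now substitute $\tau M=\RHom_{\Lambda}(C^{\alpha},M)\lotimes_{\Lambda}C^{\alpha}$ from Corollary~\ref{right adjoint corollary} and use the adjunction $-\lotimes_{\Lambda}C^{\alpha}\dashv\RHom_{\Lambda}(C^{\alpha},-)$ to move $C^{\alpha}$ across:
\[
\Hom_{\sfK^{\mrb}(\proj\Lambda)}\!\bigl(\RHom_{\Lambda}(C^{\alpha},M)\lotimes_{\Lambda}C^{\alpha},\,N\bigr)\cong\Hom_{\sfD^{\mrb}(\mod\Lambda)}\!\bigl(\RHom_{\Lambda}(C^{\alpha},M),\,\RHom_{\Lambda}(C^{\alpha},N)\bigr),
\]
which is exactly the formula claimed. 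The main obstacle, and the only place requiring real care rather than bookkeeping, is verifying that $\varpi|_{\sfK}$ genuinely coincides — as a functor, not just on objects — with the composite $(\text{equivalence }\sfT\simeq\grSing_{\mrlp}A)\circ\tau$, i.e.\ that $\varpi|_{\sfK}$ is (naturally isomorphic to) the right-adjoint component of the recollement; this rests on the commutativity of the diagram in Theorem~\ref{Adaching theorem 2}(1) and the standard fact from Section~\ref{Recollection of triangulated categories} that $((\mathsf{qt}_{\sfF}\circ\mathsf{in}_{\sfE})|_{\sfE})^{-1}\circ\mathsf{qt}_{\sfF}\cong\tau$. Once that identification is nailed down, both (1) and (2) are formal consequences of the semi-orthogonal decomposition and the adjunction.
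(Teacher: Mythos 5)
Your proposal is correct and follows essentially the same route as the paper: part (1) is read off from the semi-orthogonal decomposition $\sfK^{\mrb}(\proj\Lambda)=\sfT\perp\Ker\varpi$ of Lemma \ref{T proposition} together with Theorem \ref{Adaching theorem 2} and the recollement formalism of Section \ref{Recollection of triangulated categories}, and part (2) is exactly the paper's chain of isomorphisms via Lemma \ref{right adjoint lemma}, Corollary \ref{right adjoint corollary}, and the $-\lotimes_{\Lambda}C^{\alpha}\dashv\RHom_{\Lambda}(C^{\alpha},-)$ adjunction.
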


\begin{proof}
(1) follows from Theorem \ref{Adaching theorem 0}, Lemma \ref{T proposition}, Theorem \ref{Adaching theorem 2} and the relationship 
between an admissible subcategory and a recollement which is recalled in Section \ref{Recollection of triangulated categories}.

(2) is proved by the following string of isomorphisms 
\[
\begin{split}
\textup{(LHS)}
&  \stackrel{\textup{(a)}}{\cong} \Hom_{\sfK^{\mrb}(\proj \Lambda)}(\tau(M), N ) \\
& \stackrel{\textup{(b)}}{\cong} \Hom_{\sfK^{\mrb}(\proj \Lambda)}(\RHom_{\Lambda}(C^{\alpha}, M) \lotimes_{\Lambda}C^{\alpha}, N) 
 \stackrel{\textup{(c)}}{\cong} \textup{(RHS)}.  
\end{split} 
\]
where 
the isomorphism (a) can be  obtained from Lemma \ref{right adjoint lemma}, 
the isomorphism (b) is a consequence of Corollary \ref{right adjoint corollary} 
and the isomorphism (c) follows from the  $-\lotimes C^{\alpha} $ -$\RHom (C^{\alpha}, -)$-adjunction.
\end{proof}

Recalling the quasi-Veronese algebra construction, 
we obtain a result which can be applied to any finite dimensional graded IG-algebra $A$ 
which is  not necessary a trivial extension algebra. 
For a finite dimensional algebra $\Lambda$, 
we denote by $|\Lambda|$ the number of pairwise non-isomorphic indecomposable projective modules.

\begin{theorem}\label{Introduction K-gp corollary} 
Let $A= \bigoplus_{i= 0}^{\ell} A_{i}$ be a finite dimensional (over some field $K$ which is not necessary the base field $\kk$)  
graded IG-algebra. 
If the degree $0$-part algebra $A_{0}$ is of finite global dimension, 
then  the Gorthendieck group $K_{0}(\stabgrCM A)$ is free and 
its rank is bounded by $\ell |A_{0}|$ from above.
\[
\rank K_{0}(\stabgrCM A) \leq \ell |A_{0}|.
\]
\end{theorem}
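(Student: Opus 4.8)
The strategy is to realize $\stabgrCM A$ as one factor of a semi-orthogonal decomposition of the perfect derived category of the Beilinson algebra $\nabla A$, and then apply additivity of $K_{0}$. All the hard structural work is already available in Theorem~\ref{recollement theorem} and Theorem~\ref{Adaching theorem 2}; what remains is to feed them the right input and take Grothendieck groups.

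First I would reduce to a trivial extension. By the quasi-Veronese construction of Section~\ref{quasi-Veronese} the algebra $A$ is graded Morita equivalent to $A^{[\ell]} = \nabla A \oplus \Delta A$, so $\stabgrCM A \simeq \stabgrCM A^{[\ell]}$ and it suffices to bound $\rank K_{0}(\stabgrCM A^{[\ell]})$ for $A^{[\ell]} = \Lambda \oplus C$ with $\Lambda := \nabla A$, $C := \Delta A$. The key preliminary is that $\gldim \nabla A < \infty$: since $\gldim A_{0} < \infty$, Corollary~\ref{mrlp corollary} gives $\sfD^{\mrb}_{\mrlp}(\grmod A) = \sfD^{\mrb}(\grmod A)$, hence $\sfD^{\mrb}_{\mrlp}(\grmod A^{[\ell]}) = \sfD^{\mrb}(\grmod A^{[\ell]})$ by Lemma~\ref{qv lp lemma}, and applying Corollary~\ref{mrlp corollary} to the finite dimensional graded algebra $A^{[\ell]}$ yields $\gldim (A^{[\ell]})_{0} = \gldim \nabla A < \infty$. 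Consequently $\nabla A$ is IG, $A^{[\ell]}$ is IG (it is graded Morita equivalent to $A$), and $C = \Delta A$ has finite projective dimension over $\nabla A$ on both sides; thus the hypotheses of Theorem~\ref{recollement theorem} and Theorem~\ref{Adaching theorem 2} are met.

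Next I would invoke the structure theory. Because $\gldim \Lambda < \infty$, Corollary~\ref{mrlp corollary} together with Lemma~\ref{lp=finproj} identifies $\stablpCM A^{[\ell]}$ with $\stabgrCM A^{[\ell]}$ and $\sfK^{\mrb}(\proj \Lambda)$ with $\sfD^{\mrb}(\mod \Lambda)$. By Theorem~\ref{Adaching theorem 2} the asid subcategory $\sfT := \thick C^{\alpha}$ is an admissible subcategory of $\sfD^{\mrb}(\mod \nabla A)$ equivalent to $\stabgrCM A$, and by Lemma~\ref{T proposition} and Theorem~\ref{recollement theorem} (equivalently Theorem~\ref{Introduction recollement theorem}) it is a factor of a semi-orthogonal decomposition
\[
\sfD^{\mrb}(\mod \nabla A) = \sfT \perp \Ker \varpi .
\]
For a semi-orthogonal decomposition $\sfD = \sfA \perp \sfB$ one has the standard splitting $K_{0}(\sfD) \cong K_{0}(\sfA) \oplus K_{0}(\sfB)$, so $K_{0}(\stabgrCM A) \cong K_{0}(\sfT)$ is a direct summand of $K_{0}(\sfD^{\mrb}(\mod \nabla A))$. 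Since $\gldim \nabla A < \infty$ we have $\sfD^{\mrb}(\mod \nabla A) \simeq \sfK^{\mrb}(\proj \nabla A)$, hence $K_{0}(\sfD^{\mrb}(\mod \nabla A)) \cong K_{0}(\proj \nabla A) \cong \ZZ^{|\nabla A|}$. Finally, $\nabla A$ is an upper triangular $\ell \times \ell$ matrix algebra all of whose $\ell$ diagonal blocks equal $A_{0}$, so its indecomposable projectives are indexed by a diagonal position together with an indecomposable projective $A_{0}$-module; therefore $|\nabla A| = \ell |A_{0}|$. A direct summand — a fortiori any subgroup — of a free abelian group of rank $\ell |A_{0}|$ is free of rank at most $\ell |A_{0}|$, which is exactly the claim.

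The plan meets essentially no serious obstacle, since the two inputs Theorem~\ref{Adaching theorem 2} and Theorem~\ref{recollement theorem} carry all the real content. The only points that require a short argument are the implication $\gldim A_{0} < \infty \Rightarrow \gldim \nabla A < \infty$ (run through Corollary~\ref{mrlp corollary} and Lemma~\ref{qv lp lemma}) together with the consequent collapse $\stablpCM A^{[\ell]} = \stabgrCM A^{[\ell]}$, and the elementary count $|\nabla A| = \ell |A_{0}|$; additivity of $K_{0}$ along a semi-orthogonal decomposition and the structure of finitely generated abelian groups are standard.
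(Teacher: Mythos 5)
Your proposal is correct and follows essentially the same route as the paper: reduce to the trivial extension $A^{[\ell]}=\nabla A\oplus\Delta A$, use the recollement/semi-orthogonal decomposition of $\sfD^{\mrb}(\mod\nabla A)$ to exhibit $\stabgrCM A$ as an admissible subcategory, and then apply additivity of $K_{0}$ together with $|\nabla A|=\ell|A_{0}|$. The only cosmetic difference is that the paper gets $\gldim\nabla A<\infty$ by directly citing the triangular-matrix-algebra result from \cite{adasore}, whereas you derive it indirectly from Corollary \ref{mrlp corollary} and Lemma \ref{qv lp lemma}; both are valid.
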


\begin{proof}
Since $\nabla A$ is defined  as  an $\ell \times \ell $-upper triangular matrix 
all of whose diagonals are $A_{0}$, 
the assumption $\gldim A_{0} < \infty $ implies $\gldim \nabla A < \infty$ 
(see e.g. \cite[Corollary 6.3]{adasore}). 
Therefore $\sfK^{\mrb}(\proj \nabla A) = \sfD^{\mrb}( \mod \nabla A)$. 
By Corollary \ref{mrlp corollary} and Lemma \ref{lp=finproj}, $\stabgrCM_{\mrlp} A = \stabgrCM A$. 
Hence by Theorem \ref{recollement theorem}, 
$\stabgrCM A$ is realized as an admissible subcategory of $\sfD^{\mrb}(\mod \nabla A)$. 
In particular, 
$K_{0}(\stabgrCM A)$ is a direct summand of 
$K_{0}(\sfD^{\mrb}(\mod \nabla A))$, which is known to be free of rank $|\nabla A|$. 
Now the result follows from the equation $| \nabla A| = \ell |A_{0}|$.   
\end{proof}

%\section{Triangulated orbit categories (tentative title)}\label{Orbit category}

\section{Application I: two classes of IG algebra of finite CM-type}\label{Applications}

In this Section \ref{Applications}, 
we deal with two particular classes of trivial extension algebras. 
We show that if the classes of algebras are IG, 
then they are of finite (graded) Cohen-Macaulay (CM) type. 
For this, we use the following result which is a CM-version of Gabriel's theorem in covering theory \cite{Gabriel}.

For   a Krull-Schmidt category $\sfA$, we denote by $\ind \sfA$ the set of indecomposable objects of $\sfA$ up to isomorphisms.

Recall that  an  IG (resp. graded IG) algebra $A$ is said to be of finite CM 
(resp. graded CM) type, 
if  the number of isomorphisms classes of CM modules 
(resp. graded CM-module)  is finite (resp. finite up to degree shift)
$\# \ind \stabCM A < \infty$ (resp. $ \# \ind \stabgrCM A /(1) < \infty$). 

\begin{theorem}[{\cite{MYY}}]\label{MYY} 
Let $A$ be  a finite dimensional graded IG-algebra. 
Then, $A$ is of finite CM type if and only if it is of graded CM-type. 
Moreover, if this is the case, 
the functor $\grmod A \to \mod A$ which forgets the grading of graded modules 
induces the equality  $\ind \grCM A/(1) = \ind\CM A$. 
\end{theorem}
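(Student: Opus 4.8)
The statement to prove is Theorem~\ref{MYY}: for a finite dimensional graded IG-algebra $A$, graded CM-finiteness is equivalent to ungraded CM-finiteness, and when this holds the forgetful functor induces a bijection $\ind \grCM A/(1) = \ind \CM A$. The plan is to set this up as an instance of Gabriel-style covering theory, viewing the forgetful functor $F\colon \grmod A \to \mod A$ (equivalently $\grCM A \to \CM A$) as the push-down functor associated with the $\ZZ$-grading, which makes $\mod A$ the orbit category of $\grmod A$ under the degree-shift action of $\ZZ$. The direction ``graded CM-finite $\Rightarrow$ CM-finite'' is the easy half: the push-down of a complete set of indecomposable graded CM-modules (one per shift orbit) yields a set which, by the standard properties of covering functors, generates all indecomposable ungraded CM-modules, so $\#\ind\CM A < \infty$.

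For the converse and the precise bijection, I would first recall the two basic facts from covering theory in this Frobenius/exact setting: (i) the push-down functor $F$ is a covering functor, i.e. for graded CM-modules $X,Y$ it induces isomorphisms $\bigoplus_{i\in\ZZ}\Hom_{\grCM A}(X,Y(i)) \cong \Hom_{\CM A}(FX,FY)$ and the same on the stable categories; (ii) $F$ sends indecomposables to indecomposables and reflects isomorphism up to shift — $FX\cong FY$ iff $Y\cong X(i)$ for some $i$ — \emph{provided} each $FX$ has a local (hence semiperfect) endomorphism ring, which is automatic here since $\CM A\subset \mod A$ is a Krull--Schmidt category. The map $\ind\grCM A/(1)\to\ind\CM A$ induced by $F$ is therefore well defined and injective. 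Injectivity together with the hypothesis that the target is finite forces $\ind\grCM A/(1)$ to be finite, giving ``CM-finite $\Rightarrow$ graded CM-finite''; what remains is surjectivity of this map, i.e. that every indecomposable ungraded CM-module $M$ is in the image of $F$.

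For surjectivity I would argue as follows. Given $M\in\CM A$ indecomposable, lift a graded-free cover and, using that $A$ is graded IG so that $\CM A=\{M\in\mod A : \Ext^{>0}_A(M,A)=0\}$, produce a graded CM-module $\widetilde M$ with $F\widetilde M \cong M^{\oplus k}$ for some $k$; concretely, $\widetilde M$ can be taken as an appropriate graded submodule of $M\otimes_{\kk}\kk[\ZZ]$ (the ``grading-forgetting adjoint''), which is graded CM because being CM is a condition checkable after forgetting the grading (a graded module is graded CM iff it is CM as an ungraded module — this uses that $\grproj A$ and $\proj A$ match up under $F$). Since $F$ is a covering functor it is exact and additive and $\widetilde M$ decomposes into finitely many graded indecomposables, each of which pushes down to an indecomposable summand of $M^{\oplus k}$, hence to $M$ up to isomorphism; picking one such summand exhibits $M$ in the image of $F$. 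Combining, $F$ induces the asserted bijection $\ind\grCM A/(1)=\ind\CM A$, and in particular the two finiteness conditions are equivalent. The main obstacle I anticipate is not any single step but assembling the covering-theoretic toolkit cleanly in the Frobenius-category / stable-category setting rather than the classical module-category setting of Gabriel's theorem — specifically, verifying that push-down commutes with the triangulated structure on $\stabgrCM A$ and $\stabCM A$ and that the Krull--Schmidt property transfers, so that the orbit-category description $\stabCM A \simeq \stabgrCM A / (1)$ (as additive, indeed triangulated, categories) is literally correct; once that is in place the finiteness statement and the counting bijection are formal consequences.
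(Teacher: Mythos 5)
This theorem is not proved in the paper at all: it is quoted from the reference \cite{MYY} (``in preparation'') and explicitly described as ``a CM-version of Gabriel's theorem'', so there is no internal proof to compare against. Judged on its own merits, your proposal identifies the right framework (the push-down functor $F\colon\grCM A\to\CM A$ for the $\ZZ$-covering), and the injectivity half is fine: $F$ preserves CM-ness, sends indecomposables to indecomposables with local endomorphism ring, and reflects isomorphism up to shift, so $\ind\grCM A/(1)\to\ind\CM A$ is a well-defined injection and ``CM-finite $\Rightarrow$ graded CM-finite'' follows. But that is the \emph{easy} direction, and you have the difficulty reversed.

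The genuine gap is density of the push-down, which is needed both for ``graded CM-finite $\Rightarrow$ CM-finite'' and for the equality $\ind\grCM A/(1)=\ind\CM A$. You assert it twice, once as ``by the standard properties of covering functors'' and once via a graded lift $\widetilde M\subset M\otimes_{\kk}\kk[\ZZ]$ with $F\widetilde M\cong M^{\oplus k}$. Neither works. Density is \emph{not} a formal property of covering functors: over a $\ZZ$-graded finite dimensional algebra there exist, in general, indecomposable modules that admit no grading, and the whole content of Gabriel's theorem is that local representation-finiteness forces gradability; any argument for surjectivity that does not use the finiteness hypothesis must therefore be wrong. Concretely, the adjoint of the forgetful functor pushes down to the \emph{infinite} direct sum $\bigoplus_{i\in\ZZ}M$, not to $M^{\oplus k}$; a finitely generated graded submodule of it need not push down onto a nonzero multiple of $M$, need not be CM, and extracting an indecomposable graded summand mapping to $M$ is exactly the gradability problem you are trying to solve. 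A correct proof has to adapt Gabriel's covering-theoretic density argument (well-behaved functors, the Auslander--Reiten quiver of $\stabgrCM A$ and its $\ZZ$-action, or a Dowbor--Skowro\'nski-type argument) to the Frobenius/CM setting, using the hypothesis that $\ind\grCM A/(1)$ is finite in an essential way; as written, your proposal does not contain that argument.
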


\subsection{The case where $\Lambda$ is iterated tilted of Dynkin type}\label{iterated}

The result of Section \ref{iterated} is the following.

\begin{theorem}\label{application:iterated theorem}
Let $\Lambda$ be an iterated tilted algebra of Dynkin type 
and $C$ is a finite dimensional bimodule over $\Lambda$. 
If a trivial extension algebra $A = \Lambda \oplus C$ is IG, 
then it is of  finite  CM type.
\end{theorem}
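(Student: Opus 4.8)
The plan is to combine the triangulated-categorical interpretation of $\stabgrCM A$ from Theorem~\ref{Adaching theorem 2} with the classical fact that an iterated tilted algebra of Dynkin type has a derived category with only finitely many indecomposables. First I would reduce to the situation of a trivial extension: by hypothesis $A = \Lambda \oplus C$ is already of this form, so no quasi-Veronese reduction is needed here, and since $\Lambda$ is iterated tilted of Dynkin type it has finite global dimension. Hence Theorem~\ref{Adaching theorem 2} applies and gives an equivalence $\cH|_{\sfT}\colon \sfT \xrightarrow{\sim} \stabgrCM A$ where $\sfT = \thick C^{\alpha}$ is the asid subcategory, an admissible subcategory of $\sfD^{\mrb}(\mod\Lambda) = \sfK^{\mrb}(\proj\Lambda)$. (We also use Corollary~\ref{mrlp corollary} and Lemma~\ref{lp=finproj} to identify $\stabgrCM_{\mrlp} A$ with $\stabgrCM A$, so the locally perfect version of the theorem is the full story.)

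The key step is then: an iterated tilted algebra $\Lambda$ of Dynkin type is derived equivalent to the path algebra $\kk\Delta$ of a Dynkin quiver $\Delta$, so $\sfD^{\mrb}(\mod\Lambda) \simeq \sfD^{\mrb}(\mod \kk\Delta)$, and by Happel's description the latter has only finitely many isomorphism classes of indecomposable objects (they are the shifts of the indecomposable $\kk\Delta$-modules, parametrized by the positive roots of $\Delta$ together with the shift). Therefore $\sfD^{\mrb}(\mod\Lambda)$ is of finite representation type as a triangulated (Krull--Schmidt) category. Next I would observe that $\sfT$, being an admissible — in particular thick — subcategory of a triangulated category with finitely many indecomposables, is itself of finite type: its indecomposable objects form a subset (up to isomorphism) of those of $\sfD^{\mrb}(\mod\Lambda)$, because a thick subcategory is closed under direct summands, so any indecomposable of $\sfT$ is an indecomposable of the ambient category lying in $\sfT$. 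Hence $\#\ind\sfT < \infty$.

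Transporting along the equivalence $\cH|_{\sfT}$ then yields $\#\ind \stabgrCM A < \infty$, so $A$ is of finite graded CM type. Finally, applying Theorem~\ref{MYY} (the CM-version of Gabriel's theorem), finite graded CM type is equivalent to finite CM type for a finite dimensional graded IG-algebra, so $\#\ind\stabCM A < \infty$ and $A$ is of finite CM type, as claimed.

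The main obstacle, and the only place requiring care, is making the reduction ``iterated tilted of Dynkin type $\Rightarrow$ $\sfD^{\mrb}(\mod\Lambda)$ has finitely many indecomposables'' rigorous and properly cited: one must invoke that iterated tilted algebras of type $\Delta$ are precisely those derived equivalent to $\kk\Delta$ (Happel), and that derived equivalences preserve the set of indecomposables of the bounded derived category. Everything else — passing from the ambient category to the thick subcategory $\sfT$, and transporting finiteness across $\cH|_{\sfT}$ — is formal, using only that thick subcategories are closed under summands and that equivalences preserve indecomposability. One should also note in passing that $C$ being finite dimensional guarantees $A$ is finite dimensional, which is what Theorem~\ref{MYY} requires.
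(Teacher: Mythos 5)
There is a genuine gap, and it sits exactly where the paper's proof does its real work. Your key claim that $\sfD^{\mrb}(\mod \kk\Delta)$ ``has only finitely many isomorphism classes of indecomposable objects'' is false: the indecomposables are the shifts $M[n]$ with $M\in\ind(\mod\kk\Delta)$ and $n\in\ZZ$, and these are pairwise non-isomorphic, so there are infinitely many; what is true is that $\#\ind \sfD^{\mrb}(\mod\Lambda)/[1]<\infty$. Hence $\#\ind\sfT<\infty$ fails whenever $\sfT\neq 0$, and so does $\#\ind\stabgrCM A<\infty$. What you can legitimately extract is $\#\ind\sfT/[1]<\infty$, and transporting this across $\cH|_{\sfT}$ gives finiteness of $\ind\stabgrCM A$ modulo the \emph{triangle} shift $[1]=\Omega^{-1}$. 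But finite graded CM type is defined as $\#\ind\stabgrCM A/(1)<\infty$, i.e.\ finiteness modulo the \emph{degree} shift $(1)$, and under the equivalence $\sfT\simeq\stabgrCM A$ the functor $(1)$ corresponds to $-\lotimes_{\Lambda}C[1]$ (Lemma \ref{varpi shift lemma}), not to $[1]$. Finiteness of a set modulo one bijection does not imply finiteness modulo a commuting one: take $S=\ZZ$ with $F$ the translation and $G=\id$; then $S/F$ is a point while $S/G$ is infinite.

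Closing this gap is precisely the content of the paper's argument: it chooses representatives $M_1,\dots,M_r$ of $\ind\sfT/[1]$ normalized by $\tuH^{0}(M_i)\neq 0$ and $\tuH^{\geq 1}(M_i)=0$, writes $M_i\lotimes_{\Lambda}C[1]\cong M_{j}[p_i]$, and shows $p_i>0$ because tensoring with the module $C$ cannot create cohomology in positive degrees; the combinatorial Lemma \ref{application:finite lemma} (which encodes exactly this positivity hypothesis) then yields $\#\ind\sfT/(-\lotimes_{\Lambda}C[1])<\infty$. You need to supply this step. The remainder of your outline --- the reduction via Theorem \ref{Adaching theorem 2}, Corollary \ref{mrlp corollary} and Lemma \ref{lp=finproj}, and the final appeal to Theorem \ref{MYY} --- agrees with the paper and is fine.
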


We need a preparation. 
Let $S$ be a set and $F:S \xrightarrow{\cong} S$ a bijection. 
We denote by $S/F$ the quotient set $S/\ZZ$ by the action of $\ZZ$ on  $S$ which is defined to be $ n\cdot_{F} s := F^{n}(s)$ 
for $n\in \ZZ$ and $s \in S$. 
We note that 
if we take a complete set of representatives $S_{0} \subset S$ of $S/F$,  
then any element $s$ is of the form $s= F^{q}(s_{0})$ for some $q \in \ZZ$ and $s_{0}\in S_{0}$.

\begin{lemma}\label{application:finite lemma}
Let $S$ be a set and $F,G:S \xrightarrow{\cong} S$ bijections such that $FG= GF$.
Assume that $\# S/F = r < \infty$ and that there exists a set of  representatives $\{ s_{1}, s_{2}, \dots, s_{r}\}$ 
such that 
for all $i= 1,\dots, r$ there exists a positive integer $p_{i}> 0$ such that 
$G(s_{i}) = F^{p_{i}}(s_{\sigma(i) } )$ for some $\sigma(i) = 1,\dots, r$. 
Then $\# S/G < \infty$. 
\end{lemma}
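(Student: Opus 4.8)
The plan is to exploit the commuting bijections $F,G$ as a (possibly non-faithful) $\ZZ^{2}$-action on $S$ and reduce the finiteness of $S/G$ to the finiteness of $S/F$ together with the combinatorics of $\sigma$. First I would note that, since $FG = GF$, the bijection $G$ carries $F$-orbits to $F$-orbits, so it induces a bijection $\bar G$ of the finite set $S/F$; under the identification $S/F \cong \{1,\dots,r\}$ given by the representatives $s_{1},\dots,s_{r}$, the hypothesis $G(s_{i}) = F^{p_{i}}(s_{\sigma(i)})$ says precisely that $\bar G$ is the map $i \mapsto \sigma(i)$. Hence $\sigma$ is a permutation of $\{1,\dots,r\}$; let $d \geq 1$ be its order, so $\sigma^{d} = \mathrm{id}$.

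Next I would compute the iterates of $G$ on the representatives. An easy induction using $FG = GF$ gives, for all $n \geq 1$,
\[
G^{n}(s_{i}) = F^{P_{n}(i)}(s_{\sigma^{n}(i)}), \qquad P_{n}(i) := p_{i} + p_{\sigma(i)} + \dots + p_{\sigma^{n-1}(i)}.
\]
Taking $n = d$ and using $\sigma^{d} = \mathrm{id}$ yields $G^{d}(s_{i}) = F^{C_{i}}(s_{i})$ with $C_{i} := \sum_{k=0}^{d-1} p_{\sigma^{k}(i)}$; since each $p_{j}$ is a positive integer we have $C_{i} \geq d \geq 1$. Iterating this identity (and using that $G^{d}$ is a bijection commuting with $F$, so $G^{-d}(s_{i}) = F^{-C_{i}}(s_{i})$), one gets $G^{dm}(s_{i}) = F^{m C_{i}}(s_{i})$ for every $m \in \ZZ$.

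Finally I would conclude as follows. An arbitrary $s \in S$ lies in some $F$-orbit, so $s = F^{q}(s_{i})$ for suitable $q \in \ZZ$ and $i \in \{1,\dots,r\}$. Then $G^{dm}(s) = F^{q}\bigl(G^{dm}(s_{i})\bigr) = F^{q + m C_{i}}(s_{i})$, and since $C_{i} \geq 1$ we can pick $m \in \ZZ$ with $0 \leq q + m C_{i} < C_{i}$. Thus every $G$-orbit meets the finite set $\{\, F^{q'}(s_{i}) : 1 \leq i \leq r,\ 0 \leq q' < C_{i} \,\}$, whence $\# S/G \leq \sum_{i=1}^{r} C_{i} < \infty$.

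The argument is essentially bookkeeping once the iterate formula is set up; the only delicate point — and the step I expect to be the main (minor) obstacle — is justifying that $\sigma$ is really a permutation and that the identity $G^{dm}(s_{i}) = F^{m C_{i}}(s_{i})$ is valid for negative $m$. Both are immediate from the fact that $G$ is a bijection commuting with $F$, so that $G^{-1}$ satisfies the same type of structural relation with $F$ as $G$ does.
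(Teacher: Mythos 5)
Your proof is correct and follows essentially the same strategy as the paper's: show $\sigma$ is a permutation, iterate $G$ along a period of $\sigma$ to get $G^{d}(s_i)=F^{C_i}(s_i)$ with $C_i>0$, and conclude that every $G$-orbit meets the finite set $\{F^{q}(s_i): 0\le q<C_i\}$. The only cosmetic differences are that the paper uses $r!$ in place of the exact order $d$ of $\sigma$ and proves that $\sigma$ is injective directly from the representatives rather than via the induced bijection on $S/F$; both variants are fine.
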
 

\begin{proof}
First we claim that if $i\neq j$, then $\sigma(i) \neq \sigma(j)$. 
Indeed if $\sigma(i) = \sigma(j)$, then $F^{-p_{i}}G(s_{i}) =F^{-p_{j}}G(s_{j})$. 
Thus we have the equation  $s_{i} = F^{p_{i} - p_{j}}(s_{j})$ which contradicts to the assumption.

By the claim we may regard $\sigma$ as a permutation. 
We have $G^{r!}(s_{i}) = F^{P_{i}}(s_{i})$ where $P_{i} = \sum\nolimits_{a= 0}^{r! -1}p_{\sigma^{a}(i)}$. 
Note that $P_{i} > 0$. 
We claim that every $G^{r!}$-orbit contains  one of the elements  of the set $\{ F^{q}(s_{i}) \mid  0\leq q < P_{i} \}$. 
Indeed any element $s$ is of the form $s= F^{q}(s_{i})$ for some $q \in \ZZ$ and $ i= 1,\dots,r$. 
Let $q_{1}, q_{2} \in \ZZ$ be such that $q= q_{1}P_{i}+ q_{2},\,\, 0\leq q_{2} < P_{i}$. 
Then we have $G^{-q_{1}r!}(s) = F^{q_{2}}(s_{i})$. 

Now by the second claim  $\# S/G \leq \# S/G^{r!} \leq \sum_{i=1}^{r}P_{i} < \infty$. 
\end{proof}

\begin{proof}[Proof of Theorem \ref{application:iterated theorem}]
Let $C$ be an asid $\Lambda$-$\Lambda$-bimodule 
and $\alpha$ the asid number of $C$. 
We set $\sfT := \thick C^{\alpha}$. 
We claim that  $\#\ind \sfT/[1] < \infty$.  
Indeed, let $Q$ be a Dynkin quiver such that $\Lambda$ is derived equivalent to $\kk Q$. 
Since all indecomposable object $M \in \sfD^{\mrb}(\mod \kk Q)$ are of forms 
$M = M'[a]$ for some $M' \in \ind \mod \Lambda$ and $a \in \ZZ$,  
we have $\ind \sfD^{\mrb}(\mod \kk Q)/[1] = \ind \mod \kk Q$. 
Hence $\# \sfT/[1] \leq \# \ind \sfD^{\mrb}(\mod \Lambda)/[1]   < \infty$. 

Let $\{M_{1}, \dots, M_{r}\}$ be a set of representatives of $\ind \sfT/[1]$. 
We may assume that $\tuH^{0}(M_{i}) \neq 0$ and $\tuH^{\geq 1}(M_{i})=0$. 
For $i = 1,\dots,r$, we have $M_{i} \lotimes_{\Lambda} C[1] = M_{j}[p_{i}]$ for 
some $p_{i} \in \ZZ$ and $j = 1,\dots, r$. 
Then we have $\tuH^{1-p_{i}}(M_{i} \lotimes_{\Lambda}C) = \tuH^{0}(M_{j}) \neq 0$. 
On the other hands, since $C$ is a $\Lambda$-module, we have $\tuH^{\geq 1} (M_{i}\lotimes_{\Lambda} C)= 0$. 
Thus we conclude that $p_{i} >0$. 
Now by  Lemma \ref{application:finite lemma}, we conclude  that $\# \ind\sfT/(-\lotimes_{\Lambda}C[1]) < \infty$.  
\end{proof}

\subsection{The case where $C = {}_{\Lambda}N \otimes_{\kk} M_{\Lambda}$ }\label{tensor bimodule case}

A straightforward way to obtain a bimodule over an algebra $\Lambda$ 
is to take a tensor product $C=N \otimes_{\kk} M$ of 
a left $\Lambda$-module $N$ and a right $\Lambda$-module $M$ both of which are finite dimensional over $\kk$.
In Section \ref{tensor bimodule case}, we investigate
the case $A= \Lambda \oplus (N \otimes_{\kk} M)$. 

\subsubsection{Results and examples}
In this Section \ref{tensor bimodule case}, 
$\Lambda$ denotes a finite dimensional algebra of finite global dimension 
and $N$ denotes a nonzero left $\Lambda$-module and $M$ denotes  a nonzero  right $\Lambda$-module. 
We set  $C : = N \otimes_{\kk} M$ and $A: = \Lambda \oplus C$. 

For simplicity we restrict ourselves to deal with this case. 
We note that  some of the statements  can be verified in the more general  case where $\Lambda$ is IG 
and $ \injdim M < \infty, \ \injdim N < \infty$.

 \begin{theorem}\label{tensor bimodule case theorem}
 Let $\Lambda$ be a finite dimensional algebra of finite global dimension, 
 
\begin{enumerate}[(1)]
\item  $\gldim A< \infty $ if and only if $M \lotimes_{\Lambda} N = 0$. 
 
 \item $A$ is IG and $\gldim A = \infty$ 
 if and only if $M$ is exceptional (i.e., $\RHom_{\Lambda}(M,M) \cong \kk$) and $\RHom_{\Lambda}(M,\Lambda) = N[-p]$ for some $p \in \NN$.

 \noindent
 \hspace{-20pt}
 \item 
Assume that $A$ is IG and $\gldim A= \infty$. 
 Then the following assertions hold.

 \begin{enumerate}[(3-a)]
 \item Let $p$ be the integer in (2). 
Then $p = \pd_{\Lambda} M = \pd_{\Lambda^{\op}} N$.

 \item  There are equivalences 
$ \stabgrCM A \cong \sfD^{\mrb}(\mod \kk)$ 
under which the graded degree shift functors $(1)$ corresponds to the complex degree shift functor $[p+1]$.

%\item $\dim \End_{\grSing A}(\varpi\Lambda) = (\dim_{\kk} M)^{2}$. 

\item 
$\stabCM A \simeq \sfD^{\mrb}(\mod \kk)/ [p+1] \simeq (\mod \kk)^{\oplus p +1}$   
 and the syzygy functor $\Omega$ is $(p+1)$-periodic.

\item $\ind \stabCM A = \{ M , \Omega M, \cdots, \Omega^{p} M \}$.

\end{enumerate} 

\end{enumerate}
 \end{theorem}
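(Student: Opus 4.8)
The plan is to derive everything from one computation of the iterated derived tensor powers of $C={}_{\Lambda}N\otimes_{\kk}M_{\Lambda}$. Since the left (resp.\ right) $\Lambda$-action on $C$ lies only on the $N$ (resp.\ $M$) tensor factor,
\[
C^{a}\ \cong\ N\otimes_{\kk}V^{\otimes_{\kk}(a-1)}\otimes_{\kk}M\qquad(a\ge1),\qquad V:=M\lotimes_{\Lambda}N\in\sfD^{\mrb}(\mod\kk),
\]
and, $\kk$ being a field, $(C^{a})_{\Lambda}$ (resp.\ ${}_{\Lambda}C^{a}$) is a finite direct sum of shifts of copies of $M$ (resp.\ of $N$), which is nonzero precisely when $V^{\otimes(a-1)}\neq0$. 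Part~(1) is then immediate: inspecting graded projective $A$-resolutions (Corollary~\ref{black thunder}), for $\gldim\Lambda<\infty$ one has $\gldim A<\infty$ iff $C^{a}\simeq0$ for $a\gg0$, which by the display means $V=0$ (recall $M,N\neq0$).

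For (2), in the ``if'' direction I would verify Theorem~\ref{Adaching theorem 0}(3) with $\sfT:=\thick C^{\alpha}=\thick M\subset\sfK^{\mrb}(\proj\Lambda)$: from $M$ exceptional and $\RHom_{\Lambda}(M,\Lambda)=N[-p]$ one gets $V=\bigl(M\lotimes_{\Lambda}\RHom_{\Lambda}(M,\Lambda)\bigr)[p]\cong\RHom_{\Lambda}(M,M)[p]=\kk[p]\neq0$ (so $\gldim A=\infty$ by (1), and each $C^{a}$ is just a shift of $C$); then $(C^{a})_{\Lambda}$ and $(C^{a})^{\leftvee}$ are sums of shifts of copies of $M$ (using $\RHom_{\Lambda^{\op}}(N,\Lambda)\cong M[-p]$), which is (3-b), while the $C$-duality computes as $X^{\star}=N\otimes_{\kk}\RHom_{\Lambda}(X,M)$ on $\thick M$ and $Y^{\star}=\RHom_{\Lambda^{\op}}(Y,N)\otimes_{\kk}M$ on $\thick_{\Lambda^{\op}}N$, which for the exceptional $M,N$ with $M^{\star}=N$, $N^{\star}=M$ are mutually inverse contravariant equivalences, which is (3-a); hence $A$ is IG. Conversely, if $A$ is IG with $\gldim A=\infty$ then $V\neq0$, the asid subcategory is $\sfT=\thick M$, and by Theorem~\ref{Adaching theorem 0}(2),(3) both $-\lotimes_{\Lambda}C$ and the $C$-duality act on $\thick M$ as (auto)equivalences. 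From $M\lotimes_{\Lambda}C=V\otimes_{\kk}M$ one obtains $\End_{\kk}(V)\otimes_{\kk}\RHom_{\Lambda}(M,M)\cong\RHom_{\Lambda}(M,M)$ as graded algebras; since the right side is finite dimensional and non-negatively graded whereas $\End_{\kk}(V)=\bigoplus_{n}\Hom_{\kk}(V,V[n])$ has a nonzero negative part unless $V$ is concentrated in a single cohomological degree, $V$ is so concentrated, and a dimension count forces $V\cong\kk[p]$ with $p\ge0$. Likewise $M^{\star}=N\otimes_{\kk}\RHom_{\Lambda}(M,M)$ together with $\RHom_{\Lambda^{\op}}(M^{\star},M^{\star})\cong\RHom_{\Lambda}(M,M)$ forces $\RHom_{\Lambda}(M,M)=\kk=\RHom_{\Lambda^{\op}}(N,N)$, so $M$ and $N$ are exceptional; then $M^{\star}=N$ generates $\sfT^{\rightvee}=\thick_{\Lambda^{\op}}\RHom_{\Lambda}(M,\Lambda)$, and two exceptional, hence indecomposable, generators of the same thick subcategory differ by a shift, so $\RHom_{\Lambda}(M,\Lambda)=N[-p]$ with $p\ge0$.

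For (3): $\RHom_{\Lambda}(M,\Lambda)=N[-p]$ gives $\Ext^{p}_{\Lambda}(M,\Lambda)=N\neq0$ and $\Ext^{i}_{\Lambda}(M,\Lambda)=0$ otherwise, so $\pd_{\Lambda}M=p$ (for an Artin algebra $\pd_{\Lambda}M$ is the largest $i$ with $\Ext^{i}_{\Lambda}(M,\Lambda)\neq0$), and dualizing again, $\RHom_{\Lambda^{\op}}(N,\Lambda)=M[-p]$ gives $\pd_{\Lambda^{\op}}N=p$; this is (3-a). For (3-b), $\gldim\Lambda<\infty$ makes $\stabgrCM A=\stabgrCM_{\mrlp}A$, so Theorem~\ref{Adaching theorem 2} yields $\cH|_{\sfT}\colon\thick M\xrightarrow{\sim}\stabgrCM A$; as $M$ is exceptional, $\thick M\simeq\sfD^{\mrb}(\mod\kk)$ with $M\leftrightarrow\kk$, and by Lemma~\ref{varpi shift lemma} the graded shift $(1)$ corresponds on $\sfT$ to $-\lotimes_{\Lambda}C[1]$, which sends $M$ to $V\otimes_{\kk}M[1]=M[p+1]$ and so equals $[p+1]$. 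For (3-c)--(3-d), now $\#\,\ind\stabgrCM A/(1)=p+1$, so $A$ has graded, hence (Theorem~\ref{MYY}) finite, CM type, and via the covering functor which forgets the grading and turns $(1)$ into $\id$ one gets $\stabCM A\simeq\stabgrCM A/(1)\simeq\sfD^{\mrb}(\mod\kk)/[p+1]$; the latter orbit category has indecomposables $\kk,\kk[1],\dots,\kk[p]$ with only scalar endomorphisms and no maps between distinct ones, i.e.\ $\simeq(\mod\kk)^{\oplus(p+1)}$, and the syzygy $\Omega$ corresponds to $[-1]$, so $\Omega^{p+1}=[-(p+1)]\cong\id$. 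Checking via Lemma~\ref{dual_lem2} and Theorem~\ref{dual_prop2} that $M$ is itself a non-projective CM $A$-module (its $A$-dual $\grRHom_{A}(M,A)\cong\RHom_{\Lambda}(M,C)\cong N$ sits in cohomological degree $0$), this identifies $\ind\stabCM A$ with $\{M,\Omega M,\dots,\Omega^{p}M\}$.

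I expect the crux to be the ``only if'' half of (2): converting the soft statement that $-\lotimes_{\Lambda}C$ and the $C$-duality restrict to (auto)equivalences of the asid subcategory $\thick M$ into the rigidity conclusions ``$M$ exceptional'' and ``$\RHom_{\Lambda}(M,\Lambda)$ a shift of $N$'', where the grading argument pinning $V$ to a single cohomological degree and the ensuing dimension counts carry the real content. A secondary source of friction is the careful handling of one-sided versus two-sided $\Lambda$-structures in the $C$-duality computations of the ``if'' direction, together with the invocation of the graded-to-ungraded covering (as in \cite{MYY}) underlying (3-c).
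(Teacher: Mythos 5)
Your proposal is correct, but for the heart of the theorem --- part (2) --- it takes a genuinely different route from the paper. The paper does not invoke the categorical characterization here at all: it computes the defining morphism of the right asid condition directly, showing in Lemma \ref{tbc lemma 2} that $\scrT_{C^{a},\Lambda}$ decomposes as $\id_{L_{a}}\otimes\scrS\otimes\mathsf{h}$ for explicit maps $\mathsf{h}:\kk\to\RHom_{\Lambda}(M,M)$ and $\scrS:\RHom_{\Lambda}(M,\Lambda)\to\Hom_{\kk}(M\lotimes_{\Lambda}N,N)$, and then characterizing in Lemma \ref{tbc lemma 3} exactly when these are isomorphisms; this yields the three-case list of Proposition \ref{tbc proposition} together with the precise value of $\alpha_{r}$ in each case (including the degenerate case $\alpha_{r}=0$, i.e.\ $\Lambda\cong\End_{\kk}(V)$, $N=V$, $M=\tuD(V)$, which your statement absorbs as $p=0$). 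You instead feed the Künneth identity $C^{a}\cong N\otimes_{\kk}V^{\otimes(a-1)}\otimes_{\kk}M$ into Theorems \ref{Adaching theorem 0} and \ref{Adaching theorem 1}: the ``if'' direction verifies condition (3) of Theorem \ref{Adaching theorem 0} on $\sfT=\thick M$, and the ``only if'' direction extracts the rigidity from full faithfulness of $-\lotimes_{\Lambda}C$ and $(-)^{\star}$ on $\thick M$ via graded-dimension counts ($\RHom_{\kk}(V,V)\otimes_{\kk}E\cong E$ with $E$ finite dimensional, non-negatively graded and $E^{0}\neq0$ forces $V\cong\kk[p]$, and the analogous count for the duality forces $M$, $N$ exceptional). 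Both work; the paper's route buys the explicit computation of $\alpha_{r}$ and avoids any appeal to the abstract machinery, while yours is shorter and arguably more conceptual. Two points deserve an added line: the identification $\sfT=\thick C^{\alpha}=\thick M$ must cover the case $\alpha=0$ (where $\sfT=\thick\Lambda$), which follows from Lemma \ref{T lemma}(1) since then $\thick\Lambda=\thick C=\thick M$; and ``mutually inverse contravariant equivalences'' in the ``if'' direction requires checking that the canonical evaluation $M\to M^{\star\star}$ is an isomorphism rather than merely $M^{\star\star}\cong M$ --- routine since $\End_{\Lambda}(M)=\kk$, but this is precisely the verification the paper's explicit morphisms are built to perform. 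Finally, for (3-d) you verify that $M$ is CM through its $A$-dual via Lemma \ref{dual_lem2}, whereas the paper splices $-\otimes_{\Lambda}A$ applied to the minimal projective resolution into an explicit complete resolution; either is fine.
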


Combining Theorem \ref{tensor bimodule case theorem} and a result by X-W. Chen, 
we can provide an example of a non-IG finite dimensional algebra $A$ 
such that the singularity category $\Sing A$ is $\Hom$-finite. 

\begin{example}
Let $\Lambda$ be a basic finite dimensional algebra of finite global dimension 
and $e,f \in \Lambda$ idempotent elements. 
Then the algebra $A = \Lambda \oplus (\Lambda e \otimes_{\kk} f \Lambda)$ 
is of finite global dimension if and only if $f \Lambda e = 0$. 
The algebra $A$ is an IG-algebra of  infinite global dimension 
if and only if $e = f$ and $\dim e \Lambda e  = 1$. 

On the other hands, X-W. Chen \cite{XWChen} showed that 
$\Sing A$ is $\Hom$-finite if and only if $\dim f \Lambda e \leq 1$. 
Thus we conclude that there are finite dimensional algebras $A$
 which is not IG but whose  singularity category $\Sing A$ is $\Hom$-finite. 
\end{example}

\begin{example}
Let $\Lambda$ be a $d$-representation infinite algebra and $e \in \Lambda $ an idempotent 
such that $\dim e \Lambda e = 1$. 
Set $\theta = \Ext^{d}_{\Lambda}(D(\Lambda), \Lambda)$. 

Then $A = \Lambda \oplus D(e \Lambda) \otimes_{\kk} e\theta$ is an IG-algebra 
such that 
$\stabCM A \cong (\mod \kk)^{\oplus d +1}, \stabCM A^{\op} \cong (\mod \kk)^{\oplus d+1}$. 
\end{example}

\subsubsection{Proof of Theorem \ref{tensor bimodule case theorem}.}

First observe that $C^{a} = N \otimes_{\kk} (M \lotimes_{\Lambda} N)^{\otimes_{\kk} a -1} \otimes_{\kk} M$ for $a \geq 1$.  
On the other hand, under the assumption that $\gldim \Lambda < \infty$, we have 
 $\gldim A< \infty$ if and only if $C^{a} = 0$ for some $a \geq 0$ 
by \cite[Corollary 4.15]{adasore}.
Hence we deduce the following lemma.

\begin{lemma}\label{tbc lemma 1} 
We have $M\lotimes_{\Lambda} N = 0$ 
if and only if $\gldim A < \infty$. 
\end{lemma}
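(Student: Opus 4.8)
The plan is to unwind both sides to the condition $C^{a}=0$ for $a\gg0$ and then show this is equivalent to $M\lotimes_\Lambda N=0$. First I would record the identity $C^{a}\cong N\otimes_{\kk}(M\lotimes_\Lambda N)^{\otimes_\kk a-1}\otimes_{\kk}M$ for $a\ge1$, which follows by associativity of $\lotimes$ together with the fact that tensoring over $\kk$ is exact (so $N\otimes_\kk(-)$ and $(-)\otimes_\kk M$ commute with forming derived tensor products and preserve quasi-isomorphisms); here $N,M\ne0$ are fixed. Granting $\gldim\Lambda<\infty$, the cited \cite[Corollary 4.15]{adasore} gives that $\gldim A<\infty$ iff $C^{a}=0$ in $\sfD(\Mod\Lambda^{\mre})$ for some $a\ge0$, so the whole statement reduces to proving: $M\lotimes_\Lambda N=0$ $\iff$ $C^{a}=0$ for some $a\ge0$.

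For the forward direction, if $M\lotimes_\Lambda N=0$ then by the displayed identity $C^{a}=0$ already for $a=2$ (the middle factor $(M\lotimes_\Lambda N)^{\otimes_\kk 1}$ vanishes), hence $\gldim A<\infty$. For the converse, suppose $C^{a}=0$ for some $a\ge0$; since $C=N\otimes_\kk M$ with $N,M\ne0$ we have $C^{1}\ne0$, so we may take the minimal such $a$, which is $\ge2$, and then $C^{a}\cong N\otimes_\kk (M\lotimes_\Lambda N)^{\otimes_\kk a-1}\otimes_\kk M=0$. Because $N$ and $M$ are nonzero finite-dimensional $\kk$-complexes (with nonzero total cohomology) and $\kk$ is a field, $N\otimes_\kk(-)\otimes_\kk M$ is a faithful operation on $\sfD(\mathsf{Mod}\,\Lambda^{\mre})$ in the sense that $N\otimes_\kk X\otimes_\kk M=0$ forces $X=0$; applying this with $X=(M\lotimes_\Lambda N)^{\otimes_\kk a-1}$ gives $(M\lotimes_\Lambda N)^{\otimes_\kk a-1}=0$, and then, $\kk$ being a field so that $\otimes_\kk$ of nonzero objects is nonzero, we conclude $M\lotimes_\Lambda N=0$.

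The one point needing a careful argument — the main (mild) obstacle — is the faithfulness claim: that over a field, if $Y\otimes_\kk Z=0$ in the derived category then $Y=0$ or $Z=0$, and that $N\otimes_\kk X\otimes_\kk M=0$ with $N,M\ne0$ implies $X=0$. This follows from the Künneth formula $\tuH^{n}(Y\otimes_\kk Z)\cong\bigoplus_{p+q=n}\tuH^{p}(Y)\otimes_\kk\tuH^{q}(Z)$ (no $\Tor$ terms, since $\kk$ is a field): if $Y$ and $Z$ both have nonzero cohomology, pick the top nonvanishing degrees $p_0,q_0$; then the degree-$(p_0+q_0)$ cohomology of $Y\otimes_\kk Z$ contains $\tuH^{p_0}(Y)\otimes_\kk\tuH^{q_0}(Z)\ne0$. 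I would state this as a short sublemma (or simply inline it), apply it twice, and that completes the proof of Lemma \ref{tbc lemma 1}.
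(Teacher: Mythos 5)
Your proof is correct and follows essentially the same route as the paper: the paper likewise records the identity $C^{a}\cong N\otimes_{\kk}(M\lotimes_{\Lambda}N)^{\otimes_{\kk}a-1}\otimes_{\kk}M$ and invokes \cite[Corollary 4.15]{adasore} to reduce to the vanishing of $C^{a}$, leaving the final deduction to the reader. Your K\"unneth sublemma just makes explicit the step the paper treats as immediate, and it is argued correctly.
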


Since 
$\injdim C_{\Lambda} = \injdim M < \infty, \ 
\injdim {}_{\Lambda}C = \injdim N < \infty$, 
the trivial extension algebra $A$ is IG 
if and only if 
$C$ satisfies the right and left asid conditions  
by Theorem \ref{Adaching theorem 0}.

We concentrate on the right asid condition. 
By Lemma \ref{isomorphisms proposition}, 
the right asid  condition  is satisfied 
if and only if 
the morphism $\scrT_{C^{a}, \Lambda}: \RHom_{\Lambda}(C^{a}, \Lambda) \to \RHom_{\Lambda}(C^{a + 1} ,C)$ 
is an isomorphism. 
Moreover, if this is the case we have 
$\alpha_{r} = \min\{ a \geq 0 \mid \scrT_{C^{a}, \Lambda} \textup{ is an isomorphism.} \}$.

We give a description of the morphism $\scrT_{C^{a}, \Lambda}$ 
in terms of more concrete morphisms, which are introduced now.

By $\mathsf{lm}$ we denote the left multiplication map 
$\mathsf{lm}:  \Lambda  \to \Hom_{\kk}( N , N)$, 
i.e., 
$\mathsf{lm}(r)(n) =rn$ for $ r\in \Lambda$ and $ n \in N$.   

We define a morphism $\mathsf{h}: \kk \to \RHom_{\Lambda}(M,M)$ 
in the following way. 
By $\tilde{\mathsf{h}}$,  we denote 
the homothety map $\tilde{\mathsf{h}} : \kk \to \Hom_{\Lambda}(M, M)$, 
i.e., 
$\tilde{\mathsf{h}}(a)(m) := am$ 
for $a \in \kk$ and $m \in M$. 
Then, we set $\mathsf{h} := \mathsf{can} \circ \tilde{\mathsf{h}}$ 
where $\mathsf{can}: \Hom_{\Lambda}(M,M) \to \RHom_{\Lambda}(M,M)$ 
is the canonical morphism.

By $\scrS$, we denote the morphism 
$\scrS: \RHom_{\Lambda}(M, \Lambda) \to \Hom_{\kk}(M \lotimes_{\Lambda} N, N)$ 
in $\sfD^{\mrb}(\mod \Lambda^{\op})$  
induced from the functor $- \lotimes_{\Lambda} N : \sfD^{\mrb}(\mod \Lambda) \to \sfD^{\mrb}(\mod \kk)$. 
\[
\scrS: \RHom_{\Lambda}(M, \Lambda) \to \RHom_{\kk}(M \lotimes_{\Lambda} N, N)\cong \Hom_{\kk}(M \lotimes_{\Lambda} N, N)
\]

We leave the verification of the following lemma to the readers.

\begin{lemma}\label{tbc lemma 2} 
\begin{enumerate}[(1)]
\item 
Under the isomorphisms  
\[
\begin{split} 
&\RHom_{\Lambda} (\Lambda, \Lambda) \cong \Lambda \otimes_{\kk} \kk 
\textup{ and } \\ 
&\RHom_{\Lambda}(N \otimes_{\kk} M ,  N \otimes_{\kk} M ) 
\cong \Hom_{\kk}(N,N) \otimes_{\kk} \RHom_{\Lambda} ( M ,M ), 
\end{split}
\]  
the morphism 
$\scrT_{\Lambda, \Lambda}: \RHom_{\Lambda}(\Lambda,\Lambda) \to 
\RHom_{\Lambda}(N \otimes_{\kk} M , N \otimes_{\kk} M)$ 
corresponds to $\mathsf{lm} \otimes \mathsf{h}$.  
\[
\mathsf{lm} \otimes \mathsf{h}: \Lambda \otimes_{\kk} \kk  \to \Hom_{\kk}(N,N) \otimes_{\kk} \RHom_{\Lambda} ( M ,M )
\]

\item 
Let $a \geq 1$. 
Then under the isomorphisms 
\[
\begin{split}
& \RHom_{\Lambda}(C^{a} , \Lambda) 
\cong \tuD(N \otimes_{\kk} (M\lotimes_{\Lambda} N)^{\otimes_{\kk} a-1}) 
\otimes_{\kk} \RHom_{\Lambda} (M,\Lambda) \otimes_{\kk} \kk  \ \ \ \ \textup{ and} \\
&\RHom_{\Lambda}(C^{a+1} , C) 
\cong \tuD(N \otimes_{\kk} (M\lotimes_{\Lambda} N)^{\otimes_{\kk} a-1}) \otimes_{\kk} 
\Hom_{\kk} (M\lotimes_{\Lambda}N ,N)
 \otimes_{\kk} \RHom_{\Lambda} (M,M)
\end{split}
\] 
the morphism $\scrT_{C^{a}, \Lambda}$ corresponds to $\id \otimes \scrS \otimes \mathsf{h}$. 
\[
\id_{L_{a}} \otimes \scrS \otimes \mathsf{h}: 
 L_{a} \otimes_{\kk} \RHom_{\Lambda} (M,\Lambda) \otimes_{\kk} \kk \\
 \to 
 L_{a}  \otimes_{\kk} 
\Hom_{\kk} (M\lotimes_{\Lambda}N ,N)
 \otimes_{\kk}  \RHom_{\Lambda} (M,M).
\] 
where we set $L_{a} := \tuD(N \otimes_{\kk} (M\lotimes_{\Lambda} N)^{\otimes_{\kk} a-1}) $.  
\end{enumerate}
\end{lemma}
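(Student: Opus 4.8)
The plan is to unwind the definition of the natural transformation $\scrT$ by exploiting the factorization of $\cT = -\lotimes_\Lambda C$ forced by the shape $C = N\otimes_\kk M$: the left $\Lambda$-action on $C$ is carried by $N$ and the right one by $M$, so for any right $\Lambda$-complex $X$ one has $X\lotimes_\Lambda C\cong (X\lotimes_\Lambda N)\otimes_\kk M$ naturally, whence
\[
\cT \;\cong\; (-\otimes_\kk M)\circ(-\lotimes_\Lambda N)\colon\ \sfD^{\mrb}(\mod\Lambda)\ \xrightarrow{\ -\lotimes_\Lambda N\ }\ \sfD^{\mrb}(\mod\kk)\ \xrightarrow{\ -\otimes_\kk M\ }\ \sfD^{\mrb}(\mod\Lambda),
\]
where in the second functor $M$ enters only as $\kk$-linear coefficients carrying the output right $\Lambda$-action. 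Since the map on $\Hom$-complexes induced by a composite of functors is the composite of the induced maps, it suffices to identify the two factors of $\scrT$. Because $\gldim\Lambda<\infty$ I would compute everything inside $\sfK^{\mrb}(\proj\Lambda)$, where $\RHom$ is an honest $\Hom$-complex; fixing a bounded projective resolution $P\xrightarrow{\sim}M$ and a bounded complex $W$ of finite dimensional $\kk$-spaces representing $(M\lotimes_\Lambda N)^{\otimes_\kk a-1}$, the complexes $N\otimes_\kk P$ and $N\otimes_\kk W\otimes_\kk P$ are bounded projective resolutions of $C$ and of $C^{a}\cong N\otimes_\kk W\otimes_\kk M$ (tensoring over the field $\kk$ is exact and preserves projectivity). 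I would also record at the outset the identity $C^{a}\cong N\otimes_\kk(M\lotimes_\Lambda N)^{\otimes_\kk a-1}\otimes_\kk M$, obtained by iterating $(N\otimes_\kk M)\lotimes_\Lambda(N\otimes_\kk M)\cong N\otimes_\kk(M\lotimes_\Lambda N)\otimes_\kk M$, so that all objects in sight are visibly of the form $N\otimes_\kk(\text{inert }\kk\text{-complex})\otimes_\kk M$.

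First I would treat the factor induced by $-\lotimes_\Lambda N$. On the objects of part (1) it is $\RHom_\Lambda(\Lambda,\Lambda)\to\RHom_\kk(\Lambda\lotimes_\Lambda N,\Lambda\lotimes_\Lambda N)$; under $\RHom_\Lambda(\Lambda,\Lambda)\cong\Lambda\otimes_\kk\kk$ and $\Lambda\lotimes_\Lambda N\cong N$ it sends $\lambda\otimes 1$ to $\lambda\lotimes_\Lambda\id_N=$ left multiplication by $\lambda$ on $N$, i.e.\ it is precisely $\mathsf{lm}$. On the objects of part (2) it is $\RHom_\Lambda(C^{a},\Lambda)\to\RHom_\kk(C^{a}\lotimes_\Lambda N,\Lambda\lotimes_\Lambda N)$; here $C^{a}\lotimes_\Lambda N\cong N\otimes_\kk W\otimes_\kk(M\lotimes_\Lambda N)$, the $\kk$-factor $\tuD(N\otimes_\kk W)=L_a$ and the target $N$ are inert, and the only nontrivial effect is on the tensor factor $\RHom_\Lambda(M,\Lambda)\to\Hom_\kk(M\lotimes_\Lambda N,N)$, which by definition is $\scrS$; so this factor is $\id_{L_a}\otimes\scrS$. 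Next I would treat the factor induced by $-\otimes_\kk M$: on a $\kk$-complex $V$ it is $\Hom_\kk(V,V)\to\RHom_\Lambda(V\otimes_\kk M,V\otimes_\kk M)$, $g\mapsto g\otimes\id_M$, which under the identification $\RHom_\Lambda(V\otimes_\kk M,V'\otimes_\kk M)\cong\Hom_\kk(V,V')\otimes_\kk\RHom_\Lambda(M,M)$ (computed with the resolution $V\otimes_\kk P\to V\otimes_\kk M$) becomes $g\mapsto g\otimes[\id_M]=g\otimes\mathsf{h}(1)$, i.e.\ $\id\otimes\mathsf{h}$ after inserting the trivial slot $\otimes_\kk\kk$. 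Composing the two factors and leaving $L_a$ untouched then yields $\scrT_{\Lambda,\Lambda}=\mathsf{lm}\otimes\mathsf{h}$ in part (1) and $\scrT_{C^{a},\Lambda}=\id_{L_a}\otimes\scrS\otimes\mathsf{h}$ in part (2).

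The only substantial work, and the step I expect to be the main obstacle, is checking that the ``coefficient'' identifications used above are natural and compatible with the maps induced by $\cT$: namely $\RHom_\Lambda(V\otimes_\kk X,V'\otimes_\kk Y)\cong\Hom_\kk(V,V')\otimes_\kk\RHom_\Lambda(X,Y)$ for finite dimensional $\kk$-spaces $V,V'$ (together with its variants where $X$ or $Y$ is $\Lambda$), the isomorphism $\Lambda\lotimes_\Lambda N\cong N$, and the formal $\RHom_\kk=\Hom_\kk$ over a field. These are standard, but to safely compose the two functor-induced maps on the nose they must be pinned down at the level of the chosen resolutions $N\otimes_\kk P$, $N\otimes_\kk W\otimes_\kk P$ rather than merely in cohomology; once the resolutions are fixed this is a direct, if slightly tedious, diagram chase with no hidden subtlety, and the asserted correspondences drop out. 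Throughout I would suppress the restriction functors and the $\Hom$-complex sign conventions, as elsewhere in the paper.
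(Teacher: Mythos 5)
Your argument is correct. The paper gives no proof of this lemma (it is explicitly ``left to the readers''), and your verification via the factorization $-\lotimes_{\Lambda}C\cong(-\otimes_{\kk}M)\circ(-\lotimes_{\Lambda}N)$, identifying the first factor with $\mathsf{lm}$ (resp.\ $\id_{L_a}\otimes\scrS$) and the second with $\id\otimes\mathsf{h}$ on chosen projective resolutions, is exactly the natural way to carry it out; the coefficient identifications you flag as the only delicate point are indeed standard over a field and compatible with the induced maps once fixed at the level of complexes.
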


In the next proposition, 
we study when  $C = N \otimes_{\kk} N$ satisfies the right asid condition.

\begin{proposition}\label{tbc proposition}
The right asid  condition  is satisfied 
if and only if one of the following conditions is satisfied. 

\begin{enumerate}[(1)]
\item[(0)] $\Lambda = \End_{\kk}(V), \ N = V$ and $ M = \tuD(V)$ 
for some finite dimensional vector space $V$. 

\item[(1)] The module $M$ is exceptional and $\RHom_{\Lambda}(M, \Lambda) \cong N[-p]$ for some $p \in \ZZ$. 
The condition (0) does not hold. 

\item[(2)]  $M \lotimes_{\Lambda} N = 0$ 
\end{enumerate}

Moreover, if one of conditions (a)  is satisfied for $a = 0,1,2$, then we have $\alpha_{r} = a$. 
\end{proposition}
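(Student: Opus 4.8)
The plan is to read off, via Lemma \ref{tbc lemma 2}, exactly for which exponents $a$ the morphism $\scrT_{C^{a},\Lambda}$ is an isomorphism, and then invoke the description of the right asid condition recorded just before the proposition (it holds iff $\scrT_{C^{a},\Lambda}$ is an isomorphism for $a\gg 0$, with $\alpha_{r}=\min\{a\geq 0:\scrT_{C^{a},\Lambda}\text{ is an isomorphism}\}$). By Lemma \ref{tbc lemma 2}, $\scrT_{C^{a},\Lambda}$ is, up to the indicated natural isomorphisms, a $\kk$-tensor product of morphisms in $\sfD^{\mrb}(\mod\kk)$: it is $\mathsf{lm}\otimes_{\kk}\mathsf{h}$ for $a=0$, and $\id_{L_{a}}\otimes_{\kk}\scrS\otimes_{\kk}\mathsf{h}$ for $a\geq 1$, with $L_{a}=\tuD\bigl(N\otimes_{\kk}(M\lotimes_{\Lambda}N)^{\otimes_{\kk}a-1}\bigr)$. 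I will use the elementary fact that, over a field, a morphism $f\otimes_{\kk}g$ in $\sfD^{\mrb}(\mod\kk)$ is an isomorphism iff either both $f$ and $g$ are isomorphisms, or its source and target are both acyclic (K\"unneth, together with the fact that $-\otimes_{\kk}Y$ with $Y\not\simeq 0$ reflects isomorphisms), and that $X\otimes_{\kk}Y\simeq 0$ iff $X\simeq 0$ or $Y\simeq 0$. Since $\Lambda,M,N$ are all nonzero and $M$ is perfect over $\Lambda$ (as $\gldim\Lambda<\infty$), we have $M^{\rightvee}=\RHom_{\Lambda}(M,\Lambda)\not\simeq 0$ and $\RHom_{\Lambda}(M,M)\not\simeq 0$; I will also use repeatedly that $\mathsf{h}\colon\kk\to\RHom_{\Lambda}(M,M)$ is an isomorphism exactly when $M$ is exceptional, $\mathsf{h}$ being the inclusion of $H^{0}$ and $M$ perfect.

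First I would dispose of $a=0$ and $a=1$. For $a=0$ the source $\Lambda\otimes_{\kk}\kk$ of $\mathsf{lm}\otimes_{\kk}\mathsf{h}$ is nonzero, so $\scrT_{\Lambda,\Lambda}$ is an isomorphism iff $\mathsf{lm}\colon\Lambda\to\Hom_{\kk}(N,N)$ and $\mathsf{h}$ both are; if $\mathsf{lm}$ is an isomorphism then $\Lambda\cong\End_{\kk}(V)$ acts tautologically on $V:=N$, and over this (semisimple) full matrix algebra $M$ exceptional forces $M$ to be the unique simple right module $\tuD(V)$. Hence $\scrT_{\Lambda,\Lambda}$ is an isomorphism iff condition (0) holds, and then $\alpha_{r}=0$. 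For $a=1$ we have $L_{1}=\tuD(N)\not\simeq 0$, so the source of $\id_{L_{1}}\otimes_{\kk}\scrS\otimes_{\kk}\mathsf{h}$ is nonzero, and $\scrT_{C,\Lambda}$ is an isomorphism iff $\scrS$ is an isomorphism and $M$ is exceptional.

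The technical core is the claim: assuming $M$ exceptional, $\scrS\colon M^{\rightvee}\to\RHom_{\kk}(M\lotimes_{\Lambda}N,N)$ is an isomorphism iff $M^{\rightvee}\cong N[-p]$ for some $p\in\NN$. For the forward direction, $M$ perfect gives $M\lotimes_{\Lambda}M^{\rightvee}\cong\RHom_{\Lambda}(M,M)=\kk$ via the comparison map $\scrJ$; applying $M\lotimes_{\Lambda}-$ to $M^{\rightvee}\cong\RHom_{\kk}(M\lotimes_{\Lambda}N,N)\cong\tuD(M\lotimes_{\Lambda}N)\otimes_{\kk}N$ and pulling the $\kk$-factor through yields $\kk\cong\RHom_{\kk}(V,V)$ with $V:=M\lotimes_{\Lambda}N$; over a field this forces $V\cong\kk[p]$ for some $p$ (and $V\not\simeq 0$, else $M^{\rightvee}\simeq 0$), whence $M^{\rightvee}\cong\RHom_{\kk}(\kk[p],N)\cong N[-p]$, with $p\geq 0$ since $M^{\rightvee}$ has no negative cohomology. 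Conversely, if $M$ exceptional and $M^{\rightvee}\cong N[-p]$, then $M\lotimes_{\Lambda}N\cong M\lotimes_{\Lambda}M^{\rightvee}[p]\cong\kk[p]$, so both sides of $\scrS$ compute to $N[-p]$ and $\scrS$ realizes the canonical isomorphism (it is nonzero on top cohomology, being induced by the nonzero functor $-\lotimes_{\Lambda}N$). Finally I would observe that ``$M$ exceptional and $M^{\rightvee}\cong N[-p]$'' is precisely the disjunction of (0) and (1): (1) is this statement with (0) excluded, and (0) is a special case, since over $\Lambda=\End_{\kk}(V)$ one computes $\RHom_{\Lambda}(\tuD(V),\Lambda)\cong V=N$ in degree $0$. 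Thus $\scrT_{C,\Lambda}$ is an isomorphism iff (0) or (1) holds, and when (1) holds but not (0) this gives $\alpha_{r}=1$.

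For $a\geq 2$, $L_{a}\simeq 0$ iff $M\lotimes_{\Lambda}N\simeq 0$ (as $N\not\simeq 0$); when $L_{a}\simeq 0$ both source and target of $\scrT_{C^{a},\Lambda}$ are acyclic so $\scrT_{C^{a},\Lambda}$ is automatically an isomorphism, while when $L_{a}\not\simeq 0$ the source is nonzero and $\scrT_{C^{a},\Lambda}$ is an isomorphism iff $\scrS$ and $\mathsf{h}$ are. Hence, for every $a\geq 2$, $\scrT_{C^{a},\Lambda}$ is an isomorphism iff $M\lotimes_{\Lambda}N=0$ (condition (2)) or [$M$ exceptional and $\scrS$ an isomorphism] (conditions (0) or (1)); that is, iff one of (0), (1), (2) holds. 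As this is independent of $a$, the right asid condition is equivalent to the disjunction of (0), (1), (2). These three are mutually exclusive --- (1) excludes (0) by definition, while (0) yields $M\lotimes_{\Lambda}N\cong\kk$ and (1) yields $M\lotimes_{\Lambda}N\cong\kk[p]$, both nonzero, hence both exclude (2) --- so $\{a\geq 0:\scrT_{C^{a},\Lambda}\text{ is an isomorphism}\}$ equals $\{a\geq 0\}$, $\{a\geq 1\}$ or $\{a\geq 2\}$ according as (0), (1) or (2) holds, giving $\alpha_{r}=0$, $1$ or $2$ respectively. The step I expect to be the main obstacle is the third paragraph: pinning down exactly when the tensor factor $\scrS$ is an isomorphism, i.e. carrying out the $M\lotimes_{\Lambda}-$ computation cleanly and confirming that $\scrS$ is the canonical (in particular nonzero) comparison map rather than accidentally degenerate.
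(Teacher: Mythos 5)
Your proposal is correct and follows essentially the same route as the paper: reduce via Lemma \ref{tbc lemma 2} to deciding when the $\kk$-tensor factors $\mathsf{lm}$, $\mathsf{h}$, $\scrS$, $\id_{L_a}$ are isomorphisms, with the analysis of $\mathsf{lm},\mathsf{h}$ and of $\scrS,\mathsf{h}$ (including the trick of applying $M\lotimes_{\Lambda}-$ to $\scrS$ to force $M\lotimes_{\Lambda}N\cong\kk[p]$) being exactly the content the paper isolates as Lemma \ref{tbc lemma 3}. The only difference is organizational — you compute the full set of $a$ with $\scrT_{C^{a},\Lambda}$ invertible rather than arguing by cases on $\alpha_{r}$ — and your justification that $\scrS$ is actually invertible in the converse direction is about as terse as the paper's ("left to the readers").
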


We prepare the following lemma

\begin{lemma}\label{tbc lemma 3} 
\begin{enumerate}[(1)]
\item The morphisms  $\mathsf{lm}$ and $\mathsf{h}$ are isomorphisms 
 if and only if $\Lambda = \End_{\kk}(V), \ N = V$ and $ M = \tuD(V)$ 
for some finite dimensional vector space $V$. 

\item 
The morphism  $\scrS$ and $\mathsf{h}$ are isomorphisms 
if and only if 
 $M$ is exceptional and $\RHom_{\Lambda}(M, \Lambda) \cong N[-p]$ for some $p \in \ZZ$. 
\end{enumerate}

Moreover, in either cases, we have $M \lotimes_{\Lambda} N \neq 0$. 

\end{lemma}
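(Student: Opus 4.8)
The plan is to handle the three morphisms $\mathsf{h}$, $\mathsf{lm}$, $\scrS$ separately. Since $\gldim\Lambda<\infty$, every finitely generated $\Lambda$- or $\Lambda^{\op}$-module is perfect, so $M,N\in\sfK^{\mrb}(\proj\,\cdot)$, which I use throughout. First I would dispose of $\mathsf{h}=\mathsf{can}\circ\tilde{\mathsf{h}}$: it is an isomorphism in $\sfD^{\mrb}(\mod\kk)$ exactly when $\RHom_\Lambda(M,M)$ is concentrated in cohomological degree $0$ (so $\mathsf{can}$ is an isomorphism) and $\tilde{\mathsf{h}}\colon\kk\to\Hom_\Lambda(M,M)$ is an isomorphism, i.e.\ exactly when $M$ is exceptional. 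This observation enters both parts of the lemma.

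For (1): the map $\mathsf{lm}\colon\Lambda\to\Hom_\kk(N,N)$ is the structure homomorphism of the module $N$, hence a unital algebra homomorphism; if it is bijective then $\Lambda\cong\End_\kk(N)$ acting naturally on $N$, so $\Lambda$ is a full matrix algebra (in particular semisimple) and $N$ is its natural, i.e.\ unique simple faithful, module. Combined with $\mathsf{h}$ an isomorphism, $M$ is exceptional, hence indecomposable, hence --- over the semisimple $\Lambda\cong\End_\kk(N)$ --- simple; the unique simple right $\Lambda$-module is $\tuD(N)$, so $M\cong\tuD(N)$. Setting $V:=N$ gives the asserted form. The converse is a direct check: for $\Lambda=\End_\kk(V)$, $N=V$, $M=\tuD(V)$ the map $\mathsf{lm}$ is the identity and $M$ is the (exceptional, since $\Lambda$ is semisimple) simple right module, so both $\mathsf{lm}$ and $\mathsf{h}$ are isomorphisms.

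For (2), having reduced to $M$ exceptional, I must show $\scrS$ is an isomorphism iff $\RHom_\Lambda(M,\Lambda)\cong N[-p]$ for some $p\in\ZZ$. The crucial maneuver is to reinterpret $\scrS$: using perfectness of $M$ and the standard natural isomorphisms relating $\lotimes$ and $\RHom$ on perfect complexes (the $\scrJ$-, $\scrF$-, $\scrG$-type morphisms of Section~\ref{Categorical characterization}) one obtains a natural isomorphism $M\lotimes_\Lambda N\cong\RHom_{\Lambda^{\op}}(M^{\rightvee},N)$, and I would check on finitely generated projective representatives that under it $\scrS$ becomes the evaluation (double‑dual) morphism
\[
\eta\colon M^{\rightvee}\longrightarrow\RHom_\kk\bigl(\RHom_{\Lambda^{\op}}(M^{\rightvee},N),\,N\bigr).
\]
If $M^{\rightvee}\cong N[-p]$, then since $(-)^{\rightvee}$ is a contravariant equivalence, $\RHom_{\Lambda^{\op}}(N,N)\cong\RHom_\Lambda(M,M)\cong\kk$, so $N$ is exceptional and $\eta$ is a shift of the evaluation morphism $N\to\RHom_\kk(\RHom_{\Lambda^{\op}}(N,N),N)=\RHom_\kk(\kk,N)=N$, hence an isomorphism. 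Conversely, if $\eta$ is an isomorphism, then $M^{\rightvee}\cong\RHom_\kk(M\lotimes_\Lambda N,N)\cong\tuD(M\lotimes_\Lambda N)\otimes_\kk N$ (using $\dim_\kk N<\infty$); applying $(-)^{\leftvee}$ and finite-dimensionality again gives $M\cong(M\lotimes_\Lambda N)\otimes_\kk N^{\leftvee}$, whence
\[
\kk\cong\RHom_\Lambda(M,M)\cong\RHom_\kk(M\lotimes_\Lambda N,\,M\lotimes_\Lambda N)\otimes_\kk\RHom_\Lambda(N^{\leftvee},N^{\leftvee}).
\]
Both tensor factors are nonzero and each carries an identity element in cohomological degree $0$, so a Künneth count over the field $\kk$ forces each to have one-dimensional total cohomology, hence each is isomorphic to $\kk$; thus $M\lotimes_\Lambda N\cong\kk[-s]$ for some $s$ and $N^{\leftvee}$ (equivalently $N$) is exceptional, so $M^{\rightvee}\cong\tuD(M\lotimes_\Lambda N)\otimes_\kk N\cong N[s]$, i.e.\ $\RHom_\Lambda(M,\Lambda)\cong N[-p]$ with $p=-s$. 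Finally, for the last assertion: in case (1), $M\lotimes_\Lambda N\cong\tuD(V)\otimes_{\End_\kk(V)}V\cong\kk\neq0$, and in case (2), $M\lotimes_\Lambda N\cong\RHom_{\Lambda^{\op}}(N[-p],N)\cong\kk[p]\neq0$.

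The step I expect to be the main obstacle is the identification of $\scrS$ with the biduality morphism $\eta$: it requires careful tracking of the left/right/$\op$ bimodule bookkeeping and an explicit verification at the level of projective representatives that ``apply $-\lotimes_\Lambda N$ to a $\Lambda$-linear map into $\Lambda$'' literally coincides with evaluation against $N$. Once that is in place the argument is only exceptionality manipulations together with the elementary Künneth count, both routine.
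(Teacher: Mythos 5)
Your proposal is correct. Part (1) is essentially the paper's own argument: $\mathsf{lm}$ is a unital algebra map, so bijectivity forces $\Lambda=\End_{\kk}(N)$ acting tautologically, and then exceptionality of $M$ over the matrix algebra pins $M$ down as the simple module $\tuD(V)$. For part (2) you take a genuinely different route. The paper's proof is a one-line trick: apply $M\lotimes_{\Lambda}-$ to $\scrS$ itself, obtaining $\REnd_{\Lambda}(M)\xrightarrow{\;\cong\;}\REnd_{\kk}(M\lotimes_{\Lambda}N)$; with $\mathsf{h}$ an isomorphism the left side is $\kk$, which forces $M\lotimes_{\Lambda}N\cong\kk[p]$, and then the target of $\scrS$ is literally $\Hom_{\kk}(\kk[p],N)\cong N[-p]$ (this also yields the ``moreover'' clause, and the converse is left to the reader). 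You instead identify $\scrS$ with the biduality morphism $\eta\colon M^{\rightvee}\to\RHom_{\kk}(\RHom_{\Lambda^{\op}}(M^{\rightvee},N),N)$ and extract $M\cong(M\lotimes_{\Lambda}N)\otimes_{\kk}N^{\leftvee}$, after which the K\"unneth count over $\kk$ forces both $M\lotimes_{\Lambda}N\cong\kk[-s]$ and $N$ exceptional. Your version is longer and hinges on the deferred verification that $\scrS=\eta$ under the canonical identifications (a bookkeeping check comparable in weight to the identification of $M\lotimes_{\Lambda}\scrS$ with the map induced by $-\lotimes_{\Lambda}N$ that the paper's one-liner tacitly uses), but it buys you a complete proof of the converse implication, which the paper omits, and it makes the exceptionality of $N$ explicit along the way. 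Both arguments are sound.
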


\begin{proof}
(1) 
 Since the map $\mathsf{lm}$ is an algebra homomorphism, 
it is an isomorphism if and only if 
the algebra $\Lambda$ is the endomorphism $\End_{\kk}(V)$ 
of the $\kk$-vector space $V := {N}_{\kk}$. 
If this is the case, then $\Lambda$ is a full matrix algebra. 
Thus it is easy to see that the homothety map $\mathsf{h}$ is an isomorphism 
if and only if  $M = \tuD(V)$ as left modules and $N \otimes_{\kk} M \cong \End_{\kk}(V)$ as bimodules. 
In this case it is clear that  $M \lotimes_{\Lambda} N \neq 0$.

(2) 
First assume that 
the morphism  $\scrS$ and $\mathsf{h}$ are isomorphisms. 
Applying $M\lotimes_{\Lambda} -$ to the isomorphism $\scrS$, 
we obtain the isomorphism 
$M \lotimes \scrS : \RHom_{\Lambda}(M,M) \xrightarrow{\cong} \Hom_{\kk}(M\lotimes_{\Lambda} N, M \lotimes_{\Lambda} N)$. 
Since $\RHom_{\Lambda}(M,M) \cong \kk$, 
we must have 
$M \lotimes_{\Lambda} N \cong \kk[p]$ for some integer $p \geq 0$. 
In particular, we have $M \lotimes_{\Lambda} N \neq 0$. 
Now  $\scrS$ become  an  an isomorphism $\RHom_{\Lambda}(M, \Lambda) \cong N[-p]$. 

The converse implication can easily be proved and left to the readers. 
\end{proof}

\begin{proof}[Proof of Proposition \ref{tbc proposition}]
We assume that the right asid  condition  is satisfied. 
By Lemma \ref{tbc lemma 2}, 
if $\alpha_{r} = 0$, then $\mathsf{lm}$ and $\mathsf{h}$ are isomorphisms. 
Therefore by Lemma \ref{tbc lemma 3} the condition (0) holds. 
In the case  $\alpha_{r} =1$,  
$\scrS$ and $\mathsf{h}$ are isomorphisms by Lemma \ref{tbc lemma 2}. 
Therefore by Lemma \ref{tbc lemma 3} the condition (1) holds. 
Finally,  
we assume that $\alpha_{r} \geq 2$. 
If $M \lotimes_{\Lambda} N \neq 0$, then we have $L_{a} \neq 0$. 
Therefore,  since $\scrT_{C^{a}, \Lambda}$ is an isomorphism for some $a \geq 1$, 
$\scrS$ and $\mathsf{h}$ are isomorphisms by Lemma \ref{tbc lemma 2}.  
Hence, again by Lemma \ref{tbc lemma 2} we have $\alpha_{r} \leq 1$.  
This contradicts to the assumption. 
Thus, if $\alpha_{r} \geq 2$, 
we must have $M \lotimes_{\Lambda} N = 0$. 
Moreover, by Lemma \ref{tbc lemma 2} the last condition implies that $\alpha_{r} = 2$.

By Lemma \ref{tbc lemma 2} and  Lemma \ref{tbc lemma 3}, 
it is easy to see that 
if one of the conditions (0), (1), (2) is  satisfied, 
 then $C$ satisfies the right asid  condition  and $\alpha_{r}$ has desired value.  
\end{proof}

We proceed a proof of Theorem \ref{tensor bimodule case theorem}. 

\begin{proof}[Proof of Theorem \ref{tensor bimodule case theorem}]
Combining Lemma \ref{tbc lemma 1}, Proposition \ref{tbc proposition},  
we deduce (1) and (2). 

(3-a) 
Since $\pd M < \infty$, 
the isomorphism $\RHom_{\Lambda}(M, \Lambda) \cong N[-p]$ implies that 
$p = \pd M$. 
Since derived the $\Lambda$-dual of the above isomorphism 
yields an isomorphism $\RHom_{\Lambda^{\op}}(N, \Lambda) \cong M[-p]$, 
we conclude that $p = \pd N$. 

(3-b) 
By Lemma \ref{varpi shift lemma}  and Theorem \ref{Adaching theorem 2},   
we have an equivalence $\stabgrCM A \simeq \sfT$ under which 
the autoequivalence $(1)$ of $\stabgrCM A$ corresponds to 
$- \lotimes_{\Lambda}C[1]$.

Observe that $\sfT = \thick M$. 
Since $M$ is exceptional,  
we have  an equivalence $F: \thick M  \simeq \sfD^{\mrb}(\mod \kk)$ 
which sends $M$ to $\kk$. 
Using the isomorphism $N \cong \RHom_{\Lambda}(M,\Lambda)[p]$
we obtain  the isomorphisms 
\[
M \lotimes_{\Lambda}C \cong M \lotimes_{\Lambda} N \otimes_{\kk} M \cong \RHom_{\Lambda}(M,M)\otimes_{\kk} M[p] \cong M[p].  
\]
Therefore, under above equivalence $F$, the autoequivalence $-\lotimes_{\Lambda}C[1]$ 
corresponds to $[p+1]$. 
Hence we obtain the desired equivalence.

(3-c) is  obtained by combining (3-b) and  Theorem \ref{MYY}.

(3-d) By (3-c),  
it is enough to show that $M$ is an indecomposable CM-$A$-module. 
Since $M$ is exceptional, it is indecomposable. 
To show that $M$ is CM, we construct a complete resolution  of $M$  as $A$-module. 
Let $0\to Q^{-p} \xrightarrow{ \partial^{-p}}  \cdots \xrightarrow{ \partial^{-1}} 
Q^{0} \rightarrow  M \to 0$ be a minimal projective resolution as $\Lambda$-modules. 
 Since $M \lotimes_{\Lambda} C \cong M \lotimes_{\Lambda}N \otimes_{\kk} M \cong M[p]$, 
applying $- \otimes_{\Lambda}C$ to the minimal projective resolution, we obtain the following exact sequence of $\Lambda$-modules 
\[
0\to  M \to Q^{-p} \otimes_{\Lambda} C 
\xrightarrow{ \partial^{-p} \otimes C} \cdots \xrightarrow{ \partial^{-1} \otimes C} Q^{0} \otimes_{\Lambda} C\to 0. 
\]
Therefore applying $- \otimes_{\Lambda}A$ to the minimal projective resolution, 
we obtain the following exact sequence of $A$-modules 
\[
0\to  M \to Q^{-p} \otimes_{\Lambda} A 
\xrightarrow{ \partial^{-p} \otimes A} \cdots \xrightarrow{ \partial^{-1} \otimes A} Q^{0} \otimes_{\Lambda} A
\to  M \to 0. 
\]
Splicing the copies of this sequence we obtain a complete resolution $P$ of $M$ as an $A$-module as desired.
\[
P: \ \ \ \cdots \to
Q^{0} \otimes_{\Lambda} A \xrightarrow{\ \ } Q^{-p} \otimes_{\Lambda} A 
\xrightarrow{ \partial^{-p} \otimes A} \cdots \xrightarrow{ \partial^{-1} \otimes A} Q^{0} \otimes_{\Lambda} A
\xrightarrow{ \ \ } 
Q^{-p} \otimes_{\Lambda} A \xrightarrow{ \partial^{-p} \otimes A} \cdots
\]
\end{proof}

\begin{remark}
We remark that we can prove the 
Gorenstein symmetric conjecture for 
a trivial extension algebra $A$ of the form $A = \Lambda \oplus (N \otimes_{\kk} M)$. 
Namely, $A$ is IG if and only if $\injdim A_{A} < \infty$ 
if and only if $\injdim {}_{A}A < \infty$. 
\end{remark}

\section{Application II: classification of asid bimodules}\label{Classification}

Theorem \ref{Adaching theorem 0} gives us a way to classify asid bimodules over an IG-algebra $\Lambda$ 
as follows: 

\bigskip
\noindent
\underline{Step 1.} 
Classify admissible subcategories $\sfT$ of $\sfK^{\mrb}(\proj \Lambda)$. 

\bigskip
\noindent
\underline{Step 2.} 
For an admissible subcategory $\sfT$, 
classify bimodules $C$ such that 
the functor $- \lotimes_{\Lambda} C$ acts $\sfT$ as an equivalence 
and nilpotently acts on $\sfT^{\perp}$.

\bigskip

In Section \ref{A2},  
we deal with  the path algebra $\Lambda=\kk Q$ of $A_{2}$ quiver $Q= 1 \xleftarrow{\alpha} 2$.
We demonstrate how to use the strategy and give the complete list of the asid bimodules over $\Lambda$. 

In Section \ref{A3}, 
we deal with  the path algebra $\Lambda=\kk Q$ of $A_{3}$ quiver $Q= 1 \xleftarrow{\alpha} 2 \xrightarrow{\beta} 3$. 
We give the complete  list of the asid bimodules $C$  such that $\Lambda \oplus C$ has infinite global dimension. 
The restriction is put for the sake of space. 

\begin{remark}
In this section, 
suffixes are used in a different way from other sections. 
For a $\Lambda$-module $M$ and $n\in \NN$,  we denote by $M^{n}$ the direct sum  $M^{\oplus n}$. 
\end{remark}

\subsection{Classification of asid bimodules over the path algebra of $A_{2}$ quiver}\label{A2}

Let $\Lambda = \kk[1 \xleftarrow{\alpha} 2]$ be the path algebra, 
$e_{1}, e_{2}$ the idempotent elements corresponding to the vertex $1,2$. 
We denote by 
$S_{1}^{\ell}$  the simple left $\Lambda$-module 
which corresponds to the vertex $1$. 
We denote by $S_{2}^{r}$ the simple right $\Lambda$-module 
which corresponds to the vertex $2$.

In the list below, we give an admissible subcategory $\sfT$ of $\sfK^{\mrb}(\proj \Lambda ) = \sfD^{\mrb}(\mod \Lambda)$ 
and an asid bimodule $C$ such that $\sfT = \thick C^{\alpha}$.

\begin{classification}\label{classification theorem A2}
 
\begin{enumerate}
\item[\exnumber{\textup{1}}] 
$\sfT_{1} = \mathsf{D}^{\mathrm{b}}(\textup{mod}  \Lambda)$.   
$C = \Lambda, D(\Lambda)$. 

\item[\exnumber{\textup{2}}] 

$\sfT_{2} =\thick  e_{1} \Lambda$.  
$C = \Lambda e_{1} \otimes_{\kk} e_{1} \Lambda$.

 \item[\exnumber{\textup{3}}]
$\sfT_{3}=\thick e_{2}\Lambda$.  
$C=\Lambda e_{2} \otimes_{\kk} e_{2} \Lambda$.

\item[\exnumber{\textup{4}}]
$\sfT_{4}=\thick S_{2}^{r}$.   
$C=S_{1}^{\ell} \otimes_{\kk} S_{2}^{r}$.

\item[\exnumber{\textup{5}}]
$\sfT_{5}=0$. 
$C=(\Lambda e_{2} \otimes_{\kk} e_{1} \Lambda)^{n}, (S_{1}^{\ell} \otimes_{\kk} e_{2} \Lambda)^{ n}, 
(\Lambda e_{1} \otimes_{\kk} S_{2}^{r})^{n}$ for $n \in \NN$. 
\end{enumerate}
\bigskip 

We note that 
the case \exnumber{\textup{1}} is presicely the case $\alpha = 0$ 
and that 
the case \exnumber{\textup{5}} is precisely the case $\gldim A < \infty$.
\end{classification}

Before starting the classification, 
we recall basic facts about $\Lambda = \kk[1 \leftarrow 2]$ (for the details we refer  \cite{Happel book}). 
The algebra $\Lambda$ has three indecomposable modules up to isomorphisms, 
whose quiver representations are given by
\[
P_1= e_{1} \Lambda =
( \xymatrix@!=10pt{ \kk  & 0 \ar@{.>}[l]}),
\hspace{5mm} 
P_2= e_{2} \Lambda  =
( \xymatrix@!=10pt{ \kk & \kk  \ar[l]}),
\hspace{5mm} 
I_2= S_{2}^{r} =
( \xymatrix@!=10pt{ 0  & \kk \ar@{.>}[l] }),
\]
where solid arrow is identity map and dotted arrows are zero maps.
%The Auslander-Reiten quiver of $\mod\Lambda$ is given by
%\[\xymatrix@!=5pt{ & P_2 \ar[rd] & \\P_1 \ar[ru] & & I_2 \ar@{.>}[ll] }\]
%Since $\Lambda$ has finite global dimension, 
%the homotopy category $\sfK^{\mrb}(\proj\Lambda)$ is canonically equivalent to the derived category $\sfD^{\mrb}(\mod\Lambda)$.
%Therefore we deal with $\sfD^{\mrb}(\mod\Lambda)$ instead of $\sfK^{\mrb}(\proj\Lambda)$.
%Since $\Lambda$ is hereditary, 
The complete set of pairwise non-isomorphic indecomposable objects of $\sfD^{\mrb}(\mod\Lambda)$ is given by
\[
\{ X[i] \ | \ X: \mbox{indecomposable $\Lambda$-module}, \ i \in \ZZ \}
\]
and the Auslander-Reiten quiver of $\sfD^{\mrb}(\mod\Lambda)$ is the following.
\[ 
\xymatrix@!=10pt{
\cdots & \ar[rd]   & & P_1[-1] \ar[rd]\ar@{.>}[ll] & & I_2[-1] \ar[rd]  \ar@{.>}[ll]  &  & P_2 \ar@{.>}[ll] \ar[rd] & & P_1[1] \ar[rd]  \ar@{.>}[ll]  & & I_2[1] \ar@{.>}[ll] \ar[rd] & &\ar@{.>}[ll]&\cdots \\
 & & I_2[-2]\ar[ru] & & P_2[-1] \ar@{.>}[ll] \ar[ru] & & P_1 \ar@{.>}[ll] \ar[ru] & & I_2 \ar@{.>}[ll] \ar[ru] & & P_2[1] \ar@{.>}[ll] \ar[ru] & & P_1[2] \ar[ru]\ar@{.>}[ll] & &
}
\]

Now we give sketches of Step 1 and Step 2 for the algebra $\Lambda$. 

\bigskip

\noindent
\underline{Step 1.} 
We classify admissible subcategories of $\sfK^{\mrb}(\proj\Lambda)=\sfD^{\mrb}(\mod\Lambda)$.
Since $\Lambda$ is a path algebra of a Dynkin quiver, 
we can classify thick subcategories of $\sfD^{\mrb}(\mod\Lambda)$ by applying \cite[Theorem 5.1]{Bruning} and \cite[Theorem 1.1]{IT}.
One can check that all of them are admissible subcategories.
Thus we obtain the complete list of admissible subcategories, which is shown in the table \ref{class_adm} below.
%This example has five admissible subcategories of $\sfD^{\mrb}(\mod\Lambda)$.

%%%%%%%%%%%%%%%%%%%%%%%%%%%%%%%%%%%%%%%%%
%%%%%%%%%%%%%%%%%%%%%%%%%%%%%%%%%%%%%%%%% list of admissible subcategories (add version)
%%%%%%%%%%%%%%%%%%%%%%%%%%%%%%%%%%%%%%%%%
\begin{table}[h]
\begin{center}
\begin{tabular}{lll}
%%%%%%%%%%%%%%%%%%%%%%%%
\exnumber{1} \hspace{1mm}
$\sfT_{1}=\sfD^{\mrb}(\mod\Lambda)$, 
&
%%%%%%%%%%%%%%%%%%%%%%%%
\exnumber{2} \hspace{1mm}
$\sfT_{2}=\thick P_{1}$,
&
%%%%%%%%%%%%%%%%%%%%%%%%%
\exnumber{3} \hspace{1mm}
$\sfT_{3}=\thick P_{2}$, 
\\
%%%%%%%%%%%%%%%%%%%%%%%%
\exnumber{4} \hspace{1mm}
$\sfT_{4}=\thick I_{2}$, 
&
%%%%%%%%%%%%%%%%%%%%%%%%
\exnumber{5} \hspace{1mm}
$\sfT_{5}=0$.
\end{tabular}
\end{center}
\caption{List of admissible subcategories of $\sfD^{\mrb}(\mod\Lambda)$} \label{class_adm}
\end{table}

\noindent
\underline{Step 2.} 
%For $i=1,2$, we denote by $e_{i}$ the idempotent of $\Lambda$ corresponding to the vertex $i$ of $Q$.
We represent a $\Lambda$-$\Lambda$-bimodule $C$ as a quiver representation
\[
\xymatrix@R=20pt@C=11pt{
e_1Ce_1 \ar[d]_{\alpha \, \cdot}  && e_1Ce_2 \ar[ll]_{\cdot \, \alpha} \ar[d]^{\alpha \, \cdot} \\
e_2Ce_1  && e_2Ce_2  \ar[ll]^{\cdot \, \alpha}  \\ 
}
\]
which makes the square commutative. 
This is nothing but a quiver representation of $C$ as a $\Lambda^{\mre}$-module.

By Morita Theory, we can recover a $\Lambda$-$\Lambda$-bimodule $C$ from the functor $\cT= - \lotimes_{\Lambda} C$ in the following way.
The right $\Lambda$-module structure of $C = e_{1}C \oplus e_{2} C$ can be read off from a canonical isomorphism $\cT(P_{i}) \cong e_{i}C$.
The left $\Lambda$-module structure of $C$ can be read off from the commutative diagram
\[
\xymatrix@!=10pt{
\cT(P_1) \ar[rr]^{\cT(\alpha \cdot)} \ar[d]_{\cong} & & \cT(P_2) \ar[d]^{\cong} \\
e_1C \ar[rr]_{\alpha\cdot} & & e_2C
}
\]
where the vertical maps are canonical isomorphisms.
From these datum, we may represent $C$ as a quiver representation.

Thanks to the above observation, 
we can calculate quiver representations of asid bimodules corresponding to each admissible subcategory $\sfT$ by referring possible equivalences on $\sfT$ and nilpotent endofunctors on $\sfT^{\perp}$.
In the following example, we explain how to do it for a concrete admissible subcategory.

%%%%%%%%%%%%%%%%%%%%%%%%

\begin{example}
We deal with the admissible subcategory 
$\sfT_{2}$. 
First observe that $\sfT_{2}=\add\{ P_1[i] \ | \ i \in \ZZ \}$ and $\sfT_{2}^{\perp}=\add\{ I_2[i] \ | \ i\in \ZZ \}$. 

Let $C$ be a bimodule such that $\cT = - \lotimes_{\Lambda}C$ acts on $\sfT_{2}$ as an equivalence 
 and nilpotently acts on $\sfT_{2}^{\perp}$.
Since  $\cT$  acts on $\sfT_{2}$ as an equivalence,  
$\cT(P_1) \cong P_{1}[n]$ for some $n \in \ZZ$. 
On the other hand  $\cT(P_{1}) = e_{1} C$ belongs to $\mod \Lambda$.  
Therefore we conclude that $e_{1} C = \cT(P_{1}) \cong P_1$.

We claim that $\cT(I_{2}) = 0$. 
Indeed, by the assumption $\cT(\sfT_{2}^{\perp} ) \subset \sfT_{2}^{\perp}$,  
we have $\cT(I_2) \cong I_2^{m} \oplus I_2[1]^{ n}$ for some non-negative integers $m,n$. 
Since $\cT^{a}(I_{2}) = 0 $ for $a \gg 0$,  $m$ and $n$ must be zero, namely  $\cT(I_2) = 0$.

By applying $\cT$ to the triangle \eqref{tri-alpha} below, 
we see that $\cT(\alpha \cdot):\cT(P_1) \to \cT(P_2)$ is an isomorphism.
\begin{equation}\label{tri-alpha}
I_2[-1] \to P_1 \xrightarrow{\alpha \cdot} P_2 \to I_1 
\end{equation} 

We conclude that 
the right module structure of $C$ is
\[
e_1C = P_1=( \xymatrix@!=10pt{ \kk  & 0 \ar@{.>}[l]}), \
e_2C = P_1=( \xymatrix@!=10pt{ \kk  & 0 \ar@{.>}[l]}),
\]
and the left module structure of $C$ is given by a map
\[
\xymatrix@!=10pt{ e_1C \ar[d]_{\alpha \cdot} & = & \kk  \ar[d]_{a \, \cdot} & 0 \ar@{.>}[l] \ar@{.>}[d] \\ e_2C & = & \kk  & 0 \ar@{.>}[l] }
\]
for some  $a \in \kk\backslash\{0\}$.
Thus the representation of  $C$ is given by
\[
\xymatrix@!=10pt{
\kk  \ar[d]_{a \, \cdot} & 0 \ar@{.>}[l] \ar@{.>}[d] \\
\kk  & 0 \ar@{.>}[l]. \\
}
\]
It is easy to check that this bimodule is isomorphic to $\Lambda e_{1} \otimes_{\kk} e_{1} \Lambda$. 

%\exnumber{2-1} in the table \ref{class_asid}.

\end{example}

%%%%%%%%%%%%%%%%%%%%%%%%
\begin{example} 
We deal with the case $\sfT_{5}= 0$. 
We note that this is precesely the case where $A = \Lambda \oplus C$ has finite global dimension.
 
Let $C$ be a bimodule. 
Then  $\cT= - \lotimes_{\Lambda} C$  always acts on $\sfT_{5} = 0$ as an equivalence.  
Thus we only have to study the condition that $\cT$  nilpotently acts on $\sfT^{\perp}= \sfD^{\mrb}(\mod \Lambda)$.
We assume that $C$ satisfies this condition. 
Observes that  at least one of $\cT(P_1)=0$, $\cT(P_2)=0$ and $\cT(I_2)=0$ holds.

We discuss the case $\cT(P_1)=0$.
Since $\cT(P_{2}) = e_{2} C$ belongs to $\mod \Lambda$, 
we have 
$\cT(P_2) \cong P_1^{n} \oplus P_2^{m} \oplus I_2^{l}$ for some $l,m,n \in \NN$. 
By applying $\cT$ to the triangle \eqref{tri-alpha}, we have an isomorphism $\cT(P_2) \cong \cT(I_2)$. 
Since $\cT^{a}(P_{2}) = 0$ for $a \gg 0$, we see that  $m$ and $l$ must be zero, namely $\cT(P_2) \cong P_1^{n}$.

We conclude that  the right module structure of $C$ is
\[
e_1C = ( \xymatrix@!=10pt{ 0  & 0 \ar@{.>}[l]}), \
e_2C =P_1^{n}=( \xymatrix@!=10pt{ \kk^n  & 0 \ar@{.>}[l]}).
\]
Thus the representation of  $C$ is given by
\[
\xymatrix@!=10pt{
0  \ar@{.>}[d] & 0 \ar@{.>}[l] \ar@{.>}[d] \\
\kk^n  & 0 \ar@{.>}[l]. \\
}
\]
It is clear that this bimodule is isomorphic to $(\Lambda e_{2}  \otimes_{\kk} e_{1}\Lambda)^{n}$. 

By the similar arguments,  
 in the case $\cT(P_2) =0$ or $\cT(I_2)=0$,  
we can show that $C$ is isomorphic to  one of  the bimodules 
listed in \exnumber{5}. 
%to \exnumber{5-2} (resp. \exnumber{5-3}).
Consequently we have three asid bimodules corresponding to $\sfT_{5}=0$.
\end{example}

\subsection{Classification of asid bimodules over the path algebra of $A_{3}$ quiver}\label{A3}

Let $\Lambda=\kk Q$ be the path algebra of $A_{3}$ quiver $Q$. 
\[
Q=\xymatrix{
1 & 2 \ar[l]_{\alpha} \ar[r]^{\beta} & 3.
}
\]
Below, 
we give  the list of asid bimodules $C$ over $\Lambda$ 
such that $A$ has infinite global dimension. 
The restriction $\gldim A = \infty$ is put only for the sake of space.

We use the quiver representation as below to exhibit  a bimodule $C$.  
\[
\xymatrix@R=20pt@C=8pt{
e_1Ce_1 \ar[d]_{\alpha \, \cdot} && e_1Ce_2 \ar[ll]_{\cdot \, \alpha} \ar[rr]^{\cdot \, \beta}  \ar[d]_{\alpha \, \cdot} && e_1Ce_3  \ar[d]^{\alpha \, \cdot}  \\
e_2Ce_1 && e_2Ce_2  \ar[ll]_{\cdot \, \alpha} \ar[rr]^{\cdot \, \beta} && e_2Ce_3 \\
e_3Ce_1 \ar[u]^{\beta \, \cdot} && e_3Ce_2  \ar[ll]^{\cdot \, \alpha} \ar[rr]_{\cdot \, \beta} \ar[u]^{\beta \, \cdot}  && e_3Ce_3 \ar[u]_{\beta \, \cdot}  
}
\]
%of $\kk$-vector spaces $e_iCe_j$ and $\kk$-linear maps between them such that each square is commutative. 
%This is nothing but a quiver representation of $C$ as a $\Lambda^{\mre}$-module.
%quiver representation as $A^e$-module
%\[
%\xymatrix@!=14pt{
%(1,1) \ar[d]_{\alpha \otimes e_1} && (1,2) \ar[ll]_{e_1 \otimes \alpha } \ar[rr]^{e_1 \otimes \beta}  \ar[d]_{\alpha \otimes e_2} && (1,3)  \ar[d]^{\alpha \otimes e_3}  \\
%(2,1) &&(2,2)  \ar[ll]_{e_2 \otimes \alpha } \ar[rr]^{e_2 \otimes \beta} &&(2,3) \\
%(3,1) \ar[u]^{\beta \otimes e_1} &&(3,2)  \ar[ll]^{e_3 \otimes \alpha } \ar[rr]_{e_3 \otimes \beta} \ar[u]^{\beta \otimes e_2}  &&(3,3) \ar[u]_{\beta \otimes e_3}  
%}
%\]

In the list, the asid bimodules exhibited in \exnumber{i-s}
and \exnumber{i-t} have the same admissible subcategory 
as asid subcategories.

%\bigskip

%%%%%%%%%%%%%%%%%%%%%%%%%%%%%%%%%%%%%%%%%
%%%%%%%%%%%%%%%%%%%%%%%%%%%%%%%%%%%%%%%%% list of asid modules
%%%%%%%%%%%%%%%%%%%%%%%%%%%%%%%%%%%%%%%%%
\begin{longtable}[l]{llll}%
\caption{List of asid bimodules over $1 \leftarrow 2 \rightarrow 3$}\label{class_asid A3} \\ 
%%%%%%%%%%%%%%%%%%%%%%%%
\exnumber{1-1} \hspace{1mm}
$\xymatrix@!=7pt{
\kk \ar[d] & 0 \ar@{.>}[l] \ar@{.>}[r] \ar@{.>}[d] & 0  \ar@{.>}[d] \\
\kk & \kk \ar[l] \ar[r] & \kk \\
0 \ar@{.>}[u] & 0 \ar@{.>}[l] \ar@{.>}[r] \ar@{.>}[u] & \kk \ar[u]
}$
&
%%%%%%%%%%%%%%%%%%%%%%%%
\exnumber{1-2} \hspace{1mm}
$\xymatrix@!=7pt{
0 \ar@{.>}[d] & 0 \ar@{.>}[l] \ar@{.>}[r] \ar@{.>}[d] & \kk  \ar[d] \\
\kk & \kk \ar[l] \ar[r] & \kk \\
\kk \ar[u] & 0 \ar@{.>}[l] \ar@{.>}[r] \ar@{.>}[u] & 0 \ar@{.>}[u]
}$
&
%%%%%%%%%%%%%%%%%%%%%%%%
\exnumber{1-3} \hspace{1mm}
$\xymatrix@!=7pt{
0 \ar@{.>}[d] & \kk \ar@{.>}[l] \ar[r] \ar[d] & \kk  \ar@{.>}[d] \\
0& \kk \ar@{.>}[l] \ar@{.>}[r] & 0 \\
\kk \ar@{.>}[u] &\kk \ar[l] \ar@{.>}[r] \ar[u] &0 \ar@{.>}[u]
}$ 
&
%%%%%%%%%%%%%%%%%%%%%%%%
\exnumber{1-4} \hspace{1mm}
$\xymatrix@!=7pt{
\kk \ar@{.>}[d] &\kk \ar[l] \ar@{.>}[r] \ar[d] & 0  \ar@{.>}[d] \\
0&\kk \ar@{.>}[l] \ar@{.>}[r] & 0 \\
0 \ar@{.>}[u] &\kk \ar@{.>}[l] \ar[r] \ar[u] & \kk \ar@{.>}[u]
}$ 
\\
& & & \\
%\end{longtable}
%\begin{longtable}[l]{llll}
%%%%%%%%%%%%%%%%%%%%%%%%
\exnumber{2-1} \hspace{1mm}
$\xymatrix@!=7pt{
\kk \ar[d] & \kk \ar[l] \ar@{.>}[r] \ar[d] & 0  \ar@{.>}[d] \\
\kk & \kk \ar[l] \ar@{.>}[r] & 0 \\
\kk \ar[u] & 0 \ar@{.>}[l] \ar@{.>}[r] \ar@{.>}[u] & 0 \ar@{.>}[u]
}$ 
&
%%%%%%%%%%%%%%%%%%%%%%%%
\exnumber{2-2} \hspace{1mm}
$\xymatrix@!=7pt{
0 \ar@{.>}[d] & \kk \ar@{.>}[l] \ar@{.>}[r] \ar[d] & 0 \ar@{.>}[d] \\
0 & \kk \ar@{.>}[l] \ar@{.>}[r] & 0 \\
\kk \ar@{.>}[u] & \kk \ar[l] \ar@{.>}[r] \ar[u] & 0 \ar@{.>}[u]
}$
&
%%%%%%%%%%%%%%%%%%%%%%%%
\exnumber{3-1} \hspace{1mm}
$\xymatrix@!=7pt{
0 \ar@{.>}[d] & 0 \ar@{.>}[l] \ar@{.>}[r] \ar@{.>}[d] & \kk  \ar[d] \\
0&\kk \ar@{.>}[l] \ar[r] & \kk \\
0 \ar@{.>}[u] &\kk \ar@{.>}[l] \ar[r] \ar[u] & \kk \ar[u]
}$ 
&
%%%%%%%%%%%%%%%%%%%%%%%%
\exnumber{3-2} \hspace{1mm}
$\xymatrix@!=7pt{
0 \ar@{.>}[d] &\kk \ar@{.>}[l] \ar[r] \ar[d] & \kk \ar@{.>}[d] \\
0&\kk \ar@{.>}[l] \ar@{.>}[r] & 0 \\
0 \ar@{.>}[u] &\kk \ar@{.>}[l] \ar@{.>}[r] \ar[u] & 0 \ar@{.>}[u]
}$
\\
& & & \\
%%%%%%%%%%%%%%%%%%%%%%%%
\exnumber{4-1} \hspace{1mm}
$\xymatrix@!=7pt{
\kk \ar[d] & 0 \ar@{.>}[l] \ar@{.>}[r] \ar@{.>}[d] & 0  \ar@{.>}[d] \\
\kk & \kk \ar[l] \ar[r] & \kk \\
0 \ar@{.>}[u] & 0 \ar@{.>}[l] \ar@{.>}[r] \ar@{.>}[u] & 0 \ar@{.>}[u]
}$ 
&
%%%%%%%%%%%%%%%%%%%%%%%%
\exnumber{4-2} \hspace{1mm}
$\xymatrix@!=7pt{
\kk \ar@{.>}[d] & \kk \ar[l] \ar[r] \ar[d] & \kk \ar[d] \\
0 & \kk \ar@{.>}[l] \ar[r] & \kk \\
0 \ar@{.>}[u] & 0 \ar@{.>}[l] \ar@{.>}[r] \ar@{.>}[u] & 0 \ar@{.>}[u]
}$
&
%%%%%%%%%%%%%%%%%%%%%%%%
\exnumber{5-1} \hspace{1mm}
$\xymatrix@!=7pt{
0 \ar@{.>}[d] & 0 \ar@{.>}[l] \ar@{.>}[r] \ar@{.>}[d] & 0  \ar@{.>}[d] \\
\kk&\kk \ar[l] \ar[r] & \kk \\
0 \ar@{.>}[u] &0 \ar@{.>}[l] \ar@{.>}[r] \ar@{.>}[u] & \kk \ar[u]
}$ 
&
%%%%%%%%%%%%%%%%%%%%%%%%
\exnumber{5-2} \hspace{1mm}
$\xymatrix@!=7pt{
0 \ar@{.>}[d] &0 \ar@{.>}[l] \ar@{.>}[r] \ar@{.>}[d] & 0  \ar@{.>}[d] \\
\kk&\kk\ar[l] \ar@{.>}[r] & 0 \\
\kk \ar[u] &\kk \ar[l] \ar[r] \ar[u] &\kk \ar@{.>}[u]
}$ 
\\
& & & \\
%%%%%%%%%%%%%%%%%%%%%%%%
\exnumber{6-1} \hspace{1mm}
$\xymatrix@!=7pt{
\kk \ar[d] & 0 \ar@{.>}[l] \ar@{.>}[r] \ar@{.>}[d] & 0  \ar@{.>}[d] \\
\kk & 0 \ar@{.>}[l] \ar@{.>}[r] & \kk \\
0 \ar@{.>}[u] & 0 \ar@{.>}[l] \ar@{.>}[r] \ar@{.>}[u] & \kk \ar[u]
}$ 
&
%%%%%%%%%%%%%%%%%%%%%%%%
\exnumber{6-2} \hspace{1mm}
$\xymatrix@!=7pt{
0 \ar@{.>}[d] & 0 \ar@{.>}[l] \ar@{.>}[r] \ar@{.>}[d] & \kk \ar[d] \\
\kk & 0 \ar@{.>}[l] \ar@{.>}[r] & \kk \\
\kk \ar[u] & 0 \ar@{.>}[l] \ar@{.>}[r] \ar@{.>}[u] & 0 \ar@{.>}[u]
}$
&
%%%%%%%%%%%%%%%%%%%%%%%%
\exnumber{7-1} \hspace{1mm}
$\xymatrix@!=7pt{
0 \ar@{.>}[d] & \kk \ar@{.>}[l] \ar[r] \ar@{.>}[d] & \kk  \ar@{.>}[d] \\
0 & 0 \ar@{.>}[l] \ar@{.>}[r] & 0 \\
\kk \ar@{.>}[u] & \kk \ar[l] \ar@{.>}[r] \ar@{.>}[u] & 0 \ar@{.>}[u]
}$ 
&
%%%%%%%%%%%%%%%%%%%%%%%%
\exnumber{7-2} \hspace{1mm}
$\xymatrix@!=7pt{
\kk \ar@{.>}[d] & \kk \ar[l] \ar@{.>}[r] \ar@{.>}[d] & 0  \ar@{.>}[d] \\
0 & 0 \ar@{.>}[l] \ar@{.>}[r] & 0 \\
0 \ar@{.>}[u] &\kk \ar@{.>}[l] \ar[r] \ar@{.>}[u] &\kk\ar@{.>}[u]
}$ 
\\
& & & \\
%%%%%%%%%%%%%%%%%%%%%%%%
\exnumber{8-1} \hspace{1mm}
$\xymatrix@!=7pt{
0 \ar@{.>}[d] & 0 \ar@{.>}[l] \ar@{.>}[r] \ar@{.>}[d] & 0  \ar@{.>}[d] \\
\kk^n & 0 \ar@{.>}[l] \ar@{.>}[r] & \kk \\
0 \ar@{.>}[u] & 0 \ar@{.>}[l] \ar@{.>}[r] \ar@{.>}[u] & \kk \ar[u]
}$ 
&
%%%%%%%%%%%%%%%%%%%%%%%%
\exnumber{8-2} \hspace{1mm}
$\xymatrix@!=7pt{
\kk^n \ar@{.>}[d] & \kk^n \ar[l] \ar@{.>}[r] \ar@{.>}[d] & 0  \ar@{.>}[d] \\
0 & 0 \ar@{.>}[l] \ar@{.>}[r] & \kk \\
0 \ar@{.>}[u] & 0 \ar@{.>}[l] \ar@{.>}[r] \ar@{.>}[u] & \kk \ar[u]
}$
&
%%%%%%%%%%%%%%%%%%%%%%%%
\exnumber{8-3} \hspace{1mm}
$\xymatrix@!=7pt{
0 \ar@{.>}[d] &\kk^n \ar@{.>}[l] \ar@{.>}[r] \ar[d] & 0  \ar@{.>}[d] \\
0&\kk^n \ar@{.>}[l] \ar@{.>}[r] & \kk \\
0 \ar@{.>}[u] &0 \ar@{.>}[l] \ar@{.>}[r] \ar@{.>}[u] &\kk \ar[u]
}$ 
&
%%%%%%%%%%%%%%%%%%%%%%%%
\exnumber{9-1} \hspace{1mm}
$\xymatrix@!=7pt{
\kk \ar[d] &0 \ar@{.>}[l] \ar@{.>}[r] \ar@{.>}[d] & 0  \ar@{.>}[d] \\
\kk&0 \ar@{.>}[l] \ar@{.>}[r] & \kk^n \\
0 \ar@{.>}[u] &0 \ar@{.>}[l] \ar@{.>}[r] \ar@{.>}[u] & 0 \ar@{.>}[u]
}$ 
\\
& & & \\
%%%%%%%%%%%%%%%%%%%%%%%%
\exnumber{9-2} \hspace{1mm}
$\xymatrix@!=7pt{
\kk \ar[d] & 0 \ar@{.>}[l] \ar@{.>}[r] \ar@{.>}[d] & 0  \ar@{.>}[d] \\
\kk & 0 \ar@{.>}[l] \ar@{.>}[r] & 0 \\
0 \ar@{.>}[u] & \kk^n \ar@{.>}[l] \ar[r] \ar@{.>}[u] & \kk^n \ar@{.>}[u]
}$ 
&
%%%%%%%%%%%%%%%%%%%%%%%%
\exnumber{9-3} \hspace{1mm}
$\xymatrix@!=7pt{
\kk \ar[d] & 0 \ar@{.>}[l] \ar@{.>}[r] \ar@{.>}[d] & 0  \ar@{.>}[d] \\
\kk & \kk^n \ar@{.>}[l] \ar@{.>}[r] & 0 \\
0 \ar@{.>}[u] & \kk^n \ar@{.>}[l] \ar@{.>}[r] \ar[u] & 0 \ar@{.>}[u]
}$
& & \\
\end{longtable}

\begin{longtable}[l]{ll}
%%%%%%%%%%%%%%%%%%%%%%%%
\exnumber{10-1} \hspace{1mm}
$\xymatrix@!=7pt{
0 \ar@{.>}[d] & 0 \ar@{.>}[l] \ar@{.>}[r] \ar@{.>}[d] & 0 \ar@{.>}[d] \\
\kk^n & 0 \ar@{.>}[l] \ar@{.>}[r] & 0 \\
\kk^n \ar[u] & 0 \ar@{.>}[l] \ar@{.>}[r] \ar@{.>}[u] & 0 \ar@{.>}[u]
}
\ \raisebox{-2.6eM}{\(\oplus\)} \
\xymatrix@!=7pt{
0 \ar@{.>}[d] &0 \ar@{.>}[l] \ar@{.>}[r] \ar@{.>}[d] & 0  \ar@{.>}[d] \\
0&0 \ar@{.>}[l] \ar@{.>}[r] & 0 \\
\kk \ar@{.>}[u] &\kk \ar[l] \ar@{.>}[r] \ar@{.>}[u] &0 \ar@{.>}[u]
}$ 
\hspace{10mm}
&
%%%%%%%%%%%%%%%%%%%%%%%%
\exnumber{10-2} \hspace{1mm}
$\xymatrix@!=7pt{
0 \ar@{.>}[d] &\kk^n \ar@{.>}[l] \ar[r] \ar[d] & \kk^n \ar[d] \\
0&\kk^n \ar@{.>}[l] \ar[r] & \kk^n \\
0 \ar@{.>}[u] &\kk^n \ar@{.>}[l] \ar[r] \ar[u] & \kk^n \ar[u]
}
\ \raisebox{-2.6eM}{\(\oplus\)} \
\xymatrix@!=7pt{
0 \ar@{.>}[d] &0 \ar@{.>}[l] \ar@{.>}[r] \ar@{.>}[d] & 0  \ar@{.>}[d] \\
0&0 \ar@{.>}[l] \ar@{.>}[r] & 0 \\
\kk \ar@{.>}[u] &\kk \ar[l] \ar@{.>}[r] \ar@{.>}[u] &0 \ar@{.>}[u]
}$
\\
& \\
%%%%%%%%%%%%%%%%%%%%%%%%
\exnumber{10-3} \hspace{1mm}
$\xymatrix@!=7pt{
\kk^n \ar@{.>}[d] & \kk^n \ar[l] \ar[r] \ar@{.>}[d] & \kk^n  \ar@{.>}[d] \\
0 & 0 \ar@{.>}[l] \ar@{.>}[r] & 0 \\
\kk \ar@{.>}[u] & \kk \ar[l] \ar@{.>}[r] \ar@{.>}[u] & 0 \ar@{.>}[u]
}$
&
%%%%%%%%%%%%%%%%%%%%%%%%
\exnumber{11-1} \hspace{1mm}
$\xymatrix@!=7pt{
0 \ar@{.>}[d] & 0 \ar@{.>}[l] \ar@{.>}[r] \ar@{.>}[d] & \kk^n \ar[d] \\
0 & 0 \ar@{.>}[l] \ar@{.>}[r] & \kk^n \\
0 \ar@{.>}[u] & 0 \ar@{.>}[l] \ar@{.>}[r] \ar@{.>}[u] & 0 \ar@{.>}[u]
}
\ \raisebox{-2.6eM}{\(\oplus\)} \
\xymatrix@!=7pt{
0 \ar@{.>}[d] &\kk \ar@{.>}[l] \ar[r] \ar@{.>}[d] & \kk \ar@{.>}[d] \\
0&0 \ar@{.>}[l] \ar@{.>}[r] & 0 \\
0 \ar@{.>}[u] &0 \ar@{.>}[l] \ar@{.>}[r] \ar@{.>}[u] &0 \ar@{.>}[u]
}$
\\
& \\ 
%%%%%%%%%%%%%%%%%%%%%%%%
\exnumber{11-2} \hspace{1mm}
$\xymatrix@!=7pt{
\kk^n \ar[d] & \kk^n \ar[l] \ar@{.>}[r] \ar[d] & 0 \ar@{.>}[d] \\
\kk^n & \kk^n \ar[l] \ar@{.>}[r] & 0 \\
\kk^n \ar[u] & \kk^n \ar[l] \ar@{.>}[r] \ar[u] & 0 \ar@{.>}[u]
}
\ \raisebox{-2.6eM}{\(\oplus\)} \
\xymatrix@!=7pt{
0 \ar@{.>}[d] & \kk \ar@{.>}[l] \ar[r] \ar@{.>}[d] &\kk  \ar@{.>}[d] \\
0&0 \ar@{.>}[l] \ar@{.>}[r] & 0 \\
0 \ar@{.>}[u] &0 \ar@{.>}[l] \ar@{.>}[r] \ar@{.>}[u] &0 \ar@{.>}[u]
}$
&
%%%%%%%%%%%%%%%%%%%%%%%%
\exnumber{11-3} \hspace{1mm}
$\xymatrix@!=7pt{
0 \ar@{.>}[d] & \kk \ar@{.>}[l] \ar[r] \ar@{.>}[d] & \kk  \ar@{.>}[d] \\
0 & 0 \ar@{.>}[l] \ar@{.>}[r] & 0 \\
\kk^n \ar@{.>}[u] & \kk^n \ar[l] \ar[r] \ar@{.>}[u] & \kk^n \ar@{.>}[u]
}$
\\
& \\
%%%%%%%%%%%%%%%%%%%%%%%%
\exnumber{12-1} \hspace{1mm}
$\xymatrix@!=7pt{
0 \ar@{.>}[d] & 0 \ar@{.>}[l] \ar@{.>}[r] \ar@{.>}[d] & 0  \ar@{.>}[d] \\
\kk & \kk \ar[l] \ar[r] & \kk \\
\kk^n \ar@{.>}[u] & 0 \ar@{.>}[l] \ar@{.>}[r] \ar@{.>}[u] & 0 \ar@{.>}[u]
}$
& 
%%%%%%%%%%%%%%%%%%%%%%%%
\exnumber{12-2} \hspace{1mm}
$\xymatrix@!=7pt{
0 \ar@{.>}[d] & 0 \ar@{.>}[l] \ar@{.>}[r] \ar@{.>}[d] & 0 \ar@{.>}[d] \\
0 & 0 \ar@{.>}[l] \ar@{.>}[r] & 0 \\
\kk^n \ar@{.>}[u] & 0 \ar@{.>}[l] \ar@{.>}[r] \ar@{.>}[u] & 0 \ar@{.>}[u]
}
\ \raisebox{-2.6eM}{\(\oplus\)} \
\xymatrix@!=7pt{
0 \ar@{.>}[d] & 0 \ar@{.>}[l] \ar@{.>}[r] \ar@{.>}[d] & 0 \ar@{.>}[d] \\
\kk &\kk \ar[l] \ar[r] & \kk \\
\kk \ar[u] &0 \ar@{.>}[l] \ar@{.>}[r] \ar@{.>}[u] &0 \ar@{.>}[u]
}$
\\
& \\
%%%%%%%%%%%%%%%%%%%%%%%%
\exnumber{12-3} \hspace{1mm}
$\xymatrix@!=7pt{
0 \ar@{.>}[d] & 0 \ar@{.>}[l] \ar@{.>}[r] \ar@{.>}[d] & \kk^n  \ar@{.>}[d] \\
\kk & \kk \ar[l] \ar[r] & \kk \\
0 \ar@{.>}[u] & 0 \ar@{.>}[l] \ar@{.>}[r] \ar@{.>}[u] & 0 \ar@{.>}[u]
}$
&
%%%%%%%%%%%%%%%%%%%%%%%%
\exnumber{12-4} \hspace{1mm}
$\xymatrix@!=7pt{
0 \ar@{.>}[d] & 0 \ar@{.>}[l] \ar@{.>}[r] \ar@{.>}[d] & \kk^n \ar@{.>}[d] \\
0 & 0 \ar@{.>}[l] \ar@{.>}[r] & 0 \\
0 \ar@{.>}[u] & 0 \ar@{.>}[l] \ar@{.>}[r] \ar@{.>}[u] & 0 \ar@{.>}[u]
}
\ \raisebox{-2.6eM}{\(\oplus\)} \
\xymatrix@!=7pt{
0 \ar@{.>}[d] & 0 \ar@{.>}[l] \ar@{.>}[r] \ar@{.>}[d] & \kk \ar[d] \\
\kk &\kk \ar[l] \ar[r] & \kk \\
0 \ar@{.>}[u] &0 \ar@{.>}[l] \ar@{.>}[r] \ar@{.>}[u] &0 \ar@{.>}[u]
}$
\\
& \\
%%%%%%%%%%%%%%%%%%%%%%%%
\exnumber{13-1} \hspace{1mm}
$\xymatrix@!=7pt{
0 \ar@{.>}[d] & \kk^n \ar@{.>}[l] \ar[r] \ar[d] & \kk^n \ar[d] \\
0 & \kk^n \ar@{.>}[l] \ar[r] & \kk^n \\
0 \ar@{.>}[u] & 0 \ar@{.>}[l] \ar@{.>}[r] \ar@{.>}[u] & 0 \ar@{.>}[u]
}
\ \raisebox{-2.6eM}{\(\oplus\)} \
\xymatrix@!=7pt{
0 \ar@{.>}[d] & \kk \ar@{.>}[l] \ar@{.>}[r] \ar[d] & 0 \ar@{.>}[d] \\
0 &\kk \ar@{.>}[l] \ar@{.>}[r] & 0 \\
0 \ar@{.>}[u] & \kk \ar@{.>}[l] \ar@{.>}[r] \ar[u] &0 \ar@{.>}[u]
}$
&
%%%%%%%%%%%%%%%%%%%%%%%%
\exnumber{13-2} \hspace{1mm}
$\xymatrix@!=7pt{
0 \ar@{.>}[d] & 0 \ar@{.>}[l] \ar@{.>}[r] \ar@{.>}[d] & 0 \ar@{.>}[d] \\
\kk^n & \kk^n \ar[l] \ar@{.>}[r] & 0 \\
\kk^n \ar[u] & \kk^n \ar[l] \ar@{.>}[r] \ar[u] & 0 \ar@{.>}[u]
}
\ \raisebox{-2.6eM}{\(\oplus\)} \
\xymatrix@!=7pt{
0 \ar@{.>}[d] & \kk \ar@{.>}[l] \ar@{.>}[r] \ar[d] & 0 \ar@{.>}[d] \\
0 &\kk \ar@{.>}[l] \ar@{.>}[r] & 0 \\
0 \ar@{.>}[u] & \kk \ar@{.>}[l] \ar@{.>}[r] \ar[u] &0 \ar@{.>}[u]
}$
\end{longtable}

\begin{remark}
In these examples, 
for every admissible subcategory $\sfT$, 
there exsits  at least one  asid bimodule $C$ 
having $\sfT$ as the asid subcategory.
However this is not the case for general algebras. There exists an algebra $\Lambda$ such that
$\sfK^{\mrb}(\proj \Lambda)$ has an admissible subcategory $\sfT$ which is not the asid subcategory 
for any asid bimodule.

Let $\Lambda$ be the path algebra $\kk[ 1 \xleftarrow{\alpha} 2 \xleftarrow{\beta} 3]$ 
with the relation $\beta \alpha$. 
We denote by $P_{1}$ the indecomposable projective $\Lambda$-module 
corresponds to the vertex $1$ and
 by $S_{3}$ the simple $\Lambda$-module corresponds to the vertex $3$. 
Then, the admissible subcategory $\thick (P_{1} \oplus S_{3}) \subset \sfD^{\mrb}(\mod \Lambda)$ 
 is not the asid subcategory for any asid bimodule.
\end{remark}

{\small 

}


\begin{thebibliography}{ABCD}

\bibitem{ABS} 
Assem, I; Br\"{u}stle, T.; Schiffler, R.
Cluster-tilted algebras as trivial extensions. 
Bull. Lond. Math. Soc. 40 (2008), no. 1, 151-162. 


%\bibitem{Asadollahi-Salarian}Asadollahi, Javad,  Salarian, Shokrollah,On the vanishing of Ext over formal triangular matrix rings. Forum Math.  18  (2006),  no. 6, 951?966. 

\bibitem{AR}
Auslander, Maurice 
; Reiten, Idun, 
Cohen-Macaulay and Gorenstein Artin algebras.  Representation theory of finite groups and finite-dimensional algebras (Bielefeld, 1991),  221-245. 
Progr. Math., 95, Birkhauser, Basel, 1991. 

%\bibitem{ARS}  Auslander, Maurice; Reiten, Idun; Smalo, Sverre O. Representation theory of Artin algebras. Corrected reprint of the 1995 original. Cambridge Studies in Advanced Mathematics, 36. Cambridge University Press, Cambridge, 1997. xiv+425 

\bibitem{Bondal-Kapranov} 
Bondal, A. I.; Kapranov, M. M. Representable functors, Serre functors, and reconstructions. (Russian) Izv. Akad. Nauk SSSR Ser. Mat.  53  (1989),  no. 6, 1183--1205, 1337;  translation in  Math. USSR-Izv.  35  (1990),  no. 3, 519-541 

\bibitem{Bruning} 
Br\"uning Kristian, 
Thick subcategories of the derived category of a hereditary algebra, 
Homology Homotopy Appl. 9 (2007), no. 2, 165-176. 


\bibitem{Buchweitz} 
Buchweitz, Ragnar-Olaf,   
Maximal Cohen-Macaulay Modules and Tate-Cohomology Over Gorenstein Rings, 
unpublished manuscript available at https://tspace.library.utoronto.ca/handle/1807/16682


%\bibitem{Chen scs}Chen, Xiao-Wu, Singularity categories, Schur functors and triangular matrix rings. (English summary) Algebr. Represent. Theory  12  (2009),  no. 2-5, 181-191. 

%\bibitem{Chen trivial}X.-W. Chen, Graded self-injective algebras ``are" trivial extensions, {\it J. Algebra} \textbf{322} (2009), 2601-2606

\bibitem{XWChen}
Chen, Xiao-Wu,  
Singular equivalences of trivial extensions. Comm. Algebra  44  (2016),  no. 5, 1961-1970.

\bibitem{CZ}
Chen, Xiao-Wu; Zhang, Pu, 
Quotient triangulated categories. 
Manuscripta Math.  123  (2007),  no. 2, 167-183. 

%\bibitem{Chen-Ye-Zhang}Chen, Xiao-Wu; Ye, Yu; Zhang, Pu,
%Algebras of derived dimension zero.  Comm. Algebra  36  (2008),  no. 1, 1-10. 

%\bibitem{FGR}  Fossum, Robert M.; Griffith, Phillip A.; Reiten, Idun Trivial extensions of abelian categories. Homological algebra of trivial extensions of abelian categories with applications to ring theory. Lecture Notes in Mathematics, Vol. 456. Springer-Verlag, Berlin-New York, 1975. xi+122 pp.

\bibitem{Gabriel}
Gabriel, P.
The universal cover of a representation-finite algebra. Representations of algebras (Puebla, 1980), pp. 68-105, 
Lecture Notes in Math., 903, Springer, Berlin-New York, 1981. 

\bibitem{GLS}
Geiss, Christof; Leclerc, Bernard; Schr\''oer, Jan Quivers with relations for symmetrizable Cartan matrices I: Foundations. Invent. Math. 209 (2017), no. 1, 61-158.


\bibitem{Happel book} 
Happel, D.
Triangulated Categories in the Representation Theory of
Finite-Dimensional Algebras. 
London Mathematical  Society  Lecture Notes Series 119.  
Cambridge: 
Cambrdge University Press, 1988.


\bibitem{Happel}
Happel, Dieter, 
On Gorenstein algebras.  Representation theory of finite groups and finite-dimensional algebras (Bielefeld, 1991),  389-404, 
Progr. Math., 95, Birkhauser, Basel, 1991. 


\bibitem{IT}
Ingalls, Colin; Thomas, Hugh,
\emph{Noncrossing partitions and representations of quivers},
Compos. Math. 145 (2009), no. 6, 1533--1562.


\bibitem{Iwanaga}
Iwanaga, Yasuo, 
On rings with finite self-injective dimension. II. 
Tsukuba J. Math.  4  (1980), no. 1, 107-113. 


%\bibitem{Krause:the stable derived category}‹ò'炤'º, Henning, 
%The stable derived category of a Noetherian scheme. Compos. Math.  141  (2005),  no. 5, 1128-1162. 


\bibitem{Keller:ddc}
 Keller, Bernhard, Deriving DG categories. Ann. Sci. Ecole Norm. Sup. (4)  27  (1994),  no. 1, 63-102.

\bibitem{Keller-Reiten}
Keller, Bernhard; Reiten, Idun. 
 Cluster-tilted algebras are Gorenstein and stably Calabi-Yau. Adv. Math. 211 (2007), no. 1, 123-151.

\bibitem{Krause:Localization theory}
Krause, Henning, Localization theory for triangulated categories.  
Triangulated categories,  161-235, London Math. Soc. Lecture Note Ser., 375, Cambridge Univ. Press, Cambridge, 2010.



\bibitem{Lu} 
Lu Ming, 
Singularity categories of representations of quivers over local rings, 
arXiv :1702.01367.



\bibitem{adasore}
Minamoto H. and Yamaura K.  
Homological dimension formulas for trivial extension algebras, 
arXiv:1710.01469


\bibitem{anodai}
Minamoto H. and Yamaura K., 
Happel's functor and 
homologically well-graded Iwanaga-Gorenstein algebras, 
arXiv:1811.08036. 

\bibitem{MYY}
Minamoto H. 
%and Yoshiwaki M.,  
On Iwanaga-Gorenstein algebra of finite Cohen-Macaulay type, 
 in preparation. 


\bibitem{Mori B-construction}
Mori, Izuru, 
B-construction and C-construction. 
Comm. Algebra  41  (2013),  no. 6, 2071-2091. 

\bibitem{Neeman}
Neeman, Amnon, 
The connection between the 
K
-theory localization theorem of Thomason, Trobaugh and Yao and the smashing subcategories of Bousfield and Ravenel. 
Ann. Sci. \'Ecole Norm. Sup. (4) 25 (1992), no. 5, 547-566. 

\bibitem{Orlov}
Orlov, Dmitri, 
Derived categories of coherent sheaves and triangulated categories of singularities.  
Algebra, arithmetic, and geometry: in honor of Yu. I. Manin. Vol. II,  503-531, 
Progr. Math., 270, Birkhauser Boston, Inc., Boston, MA, 2009. 


\bibitem{Positselski}
L. Positselski, 
Two kinds of derived categories, Koszul duality, and comodule-contramodule correspondence, 
Mem. Amer. Math. Soc. 212 (2011), no. 996, vi+133 pp.




\bibitem{Ringel-Zhang}
Ringel, Claus Michael 
; Zhang, Pu, 
Representations of quivers over the algebra of dual numbers. 
J. Algebra  475  (2017), 327-360. 


%\bibitem{Yamaura} Yamaura, Kota, Realizing stable categories as derived categories. Adv. Math.  248  (2013), 784-819. 



\bibitem{Yekutieli dualizing complexes}
Yekutieli, Amnon, 
Dualizing complexes, Morita equivalence and the derived Picard group of a ring. 
J. London Math. Soc. (2)  60  (1999),  no. 3, 723-746. 




\end{thebibliography}
\end{document}